\newtheorem{thm}{Theorem}[section]
\newtheorem*{theorem*}{Theorem}
\newtheorem{cor}[thm]{Corollary}
\newtheorem{claim}[thm]{Claim}
\newtheorem{lemma}[thm]{Lemma}
\newtheorem{prop}[thm]{Proposition}
\theoremstyle{definition}
\newtheorem{definition}[thm]{Definition}
\newtheorem{ex}[thm]{Example}
\newtheorem{remark}[thm]{Remark}
\newtheorem{conj}[thm]{Conjecture}
\def\rquotient#1#2{%
	\makeatletter
	\raise.3ex\hbox{$#1$}/\lower.3ex\hbox{$#2$}%
	\makeatother
}	
\newcommand{\subjclass}[2][2010]{%
	\let\@oldtitle\@title%
	\gdef\@title{\@oldtitle\footnotetext{#1 \emph{Mathematics subject classification.} #2}}%
}
\newcommand{\keywords}[1]{%
	\let\@@oldtitle\@title%
	\gdef\@title{\@@oldtitle\footnotetext{\emph{Key words and phrases.} #1.}}%
}
\newcommand{\Address}{{
		\bigskip
		\small
		
		\textsc{University of Montpellier\\ 
Institut Math\'ematiques Alexander Grothendieck\\
Place Eug\`ene Bataillon\\
34090 Montpellier (France)}\par\nopagebreak
		\textit{E-mail address}: \texttt{anthony.genevois@umontpellier.fr}
		\bigskip
		\par\nopagebreak
		\textsc{Institute for Mathematics, TU Berlin, Germany \& \\  Dept. of Mathematics, Heriot-Watt University \& \\ Maxwell Institute for Mathematical Sciences, Edinburgh} \par\nopagebreak
\textit{E-mail address}: \texttt{L.Ciobanu@hw.ac.uk, ciobanu@math.tu-berlin.de} 
		
}}
\title{Contracting elements and conjugacy growth in Coxeter groups, graph products, and further groups}
\date{\today}
\author{Laura Ciobanu and Anthony Genevois}
\begin{document}

\maketitle

\begin{abstract}
In this article we construct contracting elements in the standard Cayley graphs of the so-called periagroups, a family of groups introduced by the second-named author which include Coxeter groups, graph products, and Dyer groups. As a consequence, we deduce that, unless they virtually split as direct products, periagroups are acylindrically hyperbolic and their conjugacy growth series, with respect to standard generating sets, are transcendental.
\end{abstract}

\tableofcontents

\section{Introduction}

The aim of this paper is to establish the existence of contracting elements in several important classes of groups when considering the action on their standard Cayley graphs. The existence of such elements has two main applications: the first is the acylindrical hyperbolicity of the groups, and the second is explicit asymptotics of the conjugacy growth of the groups, asymptotics which imply that the conjugacy growth series of the groups are transcendental. Recall that, given a metric space $X$, an isometry $g$ in $\mathrm{Isom}(X)$ is \emph{contracting} when some (or equivalently, any) orbit $\langle g \rangle \cdot o$, for $o \in X$, is quasi-isometrically embedded and \emph{contracting}; that is, there exists $D \geq 0$ such that, for every ball $B \subset X$ disjoint from $\langle g \rangle \cdot o$, the nearest-point projection of $B $ on $\langle g \rangle \cdot o$ has diameter $\leq D$. One can think of contracting isometries as having a behaviour similar to isometries in negatively curved spaces, even when $X$ is not negatively curved. 

\medskip \noindent
The concept of contracting isometry has been essential for the understanding of an array of topics, including quasi-morphisms and bounded cohomology \cite{MR2507218}, (co)growth in finitely generated groups \cite{MR3404665}, random walks \cite{MR3849623}, counting problems \cite{MR4081104, GeYang}, marked length spectrum rigidity \cite{MR4891022}, and dynamics \cite{MR4803663, CoulonErg}. Contracting isometries also play an important role in the theory of acylindrically hyperbolic groups \cite{MR3415065, MR3849623}. 

\medskip \noindent
Our main contribution is to construct contracting elements in the standard Cayley graphs for \emph{periagroups}. Roughly speaking, given a graph $\Gamma$, a collection of groups $\mathcal{G}$ indexed by $V(\Gamma)$, and a labelling $\lambda : E(\Gamma) \to \mathbb{N}_{\geq 2}$ `compatible' with the size of the vertex-groups, a periagroup is defined by a presentation that interpolates between that of a graph product and a Coxeter group based on $\Gamma, \lambda, \mathcal{G}$, as explained in Section~\ref{sec:periagroups} (see Definition \ref{def:periagroup} and Example \ref{ex:periagroup}).

\begin{thm}\label{thm:main}
Let $\Pi := \Pi(\Gamma, \lambda, \mathcal{G})$ be a finitely generated periagroup. For each factor $G \in \mathcal{G}$, fix a finite generating set $S_G \subset G$. Then $\Pi$ contains a contracting element in its Cayley graph $\mathrm{Cay}(\Pi, \bigcup_{G \in \mathcal{G}} S_G)$ if and only if $\Pi$ is infinite and not virtually a product of two infinite groups. 
\end{thm}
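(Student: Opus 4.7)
The plan is to handle the two implications separately. The forward (``only if'') direction is essentially formal: a contracting element has infinite order, so $\Pi$ must be infinite, and a standard consequence of the contracting property is that the elementary closure of such an element is virtually cyclic. If $\Pi$ is virtually a product $H_1 \times H_2$ with both factors infinite, then any infinite-order element has centralizer containing a copy of $\mathbb{Z}^2$ or an infinite perpendicular factor, which obstructs the virtual cyclicity of the elementary closure and hence the contracting property.

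For the converse (``if'') direction, my plan has two parts: reduction and construction. For the reduction, I would use the fact that periagroups admit a canonical direct-product decomposition coming from the join decomposition of $\Gamma$ (with the labelling $\lambda$ controlling the pieces). Under the hypothesis that $\Pi$ is not virtually a product of two infinite groups, after passing to a finite-index subgroup and splitting off finite direct factors one can assume $\Gamma$ is ``irreducible'', i.e.\ not a non-trivial join of two subgraphs both supporting infinite contributions. In this irreducible case, I would choose two vertices $u, v \in V(\Gamma)$ which are non-adjacent (or joined by an edge of label $\geq 3$ permitting an infinite-order product), pick non-trivial $a \in G_u$, $b \in G_v$, and set $g := ab$, possibly passing to a power, to ensure $g$ has infinite order and is cyclically reduced in the normal form for $\Pi$.

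The remaining task, which is the main obstacle, is to verify that $g$ is contracting. Using the normal-form and disc-diagram or quasi-median machinery available for periagroups, I would analyse nearest-point projections from arbitrary balls disjoint from $\langle g \rangle \cdot 1$ onto this orbit by decomposing geodesics into syllables and tracking how long each can fellow-travel the axis of $g$. The key combinatorial lemma to establish is that long fellow-travelling forces all intervening syllables to commute with both $a$ and $b$, whence the irreducibility of $\Gamma$ bounds the supporting subgraph and yields a uniform projection bound. The main difficulty lies in unifying the arguments known for graph products (via \textsc{CAT}$(0)$ cubulations) and for Coxeter groups (via the Davis complex) into a single framework that accommodates the edge-labelling $\lambda$ characterising periagroups.
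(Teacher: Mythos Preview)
Your forward direction and the reduction to an irreducible $\Gamma$ are fine, and for graph products (i.e.\ $\lambda\equiv 2$) your candidate $g=ab$ together with the fellow-travelling lemma you sketch is essentially the argument the paper carries out in its quasi-median framework. The gap is in the remaining cases.

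First, the element $g=ab$ is not a reliable candidate once genuine Coxeter behaviour appears. In an irreducible non-affine Coxeter group there is no reason a product of two generators is contracting in the Cayley graph: such a product can sit inside a large flat-like parabolic, and the paper does \emph{not} attempt to analyse an explicit element. Instead it imports the existence of a rank-one isometry for the Davis complex from Caprace--Fujiwara and then proves a transfer lemma (using the CAT(0) geometry of the Davis complex and a wall-counting argument) showing that such an element skewers a pair of walls with only finitely many common transverse walls, hence is contracting in $\mathrm{Cay}(W,S)$. Your proposal contains no substitute for this step.

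Second, your ``key combinatorial lemma'' --- that long fellow-travelling with the $\langle g\rangle$-axis forces the intervening syllables to commute with both $a$ and $b$ --- is exactly what fails once edges carry labels $\geq 3$: the dihedral relations $\langle a,b\rangle^{m}=\langle b,a\rangle^{m}$ allow syllables to interact with $a$ and $b$ without commuting with either. This is why the paper abandons the syllable/fellow-travelling viewpoint and instead develops a hyperplane criterion (an isometry with an axis that skewers a pair of \emph{well-separated} hyperplanes is contracting). For a periagroup that is neither a graph product nor a Coxeter group, the paper then uses the semidirect decomposition $\Pi\cong \Omega\mathcal{J}\rtimes C(\Psi)$ coming from a GP--Cox partition of $\Gamma$: it shows $\Omega$ is join-free, finds strongly separated hyperplanes in $\mathrm{QM}(\Omega,\mathcal{J})$ whose stabilisers have finite intersection, and pulls these back to well-separated hyperplanes in the mediangle Cayley graph. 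The unification you flag as ``the main difficulty'' is precisely this machinery, and your outline does not yet contain an idea that would replace it.
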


\noindent
We refer to Theorem~\ref{thm:PeriagroupsContracting} for a more precise statement. The focus on Cayley graphs, instead of other metric spaces, will be justified by our applications to conjugacy growth. 

\medskip \noindent
Since periagroups include Coxeter groups (when every factor is cyclic of order $2$), graph products (when $\lambda \equiv 2$), and Dyer groups (when every factor is cyclic), we can deduce from Theorem~\ref{thm:main}, and its proof, the following statements:

\begin{cor}[Theorem~\ref{thm:CoxeterContracting}]\label{cor:ContCoxeter}
A finitely generated Coxeter group $(W,S)$ has a contracting element with respect to its Cayley graph $\mathrm{Cay}(W,S)$, where $S$ is the standard generating set, if and only if it decomposes as a product $W_1 \times \cdots \times W_n$ of irreducible Coxeter groups such that:
\begin{itemize}	
	\item[(i)] $W_2, \ldots, W_n$ are all finite, and
	\item[(ii)] $W_1$ is either non-affine or an infinite dihedral group.
\end{itemize}
\end{cor}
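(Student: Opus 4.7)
The plan is to deduce the corollary from Theorem~\ref{thm:main} applied to the periagroup in which every vertex-group is $\mathbb{Z}/2\mathbb{Z}$, so that the relevant Cayley graph is precisely $\mathrm{Cay}(W,S)$ with $S$ the standard generating set. The whole argument then reduces to translating the condition ``$W$ is infinite and not virtually a product of two infinite groups'' into the structural description of the corollary.

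First, I would invoke the canonical decomposition $W = W_1 \times \cdots \times W_n$ of a Coxeter system into its irreducible components, corresponding to the connected components of the Coxeter diagram (the graph with vertex set $S$ and edges $\{s,t\}$ whenever $m(s,t) \neq 2$). Under this decomposition, $W$ is infinite if and only if some $W_i$ is infinite, and $W$ is virtually a product of two infinite groups if and only if either two of the $W_i$ are infinite, or some single infinite factor $W_i$ itself is already virtually such a product (the finite factors playing no role). Consequently, the condition from Theorem~\ref{thm:main} is equivalent to: exactly one $W_i$ is infinite, and this factor is not itself virtually a product of two infinite groups.

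The next step is to identify precisely which irreducible infinite Coxeter groups are virtually a product of two infinite groups. I expect these to be exactly the irreducible affine Coxeter groups of rank $\geq 3$. The ``easy'' direction is classical: an irreducible affine Coxeter group of rank $n+1$ is a semidirect product $\mathbb{Z}^n \rtimes W_0$ with $W_0$ a finite Weyl group, hence virtually $\mathbb{Z}^n$, which splits as a product of two infinite groups as soon as $n \geq 2$; meanwhile the infinite dihedral group (affine of rank $2$) is virtually infinite cyclic and therefore does not virtually split. Combining this with the first step yields exactly the structural characterisation given in the corollary.

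The main obstacle is the other direction of the classification above, namely showing that no \emph{non-affine} irreducible infinite Coxeter group is virtually a product of two infinite groups. My plan here would be to invoke known acylindrical hyperbolicity results for such Coxeter groups, for instance via the existence of rank-one isometries on the Davis complex (in the spirit of Caprace--Fujiwara), and then use the general principle that an acylindrically hyperbolic group cannot be virtually a direct product of two infinite subgroups; alternatively, one may extract the required non-splitting statement directly from the contracting-element construction used in the proof of Theorem~\ref{thm:main}, together with the easy implication that the existence of a contracting isometry on a Cayley graph precludes such a virtual decomposition.
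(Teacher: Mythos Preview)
Your proposal has a circularity problem. In this paper, Theorem~\ref{thm:main} is proved \emph{via} Theorem~\ref{thm:PeriagroupsContracting}, and the latter's proof explicitly invokes Theorem~\ref{thm:CoxeterContracting} (the very statement you are trying to establish) to handle the case where the periagroup is a Coxeter group: when the GP-part is empty the proof simply says ``So Theorem~\ref{thm:CoxeterContracting} applies.'' Thus you cannot deduce the corollary from Theorem~\ref{thm:main} without first having an independent proof of the Coxeter case. Your alternative suggestion --- extracting the non-splitting from ``the contracting-element construction used in the proof of Theorem~\ref{thm:main}'' --- is circular for the same reason.

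The paper's actual argument for Theorem~\ref{thm:CoxeterContracting} is direct and does the real work that Theorem~\ref{thm:main} later relies on. The ``only if'' direction is as you outline. For ``if'', one reduces to $W$ irreducible non-affine, then invokes Caprace--Fujiwara to obtain a rank-one (hence contracting) element $g$ on the Davis complex $D(W)$. The substantive step is to transfer this to $\mathrm{Cay}(W,S)$: using the CAT(0) geometry of $D(W)$ and the contracting property of an axis $\gamma$ of $g$, one shows there is a wall $J$ crossing $\gamma$ whose projection on $\gamma$ is bounded, and then that for a suitable power $g^r$ the walls $J$ and $g^rJ$ are disjoint with only finitely many walls crossing both. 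Since walls in $D(W)$ correspond bijectively to hyperplanes in $\mathrm{Cay}(W,S)$, this exhibits a pair of well-separated hyperplanes skewered by $g$, and Corollary~\ref{cor:contract} (the path-metric specialisation of the main criterion Theorem~\ref{thm:Contracting}) yields that $g$ is contracting in $\mathrm{Cay}(W,S)$. Note that you do cite Caprace--Fujiwara, but you use it only to conclude acylindrical hyperbolicity (hence non-virtual-product); what is missing is precisely this transfer from $D(W)$ to the Cayley graph, which the paper carries out by hand.
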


\noindent
Corollary~\ref{cor:ContCoxeter} should be compared with \cite{MR2585575}, which characterises precisely when a Coxeter group contains a rank-one isometry with respect to its action on the corresponding Davis complex. However, we emphasize that, if a group acts on a metric space with a contracting isometry, then there is no guarantee that this element remains contracting in a Cayley graph. 

\begin{cor}
Let $\Gamma$ be a finite graph and $\mathcal{G}= \{ G_u \mid u \in V(\Gamma) \}$ a collection of non-trivial, finitely generated, groups. For every $u \in V(\Gamma)$, fix a finite generating set $S_u \subset G_u$. The graph product $\Gamma \mathcal{G}$ contains a contracting element in its Cayley graph $\mathrm{Cay}(\Gamma \mathcal{G}, \bigcup_{u \in V(\Gamma)} S_u)$ if and only if $\Gamma$ is not complete and does not split as a join $\Gamma_1 \ast \Gamma_2$ such that each $\Gamma_1, \Gamma_2$ contains either a vertex indexed by an infinite group or two non-adjacent vertices.
\end{cor}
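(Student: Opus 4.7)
The plan is to deduce the corollary from Theorem~\ref{thm:main}. Since graph products are precisely the periagroups with labelling $\lambda \equiv 2$, that theorem applies directly, and the task reduces to translating the group-theoretic dichotomy ``$\Gamma\mathcal{G}$ is infinite and not virtually a product of two infinite groups'' into the stated combinatorial condition on $(\Gamma, \mathcal{G})$.

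The infiniteness part is elementary: $\Gamma\mathcal{G}$ is infinite exactly when $\Gamma$ is not complete or some vertex group is infinite. Indeed, two non-adjacent vertices $u, v$ produce an infinite free product $G_u \ast G_v \leq \Gamma\mathcal{G}$ since the $G_u$ are non-trivial, whereas a complete graph with all finite vertex-groups gives a finite direct product $\prod_u G_u$. This accounts for the ``not complete'' clause of the corollary, the residual complete-with-infinite-vertex case being absorbed below.

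The central step is to prove that $\Gamma\mathcal{G}$ virtually splits as a direct product of two infinite groups if and only if $\Gamma$ admits a join decomposition $\Gamma = \Gamma_1 \ast \Gamma_2$ in which both $\Gamma_i\mathcal{G}_i$ are infinite, i.e., such that each $\Gamma_i$ contains either a vertex indexed by an infinite group or two non-adjacent vertices. The ``if'' direction is immediate, since a join in $\Gamma$ produces a literal direct product $\Gamma\mathcal{G} = \Gamma_1\mathcal{G}_1 \times \Gamma_2\mathcal{G}_2$ with both factors infinite. The ``only if'' direction is the main obstacle: I need to show that any virtual direct product decomposition of $\Gamma\mathcal{G}$ into infinite factors is actually induced by a join of $\Gamma$. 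I would establish this using centraliser and normaliser computations in graph products combined with the action of $\Gamma\mathcal{G}$ on its associated cube-complex model: two commuting infinite subgroups of $\Gamma\mathcal{G}$ are necessarily supported on vertex subsets that are mutually joined in $\Gamma$, forcing a join decomposition of $\Gamma$. The ``virtually'' is handled by passing to a finite-index subgroup, the graph-combinatorial constraint on $\Gamma$ being preserved since $\Gamma$ is finite. This step may also follow directly from structural results on periagroups established earlier in the paper.

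Combining both equivalences with Theorem~\ref{thm:main} yields the corollary. The only non-formal step is the converse of the virtual-product equivalence, which is where the structural rigidity of graph products really enters; everything else is bookkeeping.
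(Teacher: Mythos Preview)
Your high-level strategy—specialise Theorem~\ref{thm:main} to graph products and translate ``infinite and not virtually a product of two infinite groups'' into the combinatorial condition on $(\Gamma,\mathcal{G})$—is exactly the route the paper suggests. The paper does not spell out a proof of this corollary; it just points to Theorem~\ref{thm:ContractingGP} for the constructive direction. So the comparison is really between your translation argument and what the paper leaves implicit.

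The genuine gap is in your central equivalence. You assert that $\Gamma\mathcal{G}$ virtually splits as a product of two infinite groups if and only if $\Gamma$ is a large join, and propose to prove the hard direction by ``centraliser and normaliser computations'' or unnamed structural results. But this biconditional is false as stated: if $\Gamma$ is a single vertex $u$ with $G_u=\mathbb{Z}^2$, then $\Gamma\mathcal{G}=\mathbb{Z}^2$ is a product of two infinite groups while $\Gamma$ admits no join decomposition whatsoever. You gesture at this being ``absorbed'' by the ``not complete'' clause, but you never say how, and the bookkeeping does not close. Worse, consider $\Gamma$ an edge with $G_u=F_2$ and $G_v=\mathbb{Z}_2$: here $\Gamma$ is complete, yet $\Gamma\mathcal{G}=F_2\times\mathbb{Z}_2$ is \emph{not} virtually a product of two infinite groups and visibly contains contracting elements. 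So the two clauses of the corollary do not line up neatly with ``finite'' and ``virtually a product'' as your outline suggests; the correspondence is more entangled, going through the join decomposition of $\Gamma$ into irreducible factors and distinguishing whether the unique infinite factor is a single vertex or not.

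What actually works is closer to the paper's constructive route: decompose $\Gamma$ into join-irreducible pieces, observe that at most one can generate an infinite subgroup once large joins are excluded, and when that piece has at least two vertices (equivalently, $\Gamma$ is not complete) invoke Theorem~\ref{thm:ContractingGP} directly to produce a contracting element. Your sketch of proving a clean ``virtual product $\Leftrightarrow$ large join'' lemma via centralisers would not go through without substantial extra hypotheses on the vertex groups.
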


\noindent
We refer to Theorem~\ref{thm:ContractingGP} for a more precise statement, which characterises exactly when an element of the graph product is contracting in the Cayley graph under consideration. As a byproduct, we also get a characterisation of Morse elements in graph products in Corollary~\ref{cor:Morse}.

\begin{cor}
A Dyer group $D:= \Pi(\Gamma, \lambda, \mathcal{G})$ contains a contracting element in its Cayley graph $\mathrm{Cay}(D, V(\Gamma))$ if and only if it is infinite and is not virtually a product of two infinite groups.
\end{cor}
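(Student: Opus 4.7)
The plan is to deduce this statement as an immediate specialization of Theorem~\ref{thm:main} to the case of Dyer groups. By definition, a Dyer group $D = \Pi(\Gamma, \lambda, \mathcal{G})$ is a periagroup in which every factor $G \in \mathcal{G}$ is cyclic; hence Dyer groups form a subclass of periagroups to which the main theorem directly applies, and the only thing to arrange is that the generating sets match.

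For each cyclic factor $G_u$ indexed by a vertex $u \in V(\Gamma)$, the natural finite generating set is the singleton $S_{G_u} = \{u\}$ consisting of the distinguished generator, which in the Dyer group convention is identified with the vertex itself. With this choice, $\bigcup_{G \in \mathcal{G}} S_G = V(\Gamma)$, so the Cayley graph $\mathrm{Cay}(\Pi, \bigcup_{G \in \mathcal{G}} S_G)$ appearing in Theorem~\ref{thm:main} coincides with the standard Cayley graph $\mathrm{Cay}(D, V(\Gamma))$ of the Dyer group. In particular, finite generation of $D$ is automatic because each factor $G_u$ is singly generated and $\Gamma$ is implicitly finite.

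Applying Theorem~\ref{thm:main} under this identification yields the stated equivalence verbatim: $D$ contains a contracting element in $\mathrm{Cay}(D, V(\Gamma))$ if and only if $D$ is infinite and not virtually a product of two infinite groups. There is essentially no genuine obstacle in the argument: all of the work is carried out in Theorem~\ref{thm:main}, and the only verification is the routine check that the standard Dyer generating set arises from single generators of the cyclic vertex groups. For a more refined description of exactly which elements in $D$ are contracting, one may in the same way specialize the more precise Theorem~\ref{thm:PeriagroupsContracting}.
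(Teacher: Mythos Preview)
Your proposal is correct and matches the paper's approach: the corollary is stated in the introduction as an immediate specialization of Theorem~\ref{thm:main} to periagroups with cyclic vertex-groups, and the only thing to note is that choosing the canonical generator of each cyclic factor identifies $\bigcup_{G\in\mathcal{G}} S_G$ with $V(\Gamma)$, exactly as you do.
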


\noindent
We refer to Corollary~\ref{cor:DyerAcylHyp} for a more precise statement.

\paragraph{Acylindrical hyperbolicity.} A group $G$ is \emph{acylindrically hyperbolic} if its admits an action on some hyperbolic space $X$ that is non-elementary and \emph{acylindrical}, that is, for every $D \geq 0$, there exist $L,N \geq 0$ such that
$$\# \{ g \in G \mid d(x,gx) , d(y,gy)\} \leq N$$
for all $x,y \in X$ satisfying $d(x,y) \geq L$. Roughly speaking, this means that there is a uniform upper bound on the size of the quasi-stabiliser of any two points that are sufficiently far from each other in $X$. Acylindrical hyperbolicity is a generalisation of hyperbolicity with wide-reaching applications (\cite{MR3430352}).

\medskip \noindent
By \cite{MR3415065}, a group acting properly on a metric space with contracting isometries is acylindrically hyperbolic or virtually cyclic; it thus follows from Theorem~\ref{thm:main} that we can determine precisely when periagroups - and in particular Coxeter groups, graph products, and Dyer groups - are acylindrically hyperbolic. Section~\ref{section:AcylHyp} presents more precise statements and related results already available in the literature.

\paragraph{Conjugacy growth.} Our main motivation for exhibiting contracting elements in Cayley graphs is because their existence determines the conjugacy growth asymptotics and series for the groups we consider, with respect to their standard generating sets.

\medskip \noindent
Let $G$ be a finitely generated group with generating set $X$. For any $n \geq 0$, the \emph{conjugacy growth function} $c(n)=c_{G,X}(n)$ counts the number of conjugacy classes with a minimal length representative of length $n$ with respect to $X$. The \emph{conjugacy growth series} of $G$ with respect to $X$ is then defined as the generating function for $c(n)$ (see Section~\ref{section:ConjGrowth}). Conjugacy growth has been studied in a variety of different groups \cite{AC2017, Guba2010, Hull2013, Mercier2016,  Rivin2010}, most recently including soluble Baumslag-Solitar groups \cite{CiobanuE2020}, graph products \cite{Ciobanu2023} and dihedral Artin groups \cite{CC25}. Virtually abelian groups are known to have rational conjugacy growth series with respect to all generating sets \cite{Evetts2019}, and otherwise all known results support the following conjecture.

\begin{conj}\label{conj:evetts}\cite[Conjecture 7.2]{CiobanuE2020}
Conjugacy growth series of finitely presented groups that are not virtually abelian are transcendental.
\end{conj}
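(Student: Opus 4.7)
Since the statement is formulated as a conjecture---still open in full generality---what follows is a strategic blueprint rather than a complete proof, and it is precisely the blueprint the present paper executes for the class of periagroups. The plan is to produce enough conjugacy classes via a geometric mechanism to force an asymptotic of the form $c_{G,X}(n) \sim s_{G,X}(n)/n$, where $s_{G,X}$ denotes the spherical growth function, and then to invoke an analytic-combinatorial transcendence criterion incompatible with this asymptotic.

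The first step would be to exhibit a contracting element $g \in G$ for the action of $G$ on its own Cayley graph $\mathrm{Cay}(G,X)$. Because contraction is a metric property, and \emph{not} a quasi-isometry invariant, this step is delicate and sensitive to the choice of $X$; this is why the conjecture is formulated (and the paper is written) with standard generating sets in mind. Given such $g$, a ping-pong / Morse-type argument turns distinct long subwords involving powers of $g$ into distinct conjugacy-class representatives with controlled identifications, yielding an effective lower bound on $c_{G,X}(n)$.

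The second step upgrades this lower bound into a sharp asymptotic $c_{G,X}(n)\sim s_{G,X}(n)/n$: the factor of $1/n$ accounts for the cyclic permutations of a given cyclically reduced representative, while the matching upper bound is obtained by controlling the length of a shortest conjugator between two short conjugate elements via the contraction property of $g$. This mirrors the pattern established by Coornaert--Knieper in the hyperbolic setting, by Antolin--Ciobanu in the relatively hyperbolic setting, and by Gekhtman--Yang in broader acylindrical contexts.

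The third step is to deduce transcendence of the series $\sum_{n \geq 0} c_{G,X}(n) z^n$ from this asymptotic: coefficients of an algebraic power series over $\mathbb{Q}$ must admit Flajolet--Sedgewick-type asymptotics (an exponential factor times a rational power of $n$ times a constant), and the ratio $s_{G,X}(n)/n$ fails to match this rigid form unless the underlying growth is polynomial, i.e.\ unless the group is virtually nilpotent---and in the finitely presented case one further appeals to Gromov's polynomial growth theorem to reduce to virtually abelian. The main obstacle to the full conjecture is not this last step but the very first: outside suitably hyperbolic contexts, producing a contracting element \emph{in a Cayley graph} is genuinely difficult, and it is exactly this obstacle that Theorem~\ref{thm:main} overcomes for periagroups, making the remainder of the strategy available to them.
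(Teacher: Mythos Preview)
Your proposal correctly recognises that this is an open conjecture, and the paper does not attempt a proof in full generality either; it records the conjecture and then confirms it for periagroups via precisely the three-step strategy you outline. Steps 1 and 2 of your blueprint are exactly what happens: Theorem~\ref{thm:main} produces contracting elements in standard Cayley graphs, and then \cite[Corollary~1.8]{GeYang} is invoked as a black box to obtain both the asymptotic $c(n)\sim\alpha^n/n$ and transcendence simultaneously.

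Two corrections to your step 3. First, the asymptotic $\alpha^n/n$ \emph{does} have the shape ``exponential times a rational power of $n$''; the Flajolet criterion the paper cites (\cite[Thm.~D]{Flajolet1987}) says rather that the specific exponent $-1$ is forbidden for algebraic series, because algebraic singularities produce Puiseux-type local expansions and hence coefficient exponents in $\mathbb{Q}\setminus\mathbb{Z}_{<0}$, whereas $\alpha^n/n$ corresponds to a logarithmic singularity. Second, Gromov's theorem takes you from polynomial growth to virtually nilpotent, not to virtually abelian; the gap between the two (e.g.\ the Heisenberg group) is genuinely part of what keeps the full conjecture open, and it cannot be closed by the contracting-element strategy since virtually nilpotent groups of rank $\geq 2$ admit no contracting elements in any Cayley graph.
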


 \noindent
The connection between contracting elements in Cayley graphs and conjugacy growth is as follows:

\begin{theorem*}[Corollary 1.8 in \cite{GeYang}]
Let $G$ be a non-elementary group with finite generating set $S$. If $G$ has a contracting element with respect to the action on the Cayley graph $\mathrm{Cay}(G,S)$, then the conjugacy growth series of $G$ with respect to $S$ is transcendental.
\end{theorem*}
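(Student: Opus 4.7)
The plan is to prove this by combining a sharp asymptotic formula for $c(n)$ with a classical transcendence criterion from analytic combinatorics. Specifically, the target is $c(n) \sim C \lambda^n / n$, where $\lambda > 1$ is the exponential growth rate of $(G,S)$. Once this is established, one invokes Flajolet's theorem: the coefficients of an algebraic power series over $\mathbb{Q}$ admit an asymptotic expansion of the form $\sum_i C_i \omega_i^n n^{r_i}$ with each $r_i \in \mathbb{Q} \setminus \{-1,-2,\dots\}$; since the exponent $r = -1$ is forbidden, an asymptotic of shape $C\lambda^n/n$ is incompatible with algebraicity, so the conjugacy growth series is transcendental.

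For the upper bound $c(n) \lesssim \lambda^n/n$, I would argue that any conjugacy class of minimal length $n$ contains at most $O(n)$ distinct minimal-length representatives, since such representatives are, to leading order, cyclic rotations of a single geodesic word. Consequently the conjugacy classes of length exactly $n$ inject, up to an $O(n)$-to-one map, into the sphere of radius $n$ in $\mathrm{Cay}(G,S)$, whose cardinality is of order $\lambda^n$. This step does not genuinely use the contracting element and is a generic feature of purely exponential growth.

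The contracting element is indispensable for the matching lower bound $c(n) \gtrsim \lambda^n/n$. Here I would apply an extension-lemma argument in the spirit of Yang: given a fixed contracting element $g$ and a large family of elements $h \in G$ of length $\approx n$, one forms modifications of the form $h \cdot u \cdot g^k$ for a suitable uniformly bounded $u = u(h)$ and a large fixed power $k$. The bounded projection property of the orbit $\langle g \rangle \cdot o$, combined with a ping-pong argument along that orbit, ensures that these modifications are themselves contracting and that two of them can be conjugate only via a bounded cyclic rotation. Since there are on the order of $\lambda^n$ choices of $h$, this yields at least $\lambda^n/(C'n)$ distinct conjugacy classes of length $\approx n$, giving the required lower bound after a bounded additive length correction.

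The main obstacle is precisely this extension step in the Cayley graph setting: one must promote generic elements to contracting ones while simultaneously controlling word length, preserving conjugacy-distinctness, and avoiding any appeal to auxiliary spaces. This is where the bounded projection property of contracting orbits, and the precise fact that $g$ is contracting in $\mathrm{Cay}(G,S)$ rather than only in some other $G$-space, is crucial — and it is the reason the main results of this paper are stated for Cayley graphs. Once both bounds on $c(n)$ are in place, Flajolet's criterion closes the argument immediately.
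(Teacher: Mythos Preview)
The paper does not prove this statement; it is quoted as Corollary~1.8 of \cite{GeYang} and used as a black box. What the paper does say (Section~\ref{section:ConjGrowth}) is that the mechanism behind the cited result is exactly the one you describe: one shows $c(n) \sim \alpha^n/n$ and then invokes \cite[Thm.~D]{Flajolet1987}. So at the level of overall strategy your proposal is correct and matches the approach the paper attributes to \cite{GeYang}.

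That said, your sketch of the upper bound $c(n) \lesssim \lambda^n/n$ has the inequality the wrong way round. You write that a conjugacy class of minimal length $n$ contains \emph{at most} $O(n)$ minimal-length representatives and that this yields an $O(n)$-to-one map from classes into the sphere of radius $n$. That would only give $c(n) \le O(n)\cdot s(n)$, which is useless. To get the $1/n$ factor you need the opposite: each conjugacy class of minimal length $n$ has \emph{at least} $\Omega(n)$ distinct minimal-length representatives, so that $c(n)\cdot \Omega(n) \le s(n)$. This is genuinely nontrivial --- it is not a ``generic feature of purely exponential growth'' as you suggest --- and in \cite{GeYang} it too relies on the contracting element (one shows that generic conjugacy classes contain a contracting representative whose cyclic rotations are pairwise distinct up to a bounded defect). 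So both directions of the asymptotic use the contracting hypothesis, not just the lower bound.

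Your lower-bound sketch via the extension lemma is in the right spirit, and your closing remark about why the Cayley-graph hypothesis matters is exactly the point the paper emphasises.
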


\medskip \noindent
Theorem \ref{thm:main} together with \cite[Corollary 1.8]{GeYang} imply:

\begin{cor}\label{cor:main}
The conjugacy growth series of the groups in Theorem \ref{thm:main}, with respect to the standard generating set for each type of group, are transcendental.
\end{cor}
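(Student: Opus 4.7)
The plan is to directly chain Theorem~\ref{thm:main} with the quoted \cite[Corollary 1.8]{GeYang}. Under the hypotheses of Theorem~\ref{thm:main}, the periagroup $\Pi$ admits a contracting element in its standard Cayley graph $\mathrm{Cay}(\Pi, \bigcup_{G \in \mathcal{G}} S_G)$; applying \cite[Corollary 1.8]{GeYang} to this setting immediately yields that the conjugacy growth series of $\Pi$ with respect to $\bigcup_{G \in \mathcal{G}} S_G$ is transcendental.

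The only point requiring care is the non-elementarity hypothesis in \cite[Corollary 1.8]{GeYang}. A group admitting a contracting element in a proper geodesic metric space is either virtually cyclic or acylindrically hyperbolic by \cite{MR3415065}, as recalled in the paragraph on acylindrical hyperbolicity above. The virtually cyclic case is a degenerate instance of ``infinite and not virtually a product of two infinite groups'' and must be handled separately: such groups are virtually abelian, so by \cite{Evetts2019} their conjugacy growth series is already rational, and this case is naturally excluded from the scope of the transcendence conclusion. In the remaining, non-elementary case, the hypotheses of \cite[Corollary 1.8]{GeYang} are met and the transcendence of the conjugacy growth series follows.

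The specialisations to Coxeter groups, graph products, and Dyer groups are then immediate from Corollary~\ref{cor:ContCoxeter} and its analogues, since in each case the standard generating set is precisely the one for which Theorem~\ref{thm:main} produces a contracting element. There is no substantive obstacle in the proof of Corollary~\ref{cor:main}: the entire content lies in Theorem~\ref{thm:main}, and the deduction of transcendence is a direct application of the cited theorem from \cite{GeYang}. The main work, therefore, is already done upstream in establishing the existence of contracting elements.
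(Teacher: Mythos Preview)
Your proposal is correct and matches the paper's approach: the paper gives no separate proof of Corollary~\ref{cor:main}, simply stating that it follows from Theorem~\ref{thm:main} combined with \cite[Corollary~1.8]{GeYang}. Your discussion of the non-elementarity hypothesis is in fact more careful than the paper itself, which leaves implicit that virtually cyclic periagroups (such as $\mathbb{Z}_2\ast\mathbb{Z}_2$) fall outside the transcendence conclusion.
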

\medskip \noindent
By applying Corollary \ref{cor:dirprod} to Theorem \ref{thm:main}, Corollary \ref{cor:main} can be generalised to the case when the groups are not irreducible.

\begin{cor}\label{cor:products}
Let $G$ be a graph product, Coxeter group or periagroup that splits non-trivially as a direct product. If the growth rate of one of the factors is strictly larger than the growth rates of the other factors, then the group has transcendental conjugacy growth series with respect to its standard generating set.
\end{cor}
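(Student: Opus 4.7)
The plan is to use Corollary~\ref{cor:dirprod} to reduce the problem to the strictly dominant factor, and then to apply Theorem~\ref{thm:main} together with \cite[Corollary 1.8]{GeYang} to obtain transcendence of the conjugacy growth series for that factor.

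Write $G = G_1 \times H$, where $G_1$ denotes the factor with strictly largest growth rate among $G_1,\ldots,G_n$, and $H := G_2 \times \cdots \times G_n$ collects the remaining ones. Since the exponential growth rate of a finite direct product equals the maximum of the growth rates of its factors, $\mathrm{gr}(H) = \max_{i \geq 2} \mathrm{gr}(G_i) < \mathrm{gr}(G_1)$. The hypothesis of Corollary~\ref{cor:dirprod} is thus met for the decomposition $G = G_1 \times H$, so it suffices to prove that the conjugacy growth series of $G_1$, with respect to its standard generating set, is transcendental.

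The factor $G_1$ is itself a graph product (respectively, Coxeter group, periagroup) of the same type as $G$, obtained from an induced sub-structure of the defining combinatorial data; its standard generating set coincides with the one induced from $G$. Strict dominance forces $G_1$ to be infinite. If moreover $G_1$ is not virtually a direct product of two infinite groups, then Theorem~\ref{thm:main} produces a contracting element in its Cayley graph, and \cite[Corollary 1.8]{GeYang} concludes. Otherwise, we iterate: refine the direct product decomposition of $G_1$ at the level of its defining data, identify a strictly dominant sub-factor with respect to the combined collection $\{G_2,\ldots,G_n\}$ together with the other sub-factors of $G_1$, and repeat the argument.

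The main obstacle is to verify that strict dominance is preserved along this recursion until a factor satisfying the hypotheses of Theorem~\ref{thm:main} is reached. This relies on the fact that in each of these classes there is a canonical finest direct product decomposition at the combinatorial level (connected components of the Coxeter diagram, join components of the defining graph, and their analogue for periagroups), which ensures the recursion terminates after finitely many steps; combined with the dominance assumption propagated through the maximum-of-growth-rates formula for direct products, this yields an innermost factor to which Theorem~\ref{thm:main} and \cite[Corollary 1.8]{GeYang} apply, completing the proof via Corollary~\ref{cor:dirprod}.
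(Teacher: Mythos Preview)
Your proposal misreads Corollary~\ref{cor:dirprod}. That corollary does not reduce transcendence of $C_G$ to transcendence of $C_{G_1}$; it concludes transcendence of $C_G$ directly, under the hypotheses that one factor $A_n$ is virtually abelian, that all remaining factors $A_1,\ldots,A_{n-1}$ have exponential growth \emph{and a contracting element} in their Cayley graphs, and that $\gamma_1>\gamma_2\geq\cdots\geq\gamma_{n-1}$. Your two-factor decomposition $G=G_1\times H$ verifies only the growth inequality $\mathrm{gr}(H)<\mathrm{gr}(G_1)$; you never check that $H$ is virtually abelian (it need not be: take $G=F_3\times F_2$), nor that $G_1$ has a contracting element. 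Hence the sentence ``the hypothesis of Corollary~\ref{cor:dirprod} is thus met'' is unjustified, and the consequent reduction ``it suffices to prove that the conjugacy growth series of $G_1$ is transcendental'' does not follow from anything in the paper: even granting $C_{G_1}$ transcendental, nothing here controls the product $C_{G_1}\cdot C_H$.

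The iteration step has an independent gap. When you refine $G_1$ further, the resulting sub-factors may have equal growth rates (for instance $G_1$ could be $F_2\times F_2$ inside $G=(F_2\times F_2)\times\mathbb{Z}$), so strict dominance need not propagate to the refined collection and you cannot single out a new dominant factor. Your final paragraph acknowledges this obstacle but does not resolve it.

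The paper's route is to pass at once to the full $\ast_2$-irreducible decomposition of the periagroup. Theorem~\ref{thm:main} (via Theorem~\ref{thm:PeriagroupsContracting}) is applied to each irreducible factor, yielding a contracting element whenever the factor is infinite and not virtually a product of two infinite groups; the remaining irreducible factors are finite or virtually abelian and are collected into a single $A_n$. Corollary~\ref{cor:dirprod} is then invoked once, with its hypotheses checked factor by factor, and the strict-dominance assumption of the statement supplies the inequality $\gamma_1>\gamma_i$ required there.
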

\medskip \noindent
We believe the hypothesis in Corollary \ref{cor:products} that one growth rate dominates the others is in fact not necessary, and all the non-virtually abelian Coxeter groups, graph products and periagroups have transcendental conjugacy growth.

\paragraph{On the proof of Theorem~\ref{thm:main}.} As shown in \cite{Mediangle}, the Cayley graph $\mathrm{Cay}(\Pi, \bigcup \mathcal{G})$ of a periagroup $\Pi := \Pi(\Gamma, \lambda, \mathcal{G})$ admits a very specific structure: it is a \emph{mediangle graph} (see Definition~\ref{def:Mediangle}). However, the generating set $\bigcup \mathcal{G}$ will typically be infinite, so, to work with finite generating sets, we make the following key observation: once we have fixed a finite generating set $S_G \subset G$ for every factor $G \in \mathcal{G}$, it is possible to recover the geometry of $\mathrm{Cay}(\Pi, \bigcup_{G \in \mathcal{G}} S_G)$ from the geometry of $\mathrm{Cay}(\Pi, \bigcup \mathcal{G})$ thanks to a family of \emph{local metrics}, which allows us to keep track of the mediangle geometry of $\mathrm{Cay}(\Pi, \bigcup \mathcal{G})$ on the Cayley graph $\mathrm{Cay}(\Pi, \bigcup_{G \in \mathcal{G}} S_G)$ we are interested in.

\medskip \noindent
More precisely, we introduce \emph{paraclique graphs} in Section~\ref{section:Paraclique}, a new family of graphs that generalise the mediangle graphs previously mentioned; generalising the second-named author's work \cite[Section~3]{QM} for quasi-median graphs, we show in Section~\ref{section:Unfolding} how some system of local metrics on paraclique graphs can be extended to global metrics. Then, in Section~\ref{section:Contracting}, we state and prove the main criterion, namely Theorem~\ref{thm:Contracting}, that allows us to recognise contracting elements for groups acting on paraclique graphs endowed with global metrics. Our criterion is inspired by results for right-angled Artin groups \cite{MR2874959}, CAT(0) cube complexes \cite{MR3339446}, and median graphs \cite{MR4071367}. 

\medskip \noindent
In Section~\ref{section:FirstApplications}, we apply our criterion to Coxeter groups (whose Cayley graphs are bipartite mediangle graphs) and graph products (whose Cayley graphs are quasi-median). Section~\ref{section:periagroups} is dedicated to arbitrary periagroups (whose Cayley graphs are mediangle). Despite the fact that the strategy is similar, the proof of Theorem~\ref{thm:main} for periagroups that are neither Coxeter groups nor graph products is more delicate, and it relies on a careful analysis of some semidirect product decompositions highlighted in \cite{Mediangle}.

\section{Graphs and parallel cliques}

\noindent
In this section, our goal is to introduce paraclique graphs, a new family of graphs, and to show how they are related to other families of graphs. In a nutshell, we have
$$\text{quasi-median} \Rightarrow \text{mediangle} \Rightarrow \text{paraclique} \Rightarrow \begin{array}{c} \text{isometrically embeddable} \\ \text{into a Hamming graph} \end{array}.$$
We first introduce paraclique graphs in Section~\ref{section:Paraclique}, and later recall the concepts of quasi-median (Section \ref{section:QM}) and mediangle graphs (Section \ref{section:MediangleGeometry}).

\medskip \noindent
We start by recording a few basic definitions related to graphs that will be used later.

\begin{definition}
A \emph{graph} is a set $X$ endowed with a symmetric and non-reflexive relation. The elements of $X$ are \emph{vertices}, and two vertices in relation are \emph{adjacent}. An \emph{edge} is the data of two adjacent vertices, and a \emph{path} in $X$ is a sequence of successively adjacent vertices. A graph is \emph{connected} if any two vertices are connected by a path. The \emph{length} of a path is the number of edges it consists of, and a path of minimal length between its endpoints is a \emph{geodesic}. The \emph{distance} between two vertices is the length of any geodesic that connects them. 
\end{definition}

\begin{definition}
A \emph{complete graph} is one where any two vertices are adjacent, and a \emph{clique} is a complete subgraph that is maximal with respect to inclusion. 
\end{definition}

\begin{definition}
Let $X$ be a graph. A(n \emph{induced}) \emph{subgraph} is a subset of $X$ endowed with the restriction of the adjacency relation. A subgraph $Y \subset X$ is \emph{gated} if, for each $x \in X$, there exists a vertex $x_0 \in Y$, referred to as the \emph{gate} of $x$, satisfying the following condition: for every $y \in Y$, there is at least one geodesic from $x$ to $y$ passing through~$x_0$. 
\end{definition}

\noindent
An immediate consequence of the definition is that, given a fixed gated $Y\subset X$, every $x\in X$ has a unique gate with respect to $Y$, also referred to as the \emph{projection on $Y$}; we denote by $\mathrm{proj}_Y : X \to Y$ the map that sends every vertex of $X$ to its projection on $Y$.

\subsection{Paraclique graphs}\label{section:Paraclique}

\noindent
Paraclique graphs are clique-gated graphs that have a good notion of parallelism between cliques, as follows. 

\begin{definition}[\cite{MR1420527}]
A connected graph is \emph{clique-gated} if its cliques are gated. 
\end{definition}

\noindent
A key property of clique-gated graphs, which actually characterises them, is:
 
\begin{lemma}[\cite{MR1420527}]\label{lem:CliqueGated}
Let $X$ be a clique-gated graph. For any two cliques $C_1,C_2 \subset X$, either $C_1$ projects to a single vertex in $C_2$ or $\mathrm{proj}_{C_2}$ induces a bijection $C_1 \to C_2$. 
\end{lemma}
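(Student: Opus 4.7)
The plan rests on an intermediate \emph{parallelism} claim (A): if $x_1 \neq x_2$ are vertices of $C_1$ with distinct projections $a_i := \mathrm{proj}_{C_2}(x_i)$ onto $C_2$, then $x_i = \mathrm{proj}_{C_1}(a_i)$ for $i \in \{1,2\}$ and, moreover, $d(x_1, C_2) = d(x_2, C_2)$. This statement bundles the bijection between $C_1$ and $C_2$ with the mutual inverse property of the two gate-projections, which makes the lemma fall out easily.

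To prove (A), I set $y_1 := \mathrm{proj}_{C_1}(a_1)$ and apply the gate equation of $a_1$ on $C_1$ together with the gate equations of $x_1, x_2$ on $C_2$:
\begin{align*}
d(a_1, y_1) + d(y_1, x_1) &= d(a_1, x_1) = d(x_1, C_2),\\
d(a_1, y_1) + d(y_1, x_2) &= d(a_1, x_2) = d(x_2, C_2) + 1,
\end{align*}
the $+1$ in the second line coming from $a_1 \neq a_2 = \mathrm{proj}_{C_2}(x_2)$. Subtracting yields $d(y_1, x_2) - d(y_1, x_1) = d(x_2, C_2) - d(x_1, C_2) + 1$. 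Since $y_1, x_1, x_2$ all lie in the clique $C_1$, the left-hand side lies in $\{-1, 0, 1\}$, forcing $d(x_2, C_2) \leq d(x_1, C_2)$. The symmetric argument with $y_2 := \mathrm{proj}_{C_1}(a_2)$ gives the reverse inequality, so $d(x_1, C_2) = d(x_2, C_2)$; plugging back forces $d(y_1, x_1) = 0$ (so $y_1 = x_1$), and analogously $y_2 = x_2$.

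With (A) established, assume $\pi := \mathrm{proj}_{C_2}|_{C_1}$ is non-constant and fix $x_1 \neq x_2$ in $C_1$ with $\pi(x_1) = a_1 \neq a_2 = \pi(x_2)$. For injectivity, any $w, w' \in C_1$ with $\pi(w) = \pi(w') = a$ satisfy, by (A) applied to $(w, x_i)$ and to $(w', x_i)$ for the index $i$ with $a_i \neq a$, that both $w$ and $w'$ coincide with $\mathrm{proj}_{C_1}(a)$; hence $w = w'$. For surjectivity, given $b \in C_2$, set $y := \mathrm{proj}_{C_1}(b)$ and suppose for contradiction that $\pi(y) = c \neq b$. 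Pick $x \in C_1$ with $\pi(x) \neq c$ (provided by non-constancy; note $\pi(x) \neq b$ because of the surjectivity-failure assumption $b \notin \pi(C_1)$). Evaluating $d(b, x)$ via the gate $\pi(x)$ of $x$ on $C_2$ gives $d(b, x) = d(x, \pi(x)) + 1$, and via the gate $y$ of $b$ on $C_1$ gives $d(b, x) = d(b, y) + 1 = d(y, c) + 2$ (after expanding $d(y, b)$ via the gate $c$ of $y$ on $C_2$). Hence $d(x, \pi(x)) = d(y, c) + 1$, while (A) applied to $(y, x)$ yields $d(x, \pi(x)) = d(x, C_2) = d(y, C_2) = d(y, c)$, a contradiction.

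The main obstacle is (A), and specifically the derivation of $d(x_1, C_2) = d(x_2, C_2)$: one needs to notice that the clique-distance constraint bites on both $y_1$ and $y_2$ sides simultaneously, creating the pinch that forces equality. Once this symmetric pinch is set up, the rest of the lemma consists of a few applications of (A) to well-chosen pairs of vertices.
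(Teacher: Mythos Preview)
The paper does not give its own proof of this lemma; it is quoted directly from \cite{MR1420527}. Your argument is therefore being judged on its own merits, and it is correct.

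One small wording issue: in the surjectivity step you refer to ``the surjectivity-failure assumption $b \notin \pi(C_1)$'', but what you actually assumed for contradiction was only $\pi(y) = c \neq b$. These are equivalent, however, once injectivity is in hand: if some $z \in C_1$ satisfied $\pi(z) = b$, then (A) applied to the pair $(z,y)$ (whose projections $b \neq c$ are distinct) would force $z = \mathrm{proj}_{C_1}(b) = y$, contradicting $\pi(y) \neq b$. So the claim $\pi(x) \neq b$ is justified, just via a slightly different route than the one you cite. With that clarified, every step checks out: the symmetric pinch in (A) is set up correctly, and the injectivity and surjectivity arguments are clean applications of (A) to well-chosen pairs.
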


\noindent
This property allows us to define a \emph{parallelism} relation between cliques.

\begin{definition}
In a clique-gated graph, two cliques $C_1$ and $C_2$ are \emph{parallel} if $\mathrm{proj}_{C_2}$ induces a bijection $C_1 \to C_2$. 
\end{definition}

\noindent
It is worth noticing that the parallelism relation between cliques in clique-gated graphs is reflexive and symmetric, but may not be transitive: an example with such behaviour is the bipartite complete graph $K_{2,3}$. 

\begin{definition}
A \emph{paraclique} graph is a clique-gated graph for which parallelism between cliques is transitive.
\end{definition}

\noindent
See Proposition~\ref{prop:Paraclique} for an alternative characterisation of paraclique graphs.

\medskip \noindent
\begin{minipage}{0.3\linewidth}
\includegraphics[width=0.9\linewidth]{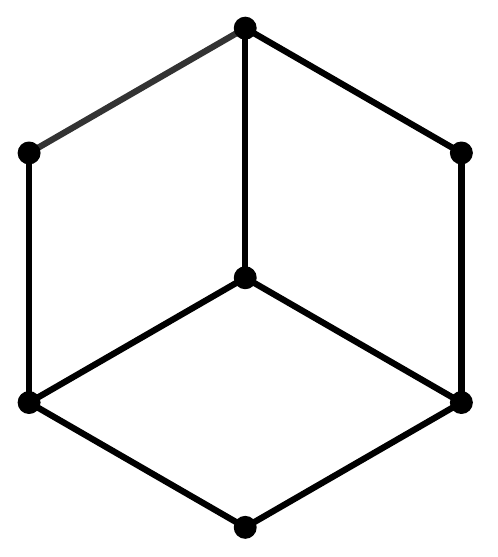}
\end{minipage}
\begin{minipage}{0.69\linewidth}
 The main examples of paraclique graphs we will be interested in are \emph{mediangle graphs} (see Section~\ref{section:MediangleGeometry}), which include (quasi-)median graphs (see Section~\ref{section:QM}), Cayley graphs of Coxeter groups, one-skeleta of some small cancellation polygonal complexes, and hypercellular graphs \cite{Mediangle}. Other examples are given by arbitrary \emph{partial cubes} (i.e.\ isometrically embedded subgraphs in hypercubes). For instance, the wheel of three $4$-cycles illustrated on the left is a paraclique graph, as a partial cube, but it is not mediangle.
\end{minipage}

\medskip \noindent 
It follows from the definition that paraclique graphs have no odd cycles other than triangles, so graphs with odd cycles (of length $> 3$) are not paraclique.
\medskip

 \noindent
We next introduce \emph{hyperplanes}, extending the definition from (quasi-)median graphs.

\begin{definition}\label{def:hyper}
In a paraclique graph, a \emph{hyperplane} is the union of all the edges from a parallelism class of cliques.
\end{definition}

\noindent
An easy consequence of Lemma~\ref{lem:CliqueGated} is that hyperplanes separate a clique-gated graph into convex  components. More precisely:

\begin{prop}\label{prop:HypCliqueGated}
Let $X$ be a paraclique graph. Given a hyperplane $J$, fix a clique $C \subset J$, and let $X \backslash\backslash J$ denote the graph obtained from $X$ by removing the edges in $J$. Then the following hold:
\begin{itemize}
	\item[(i)] Any two vertices of $C$ belong to distinct connected components of $X \backslash\backslash J$.
	\item[(ii)] Conversely, every connected component of $X \backslash\backslash J$ contains a vertex of $C$. 
	\item[(iii)] The connected components of $X\backslash \backslash J$ are convex in $X$.
\end{itemize}
\end{prop}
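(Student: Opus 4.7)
The argument rests on the following observation, extracted from Lemma~\ref{lem:CliqueGated}: for any edge $e=(u,v)$ contained in the (maximal) clique $C_e$, and any other clique $C \subset X$, one has $\mathrm{proj}_C(u) = \mathrm{proj}_C(v)$ unless $C_e \parallel C$, in which case $\mathrm{proj}_C$ restricts to a bijection $C_e \to C$. Equivalently, $\mathrm{proj}_C$ is constant along any walk of $X \backslash\backslash J$. Part (i) is then immediate: a walk in $X \backslash\backslash J$ between distinct $c_1,c_2 \in C$ would force $c_1 = \mathrm{proj}_C(c_1) = \mathrm{proj}_C(c_2) = c_2$.

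For (ii), given $x \in X$ with $y := \mathrm{proj}_C(x)$, I will show that every geodesic from $x$ to $y$ avoids $J$. Suppose, for contradiction, that such a geodesic $\gamma$ has a $J$-edge, and let $(u,v)$ be the first one (with $v$ closer to $y$). Since the prefix of $\gamma$ up to $u$ lies in $X \backslash\backslash J$, $\mathrm{proj}_C(u) = y$, whereas $\mathrm{proj}_C(v) =: y' \ne y$. Gatedness of $C$ from $u$ and from $v$ yields $d(u,y') = d(u,y)+1$ and $d(v,y) = d(v,y') + 1$, and since $v$ is on a geodesic from $u$ to $y$, $d(v,y) = d(u,y) - 1$, so $d(v,y') = d(u,y) - 2$. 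The triangle inequality then gives $d(u,y') \le 1 + d(v,y') = d(u,y) - 1$, contradicting $d(u,y') = d(u,y) + 1$.

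The crux of the proof is (iii). I will prove the stronger statement that no geodesic of $X$ traverses more than one edge of $J$; granting this, if $a,b$ lie in the same component of $X \backslash\backslash J$ then by (i) and (ii) they satisfy $\mathrm{proj}_C(a) = \mathrm{proj}_C(b)$, so a single $J$-edge along any geodesic between them is excluded, forcing the geodesic to stay inside the component. To establish the claim, suppose a geodesic $\gamma = (x_0,\ldots,x_n)$ from $a$ to $b$ contains two consecutive $J$-edges $(x_{i-1},x_i) \in C_1$ and $(x_{j-1},x_j) \in C_2$ (with $i<j$ and no $J$-edge of $\gamma$ strictly between them). By transitivity of parallelism --- the defining property of paraclique graphs --- $C_1 \parallel C_2$. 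A routine gatedness computation then gives two facts about the three mutually parallel cliques $C, C_1, C_2$: the projection bijections between them are compatible (for $z \in C_1$, $\mathrm{proj}_{C_2}(z)$ is the unique vertex of $C_2$ with the same projection on $C$ as $z$), and $z \mapsto d(z,C_2)$ is constant on $C_1$. Letting $w \in C_2$ be the vertex with $\mathrm{proj}_C(w) = \mathrm{proj}_C(x_{i-1})$, one obtains $w = \mathrm{proj}_{C_2}(x_{i-1})$ and $d(x_{i-1},w) = d(x_i,C_2) = d(x_i,x_{j-1}) = j-1-i$. Concatenating the prefix $x_0 \to x_{i-1}$ of $\gamma$, a geodesic $x_{i-1}\to w$, the (possibly empty) edge $w \to x_j$ inside $C_2$, and the suffix $x_j \to x_n$ of $\gamma$ produces a walk from $a$ to $b$ of length at most $(i-1)+(j-1-i)+1+(n-j)=n-1$, contradicting that $\gamma$ is a geodesic.

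The main obstacle will be this last shortcut argument: although it uses only gatedness and transitivity of parallelism, one must carefully unpack the coherence between projections onto three mutually parallel cliques, and it is precisely here --- rather than in (i) or (ii) --- that the paraclique hypothesis is essential over mere clique-gatedness.
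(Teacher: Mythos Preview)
Your proof is correct. The argument for (i) matches the paper's exactly, and your (ii) is a clean variant of the same gate computation.

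For (iii), however, you take a genuinely different route. The paper does not prove the ``at most one $J$-edge per geodesic'' statement first; instead it shows directly that each fibre $W:=\mathrm{proj}_C^{-1}(o)$ is convex, via a very short argument: if a geodesic from $x\in W$ to $y\in W$ exits $W$ along an edge $[a,b]$ with $a\in W$, then the clique $Q$ containing $[a,b]$ is parallel to $C$, and compatibility of projections forces $a=\mathrm{proj}_Q(y)$; the gate inequality $d(b,y)=d(a,y)+1$ then contradicts $b$ lying on a geodesic from $a$ to $y$. This single paragraph yields (ii) and (iii) simultaneously, and never needs to relate two different $J$-cliques along the geodesic. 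Your shortcut argument instead compares $C_1$ and $C_2$ directly and explicitly invokes transitivity of parallelism to get $C_1\parallel C_2$; it is longer but has the virtue of establishing, as a by-product, the stronger fact that geodesics cross each hyperplane at most once --- which the paper records separately as Proposition~\ref{prop:ParaGeod} and deduces \emph{from} the convexity in Proposition~\ref{prop:HypCliqueGated}. So you have essentially inverted the logical order of Propositions~\ref{prop:HypCliqueGated} and~\ref{prop:ParaGeod}.

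One small remark on your closing comment: while your proof of (iii) does use transitivity of parallelism explicitly, the paper's convexity argument does not --- the paraclique hypothesis enters only through the well-definedness of $J$ as a parallelism class (so that ``edge not in $J$'' is equivalent to ``clique not parallel to $C$''). This is worth noting, since it shows the fibres $\mathrm{proj}_C^{-1}(o)$ are convex in any clique-gated graph.
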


\noindent
The connected components of $X\backslash \backslash J$, thought of as subgraphs of $X$, are referred to as \emph{sectors}. We emphasize that our graph may (and will often) be locally infinite, so a hyperplane may delimit infinitely many sectors.

\begin{proof}[Proof of Proposition~\ref{prop:HypCliqueGated}.]
Let $x,y \in C$ be two distinct vertices and let $\gamma$ be an arbitrary path connecting $x$ and $y$. Because $\mathrm{proj}_C(x)=x$ and $\mathrm{proj}_C(y)=y$ are distinct, there must exist two adjacent vertices $a,b \in  \gamma$ having distinct projections on $C$. Let $Q$ denote the (unique) clique containing $a$ and $b$. Then, because the projection of $Q$ on $C$ is not reduced to a single vertex, $Q$ must be parallel to $C$, which means that the edge of $\gamma$ that connects $a$ and $b$ belongs to $J$. Thus, $J$ separates $x$ and $y$, and (i) is proved. 

\medskip \noindent
In order to prove the two other items, it suffices to show that, for a fixed vertex $o \in X$, $W:= \mathrm{proj}_C^{-1}(o)$ is convex. 

\medskip \noindent
If $W$ is not convex, then we can find a geodesic $\gamma$ connecting two vertices $x,y \in W$ that is not contained in $W$, or equivalently, that crosses $J$. Fix two consecutive vertices $a,b \in \gamma$ such that $a \in W$ but $b \notin W$. Then we find the contradiction
$$d(a,y)= d(a,b)+d(b,y) = 1+d(b,y) > 1+ d(a,y),$$
where the inequality is justified by the fact that $a$ is the projection of $y$ on the clique containing the edge $[a,b]$. 
\end{proof}

\noindent
Similarly to (quasi-)median graphs, the metric in a paraclique graph can be recovered from hyperplanes only. More precisely:

\begin{prop}\label{prop:ParaGeod}
Let $X$ be a paraclique graph. A path in $X$ is a geodesic if and only if it crosses each hyperplane at most once. Consequently, the distance between two vertices coincides with the number of hyperplanes separating them. 
\end{prop}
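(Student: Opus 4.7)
The proof splits into a preliminary parity observation, the main technical claim that geodesics cross each hyperplane at most once, and a short deduction of the remaining assertions. For the preliminary: by Proposition~\ref{prop:HypCliqueGated}, the sectors of any hyperplane $J$ partition the vertex set, and each edge of a path lying in $J$ switches the current sector; hence $J$ separates the endpoints of a path if and only if the path contains an odd number of edges in $J$. In particular, every separating hyperplane is crossed at least once, so any path has length at least $N(x,y)$, the number of hyperplanes separating $x$ from $y$.

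The main work is the contradiction argument that a geodesic $\gamma = v_0, \ldots, v_n$ cannot cross any hyperplane $J$ twice. Replacing $\gamma$ by a subpath (still a geodesic), I may assume the first and last edges of $\gamma$ lie in $J$, in parallel cliques $C_0 \ni v_0,v_1$ and $C_n \ni v_{n-1}, v_n$, while the intermediate vertices $v_1,\ldots,v_{n-1}$ lie in a single sector $S$ of $J$. Using gatedness of $C_n$ together with the geodesic identities $d(v_0,v_n)=n$ and $d(v_0,v_{n-1})=n-1$ forces $\mathrm{proj}_{C_n}(v_0) = v_{n-1}$. On the other hand, the projection onto $C_n$ is constant on each sector (projections change along an edge only if the edge lies in the hyperplane of $C_n$, exactly as exploited inside the proof of Proposition~\ref{prop:HypCliqueGated}), so $\mathrm{proj}_{C_n}(v_1) = \mathrm{proj}_{C_n}(v_{n-1}) = v_{n-1}$ as well. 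But $v_0 \neq v_1$, so this contradicts the bijectivity of $\mathrm{proj}_{C_n}\colon C_0 \to C_n$ guaranteed by parallelism and Lemma~\ref{lem:CliqueGated}.

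Once the one-crossing property for geodesics is secured, the rest is immediate: a geodesic of length $d(x,y)$ crosses exactly $d(x,y)$ distinct hyperplanes, each separating $x$ from $y$ (each crossed at most and at least once), giving $d(x,y)\leq N(x,y)$ and hence $d(x,y)=N(x,y)$. Conversely, if any path $\gamma$ crosses each hyperplane at most once, then its length equals the number of hyperplanes it crosses, all of which must then be separating, and together these comprise all the separating hyperplanes; hence $|\gamma| = N(x,y) = d(x,y)$ and $\gamma$ is a geodesic. I expect the projection identity in the contradiction argument to be the main obstacle: in median or quasi-median graphs one could ``translate'' subpaths across $J$ using an isomorphism of sectors, but paraclique graphs offer only the parallelism bijection between individual cliques of $J$, so the argument must be routed through the constancy of the single projection $\mathrm{proj}_{C_n}$ on each sector.
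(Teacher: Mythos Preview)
Your argument is essentially correct, with two small caveats. First, the parity claim ``$J$ separates the endpoints iff the path has an odd number of edges in $J$'' is false once $J$ delimits three or more sectors (a path $S_1\to S_2\to S_3$ has two crossings yet $J$ separates its endpoints, and $S_1\to S_2\to S_3\to S_1$ has three crossings yet $J$ does not). Fortunately you only use the two implications that survive: every separating hyperplane is crossed at least once, and a hyperplane crossed exactly once separates. (The paper makes the same misstatement, also harmlessly.) Second, to guarantee that $v_1,\dots,v_{n-1}$ lie in a single sector you must pass to two \emph{consecutive} crossings of $J$, not merely arrange that the first and last edges of the subpath lie in $J$.

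The paper takes a shorter route: it simply cites convexity of sectors (Proposition~\ref{prop:HypCliqueGated}) and asserts that a geodesic cannot cross a hyperplane twice. Read literally, convexity of individual sectors only rules out a geodesic \emph{revisiting} a sector; it does not by itself forbid a geodesic passing through three distinct sectors $S'\to S\to S''$. Your projection argument handles precisely this residual case: constancy of $\mathrm{proj}_{C_n}$ on the sector $S$ gives $\mathrm{proj}_{C_n}(v_1)=v_{n-1}$, gatedness forces $\mathrm{proj}_{C_n}(v_0)=v_{n-1}$, and bijectivity of $\mathrm{proj}_{C_n}|_{C_0}$ then yields the contradiction regardless of whether $v_0$ and $v_n$ lie in the same sector. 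So your approach is more explicit and supplies a detail the paper leaves to the reader; the paper's version is terser but requires the reader to reconstruct something close to your projection argument.
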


\begin{proof}
Let $x,y \in X$ be two vertices and $\alpha$ a path connecting $x$ to $y$. Necessarily, $\alpha$ crosses all the hyperplanes separating $x$ and $y$. The hyperplanes that do not separate $x$ and $y$ must be crossed an even number of times by $\alpha$. Consequently, if $\alpha$ does not cross a hyperplane twice, it follows that it has the least possible length, namely the number of hyperplanes separating $x$ and $y$. Conversely, it follows from the convexity of halfspaces given by Proposition~\ref{prop:HypCliqueGated} that a geodesic crosses each hyperplane at most once. 
\end{proof}

\noindent
We conclude this section by noticing that, as a consequence of our previous observations about hyperplanes, one has the following alternative characterisation of paraclique graphs, which we record because of its independent interest, but which will not be used in the article.

\begin{prop}\label{prop:Paraclique}
A connected graph $X$ is paraclique if and only if 
\begin{itemize}
	\item[(i)] it does not contain an induced copy of $K_4^-$ (i.e.\ the graph obtained from the complete graph $K_4$ by removing an edge), and
	\item[(ii)] it satisfies the \emph{triangle condition}, i.e.\ for all vertices $o,x,y \in X$ with $x,y$ adjacent and $d(o,x)=d(o,y)$, there exists a common neighbour $z$ of $x,y$ such that $d(o,z)<d(o,x)$, and
	\item[(iii)] it can be isometrically embedded into a Hamming graph (i.e.\ a Cartesian product of complete graphs).
\end{itemize}
\end{prop}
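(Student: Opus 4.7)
The plan is to prove each implication separately, using Propositions~\ref{prop:HypCliqueGated} and \ref{prop:ParaGeod} for the forward direction and an analysis of the Hamming embedding for the converse.

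For the ($\Rightarrow$) direction, suppose $X$ is paraclique. Condition (i) holds because in any clique-gated graph every edge lies in a unique clique: if an edge $\{a,b\}$ belonged to distinct cliques $C_1, C_2$, a vertex $c \in C_1 \setminus C_2$ would have no well-defined gate in $C_2$, both $a$ and $b$ being equally valid candidates. Condition (ii) follows by taking the gate $z$ of $o$ in the unique clique containing $\{x,y\}$: the hypothesis $d(o,x) = d(o,y)$ forces $z \notin \{x,y\}$, so $z$ is a common neighbour with $d(o,z) < d(o,x)$. For condition (iii), enumerate the hyperplanes $(J_\alpha)$ of $X$, fix a clique $C_\alpha \subset J_\alpha$ for each $\alpha$, and send each vertex $v$ to the tuple whose $\alpha$-coordinate is the representative in $C_\alpha$ of the sector of $X \backslash\backslash J_\alpha$ containing $v$; Proposition~\ref{prop:HypCliqueGated} makes this well-defined and Proposition~\ref{prop:ParaGeod} shows it is an isometric embedding into the Hamming graph $\prod_\alpha V(C_\alpha)$.

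For the ($\Leftarrow$) direction, assume (i), (ii), (iii). First I would show $X$ is clique-gated. Given a clique $C$ and a vertex $x \in X$, let $y \in C$ minimise the distance to $x$. If some $y' \in C \setminus \{y\}$ satisfied $d(x,y') = d(x,y)$, applying (ii) to $(x,y,y')$ would produce a common neighbour $z$ of $y, y'$ with $d(x,z) < d(x,y)$; any non-neighbour $u \in C$ of $z$ would create an induced $K_4^-$ on $\{y, y', z, u\}$, violating (i), so $z$ is adjacent to every vertex of $C$ and hence $z \in C$ by maximality, contradicting the minimality of $y$. Since $y$ is the unique minimiser and $d(x, y') \leq d(x,y) + 1$, equality holds, exhibiting the gate property of $y$.

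Transitivity of parallelism then uses the Hamming embedding $f : X \hookrightarrow H = \prod_\beta K_{n_\beta}$. Each edge of $X$ alters a single coordinate under $f$, called its \emph{type}; since distances are preserved, all edges of a fixed clique must share a common type (otherwise two vertices of the clique would differ in at least two coordinates), giving each clique a well-defined type. It then suffices to prove that two cliques are parallel if and only if they share a type, since sharing a type is an equivalence relation. For the forward implication, if $C_1, C_2$ are parallel with distinct $u, v \in C_1$ projecting to $u', v' \in C_2$, the gated relations $d(u, v') = d(u, u') + 1$ and $d(v, u') = d(v, v') + 1$ translate at the Hamming level into coordinate constraints forcing the type of $\{u, v\}$ to coincide with that of $\{u', v'\}$. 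Conversely, for cliques of common type $\beta$, any $u \in C_1$ whose $\beta$-coordinate is not realised in $f(C_2)$ would be equidistant from every vertex of $C_2$, contradicting the existence of its gate already established above; hence every $u \in C_1$ projects to the unique $w \in C_2$ with the same $\beta$-coordinate, so the projection is injective, and Lemma~\ref{lem:CliqueGated} upgrades injectivity to a bijection. The main obstacle will be this final converse step: translating the purely metric data of the Hamming embedding into the combinatorial parallelism of cliques requires carefully matching the $\beta$-coordinates of $f(C_1)$ and $f(C_2)$, and crucially exploiting the gated property just proven to rule out constant projections.
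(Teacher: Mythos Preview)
Your proof is correct and follows essentially the same strategy as the paper's. The paper reduces to the equivalence ``clique-gated $\Leftrightarrow$ (i)+(ii)'' by citing \cite[Theorem~3.1]{MR1420527}, whereas you re-derive both directions of that fact by hand; your Hamming embedding via sectors of hyperplanes is exactly the paper's projection map $x\mapsto(\mathrm{proj}_{C_i}(x))_i$ in different clothing, and your ``parallel $\Leftrightarrow$ same type'' analysis is the paper's case split $j=k$ versus $j\neq k$, with the same use of the gated property to rule out equidistant projections.
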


\begin{proof}
First note that (i) and (ii) fully characterise clique-gated graphs, that is:

\begin{claim}
A connected graph is clique-gated if and only if it has no $K_4^-$ and it satisfies the triangle condition.
\end{claim}

\noindent
This can be proved easily, and is done in \cite[Theorem~3.1]{MR1420527}.

\medskip \noindent
Therefore, our proposition reduces to proving that a clique-gated graph is paraclique if and only if it can be isometrically embedded into a Hamming graph.

\medskip \noindent
Let $X$ be an isometrically embedded clique-gated subgraph in some Hamming graph $\prod_{i \in I} (K_i,o_i)$, where $K_i$ are complete graphs. Given a clique $C$ of $X$, there exist some $j \in I$, some complete subgraph $C_j \subset K_j$, and some vertices $u_i \in K_i$ for $i \neq j$ such that 
$$C = \prod\limits_{i \in I} C_i \text{ where } C_i:= \{u_i\} \text{ for every } i \neq j.$$
Let $Q$ be another clique of $X$. Similarly, there exist some $k \in I$, some complete subgraph $Q_k \subset K_k$, and some vertices $v_i \in K_i$ for $i \neq k$ such that
$$Q= \prod\limits_{i \in I} Q_i \text{ where } Q_i:= \{v_i\} \text{ for every } i \neq k.$$
We distinguish two cases. First, assume that $j=k$. Up to reindexing the factors in our Hamming graph, we can write
$$C = C_j \times \prod\limits_{i \in I\backslash \{j\}} \{u_i\} \text{ and } Q= Q_j \times \prod\limits_{i \in I \backslash \{j\}} \{v_i\}.$$
Because $Q$ is gated, for every $x \in C_j$, there must exist a unique vertex of $Q$ minimising the distance to $(x,(u_i)_i) \in C$. This implies that $x \in Q_j$ and that the projection of $(x,(u_i)_i)$ on $Q$ is $(x,(v_i)_i)$. Since this is true for every $x \in C_j$ and since the same argument applies to $C$, we conclude that $C_j=Q_j$ and that the projection of $C$ to $Q$ is given by $(x,(u_i)_i) \mapsto (x,(v_i)_i)$. It is bijective, so $C$ and $Q$ are parallel in $X$. Next, assume that $j \neq k$. Up to reindexing the factors in our Hamming graph, we can write
$$C= C_j \times \{ u_k\} \times \prod\limits_{i \in I \backslash \{j,k\}} \{u_i\} \text{ and } Q= \{v_j\} \times Q_k \times \prod\limits_{i \in I\backslash \{j,k\}} \{v_i\}.$$
Then, the projection of $C$ to $Q$ must be $(x,u_k,(u_i)_i) \mapsto (v_j, \mathrm{proj}_{Q_k}(u_k), (v_i)_i)$. In particular, the projection of $C$ to $Q$ is reduced to a single vertex. We conclude that our two cliques $C$ and $Q$ are parallel in $X$ if and only if $j=k$. From this characterisation, it clearly follows that parallelism is transitive.

\medskip \noindent
Conversely, let $X$ be a paraclique graph. Fix a collection of representatives of cliques $(C_i)_{i \in I}$ up to parallelism. Also, fix a vertex $o \in X$; and, for every $i \in I$, let $o_i$ denote the projection of $o$ on $C_i$. We claim that
$$\eta : \left\{ \begin{array}{ccc} X & \to & \prod\limits_{i \in I} (C_i,o_i) \\ x & \mapsto & (\mathrm{proj}_{C_i}(x))_{i \in I} \end{array} \right.$$
is an isometric embedding. First, the map is well-defined because, given a vertex $x \in X$, there are only finitely many cliques $C_i$ on which $x$ and $o$ have distinct projections, namely the cliques that belong to the (finitely many) hyperplanes separating $x$ and $o$. Then, for all vertices $x,y \in X$, we have
$$\begin{array}{lcl} d(\eta(x),\eta(y)) & = & \# \{ i \in I \mid \mathrm{proj}_{C_i}(x) \neq \mathrm{proj}_{C_i}(y) \} \\ \\ & = & \# \{ i \in I \mid \text{the hyperplane containing $C_i$ separates $x$ and $y$}\} \\ \\ & = & \# \{ \text{hyperplanes separating $x$ and $y$}\} = d(x,y) \end{array}$$
proving that $\eta$ is indeed an isometric embedding. 
\end{proof}

\subsection{Quasi-median graphs}\label{section:QM}

\noindent
Recall that a connected graph is \emph{median} if, for any three vertices $x_1,x_2,x_3 \in X$, there exists a unique vertex $m \in X$, referred to as the \emph{median point}, satisfying
$$d(x_i,x_j)=d(x_i,m)+d(m,x_j) \text{ for all } i \neq j.$$
For instance, trees are simple examples of median graphs, as are products of trees (including hypercubes). Alternatively, median graphs can be characterised as retracts of hypercubes \cite{MR766499}. From this perspective, \emph{quasi-median graphs}, defined as retracts of Hamming graphs (i.e.\ Cartesian products of complete graphs), naturally generalise median graphs. The intuition is that, in the same way that median graphs can be thought of as made of cubes, quasi-median graphs can be thought of as made of Hamming graphs (or \emph{prisms}). A formal motivation of this idea is  that the prism-completion of a quasi-median graph is always contractible (and, even stronger, it can be endowed with a CAT(0) metric \cite[Theorem~2.120]{QM}). 

\medskip \noindent
Let us mention an alternative characterisation of quasi-median graphs, which can be used to motivate the definition (Def. \ref{def:Mediangle}) of mediangle graphs in Section~\ref{section:MediangleGeometry}. In the definition below, $I(\cdot,\cdot)$ denotes the \emph{interval} between two vertices, i.e.\ the union of all the geodesics connecting the two vertices under consideration. 

\begin{prop}\label{prop:QMweaklyModular}
A connected graph $X$ is quasi-median if and only if all of the following conditions are satisfied:
\begin{description}
	\item[(Triangle Condition)] For all vertices $o,x,y \in X$ satisfying $d(o,x)=d(o,y)$ and $d(x,y)=1$, there exists a common neighbour $z \in X$ of $x,y$ such that $z \in I(o,x) \cap I(o,y)$.
	\item[(Intersection of Triangles)] $X$ does not contain an induced copy of $K_4^-$.
	\item[(Quadrangle Condition)] For all vertices $o ,x,y,z \in X$ satisfying $d(o,x)=d(o,y)=d(o,z)-1$ and $d(x,z)=d(y,z)=1$, there exists a common neighbour of $x,y$ that belongs to $I(o,x) \cap I(o,y)$.
	\item[(Intersection of $4$-cycles)] $X$ does not contain an induced copy of $K_{3,2}$. 
\end{description}
\end{prop}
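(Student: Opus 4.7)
The plan is to prove both implications, relying on the definition of quasi-median graphs as retracts of Hamming graphs and on Proposition~\ref{prop:Paraclique}.

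For the forward direction, I would first verify that every Hamming graph $H = \prod_{i \in I} K_i$ itself satisfies the four conditions. Each verification is a short coordinate-wise computation: the triangle and quadrangle conditions are witnessed by flipping one coordinate back to the value of $o$, while the absence of induced $K_4^-$ and $K_{3,2}$ follows from the fact that any two adjacent vertices of $H$ differ in exactly one coordinate, forcing pairwise adjacent vertices to lie in a common factor-clique. Then, for a retract $X \subset H$ with retraction $r \colon H \to X$, one notes that $X$ is isometrically embedded; hence the triangle and quadrangle conditions transfer to $X$ by applying $r$ to the common neighbours produced inside $H$, and the two forbidden induced subgraphs cannot appear in $X$ since they would already appear in the ambient $H$.

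For the converse, assume $X$ is a connected graph satisfying all four conditions. The claim recorded inside the proof of Proposition~\ref{prop:Paraclique} shows that the triangle condition together with the absence of $K_4^-$ already implies that $X$ is clique-gated, so parallelism of cliques is a well-defined reflexive and symmetric relation. The next step is to promote $X$ to a paraclique graph by proving that parallelism is transitive: given cliques $C_1 \parallel C_2 \parallel C_3$, propagate the bijection $\mathrm{proj}_{C_2}\colon C_1 \to C_2$ along the chain, and use the quadrangle condition to produce $4$-cycles relating edges of $C_1$ to edges of $C_3$, while the absence of induced $K_{3,2}$ forces these $4$-cycles to close into parallel cliques. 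Once transitivity is in hand, Proposition~\ref{prop:Paraclique} yields an isometric embedding $\eta \colon X \hookrightarrow \prod_{i \in I}(C_i,o_i)$ into a Hamming graph.

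The last step, which I expect to be the main obstacle, is to upgrade $\eta$ to an actual retraction -- the hallmark of quasi-median graphs among paraclique graphs. Given a point $h$ in the ambient Hamming graph, I would construct its gate inside $\eta(X)$ by modifying the coordinates of $h$ one hyperplane at a time: the quadrangle condition guarantees that each intermediate tuple still lies in $\eta(X)$, and the absence of $K_{3,2}$ ensures uniqueness of the gate and hence well-definedness of the assignment. The resulting map defines a retraction from $\prod_i(C_i,o_i)$ onto $\eta(X)$. Both conditions that were not needed for the paraclique conclusion, namely the quadrangle condition and the absence of $K_{3,2}$, enter here essentially, and the induction on the number of mismatched coordinates demands a careful interplay between them; this is the technical heart of the argument and the step I would spend most effort on.
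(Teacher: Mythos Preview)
The paper does not actually prove this proposition: immediately after stating it, the authors write ``We refer the reader to \cite{MR1297190} for various characterisations of quasi-median graphs, including the one given above.'' The result is imported from the literature as a known characterisation, so there is no in-paper argument to compare your attempt against.

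As for your sketch on its own merits: the forward direction is fine. The converse has the right architecture but two steps are only gestured at. The transitivity step is not yet an argument---you say to ``use the quadrangle condition to produce $4$-cycles relating edges of $C_1$ to edges of $C_3$,'' but the quadrangle condition hands you a single common neighbour given a very specific distance configuration, and you have not explained how to arrange that configuration between cliques that may sit far apart. The retraction step is where the real work lies in the classical proof, and your description (``modifying the coordinates of $h$ one hyperplane at a time \ldots the quadrangle condition guarantees that each intermediate tuple still lies in $\eta(X)$'') does not say why: the quadrangle condition is a statement about vertices of $X$, not about arbitrary points of the ambient Hamming graph, so some bootstrapping is needed before it can be invoked. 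There is also a small internal inconsistency: having just used the quadrangle condition and the $K_{3,2}$-exclusion to obtain transitivity (and hence the paraclique conclusion), you then write that these two conditions ``were not needed for the paraclique conclusion.''
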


\noindent
We refer the reader to \cite{MR1297190} for various characterisations of quasi-median graphs, including the one given above. 

\medskip \noindent
Quasi-median graphs are examples of mediangle graphs, which will be considered in Section~\ref{section:MediangleGeometry}. Therefore, it follows from Proposition~\ref{prop:MediangleParaclique} that quasi-median graphs are also paraclique graphs. As a consequence, all the properties stated and proved in Section~\ref{section:Paraclique} also hold for quasi-median graphs. To sum up,
$$\text{quasi-median} \Rightarrow \text{mediangle} \xRightarrow{\text{Prop.~\ref{prop:MediangleParaclique}} } \text{paraclique} \xRightarrow{\text{Prop.~\ref{prop:Paraclique}} } \begin{array}{c} \text{isometrically embeddable} \\ \text{into a Hamming graph} \end{array}.$$
\begin{minipage}{0.2\linewidth}
\includegraphics[width=0.9\linewidth]{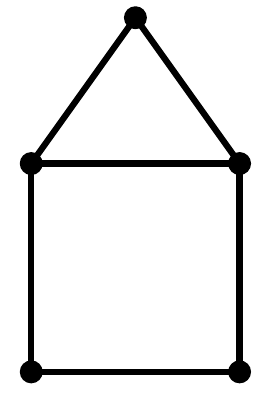}
\end{minipage}
\begin{minipage}{0.79\linewidth}
The classes above are distinct. For example, the graph on the left can be isometrically embedded into a Hamming graph (namely, $K_2 \times K_3$) but is not paraclique. As mentioned in Section~\ref{section:Paraclique}, a wheel of three $4$-cycles is paraclique but not mediangle. Finally, an even cycle of length $\geq 6$ is mediangle but not quasi-median. More conceptually, a major difference between quasi-median and mediangle graphs is that sectors in quasi-median graphs are always gated. 
\end{minipage}

\medskip 

\noindent
In all our types of graphs, there is a good notion of how the hyperplanes may interact. For instance:

\begin{definition}
In a quasi-median/mediangle/paraclique graph, two hyperplanes $J$ and $H$ are \emph{transverse} whenever every sector delimited by $J$ intersects every sector delimited by $H$. 
\end{definition}

\noindent
For example, in a quasi-median graph, the hyperplanes crossing a prism are pairwise transverse.

\subsection{Quasi-median closures}

\noindent
Generalising the construction of a median graph (or equivalently, a CAT(0) cube complex) from a \emph{space with walls}, the second author introduced \emph{spaces with partitions} and showed how to construct quasi-median graphs from them (in \cite{QM}). Here we describe this construction and its application to paraclique graphs.

\medskip \noindent
Let $X$ be a set and $\mathfrak{P}$ a collection of partitions of $X$. 
We refer to the subsets or pieces of the partitions from $\mathfrak{P}$ as \emph{sectors}, and say that: 
\begin{itemize}
	\item Two partitions $\mathcal{P},\mathcal{Q}$ are \emph{nested} if there exist sectors $A \in \mathcal{P}$ and $B \in \mathcal{Q}$ such that $D \subset A$ for every $D \in \mathcal{Q} \backslash \{B\}$ and $D \subset B$ for every $D \in \mathcal{P} \backslash \{A\}$.
	\item Two partitions $\mathcal{P},\mathcal{Q}$ are \emph{transverse} if, for all $P \in \mathcal{P}$ and $Q\in \mathcal{Q}$, $P$ and $Q$ are not $\subset$-comparable.
	\item A partition $\mathcal{P}$ \emph{separates} two points $x,y \in X$ if $x$ and $y$ belong to two distinct sectors of $\mathcal{P}$.
\end{itemize}
The typical example to keep in mind is when $X$ is a quasi-median graph and the partitions in $\mathfrak{P}$ come from the hyperplanes of $X$ cutting $X$ into sectors. Then $X$ together with $\mathfrak{P}$ is a `space with partitions', in that it satisfies the following properties.

\begin{definition}
A \emph{space with partitions} $(X,\mathfrak{P})$ is the data of a set $X$ and a collection of partitions $\mathfrak{P}$ satisfying the following conditions:
\begin{itemize}
	\item for every $\mathcal{P}\in \mathfrak{P}$, $\# \mathcal{P} \geq 2$ and $\emptyset \notin \mathcal{P}$;
	\item for any two (different) partitions $\mathcal{P},\mathcal{Q} \in \mathfrak{P}$, if there exist two sectors $A\in \mathcal{P}$ and $B \in \mathcal{Q}$ such that $A \subset B$, then $\mathcal{P}$ and $\mathcal{Q}$ are nested;
	\item any two points of $X$ are separated by only finitely many partitions of $\mathfrak{P}$. 
\end{itemize}
\end{definition}

\noindent
This generalizes the construction of CAT(0) spaces from spaces with walls, in that we replace half-spaces by sectors, and accordingly adjust the definition of orientation: this will be a map that picks out coherently one sector for each partition, instead of one half-space in the case of walls.

\begin{definition}
Let $(X,\mathfrak{P})$ be a space with partitions. An \emph{orientation} $\sigma$ is a map $\mathfrak{P} \to \{ \text{sectors}\}$ satisfying the following conditions:
\begin{itemize}
	\item $\sigma(\mathcal{P}) \in \mathcal{P}$ for every partition $\mathcal{P} \in \mathfrak{P}$;
	\item $\sigma(\mathcal{P}) \cap \sigma(\mathcal{Q}) \neq \emptyset$ for any two partitions $\mathcal{P},\mathcal{Q} \in \mathfrak{P}$.
\end{itemize} 
For every point $x \in X$, 
$$\sigma_x : \mathcal{P} \mapsto \text{sector in $\mathcal{P}$ containing $x$}$$
is the \emph{principal orientation at $x$}.
\end{definition}

\noindent
Analogous to the setup of spaces with walls, where building a cubulation starts with choosing orientations as vertices and adding edges when two orientations differ on a single wall, we can build a `quasi-cubulation' for spaces with partitions.

\begin{definition}
Let $(X,\mathfrak{P})$ be a space with partitions. The \emph{quasi-cubulation} $\mathrm{QM}(X,\mathfrak{P})$ is the connected component containing the principal orientations of the graph whose vertices are the orientations and whose edges connect two orientations whenever they differ on a single partition.
\end{definition}

\noindent
The statement below summarises the basic properties satisfied by the quasi-cubulation of a space with partitions. See \cite[Proposition~5.46, Theorem~2.56 and its proof, Lemma~2.60, Corollary~2.51]{QM} for more details.

\begin{thm}\label{thm:Popset}
Let $(X,\mathfrak{P})$ be a space with partitions and let 
$\mathrm{QM}:= \mathrm{QM}(X,\mathfrak{P})$ be its the quasi-cubulation. The following assertions hold:
\begin{itemize}
	\item $\mathrm{QM}$ is a quasi-median graph;
	\item the distance between two orientations $\mu,\nu \in \mathrm{QM}$ coincides with the number of partitions of $\mathfrak{P}$ on which they differ;
	\item for every partition $\mathcal{P} \in \mathfrak{P}$ there is a corresponding hyperplane of $\mathrm{QM}$ defined as $$J_\mathcal{P}:= \left\{ \{\mu,\nu \} \mid \mu(\mathcal{P}) \neq \nu(\mathcal{P}) \text{ but } \mu(\mathcal{Q})= \nu(\mathcal{Q}) \text{ for every } \mathcal{Q} \in \mathfrak{P} \backslash \{ \mathcal{P} \} \right\};$$
	\item the map $\mathcal{P} \mapsto J_\mathcal{P}$ induces a bijection from $\mathfrak{P}$ to the hyperplanes of $\mathrm{QM}$ that preserves (non-)transversality. 
\end{itemize}
\end{thm}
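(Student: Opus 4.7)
The plan is to first establish a single technical \emph{flip lemma}, from which all four assertions of the theorem cascade, and then to handle the hyperplane identification and the transversality claim as straightforward corollaries.

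First I would prove the flip lemma: if $\mu \neq \nu$ are two orientations and $S = \{ \mathcal{P} \in \mathfrak{P} \mid \mu(\mathcal{P}) \neq \nu(\mathcal{P}) \}$ (which is finite by the separation axiom), then some $\mathcal{P} \in S$ admits a \emph{flip}, i.e.\ the map $\mu'$ that agrees with $\mu$ off $\mathcal{P}$ and with $\nu$ on $\mathcal{P}$ is still an orientation. Arguing by contradiction, if no such flip exists then for every $\mathcal{P} \in S$ there is some $\mathcal{Q} \in \mathfrak{P}$ with $\nu(\mathcal{P}) \cap \mu(\mathcal{Q}) = \emptyset$; since $\mu$ and $\nu$ are both orientations, such a $\mathcal{Q}$ must belong to $S$. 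The nestedness axiom then forces a comparison between $\nu(\mathcal{P})$ and $\mu(\mathcal{Q})$ as sectors, and choosing $\mathcal{P} \in S$ so that $\nu(\mathcal{P})$ is $\subset$-minimal among $\{ \nu(\mathcal{R}) \mid \mathcal{R} \in S \}$ (a valid choice because $S$ is finite) yields the desired contradiction.

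Iterating the flip lemma produces a path in $\mathrm{QM}$ from $\mu$ to $\nu$ of length $|S|$, while every edge of $\mathrm{QM}$ flips exactly one partition by definition, so no shorter path exists. This proves the distance formula (second bullet), and in particular shows that the connected component of $\mathrm{QM}$ containing the principal orientations contains every orientation which differs from some principal orientation on finitely many partitions. Next, the set $J_\mathcal{P}$ of edges flipping $\mathcal{P}$ is automatically a parallelism class of cliques: the cliques of $\mathrm{QM}$ contained in $J_\mathcal{P}$ are precisely the maximal families of orientations agreeing off $\mathcal{P}$, and two such cliques are easily seen to project bijectively onto each other, yielding the third bullet together with the required bijection $\mathcal{P} \mapsto J_\mathcal{P}$.

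Finally, to prove that $\mathrm{QM}$ is quasi-median, I would verify the four axioms of Proposition~\ref{prop:QMweaklyModular}: the triangle and quadrangle conditions are supplied by the flip lemma, which manufactures the required common neighbour by flipping a suitably chosen partition back toward $o$; the absence of induced $K_4^-$ follows because any three pairwise adjacent orientations must flip the same partition and hence extend to a full clique; and the absence of induced $K_{3,2}$ follows by a similar case analysis on which partitions are flipped. The transversality claim (fourth bullet) reduces to the observation that $J_\mathcal{P}$ and $J_\mathcal{Q}$ are transverse in $\mathrm{QM}$ iff every pair of sectors $A \in \mathcal{P}, B \in \mathcal{Q}$ is simultaneously realised by an orientation, which by the orientation axiom holds iff $A \cap B \neq \emptyset$ for all such pairs, i.e.\ iff $\mathcal{P}$ and $\mathcal{Q}$ are transverse as partitions. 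The main obstacle is the flip lemma, since it is the only step where the nestedness axiom is used in a non-trivial way; once it is available, the rest of the theorem is a careful but routine combinatorial verification.
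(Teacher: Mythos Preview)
The paper does not prove this theorem; it simply records it as a summary of results from the reference \cite{QM} (Proposition~5.46, Theorem~2.56 and its proof, Lemma~2.60, Corollary~2.51), so there is no in-paper argument to compare against. Your overall strategy --- establish a flip lemma, deduce the distance formula, then verify the axioms of Proposition~\ref{prop:QMweaklyModular} and identify the hyperplanes --- is the standard route and is essentially what is carried out in the cited source.

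Two points in your sketch of the flip lemma deserve tightening. First, the extremality choice should be $\nu(\mathcal{P})$ $\subset$-\emph{maximal} (equivalently, $\mu(\mathcal{P})$ minimal), not $\nu(\mathcal{P})$ minimal: from $\nu(\mathcal{P}) \cap \mu(\mathcal{Q}) = \emptyset$ one obtains, in the two-sector case, $\nu(\mathcal{P}) \subsetneq \nu(\mathcal{Q})$, and it is maximality that this contradicts. Second, the sentence ``the nestedness axiom then forces a comparison between $\nu(\mathcal{P})$ and $\mu(\mathcal{Q})$'' is imprecise: disjointness of two sectors from distinct partitions does not by itself yield a containment when the partitions have three or more pieces, so the reduction to an inclusion between $\nu$-sectors needs an extra step in the general case (for instance, arguing via the nested/transverse dichotomy that the axiom forces, and handling the nested case explicitly). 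These are repairable details rather than structural gaps; once the flip lemma is secured, the remaining verifications proceed as you outline.
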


\noindent
Since for every paraclique graph there is a decomposition into sectors (coming from the hyperplanes), and therefore a natural space with partitions, we can show that every paraclique graph embeds canonically into a quasi-median graph, which we will refer to as its \emph{quasi-median closure}.

\begin{prop}\label{prop:QMextension}
Let $X$ be a paraclique graph. There exist a quasi-median graph $M$ and an isometric embedding $\iota : X \hookrightarrow M$ such that, for every isometric embedding $\eta$ of $X$ into a quasi-median graph $QM$, one can find an isometric embedding $\xi : M \hookrightarrow QM$ satisfying $\eta = \xi \circ \iota$. Moreover, thinking of $X$ as a subgraph of $M$,
\begin{itemize}
	\item[(i)] the cliques in $X$ are cliques in $M$;
	\item[(ii)] every clique in $M$ is parallel to some clique in $X$;
	\item[(iii)] two cliques in $X$ are parallel in $M$ if and only if they are parallel in $X$;
	\item[(iv)] two hyperplanes are transverse in $M$ if and only if they are transverse in $X$. 
\end{itemize} 
\end{prop}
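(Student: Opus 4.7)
The approach is to realize $M$ as the quasi-cubulation of the space with partitions naturally attached to $X$, and then read off properties (i)--(iv) and the universal property from Theorem~\ref{thm:Popset}.

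Let $\mathfrak{P}$ be the collection of partitions of the vertex set of $X$ whose pieces are the sectors delimited by a single hyperplane, which is well-defined by Proposition~\ref{prop:HypCliqueGated}. The first task is to verify that $(X,\mathfrak{P})$ is a space with partitions. Non-degeneracy of each $\mathcal{P}_J$ and the fact that only finitely many partitions separate any two vertices follow immediately from Propositions~\ref{prop:HypCliqueGated} and~\ref{prop:ParaGeod}. The delicate axiom is nestedness: if a sector of $\mathcal{P}_J$ is contained in a sector of $\mathcal{P}_H$, then $\mathcal{P}_J$ and $\mathcal{P}_H$ must be nested. I would verify this by working in the Hamming graph $\prod_i(C_i,o_i)$ provided by Proposition~\ref{prop:Paraclique}: two distinct hyperplanes of $X$ correspond to distinct factors $C_j, C_k$, each sector is the preimage in $X$ of a single vertex of the corresponding factor, and a sector containment translates into a combinatorial constraint on which $(C_j, C_k)$-coordinate pairs are realised in $X$. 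Choosing $A$ and $B$ to be the unique ``large'' sectors (whose complements on the other side collapse onto a single sector) then yields the required nested pair.

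Setting $M := \mathrm{QM}(X,\mathfrak{P})$ and $\iota(x) := \sigma_x$, Theorem~\ref{thm:Popset} gives that $M$ is quasi-median and that $d_M(\sigma_x,\sigma_y)$ equals the number of partitions on which $\sigma_x, \sigma_y$ disagree, that is, the number of hyperplanes of $X$ separating $x$ and $y$, which by Proposition~\ref{prop:ParaGeod} is $d_X(x,y)$. So $\iota$ is an isometric embedding. For items (i)--(iv), the last bullet of Theorem~\ref{thm:Popset} provides a transversality-preserving bijection between $\mathfrak{P}$ and the hyperplanes of $M$. For a clique $C \subset J$ in $X$, Proposition~\ref{prop:HypCliqueGated} ensures $C$ meets every sector of $J$, while a distance-$1$ argument shows $C$ is contained in a single sector of every other hyperplane (two cliquemates are separated by exactly one hyperplane). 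Consequently $\iota(C)$ is precisely the clique of $M$ dual to $J_{\mathcal{P}_J}$, and the items (i)--(iv) fall out of the hyperplane correspondence.

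For the universal property, given an isometric embedding $\eta : X \hookrightarrow QM$ with $QM$ quasi-median, each hyperplane $J$ of $X$ determines a unique hyperplane $\eta_\ast(J)$ of $QM$, namely the one containing $\eta(e)$ for any edge $e \in J$; well-definedness holds because edge parallelism is a distance identity preserved by $\eta$. Identifying $QM$ with $\mathrm{QM}(QM,\mathfrak{P}_{QM})$ via principal orientations, I would build $\xi : M \to QM$ by extending every orientation $\mu$ of $\mathfrak{P}$ to an orientation of $\mathfrak{P}_{QM}$: use $\mu$ (translated via $\eta_\ast$) on hyperplanes in the image, and the principal orientation at a fixed basepoint $\eta(o)$ on the remaining hyperplanes. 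Verifying that this extension is a genuine orientation, lands in the connected component of principal orientations of $QM$, and produces an isometric embedding satisfying $\eta = \xi \circ \iota$ is then a matter of applying the distance formula of Theorem~\ref{thm:Popset}. The main obstacle is the nestedness verification: this is precisely where the full paraclique assumption (transitivity of clique-parallelism, not just clique-gatedness) is essential, and everything else is bookkeeping.
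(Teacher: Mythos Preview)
Your approach is essentially the paper's: construct $M$ as the quasi-cubulation of $(X,\mathfrak{P})$ with $\mathfrak{P}$ the hyperplane partitions, embed $X$ via principal orientations, and read off (i)--(iv) and the universal property from Theorem~\ref{thm:Popset}. The paper carries this out in the same order, with nearly identical arguments for the distance formula and the extension of orientations to $QM$.

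Two remarks on where you and the paper diverge. First, you correctly flag the nestedness axiom as nontrivial; the paper simply asserts $\mathfrak{P}$ is a space with partitions without comment. However, your proposed verification via the Hamming embedding is incomplete: the claim that one can always find ``unique large sectors whose complements collapse onto a single sector'' is asserted, not proved, and it is not obvious that sector containment in one direction forces the existence of such a nested pair in a general paraclique graph. This needs an actual argument (for instance, using that the projection of a clique of $H$ onto a clique of $J$ is a single vertex, and analysing the resulting sector structure). Second, for the universal property the paper does a bit more than bookkeeping: it invokes \cite[Lemma~2.64]{QM} to show that the extended orientation $\xi_\sigma$ is principal (i.e.\ lies in $QM$ rather than merely in the abstract orientation graph). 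Your ``lands in the connected component'' step hides exactly this point, so you should expect to need an analogous lemma rather than just the distance formula.
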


\begin{proof}
Let $\mathfrak{P}$ denote the set of partitions of $X$ produced by the decompositions into sectors induced by the hyperplanes of $X$. Let $M$ be the corresponding quasi-cubulation and $\iota : X \to M$ the map that sends a vertex of $X$ to the corresponding principal orientation. Notice that, for all vertices $x,y \in X$ and for every hyperplane $J$ of $X$, $J$ separates $x$ and $y$ if and only if $\sigma_x$ and $\sigma_y$ differ on the partition associated to $J$. We deduce from Proposition~\ref{prop:ParaGeod} and Theorem~\ref{thm:Popset} that
$$\begin{array}{lcl} d_M(\sigma_x,\sigma_y) & = & \# \{ \mathcal{P} \in \mathfrak{P} \mid \sigma_x(\mathcal{P}) \neq \sigma_y(\mathcal{P}) \} \\ \\ & = & \# \{ \text{hyperplanes of $X$ separating $x$ and $y$}\} = d_X(x,y). \end{array}$$
In other words, $\iota$ is an isometric embedding. 

\medskip \noindent
Now, let $QM$ be a quasi-median graph containing $X$ as an isometrically embedded subgraph. Given an orientation $\sigma \in M$, we want to construct an orientation $\xi_\sigma$ of $QM$. For every hyperplane $J$ of $QM$, we distinguish two cases: either $J$ does not cross $X$, in which case we define $\xi_\sigma(J)$ as the sector delimited by $J$ that contains $X$; or $J$ crosses $X$, in which case we define $\xi_\sigma(J)$ as the sector delimited by $J$ that contains $\sigma(J)$. It is clear that $\xi_\sigma$ is an orientation of $QM$. Moreover, since $\sigma$ differs from a principal orientation of $X$ on only finitely many hyperplanes, it follows that $\xi_\sigma$ also differs from a principal orientation of $QM$ on only finitely many hyperplanes. Then \cite[Lemma~2.64]{QM} implies that $\xi_\sigma$ is a principal orientation of $QM$. In other words, we can think of $\xi_\sigma$ as a vertex of $QM$, defining a map $M \to QM$ via $\sigma \mapsto \xi_\sigma$. 

\medskip \noindent
First, observe that, for every vertex $x \in X$, if $\sigma_x$ denotes the principal orientation of $X$ given by $x$, then $\xi_{\sigma_x}$ coincides with the principal orientation of $QM$ given by $x$. In other words, thinking of $X$ as living inside both $M$ and $QM$, our map $M \to QM$ restricts to the inclusion $X \hookrightarrow QM$. 

\medskip \noindent
Next, notice that, for all orientations $\mu$ and $\nu$ of $X$, 
$$\begin{array}{lcl} d_{QM}(\xi_\mu,\xi_\nu) & = & \# \{ J \text{ hyperplane of } QM \text{ such that } \xi_\mu(J) \neq \xi_\nu(J) \} \\ \\ & = & \# \{ J \text{ hyperplane of } X \text{ such that } \mu(J) \neq \nu(J) \}= d_M(\mu,\nu). \end{array}$$
Therefore, our map $M \to QM$ turns out to be an isometric embedding. This concludes the proof of the first assertion of our proposition. 

\medskip \noindent
Now we prove $(i)$. Let $C$ be a clique of $X$. If there exists a vertex $x \in M$ that is adjacent to all the vertices in $C$, then, thinking of the vertices in $C \cup \{x\}$ as orientations, it follows from the third item of Theorem~\ref{thm:Popset} that $x$ differs from some orientation $o \in C$ we fix only on the partition $\mathcal{P}$ given by the hyperplane containing $C$. But, if $y \in C$ denotes the vertex of $C$ contained in the sector $x(\mathcal{P})$, then $x$ and $y$ agree on every partition, hence $x=y \in C$. This concludes the proof $(i)$. 

\medskip \noindent
So $\iota$ sends cliques to cliques. Moreover, since $\iota$ is an isometric embedding, it preserves parallelism, that is, $\iota$ sends the hyperplanes of $X$ to hyperplanes of $M$. We claim that this map coincides with the map $\mathcal{P} \mapsto J_\mathcal{P}$ considered in Theorem~\ref{thm:Popset}. Indeed, any partition $\mathcal{P}$ corresponds to a hyperplane $J$ of $X$, and two vertices of $X$ yield an edge of $J$ if and only if $J$ is the only hyperplane separating them, or equivalently, if their principal orientations only differ on $\mathcal{P}$. Therefore $J_\mathcal{P} \cap X = J$. This proves our claim.

\medskip \noindent
Now, Theorem~\ref{thm:Popset} implies that the map $\mathcal{P} \mapsto J_\mathcal{P}$ is surjective, so $(ii)$ follows; it is also injective, so $(iii)$ follows; and finally, it preserves (non-)transversality, so $(iv)$ follows. 
\end{proof}

\section{Unfolding cliques}\label{section:Unfolding}

\subsection{Systems of metrics}

\noindent
In future sections, we will consider Cayley graphs with respect to different sets of generators and `translate' between the different metrics. The context will be as follows:

\medskip \noindent
Let $G$ be a group generated by a collection of subgroups $H_1, \ldots, H_n$. First let $X:=\mathrm{Cay}(G, H_1 \cup \cdots \cup H_n)$ be the Cayley graph of $G$ with generating set $H_1 \cup \cdots \cup H_n$. In the cases we are interested in, the cosets of the $H_i$ correspond to cliques in $X$. Then fix a generating set $S_i$ of $H_i$ for every $1 \leq i \leq n$, and view each clique $C$ of $X$ as a (translate of) $\mathrm{Cay}(H_i,S_i)$ with added edges; each individual $\mathrm{Cay}(H_i,S_i)$ has a metric with respect to $S_i$, and consequently one can endow every clique $C$ with its own metric $\delta_C$ coming from the corresponding $S_i$. One can extend this collection of metrics to a global metric $\delta$ defined on the vertices of $X$ and recover $\mathrm{Cay}(G, S_1 \cup \cdots \cup S_n)$.

\medskip \noindent
Loosely speaking, we ``unfold'' the cliques of $X$. More formally:

\begin{definition}\label{def:delta}
Let $X$ be a graph. A \emph{system of metrics} is a collection of metrics $\{(C,\delta_C) \mid C \text{ clique}\}$ for all cliques. To such a system of metrics we associate the \emph{extended (pseudo-)metric} defined by
$$\delta(x,y) = \inf \left\{ \sum\limits_{i=0}^{n-1} \delta_{C_i}(v_i,v_{i+1}) \right\},$$
where the infimum is taken over all vertices $v_0:=x, v_1, \ldots, v_n:=y$ and all cliques $C_1, \ldots, C_{n-1}$ such that $v_i \in C_i \cap C_{i+1}$ for every $1 \leq i \leq n-1$. 
\end{definition}

\noindent
In paraclique graphs, our systems of metrics will be compatible with projections on cliques in the following sense:

\begin{definition}
Let $X$ be a clique-gated graph. A system of metrics $\{ (C,\delta_C) \mid C \text{ clique}\}$ is \emph{coherent} if, for any two parallel cliques $C_1,C_2 \subset X$, the projection of $C_1$ onto $C_2$ induces an isometry $(C_1,\delta_{C_1}) \to (C_2, \delta_{C_2})$. 
\end{definition}

\noindent
We end this section by showing how the distance given by Definition~\ref{def:delta} can be computed in paraclique graphs. Before stating our proposition, we need to introduce some notation.

\medskip \noindent
Let $X$ be a paraclique graph endowed with a coherent system of metrics $\{ (C,\delta_C) \mid C \text{ clique}\}$. For every hyperplane $J$ and any $x,y \in X$, define 
$$\delta_J(x,y):= \delta_C( \mathrm{proj}_C(x), \mathrm{proj}_C(y)), \text{where $C$ is some/any clique in $J$}.$$ 
Notice that the quantity $\delta_C( \mathrm{proj}_C(x), \mathrm{proj}_C(y))$ does not depend on the choice of clique because the system of metrics is coherent. The $\delta_J$ thus defined are pseudo-metrics. 

\begin{prop}\label{prop:Delta}
Let $X$ be a paraclique graph endowed with a coherent system of metrics $\{(C,\delta_C) \mid C \text{ clique}\}$. Given two vertices $x,y \in X$, fix an arbitrary geodesic $u_0=x, \ldots, u_n=y$ in $X$. For every $0 \leq i \leq n-1$, let $C_i$ denote the clique containing the edge $[u_i,u_{i+1}]$. Then
$$\delta(x,y)= \sum\limits_{i=1}^{n-1} \delta_{C_i}(u_i,u_{i+1}) = \sum\limits_{J \text{ separating $x$ and $y$}} \delta_J(x,y).$$
\end{prop}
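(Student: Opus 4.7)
The plan is to establish the two equalities in turn, noting that the second one (rewriting the geodesic sum as a sum over separating hyperplanes) sets up the per-hyperplane accounting that will also drive the lower bound for $\delta(x,y)$.

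I would begin with the second equality. By Proposition~\ref{prop:ParaGeod}, the geodesic $u_0,\ldots,u_n$ crosses each hyperplane at most once, and the hyperplanes it crosses are exactly those separating $x$ and $y$; hence $i \mapsto J(C_i)$, the hyperplane containing $C_i$, is a bijection between $\{0,\ldots,n-1\}$ and the set of separating hyperplanes. For each such index $i$, since $u_0,\ldots,u_i$ and $u_{i+1},\ldots,u_n$ are themselves geodesics and both $u_i,u_{i+1}$ lie on the gated clique $C_i$, the gate property forces $u_i=\mathrm{proj}_{C_i}(x)$ and $u_{i+1}=\mathrm{proj}_{C_i}(y)$. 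By definition of $\delta_J$, this yields $\delta_{C_i}(u_i,u_{i+1})=\delta_{J(C_i)}(x,y)$, and the second equality follows.

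For the first equality, the bound $\delta(x,y) \leq \sum_i \delta_{C_i}(u_i,u_{i+1})$ is immediate from Definition~\ref{def:delta} applied to the geodesic itself. For the reverse inequality, I would fix an arbitrary decomposition $x=v_0,\ldots,v_m=y$ with $v_i,v_{i+1}\in C_i'$, and compare it hyperplane by hyperplane with the geodesic. The key observation is that, for any hyperplane $J$ and any fixed clique $C \subset J$, the projected sequence $\mathrm{proj}_C(v_0),\ldots,\mathrm{proj}_C(v_m)$ is constant along each step where $C_i'$ is not parallel to $C$, by Lemma~\ref{lem:CliqueGated}, whereas along each step with $C_i'\subset J$ coherence gives $\delta_{C_i'}(v_i,v_{i+1}) = \delta_C(\mathrm{proj}_C(v_i),\mathrm{proj}_C(v_{i+1}))$. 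The triangle inequality in $(C,\delta_C)$ then yields
\[
\sum_{i\,:\, C_i'\subset J} \delta_{C_i'}(v_i,v_{i+1}) \;=\; \sum_{i=0}^{m-1} \delta_C\bigl(\mathrm{proj}_C(v_i),\mathrm{proj}_C(v_{i+1})\bigr) \;\geq\; \delta_C\bigl(\mathrm{proj}_C(x),\mathrm{proj}_C(y)\bigr) \;=\; \delta_J(x,y).
\]
Summing over all hyperplanes $J$ and using that each $C_i'$ lies in a unique hyperplane, the left-hand side reorganises to $\sum_{i=0}^{m-1} \delta_{C_i'}(v_i,v_{i+1})$; on the right, hyperplanes that do not separate $x$ and $y$ contribute $0$, since $x$ and $y$ then lie in the same sector of $J$ by Proposition~\ref{prop:HypCliqueGated} and so share a common projection on $C$. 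Combined with the second equality, this gives the desired lower bound.

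The main obstacle I expect is the bookkeeping in the hyperplane-by-hyperplane step: one must track which alternative cliques $C_i'$ are parallel to the reference clique $C \subset J$, and carefully argue that hyperplanes not crossed by the geodesic contribute zero on the right-hand side while they may still contribute positively on the left-hand side, which is precisely what makes the inequality sharp. Once the projections onto $C$ are interpreted coherently with the local metric, however, the rest is a single per-hyperplane triangle inequality that is then aggregated.
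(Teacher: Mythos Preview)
Your proposal is correct and follows essentially the same route as the paper: group the terms of any competing path by hyperplane, use coherence to transport each piece to a fixed clique in that hyperplane, and apply the triangle inequality there to get the per-hyperplane lower bound $\delta_J(x,y)$; then match this against the geodesic sum via Proposition~\ref{prop:ParaGeod}. Your version is in fact slightly cleaner than the paper's in one respect: the paper fixes a path that \emph{realises} the infimum $\delta(x,y)$, which need not exist a priori, whereas you bound an arbitrary decomposition from below and thus avoid this issue entirely.
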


\begin{proof}
Fix a path $\gamma$, with vertices $v_0=x, \ldots, v_m=y$, such that
$$\delta(x,y)= \sum\limits_{i=0}^{m-1} \delta_{Q_i} (v_i,v_{i+1}),$$
where $Q_i$ denotes the clique containing the edge $[v_i,v_{i+1}]$ for every $0 \leq i \leq m-1$. Let $\mathcal{J}$ denote the set of the hyperplanes crossed by $\gamma$. We have
$$\delta(x,y)= \sum\limits_{J \in \mathcal{J}} \sum\limits_{i \text{ such that } Q_i \subset J} \delta_{Q_i}(v_i,v_{i+1}).$$
Fix a $J \in \mathcal{J}$ and fix an enumeration $j_1< \cdots < j_k$ of $\{i \mid Q_i \subset J\}$. Notice that, for every $1 \leq s \leq k-1$, the subpath of $\gamma$ connecting $v_{j_s}$ and $v_{j_{s+1}}$ does not cross $J$, hence $\delta_J(v_{j_s},v_{j_s+1}) = \delta_J(v_{j_s},v_{j_{s+1}})$. From the triangle inequality, we deduce that
$$\sum\limits_{i \text{ such that } Q_i \subset J} \delta_{Q_i}(v_i,v_{i+1}) = \sum\limits_{s=1}^{k-1} \delta_J(v_{j_s},v_{j_{s+1}}) \geq \delta_J(v_{j_0},v_{j_k}).$$
This quantity coincides with $\delta_J(x,y)$ since $\gamma$ does not cross $J$ between $x$ and $v_{j_0}$ neither between $v_{j_k}$ and $y$. Hence
$$\delta(x,y) \geq \sum\limits_{J \in \mathcal{J}} \delta_J(x,y) = \sum\limits_{J \text{ separating $x$ and $y$}} \delta_J(x,y),$$
where the last inequality follows from the fact that $\delta_J(x,y)=0$ for every hyperplane $J$ not separating $x$ and $y$. But we know from Proposition~\ref{prop:ParaGeod} that the hyperplanes separating $x$ and $y$ are exactly the hyperplanes crossed by our geodesic $u_0, \ldots, u_n$. More precisely, if $J_i$ denotes the hyperplane containing $C_i$ for every $0 \leq i \leq n-1$, then $J_0,\ldots, J_{n-1}$ are the (pairwise distinct) hyperplanes separating $x$ and $y$. Hence
$$\sum\limits_{J \text{ separating $x$ and $y$}} \delta_J(x,y) = \sum\limits_{i=0}^{n-1} \delta_{J_i} (x,y) = \sum\limits_{i=0}^{n-1} \delta_{C_i}(u_i,u_{i+1}),$$
where the last equality is justified by the fact that $u_i= \mathrm{proj}_{C_i}(x)$ and $u_{i+1}= \mathrm{proj}_{C_i}(y)$ for every $0 \leq i \leq n-1$. In conclusion, we have
$$\delta(x,y) \geq \sum\limits_{J \text{ separating $x$ and $y$}} \delta_J(x,y) = \sum\limits_{i=0}^{n-1} \delta_{C_i}(u_i,u_{i+1}) \geq \delta(x,y),$$
so our proposition follows.
\end{proof}

\subsection{Compatibility with quasi-median closures}

\noindent
Proposition~\ref{prop:QMextension} shows that every paraclique graph embeds canonically into some quasi-median graph. Our next proposition shows that this embedding is compatible with the systems of metrics introduced earlier.

\begin{prop}\label{prop:ExtendingSystemOfMetrics}
Let $X$ be a paraclique graph and let $M$ be its quasi-median extension. A coherent system of metrics $\{ (C,\delta_C) \mid C \subset X \text{ clique}\}$ for $X$ extends uniquely to a coherent system of metrics $\{ (C,\mu_C) \mid C \subset M \text{ clique} \}$ for $M$. Moreover, the inclusion map $X \hookrightarrow M$ induces an isometric embedding $(X,\delta) \hookrightarrow (M,\mu)$. 
\end{prop}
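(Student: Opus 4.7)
The plan is to define $\mu$ on cliques of $M$ by transporting each $\delta_C$ along parallel-projection bijections, to verify well-definedness and coherence using the structural properties from Proposition~\ref{prop:QMextension}, and finally to deduce the isometric embedding from Proposition~\ref{prop:Delta}.

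For the construction, Proposition~\ref{prop:QMextension}(ii) guarantees that every clique $C'$ of $M$ is parallel to some clique $C \subset X$, and the parallel projection restricts to a bijection $\mathrm{proj}_{C'} : C \to C'$. Coherence forces $\mu_{C'}$ to be the pull-back of $\delta_C$ through this bijection, which already establishes uniqueness. To see that the definition does not depend on the auxiliary clique, suppose two cliques $C_1, C_2 \subset X$ are both parallel to $C'$ in $M$. By transitivity of parallelism in $M$, $C_1$ and $C_2$ are parallel in $M$, hence in $X$ by Proposition~\ref{prop:QMextension}(iii). I would then establish a short sub-lemma: in any paraclique graph, the parallel projection between two cliques lying in the same hyperplane is precisely the sector-matching bijection, sending a vertex of one clique to the unique vertex of the other lying in the same sector delimited by the hyperplane. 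This follows from the gated property combined with Propositions~\ref{prop:HypCliqueGated} and \ref{prop:ParaGeod}, because otherwise a geodesic from $a \in C_1$ to some $c \in C_2$ would be forced to cross the common hyperplane twice. In particular parallel projections between cliques in a common hyperplane compose, and combined with the coherence of $\delta$ in $X$ this yields that the pull-backs of $\delta_{C_1}$ and $\delta_{C_2}$ to $C'$ agree. The very same sub-lemma, invoked now inside $M$, shows that the resulting system $\mu$ is coherent.

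For the isometric embedding, I would apply Proposition~\ref{prop:Delta} in both $X$ and $M$ to reduce to a hyperplane-by-hyperplane comparison. The proof of Proposition~\ref{prop:QMextension} provides a bijection between the hyperplanes of $X$ and those of $M$ under which corresponding hyperplanes $J$ and $J'$ satisfy $J' \cap X = J$. For $x,y \in X$, a geodesic in $X$ remains a geodesic in $M$, so a hyperplane of $X$ separating $x$ and $y$ corresponds to a hyperplane of $M$ separating them; a cardinality comparison via Proposition~\ref{prop:ParaGeod} combined with $d_X(x,y) = d_M(x,y)$ upgrades this injection to a bijection between the respective separating sets. Fixing such a pair $J \subset J'$ and a clique $C \subset J \cap X$, the fact that $X$ is isometrically embedded in $M$ and $C \subset X$ forces $\mathrm{proj}_C^X$ and $\mathrm{proj}_C^M$ to agree on vertices of $X$; since $\mu_C = \delta_C$ by construction, we get $\delta_J(x,y) = \mu_{J'}(x,y)$, and summing over the separating hyperplanes yields $\delta(x,y) = \mu(x,y)$. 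The main technical point is the sector-matching sub-lemma: once it is in hand, well-definedness, coherence, and the isometric embedding all follow with essentially no further work.
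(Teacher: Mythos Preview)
Your proposal is correct and follows essentially the same approach as the paper: define $\mu_{C'}$ by transporting $\delta_C$ along parallel projections from a clique $C \subset X$, establish well-definedness and coherence via the composition behaviour of parallel projections, and deduce the isometric embedding from Proposition~\ref{prop:Delta}. The paper is slightly more streamlined in two places: it packages your sector-matching sub-lemma as the single identity $(\mathrm{proj}_{C_1}\circ\mathrm{proj}_{C_2})|_{C_3}=(\mathrm{proj}_{C_1})|_{C_3}$ for parallel cliques, and for the isometric embedding it computes $\mu(x,y)$ and $\delta(x,y)$ directly along a common geodesic in $X$ (using the first equality in Proposition~\ref{prop:Delta}) rather than setting up the hyperplane bijection, but these are presentational differences rather than substantive ones.
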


\begin{proof}
Given a clique $C \subset M$, Proposition~\ref{prop:QMextension} implies that there exists a clique $Q \subset X$ parallel to $C$. Therefore, to extend the system of metrics defined on $X$ to a coherent system of metrics on $M$, we set
$$\mu_C : (x,y) \in C^2 \mapsto \delta_Q(\mathrm{proj}_Q(x), \mathrm{proj}_Q(y)).$$
The coherence of our extended system of metrics follows from the coherence of the previous one and from the fact that 
$$\left( \mathrm{proj}_{C_1} \circ \mathrm{proj}_{C_2}\right)_{|C_3} = \left( \mathrm{proj}_{C_1}\right)_{|C_3}$$
for all parallel cliques $C_1,C_2,C_3 \subset M$. 

\medskip \noindent
Next, let $x,y \in X$ be two vertices. Because $X$ is isometrically embedded in $M$, there exists a path $u_0=x, \ldots, u_n=y$ contained in $X$ that is a geodesic in $M$. For every $0 \leq i \leq n-1$, let $C_i$ denote the clique of $X$ (or equivalently, of $M$) that contains the edge $[u_i,u_{i+1}]$. It follows from Proposition~\ref{prop:Delta} that
$$\mu(x,y)= \sum\limits_{i=0}^{n-1} \mu_{C_i} (u_i,u_{i+1}) = \sum\limits_{i=0}^{n-1} \delta_{C_i}(u_i,u_{i+1}) = \delta(x,y).$$
Thus, $(X,\delta)$ is isometrically embedded into $(M,\mu)$, as desired.
\end{proof}

\begin{remark}\label{remark:CanonicalEquivariant}
A consequence of the uniqueness provided by Proposition~\ref{prop:ExtendingSystemOfMetrics} is compatibility with group actions. That is, if a group $G$ acts on a paraclique graph $X$ endowed with a coherent system of metrics $\{(C,\delta_C) \mid C \text{ clique}\}$ in a $G$-invariant way (i.e.\ $\delta_{gC}(gx,gy)=\delta_C(x,y)$ for any clique $C \subset X$, vertices $x,y \in C$, and element $g \in G$), then the action of $G$ on $(X,\delta)$ uniquely extends to $(M,\mu)$. 
\end{remark}

\section{Recognising contracting elements}\label{section:Contracting}

\noindent
For any geodesic metric space $(X,d)$ and subset $Y \subset X$, we let $\pi_Y: X \mapsto Y$ denote the nearest-point projection onto $Y$, i.e.\ $\pi_Y(x)$ represents the points in $Y$ for which the distance between $x$ and $Y$ is minimised. The subset $Y$ is called \emph{contracting} if there exists a constant $C \geq 0$ such that, for any metric ball $B \subset X$ disjoint from $Y$, the diameter of the projection of $B$ onto $Y$ is bounded by $C$, that is, $\mathrm{diam}(\pi_Y(B)) \leq C$.

\medskip \noindent
Now consider a group $G$ acting by isometries on $(X,d)$. An element $g \in G$ is called \emph{contracting in} $X$ if, for some (or equivalently, any) basepoint $o \in X$, the orbit $\langle g \rangle \cdot o$ of $o$ is both quasi-isometrically embedded in $X$ and a $C$-contracting set for some constant $C \geq 0$. 

\medskip \noindent
In this section, we state and prove a sufficient condition for proving that certain isometries of paraclique graphs endowed with coherent systems of metrics are contracting. Before stating our main result, we need some vocabulary.

\begin{definition}\label{def:WellSeparated}
Let $X$ be a paraclique graph endowed with a coherent system of metrics $\mathscr{C}= \{(C,\delta_C) \mid C \subset X \text{ clique}\}$.
\begin{itemize}
	\item A \emph{chain of hyperplanes} is a (possibly finite) sequence $(\ldots, J_1, J_2, \ldots)$ of hyperplanes such that $J_i$ separates $J_{i-1}$ and $J_{i+1}$ for every $i$.
	\item A \emph{facing triple} is the data of three pairwise non-transverse hyperplanes such that none separates the other two.
	\item For every hyperplane $J$ of $X$, its \emph{thickness} $\mathrm{thick}(J)$ is $\mathrm{diam}(C,\delta_C)$ where $C$ is an arbitrary clique in $J$.
	\item For some $L \geq 0$, two hyperplanes $H,K$ are \emph{$L$-well-separated relative to $\mathscr{C}$} if $$\sum\limits_{i=1}^n \mathrm{thick}(J_i) \leq L$$ for every collection $\{J_1, \ldots, J_n\}$ of hyperplanes transverse to both $H$ and $K$ with no facing triple. We say that two hyperplanes are \emph{well-separated} if a constant $L$ as above exists.
\end{itemize}
\end{definition}

\noindent
It is worth mentioning that, if our paraclique graph is locally finite, then the ``no facing triples'' condition in the definition of well-separated hyperplanes can be removed. 

\medskip \noindent
Definition \ref{def:WellSeparated} is motivated by \cite{MR2874959}, in the context of CAT(0) cube complexes, and its subsequent developments \cite{MR4071367, MR4057355}. Our next definition is inspired by \cite{MR2827012}. 

\begin{definition}
Let $X$ be a paraclique graph. An isometry $g \in \mathrm{Isom}(X)$ \emph{skewers} a pair of hyperplanes $J_1,J_2$ if they delimit two sectors $S_1 \supsetneq S_2$ such that $g^nS_1 \subsetneq S_2$ for some integer $n \in \mathbb{Z}$.
\end{definition}

\noindent
The rest of the section is dedicated to the proof of the following statement:

\begin{thm}\label{thm:Contracting}
Let $G$ be a group acting on a paraclique graph $X$ and let $\mathscr{C}=\{(C, \delta_C) \mid C \subset X \text{ clique}\}$ be a coherent and $G$-invariant system of metrics. If $g \in G$ admits an axis in $X$ and skewers a pair of hyperplanes that are well-separated relative to $\mathscr{C}$, then $g$ is contracting in $(X,\delta)$. 
\end{thm}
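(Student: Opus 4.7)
My plan is to reduce to the quasi-median case and then apply a projection argument controlled by well-separatedness, following the strategy of \cite{MR2874959, MR4071367} from the cubical and median settings. By Propositions~\ref{prop:QMextension} and~\ref{prop:ExtendingSystemOfMetrics}, the inclusion of $X$ into its quasi-median closure extends to an isometric embedding $(X, \delta) \hookrightarrow (M, \mu)$, and by Remark~\ref{remark:CanonicalEquivariant} the $G$-action extends equivariantly. The axis $\gamma$ of $g$ in $X$ remains an axis in $M$, the skewered well-separated pair of hyperplanes remains skewered and well-separated in $M$ (thanks to Proposition~\ref{prop:QMextension}(iv)), and since $\gamma \subset X$, the $\mu$-nearest-point projection of a $\mu$-ball in $X$ onto $\gamma$ coincides with its $\delta$-nearest-point projection. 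It therefore suffices to prove the theorem under the additional assumption that $X$ is quasi-median.

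Assuming this, I would iterate $g$ on the skewered pair to build a bi-infinite chain $(J_n)_{n \in \mathbb{Z}}$ of pairwise $L$-well-separated hyperplanes, with $g$ acting by a shift. The axis $\gamma$ crosses each $J_n$ transversally, so by Proposition~\ref{prop:Delta} the contribution of $J_n$ to $\delta(o, g^N o)$ is a fixed positive constant $c > 0$ whenever $J_n$ separates these two points: indeed, the projections of $o$ and $g^N o$ onto a clique of $J_n$ are the distinct sector-labels associated to the two sides of $J_n$ along $\gamma$, for all but boundedly many $n$ between $0$ and $N$. This gives $\delta(o, g^N o) \geq cN - O(1)$, and the matching upper bound $\delta(o, g^N o) = O(N)$ follows immediately from Proposition~\ref{prop:Delta} and the $G$-equivariance of the hyperplane thicknesses. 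Hence $\gamma$ is quasi-isometrically embedded in $(X, \delta)$, and the orbit $\langle g \rangle \cdot o$ inherits the same property since it sits cocompactly on $\gamma$.

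The main step is to show that $\gamma$ is $\delta$-contracting. For $x \in X$, define $n(x)$ to be the unique integer such that $x$ and $\gamma(-\infty)$ lie in the same sector of $J_{n(x)}$ but in opposite sectors of $J_{n(x)+1}$, and set $p_{n(x)} := \gamma \cap J_{n(x)}$. I would then prove that the $\delta$-nearest-point projection $\pi_\gamma(x)$ lies within uniformly bounded $\delta$-distance of $p_{n(x)}$. The argument is that any displacement of $\pi_\gamma(x)$ past several chain hyperplanes forces the hyperplanes separating $\pi_\gamma(x)$ from $x$ to be transverse to two consecutive chain hyperplanes. The well-separated condition then caps the total thickness of these transverse hyperplanes by $L$, while the chain hyperplanes bypassed each contribute at least $c$ to the distance from $x$, contradicting the minimality of $\pi_\gamma(x)$. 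From this, any $\delta$-ball disjoint from $\gamma$ projects to a $\delta$-bounded subsegment of $\gamma$, yielding the contracting constant.

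The main obstacle is making this last step rigorous, especially handling the ``no facing triples'' clause in the definition of well-separatedness: in quasi-median graphs the family of hyperplanes separating $\pi_\gamma(x)$ from $x$ may contain facing triples, which would prevent a direct application of the well-separated bound. I expect to resolve this by extracting a maximal subfamily of pairwise non-transverse separating hyperplanes without facing triples, and then combining the well-separated bound with Proposition~\ref{prop:Delta} to control the $\delta$-contribution of the remaining (transverse) hyperplanes.
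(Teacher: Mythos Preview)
Your approach is essentially the paper's: reduce to the quasi-median closure, build a $\langle g\rangle$-invariant bi-infinite chain of pairwise $L$-well-separated hyperplanes crossing the axis, and then run a projection argument governed by the chain. Two remarks. First, your worry about facing triples is unnecessary: the collections to which you (and the paper) apply the well-separated bound always consist of hyperplanes separating two fixed vertices, and such a collection never contains a facing triple, since any pairwise non-transverse subfamily is automatically a chain (each member has a sector containing one endpoint and a sector containing the other, forcing a linear order). This is exactly how the paper applies the bound without comment in Claims~\ref{claim:MidHypOne} and the final estimate on $\delta(c,z)$; no extraction of a subfamily is needed. Second, rather than showing the $d$-axis $\gamma$ is quasi-isometrically embedded in $(X,\delta)$, the paper upgrades $\gamma$ to a genuine $\delta$-geodesic $\gamma^+$ by replacing each edge $[a,b]\subset C$ with a $\delta_C$-geodesic inside its clique (citing \cite[Lemma~3.18]{QM}); this makes the subsequent computations with Proposition~\ref{prop:Delta} slightly cleaner but is not a substantive difference.
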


\noindent
Recall from Remark~\ref{remark:CanonicalEquivariant} that a system of metrics $\mathscr{C}$ is \emph{$G$-invariant} if $\delta_{gC}(gx,gy)= \delta_C(x,y)$ holds for every element $g \in G$, every clique $C \subset X$, and all vertices $x,y \in C$. Also, an \emph{axis} for an isometry refers to a bi-infinite geodesic on which the isometry acts as a non-trivial translation. 

\begin{proof}[Proof of Theorem~\ref{thm:Contracting}.]
By Proposition~\ref{prop:ExtendingSystemOfMetrics}, the system of metrics for $X$ uniquely extends to a coherent and $G$-invariant system of metrics $\{(C,\mu_C) \mid C \text{ clique}\}$ for the quasi-median closure $M$ of $X$. It follows from Proposition~\ref{prop:QMextension} that the element $g$ also skewers a pair of well-separated hyperplanes in $M$. Since $(X,\delta)$ is isometrically embedded into $(M,\mu)$, by Proposition~\ref{prop:ExtendingSystemOfMetrics} it suffices to prove the theorem for quasi-median graphs. From now on, we assume that $X$ is quasi-median.

\medskip \noindent
Let $\gamma$ be an axis of $g$ in $(X,d)$, that is, $\gamma$ is a bi-infinite geodesic with respect to the path metric $d$, and $g$ acts as a translation of length $\tau>0$ on $\gamma$. Let $\gamma^+$ be the path in $(X,\delta)$ obtained from $\gamma$ by connecting any two consecutive vertices $a,b$ in $\gamma$, which necessarily both belong to some clique $C$, by a geodesic in $(C,\delta_C)$. We do this equivariantly so that $g$ acts on $\gamma^+$ as a translation. Then $\gamma^+$ is a geodesic in $(X,\delta)$ (see \cite[Lemma~3.18]{QM}) and, in fact, $\gamma^+$ is an axis of $g$ in $(X,\delta)$. Let $\tau^\ast$ denote the translation length of $g$ with respect to $(X,\delta)$. Set $\tau^+:= \max(\tau,\tau^\ast)$.

\medskip \noindent
We will prove that $\gamma^+$ is contracting in $(X,\delta)$. We first observe that:

\begin{claim}\label{claim:DvsDelta}
For all vertices $a,b \in \gamma$, $d(a,b) \geq \delta(a,b) / \tau^\ast$.
\end{claim}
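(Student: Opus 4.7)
The plan is to exploit that $\gamma^+$ is a $\delta$-geodesic obtained equivariantly from $\gamma$, which forces the $\delta$-lengths of its consecutive segments to form a $\tau$-periodic sequence of total mass $\tau^\ast$ over one period; the claim will then follow by a pigeonhole-style bound.

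First, parameterise $\gamma$ by integers as $(a_i)_{i \in \mathbb{Z}}$ with $d(a_i,a_j) = |i-j|$ and $g \cdot a_i = a_{i+\tau}$ (possible since $\tau$ is a positive integer). For each $k$, the edge $[a_k, a_{k+1}]$ sits in a unique clique $C_k$, and by construction $\gamma^+$ is the concatenation of $\delta_{C_k}$-geodesics inside the cliques $C_k$. Setting
$$\delta_k := \delta_{C_k}(a_k, a_{k+1}),$$
the fact that $\gamma^+$ is a $\delta$-geodesic (recorded in the proof via \cite[Lemma~3.18]{QM}) yields, for all $i \leq j$,
$$\delta(a_i, a_j) = \sum_{k=i}^{j-1} \delta_k.$$

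The next step is to combine two consequences of the equivariant construction of $\gamma^+$ together with the $G$-invariance of the system of metrics $\mathscr{C}$. On one hand, $g$ sends $C_k$ to $C_{k+\tau}$ isometrically with respect to the clique metrics, hence $\delta_{k+\tau} = \delta_k$, so the sequence $(\delta_k)_{k \in \mathbb{Z}}$ is $\tau$-periodic. On the other hand, $\delta(a_0, a_\tau) = \tau^\ast$ since $g \cdot a_0 = a_\tau$ and $\gamma^+$ is an axis of translation length $\tau^\ast$, so one period sums to
$$\delta_0 + \delta_1 + \cdots + \delta_{\tau-1} = \tau^\ast.$$
Since every $\delta_k$ is non-negative, this immediately forces $\delta_k \leq \tau^\ast$ for each $k$.

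Putting the pieces together, for any $a = a_i$ and $b = a_j$ with $i \leq j$,
$$\delta(a,b) = \sum_{k=i}^{j-1} \delta_k \leq \tau^\ast \cdot (j-i) = \tau^\ast \cdot d(a,b),$$
which rearranges to the desired inequality. The only point that really needs care is justifying that the equivariant choice of $\delta_{C_k}$-geodesic inside each clique makes $(\delta_k)$ periodic; once this is in place (and it is a direct consequence of the $G$-invariance of $\mathscr{C}$ together with the equivariant construction of $\gamma^+$ mentioned in the proof), the argument is a one-line bookkeeping computation, so no substantial obstacle is expected.
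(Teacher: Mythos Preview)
Your proof is correct and follows essentially the same approach as the paper. The paper's argument is terser: it invokes Proposition~\ref{prop:Delta} (applied to the $d$-geodesic $\gamma$) to obtain $\delta(a,b)=\sum_{i=0}^{\ell-1}\delta_{C_i}(x_i,x_{i+1})$ and then writes $\leq \ell\tau^\ast$ without spelling out why each summand is at most $\tau^\ast$; your periodicity argument (the $\delta_k$ form a $\tau$-periodic sequence summing to $\tau^\ast$ over one period, by $G$-invariance of $\mathscr{C}$ and the equivariant construction of $\gamma^+$) is exactly the justification the paper is tacitly relying on.
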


\noindent
Let $x_0, \ldots, x_\ell$ denote the successive vertices of the subpath of $\gamma$ connecting $a$ and $b$. Note that $\ell = d(a,b)$. For every $0 \leq i \leq \ell-1$, let $C_i$ denote the clique containing $x_i$ and $x_{i+1}$. According to Proposition~\ref{prop:Delta}, 
$$\delta(a,b) = \sum\limits_{i=0}^{\ell-1} \delta_{C_i}(x_i,x_{i+1}) \leq \ell \tau^\ast = d(a,b) \tau^\ast,$$
concluding the proof of Claim~\ref{claim:DvsDelta}. 

\begin{claim}\label{claim:ManySeparatedHyp}
There exist an integer $L \geq 0$ and some hyperplane $J$ such that, up to replacing $g$ with one of its powers, the collection $\{g^kJ \mid k\in \mathbb{Z} \}$ consists of hyperplanes that cross $\gamma$ and that are pairwise $L$-well-separated relative to $\mathscr{C}$. 
\end{claim}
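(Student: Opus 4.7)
The plan is to exploit the hypothesised well-separated skewered pair $(H_1, H_2)$. By the definition of skewering, there are sectors $S_1 \supsetneq S_2$ delimited by $H_1, H_2$ such that $g^{n_0}S_1 \subsetneq S_2$ for some $n_0 \geq 1$. After replacing $g$ by $g^{n_0}$ (permitted by the statement), I may assume $gS_1 \subsetneq S_2 \subsetneq S_1$. Iterating yields the strictly nested chain of sectors
$$\cdots \supsetneq S_1 \supsetneq S_2 \supsetneq gS_1 \supsetneq gS_2 \supsetneq g^2S_1 \supsetneq \cdots$$
and a corresponding $g$-equivariant chain of pairwise non-transverse hyperplanes $(\ldots, H_1, H_2, gH_1, gH_2, g^2H_1, \ldots)$, with $H_2$ strictly between $H_1$ and $gH_1$, and more generally $H_2$ strictly between $H_1$ and $g^nH_1$ for every $n \geq 1$.

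Next, I would argue that $H_1$ crosses the axis $\gamma$. Tracking the orbit $(g^k x)_{k\in\mathbb{Z}}$ of a point $x \in \gamma$, the forward iterates must eventually enter every sector $g^kS_1$, while the backward iterates must eventually escape $S_1$; were it otherwise, $\gamma$ would be contained in the intersection $\bigcap_k g^k S_1$, and this intersection is too thin to contain a bi-infinite geodesic on which $g$ translates with positive length. Hence $\gamma$ has vertices both in $S_1$ and outside $S_1$, so $H_1$ crosses $\gamma$. By $g$-equivariance of $\gamma$, every translate $g^kH_1$ crosses $\gamma$ as well.

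I would then set $J := H_1$ and prove that $\{g^kJ\}_{k\in\mathbb{Z}}$ is pairwise $L$-well-separated with $L := L_0$, the well-separation constant for the pair $(H_1, H_2)$. By the $G$-equivariance of $\mathscr{C}$, this reduces to showing that $H_1$ and $g^nH_1$ are $L_0$-well-separated for every $n \geq 1$. The key point is a nesting-inheritance observation: any hyperplane $J'$ transverse to both $H_1$ and $g^nH_1$ is necessarily transverse to the intermediate $H_2$. Granting this, every facing-triple-free collection of hyperplanes transverse to $\{H_1, g^nH_1\}$ is already a facing-triple-free collection transverse to $\{H_1, H_2\}$, so its total thickness is bounded by $L_0$, as desired.

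The main obstacle is the nesting-inheritance step. Unlike in CAT(0) cube complexes, a hyperplane in a paraclique graph may have many sectors, and transversality requires meeting every one of them. To handle this, I would partition the sectors of $H_2$ into those lying on the $H_1$-side and those on the $g^nH_1$-side of $H_2$ (using that $H_2 \subset \overline{S_1}$ and $g^nH_1 \subset \overline{S_2}$), then use the convexity of sectors from Proposition~\ref{prop:HypCliqueGated} together with the transversality of $J'$ to $H_1$ and to $g^nH_1$ to verify that $J'$ meets every sector of $H_2$. This completes the argument.
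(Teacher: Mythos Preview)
Your approach is essentially the same as the paper's: both exploit the skewered $L$-well-separated pair to manufacture a $\langle g\rangle$-orbit of pairwise $L$-well-separated hyperplanes crossing the axis, via the ``nesting--inheritance'' principle that a hyperplane transverse to two members of a chain is transverse to everything between them. The paper phrases this slightly differently---it replaces $g$ by $g^2$ so that \emph{both} $A$ and $B$ sit between consecutive translates $g^{-1}A,\,gA$, whereas you keep $g$ and use only that $H_2$ sits between $H_1$ and $g^nH_1$---but this is cosmetic: each version reduces to the same inheritance fact, and your route avoids the extra power.

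The one place to tighten is your justification of the inheritance step. Your proposed ``partition the sectors of $H_2$ into the $H_1$-side and the $g^nH_1$-side'' does not obviously cover all sectors when $H_2$ delimits more than two: only the sector containing $N(H_1)$ and the sector containing $N(g^nH_1)$ are accounted for, and a third sector $D$ of $H_2$ lies on neither side in your sense. The clean argument (available because by this point in the proof $X$ has been replaced by its quasi-median closure) is: carriers of hyperplanes are gated, hence convex; pick $v\in N(J')\cap N(H_1)$ and $w\in N(J')\cap N(g^nH_1)$ (nonempty since transversality of hyperplanes in a quasi-median graph is equivalent to intersection of carriers); a geodesic from $v$ to $w$ stays in $N(J')$ by convexity and must cross $H_2$ since $N(H_1)$ and $N(g^nH_1)$ lie in distinct sectors of $H_2$; hence $N(J')\cap N(H_2)\neq\emptyset$ and $J'$ is transverse to $H_2$. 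The paper does not spell this out either, so your level of detail matches, but your partition heuristic as written does not survive the multi-sector case.
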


\noindent
By hypothesis, there is an integer $L \geq 0$ and there are two hyperplanes $A$ and $B$ that are $L$-well-separated. Then, up to replacing $g$ with one of its powers, $gA^+ \subsetneq B^+ \subsetneq A^+$ for some sectors $A^+, B^+$ delimited by $A,B$. Notice that 
$$\ldots, g^{-1}A, g^{-1}B, A,B, gA, gB, \ldots$$
defines a chain of hyperplanes that must all cross $\gamma$. Moreover, because $A$ and $B$ both separate $g^{-1}A$ and $gA$, necessarily $g^{-1}A$ and $gA$ are $L$-well-separated relative to $\mathscr{C}$. So $\{ g^{2k}A , k \in \mathbb{Z}\}$ is a collection of hyperplanes that cross $\gamma$ and are pairwise $L$-well-separated relative to $\mathscr{C}$. This concludes the proof of Claim~\ref{claim:ManySeparatedHyp}. 

\medskip \noindent
We fix $L$ and the hyperplane $J$ given by Claim~\ref{claim:ManySeparatedHyp}, and let $J_k:= g^k J$ for $k \in \mathbb{Z}$. Then $(\ldots, J_{-1},J_0,J_1,\ldots)$ is a chain of hyperplanes that are pairwise $L$-well-separated relative to $\mathscr{C}$ and all the $J_k$ cross $\gamma$ along edges at distances $\tau$ in $(X,d)$ and $\tau^\ast$ in $(X,\delta)$.

\medskip \noindent
Fix two vertices $x,y \in X$ and let $a,b$ be two nearest points to $x$ and $y$ on $\gamma^+$, respectively. Assume
$$\delta(a,b) > \tau^\ast \tau ( 6\tau^++4L+3).$$
We prove below that the ball centred at $x$ of radius $\delta(x,y)$, in $(X,\delta)$, intersects $\gamma^+$. This will be enough to conclude that $\gamma^+$ is contracting in $(X,\delta)$. 

\medskip \noindent
As a consequence of Claim~\ref{claim:DvsDelta}, there must exist more than $6\tau^++4L+3$ of the $J_k$'s that separate $a$ and $b$. Up to reindexing the hyperplanes, assume that $J_1, \ldots, J_m$ separate $a$ and $b$, where $m=6\tau^++4L+3$. 

\begin{claim}\label{claim:MidHypOne}
If $r> \tau^* +L$, then $J_r$ does not separate $a$ and $x$. Similarly, if $r< m- \tau^*-L$, then $J_r$ does not separate $b$ and $y$.
\end{claim}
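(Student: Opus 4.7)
The plan is to argue by contradiction: suppose $J_r$ separates $a$ from $x$ with $r > \tau^{\ast} + L$; I will produce a vertex $p \in \gamma^+$ with $\delta(p,x) < \delta(a,x)$, contradicting the choice of $a$ as a nearest point of $\gamma^+$ to $x$.

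I take $p$ to be the first vertex of $\gamma^+$, traversing from $a$ toward $b$, that lies on the $x$-side of $J_r$. Since each $J_i$ with $i < r$ is entirely contained in the $a$-sector of $J_r$, the complement of that $a$-sector lies in a single non-$a$ sector of $J_i$ (nothing in that complement can be disconnected from $J_i$ without crossing $J_i$, which lives only inside the $a$-sector of $J_r$). Both $x$ and $p$ lie in this complement, hence in a common sector of $J_i$, so each $J_i$ with $i < r$ separates $a$ from $x$ but not $p$ from $x$; the hyperplane $J_r$ itself separates $a$ from $x$, and separates $p$ from $x$ only in the non-bipartite case when $p$ and $x$ happen to occupy distinct sectors of $J_r$.

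I then compare $\delta(a,x)$ and $\delta(p,x)$ term-by-term using the hyperplane decomposition of Proposition~\ref{prop:Delta}. The $r-1$ chain hyperplanes $J_1, \ldots, J_{r-1}$ each contribute at least $1$ to $\delta(a,x) - \delta(p,x)$, giving a positive gain of at least $r-1$. The only chain hyperplane that can contribute negatively is $J_r$, and only in the non-bipartite case; its contribution is bounded from below by $-\mathrm{thick}(J_r) \ge -\tau^{\ast}$. Any non-chain hyperplane $H$ contributing negatively must separate $a$ from $p$, hence must cross $\gamma^+$ strictly between $a$ and $p$; since both $H$ and every $J_i$ with $i \le r$ cross the common geodesic $\gamma^+$, a standard four-corner argument in paraclique graphs forces $H$ to be transverse to each $J_i$, and in particular to both $J_1$ and $J_r$. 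After extracting a sub-collection with no facing triple (which costs only a bounded constant overhead), the $L$-well-separation of $J_1, J_r$ bounds $\sum_H \mathrm{thick}(H) \le L$, and hence the total negative contribution of these non-chain hyperplanes by $L$.

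Combining, $\delta(a,x) - \delta(p,x) \ge (r-1) - \tau^{\ast} - L$, which is strictly positive once $r > \tau^{\ast} + L$ (up to a harmless integer adjustment), yielding the required contradiction. The second assertion follows by the same argument after reversing the chain and swapping $(a,x)$ with $(b,y)$. The main obstacle I anticipate is cleanly separating the ambiguous role of $J_r$ in the non-bipartite setting, and handling facing triples in the transverse collection carefully enough to recover exactly the threshold $\tau^{\ast} + L$ rather than a larger multiple of it.
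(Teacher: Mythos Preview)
Your overall plan coincides with the paper's: pick the vertex $a'$ on $\gamma^+$ just past $J_r$, compare $\delta(x,a)$ with $\delta(x,a')$ via the hyperplane decomposition of Proposition~\ref{prop:Delta}, and use that $a$ is a nearest point. Your positive side of the ledger is also right: the chain hyperplanes $J_1,\ldots,J_{r-1}$ separate $a$ from $x$ but not $a'$ from $x$, so they contribute at least $r-1$.

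The gap is in your control of the negative contributions. You claim that any non-chain hyperplane $H$ with $\delta_H(p,x)>\delta_H(a,x)$ is transverse to every $J_i$ with $i\le r$, by a ``four-corner argument'' coming from the fact that $H$ and the $J_i$ all cross $\gamma^+$. This is false: two hyperplanes crossing a common geodesic need not be transverse (already $J_1$ and $J_2$ give a counterexample). More concretely, if $H$ crosses $\gamma^+$ between $J_k$ and $J_{k+1}$, then for $i\le k$ the only sectors of $H$ and $J_i$ you can populate from $a,x,p$ and the path $\gamma^+$ are three of the four corners; nothing forces the sector of $H$ containing $p$ to meet the sector of $J_i$ containing $a$. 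So you cannot invoke well-separation of $J_1$ and $J_r$.

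The paper repairs this by arguing \emph{locally} at $J_r$. Because $a'$ lies in the carrier $N(J_r)$, every hyperplane $H$ separating $a'$ from $\{a,x\}$ is transverse to $J_r$. One then splits: either $H$ crosses $\gamma^+$ in the segment between $J_{r-1}$ and $J_r$, where the total $\delta$-length is $\tau^\ast$, or $H$ is transverse to $J_{r-1}$ as well, and the $L$-well-separation of the \emph{adjacent} pair $J_{r-1},J_r$ bounds the total thickness by $L$. This yields $\delta(x,a')\le \delta(x,a)-r+\tau^\ast+L$, hence $r\le \tau^\ast+L$. If you route your estimate through $J_{r-1}$ and $J_r$ rather than $J_1$ and $J_r$, your argument goes through with essentially the paper's bound. (A minor side remark: your definition of $p$ as the first vertex ``on the $x$-side of $J_r$'' is ill-posed when $J_r$ has more than two sectors and $\gamma^+$ enters a sector different from the one containing $x$; the paper's choice of the vertex of $\gamma^+$ adjacent to $J_r$ and separated from $a$ avoids this.)
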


\noindent
The two assertions being symmetric, we only prove the first one. Assume that $J_r$ separates $a$ and $x$ for some $r$. Let $a' \in \gamma^+$ denote the vertex adjacent to $J_r$ that is separated from $a$ by $J_r$. By the definition of $a$, we must have $\delta(x,a') \geq \delta(x,a)$. 

\begin{center}
\includegraphics[width=0.35\linewidth]{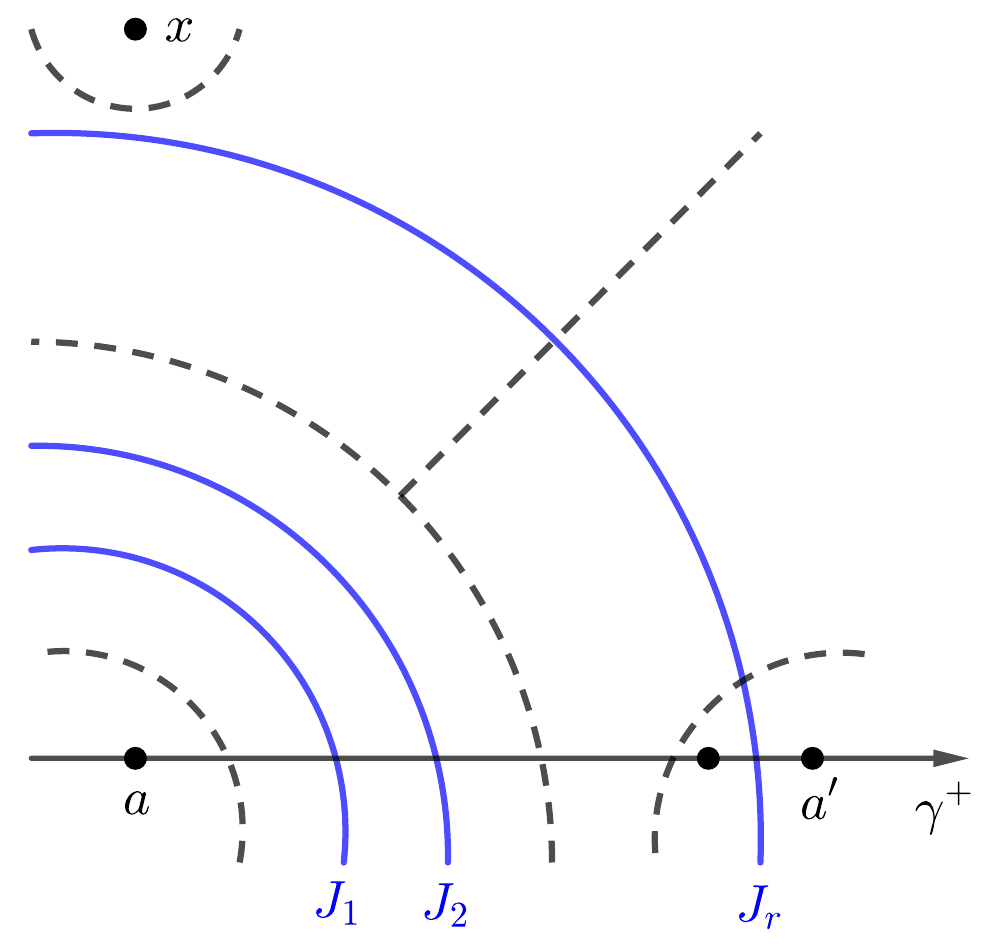}
\end{center}

\noindent
On one hand, we have
$$\delta(x,a)= \sum\limits_{J \text{ sep. $x$ from $\{a,a'\}$}} \delta_J(x,a) + \sum\limits_{J \text{ sep. $a$ from $\{x,a'\}$}} \delta_J(x,a) + \sum\limits_{J \text{ sep. $x,a,a'$}} \delta_J(x,a).$$
At least $r$ hyperplanes, $J_1, \ldots, J_r$, separate $x$ and $a$, and also cross $\gamma^+$, so it follows that
\begin{equation}\label{eq:One}
\delta(x,a) \geq \sum\limits_{J \text{ sep. $x$ from $\{a,a'\}$}} \delta_J(x,a) +r.
\end{equation}
On the other hand, we have
$$\delta(x,a')= \sum\limits_{J \text{ sep. $x$ from $\{a,a'\}$}} \delta_J(x,a') + \sum\limits_{J \text{ sep. $a'$ from $\{x,a\}$}} \delta_J(x,a') + \sum\limits_{J \text{ sep. $x,a,a'$}} \delta_J(x,a').$$
Notice that, for every hyperplane $J$ separating $x$ from $\{a,a'\}$, we have $\delta_J(x,a)=\delta_J(x,a')$. Also, every hyperplane separating $a'$ from $a$ and $x$ must be transverse to $J_r$. For such a hyperplane, there are two possibilities: either it crosses $\gamma^+$ between $J_r$ and $J_{r-1}$; or it is transverse to both $J_{r-1}$ and $J_r$. Therefore,
$$ \sum\limits_{J \text{ sep. $a'$ from $\{x,a\}$}} \delta_J(x,a') + \sum\limits_{J \text{ sep. $x,a,a'$}} \delta_J(x,a') \leq \tau^\ast +L.$$
So far, we have proved that
\begin{equation}\label{eq:Two}
\delta(x,a') \leq \sum\limits_{J \text{ sep. $x$ from $\{a,a'\}$}} \delta_J(x,a) + \tau^\ast +L
\end{equation}
By combining the inequalities (\ref{eq:One}) and (\ref{eq:Two}), we deduce from $\delta(x,a') \geq \delta(x,a)$ that $r \leq \tau^\ast +L$. This concludes the proof of Claim~\ref{claim:MidHypOne}. 

\medskip \noindent
We established that $J_1, \ldots, J_m$ separate $a$ and $b$. Among these hyperplanes, some do not separate $x$ and $y$ by Claim \ref{claim:MidHypOne}. By removing those from consideration and setting $s:=\tau^++L+2$, we get that $J_{s-1},J_s, \ldots, J_{m-s+1}$ do separate $x$ and $y$. Let us exploit this observation to conclude that
$$\delta(x,\gamma^+) < \delta(x,y).$$
For this, fix a vertex $z$ between $J_{s}$ and $J_{s+1}$ that belongs to a geodesic in $(X,\delta)$ connecting $x$ to $y$. Also, fix a vertex $c \in \gamma^+$ between $J_{s}$ and $J_{s+1}$. Finally, let $c^-,c^+ \in \gamma^+$ denote the two vertices at minimal distance that are separated by $J_{s-1}$ and $J_{s+2}$. 

\begin{center}
\includegraphics[width=0.6\linewidth]{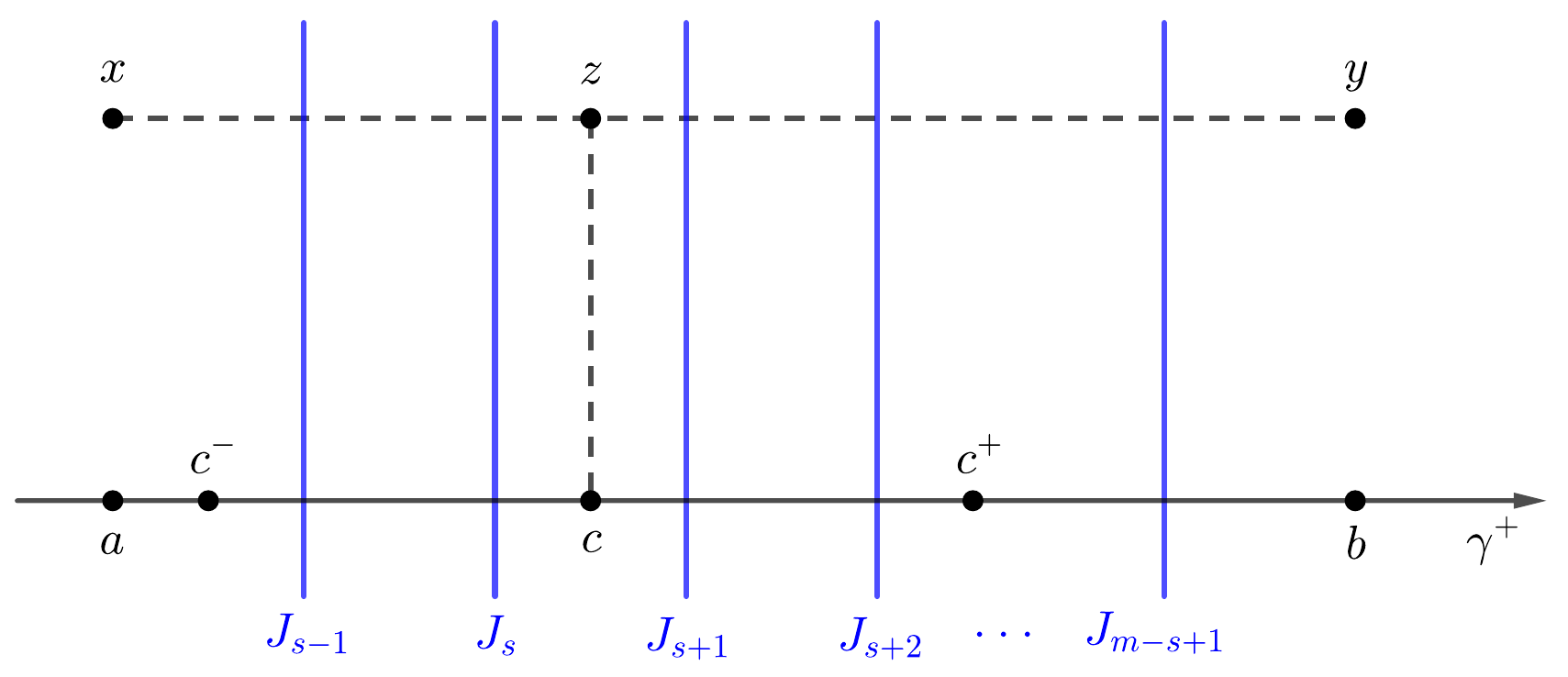}
\end{center}

\noindent
Notice that a hyperplane separating $z$ and $c$ must either be transverse to $J_s$ and $J_{s-1}$, or be transverse to $J_{s+1}$ and $J_{s+2}$, or cross $\gamma^+$ between $J_{s-1}$ and $J_{s+2}$. Therefore, because $J_s$ and $J_{s-1}$, as well as $J_{s+1}$ and $J_{s+2}$, are $L$-well-separated relative to $\mathscr{C}$, we have
$$\delta(c,z) =\sum\limits_{J \text{ sep. $c$ and $z$}} \delta_J(c,z) \leq 2L+ \delta(c^-,c^+) \leq 2(L+2\tau^\ast).$$
But, by definition of $z$, $J_{s+1},\ldots, J_{m-s+1}$ all separate $z$ and $y$, hence
$$\delta(z,y) \geq m-2s+1> 2(L+2 \tau^+) \geq \delta(c,z).$$
We conclude that 
$$\delta(x,\gamma^+) \leq \delta(x,c) \leq \delta(x,z)+\delta(z,c) < \delta(x,z)+\delta(z,y)= \delta(x,y),$$
which concludes the proof of our theorem.
\end{proof}

\noindent
Theorem~\ref{thm:Contracting} applies to paraclique graphs themselves with the path/graph metric $d$, since the metric $\delta$ obtained by endowing every clique of a paraclique graph with the discrete metric coincides with the graph metric $d$. More explicitly:

\begin{cor}\label{cor:contract}
Let $G$ be a group acting on a paraclique graph $X$. If $g \in G$ admits an axis $\gamma$ in $X$ and $\gamma$ intersects two well-separated hyperplanes, then $g$ is contracting in $X$. 
\end{cor}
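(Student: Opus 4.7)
The plan is to reduce Corollary~\ref{cor:contract} to Theorem~\ref{thm:Contracting} by choosing on $X$ the system of metrics $\mathscr{C} = \{(C, \delta_C) \mid C \text{ clique of } X\}$ in which every $\delta_C$ is the discrete metric (so $\delta_C(x,y) = 1$ whenever $x \neq y$). First I would verify that $\mathscr{C}$ is coherent: this is immediate, since the bijection $\mathrm{proj}_{C_2} : C_1 \to C_2$ between any two parallel cliques tautologically preserves the discrete metric. $G$-invariance is equally immediate, because $G$ acts on $X$ by graph isometries, hence permutes cliques bijectively. Then a direct inspection of Definition~\ref{def:delta} shows that the extended metric $\delta$ coincides with the graph metric $d$: the infimum defining $\delta(x,y)$ is realised by any shortest path, and each clique-step along such a path contributes exactly $1$ to the sum.

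The core step is then to verify the hypothesis of Theorem~\ref{thm:Contracting}, i.e.\ that $g$ skewers a pair of hyperplanes well-separated relative to $\mathscr{C}$. I would use the two well-separated hyperplanes $H_1, H_2$ that $\gamma$ crosses, ordered along the positive direction of $\gamma$. Taking $S_1$ and $S_2$ to be, respectively, the sectors of $H_1$ and $H_2$ containing the positive tail of $\gamma$, the facts that $\gamma$ is a geodesic crossing each hyperplane at most once (Proposition~\ref{prop:ParaGeod}) and that sectors are convex in $X$ (Proposition~\ref{prop:HypCliqueGated}) yield $S_1 \supsetneq S_2$: the vertex of $\gamma$ sitting between the crossings with $H_1$ and $H_2$ lies in $S_1$ but not in $S_2$. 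Since $g$ acts on $\gamma$ as a non-trivial positive translation, for $n$ sufficiently large the hyperplane $g^n H_1$ crosses $\gamma$ strictly past $H_2$, which forces $g^n S_1 \subsetneq S_2$. Hence $g$ skewers the well-separated pair $(H_1, H_2)$.

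Applying Theorem~\ref{thm:Contracting} to $(X,\delta) = (X,d)$ then yields that $g$ is contracting in $X$, which is the conclusion of the corollary. The main obstacle I anticipate is the skewering step: extracting the strict nestings $S_1 \supsetneq S_2$ and $g^n S_1 \subsetneq S_2$ requires a careful analysis of how $H_2$ sits with respect to the sectors of $H_1$ (and similarly for $g^n H_1$), and one has to use both the convexity properties established in Section~\ref{section:Paraclique} and the fact that $\gamma$ is a $g$-axis to conclude that $g^n H_1$ eventually ends up on the far side of $H_2$ along $\gamma$.
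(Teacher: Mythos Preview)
Your approach is exactly the paper's: endow each clique with the discrete metric so that the extended metric $\delta$ coincides with the graph metric $d$, and apply Theorem~\ref{thm:Contracting}. The paper leaves the passage from ``$\gamma$ intersects two well-separated hyperplanes'' to ``$g$ skewers the pair'' entirely implicit, so your explicit treatment of that step (via Propositions~\ref{prop:ParaGeod} and~\ref{prop:HypCliqueGated}) is in fact more thorough than the original.
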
 

\noindent
Here, two well-separated hyperplanes in a paraclique graph correspond to two hyperplanes $H$ and $K$ such that, for some $L \geq 0$, every collection of hyperplanes transverse to both $H,K$ and with no facing triple has cardinality $\leq L$. This is compatible with Definition~\ref{def:WellSeparated}. In practice, we will verify the stronger property that only finitely many hyperplanes are transverse to our two hyperplanes $H$ and $K$.

\section{First applications}\label{section:FirstApplications}

\noindent
In this section, we focus our attention on two important classes of groups: Coxeter groups and graph products of groups. These specific cases will allow us to generalise our results to arbitrary periagroups in Section~\ref{section:periagroups}.

\subsection{Coxeter groups}\label{section:Coxeter}

\noindent
As a first application of Theorem~\ref{thm:Contracting}, we determine when a Coxeter group contains a contracting element with respect to its canonical Cayley graph. 

\begin{thm}\label{thm:CoxeterContracting}
A finitely generated Coxeter group $(W,S)$ has a contracting element with respect to its Cayley graph $\mathrm{Cay}(W,S)$, where $S$ is the standard generating set, if and only if it decomposes as a product $W_1 \times \cdots \times W_n$ of irreducible Coxeter groups such that:
\begin{itemize}	
	\item[(i)] $W_2, \ldots, W_n$ are all finite, and
	\item[(ii)] $W_1$ is either non-affine or an infinite dihedral group.
\end{itemize}
\end{thm}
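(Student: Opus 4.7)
The plan is to apply Corollary~\ref{cor:contract} in the setting of the paraclique graph $\mathrm{Cay}(W,S)$, whose bipartite mediangle structure is recorded in \cite{Mediangle}. In this Cayley graph every clique is a single edge, so the discrete system of metrics on cliques reproduces the graph metric, and the hypothesis to check is purely combinatorial: produce a hyperbolic element with an axis that skewers two walls which are well-separated in the sense of Definition~\ref{def:WellSeparated}, i.e.\ such that only finitely many walls are transverse to both.

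For the necessity direction, let $W = W_1\times\cdots\times W_n$ be the decomposition into irreducible factors. The Cayley graph $\mathrm{Cay}(W,S)$ is the Cartesian product of the Cayley graphs $\mathrm{Cay}(W_i,S_i)$; in a Cartesian product of two unbounded graphs no orbit can be contracting, since a ball sitting deep in a second unbounded factor and elongated along the first yields nearest-point projections of unbounded diameter. This forces at most one factor $W_1$ to be infinite. The theorem of \cite{MR3415065} then forces any group admitting a contracting element on a proper metric space to be virtually cyclic or acylindrically hyperbolic; but irreducible affine Coxeter groups other than the infinite dihedral group are virtually $\mathbb{Z}^k$ with $k\geq 2$, and fall into neither class. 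This rules out any affine irreducible factor apart from the infinite dihedral group.

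For sufficiency, if $W_1$ is the infinite dihedral group and the remaining factors are finite, then $\mathrm{Cay}(W,S)$ is a bi-infinite line (with some finite decoration coming from the finite factors) and any infinite-order element of $W_1$ translates along the line, giving a contracting axis directly. The substantive case is $W_1$ infinite, irreducible, and non-affine. Here I would exhibit two walls $H_1, H_2$ of $\mathrm{Cay}(W_1,S_1)$ with only finitely many common transverse walls, together with a hyperbolic element $g \in W_1$ whose axis crosses both. Such a pair of strongly separated walls, together with an element of $W_1$ translating one into the other, is exactly the input in Caprace--Fujiwara's characterisation \cite{MR2585575} of when the Davis complex of $W_1$ admits a rank-one isometry, and is built there from the theory of essential roots in the sense of Krammer. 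Taking a sufficiently high power of $g$ ensures skewering. Adjoining finite factors on the right only contributes finitely many new transverse walls, so well-separatedness is preserved, and Corollary~\ref{cor:contract} then produces a contracting element.

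The main obstacle is the dictionary between walls in the Davis complex, where rank-one isometries and strong separation are formulated in \cite{MR2585575}, and hyperplanes of the paraclique graph $\mathrm{Cay}(W,S)$ in the sense of Definition~\ref{def:hyper}. Transversality of mediangle hyperplanes in $\mathrm{Cay}(W,S)$ must be shown to correspond precisely to Coxeter-theoretic crossing of walls, and finiteness of the set of common transverse walls must transfer cleanly between the two pictures; the bipartite mediangle structure from \cite{Mediangle} is exactly the tool that makes this dictionary work, but verifying it is the technical heart of the argument.
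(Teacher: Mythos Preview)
Your overall strategy matches the paper's: reduce to the irreducible non-affine case, invoke Caprace--Fujiwara \cite{MR2585575}, and then apply Corollary~\ref{cor:contract}. The necessity direction is fine. But there is a genuine gap in the sufficiency direction, and you have misidentified where the real difficulty lies.

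You assert that a pair of walls with only finitely many common transverse walls ``is built [in \cite{MR2585575}] from the theory of essential roots.'' This is not what that paper provides. What \cite{MR2585575} gives you is a rank-one isometry of the Davis complex $D(W)$, equivalently a contracting element for the CAT(0) metric on $D(W)$. It does \emph{not} hand you two walls whose set of common transverse walls is finite; extracting such a pair from a rank-one element is precisely the work that needs to be done, and it is not automatic because $D(W)$ is not a CAT(0) cube complex in general (so the usual rank-one $\Leftrightarrow$ strongly-separated-hyperplanes dictionary from cubical geometry is unavailable).

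The paper fills this gap with a CAT(0) argument. Starting from a contracting CAT(0) axis $\gamma$ for $g$ in $D(W)$, it first proves a projection lemma (Lemma~\ref{lem:ContractingQuadrangle}) saying that any segment with long nearest-point projection onto $\gamma$ stays uniformly close to $\gamma$. Using this, it finds a wall $J$ crossing $\gamma$ whose projection onto $\gamma$ is bounded (this requires ruling out walls stabilised by powers of $g$, via a counting argument on walls meeting a ball). Then for $r$ large enough, any wall crossing both $J$ and $g^rJ$ must, again by the projection lemma, meet a fixed ball of bounded radius; local finiteness of the wall structure then forces only finitely many such walls. This is the substantive step, and your proposal skips it.

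By contrast, the issue you flag as the ``main obstacle''---the dictionary between Davis-complex walls and hyperplanes of $\mathrm{Cay}(W,S)$---is essentially routine: the paper notes in passing that there is a natural bijection sending a wall to the set of Cayley-graph edges it crosses, and transversality transfers directly. That is not where the work is.
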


\noindent
In this section, we assume some familiarity with Davis complexes associated to Coxeter groups. Indeed, any Coxeter group $(W,S)$ acts geometrically on its so-called Davis complex $D(W)$ (\cite{MR2360474}), which is CAT(0), and, if a coarser cellulation is used for $D(W)$, then $\mathrm{Cay}(W,S)$ can be seen as the $1$-skeleton of $D(W)$. Moreover, $\mathrm{Cay}(W,S)$ is mediangle by \cite{Mediangle}, and therefore paraclique; in particular, $\mathrm{Cay}(W,S)$ is endowed with a structure of hyperplanes, and there is a natural bijection between the walls in $D(W)$ and the hyperplanes in $\mathrm{Cay}(W,S)$, which sends a wall in $D(W)$ to the set of the edges in $\mathrm{Cay}(W,S)$ that it crosses (see \cite{MR2360474, MR1983376} for more information). 

\medskip \noindent
For short, our strategy to prove Theorem~\ref{thm:CoxeterContracting} will use the fact, provided by By \cite{MR2585575}, that $W$ contains a rank-one element in $D(W)$, or equivalently, a contracting element for $D(W)$ \cite{MR2507218}. Our goal is to show that contracting elements in $D(W)$ are also contracting in $\mathrm{Cay}(W,S)$. This will be the core of the proof for Theorem \ref{thm:CoxeterContracting}.

\medskip \noindent
The following observation will be used in the proof of Theorem \ref{thm:CoxeterContracting}. It essentially states that, if a geodesic segment has a `long' projection on a contracting geodesic in a CAT(0) space, then the segment and its projection are `close' to each other. We denote by $[x,y]$ a geodesic segment between two points $x$ and $y$.

\begin{lemma}\label{lem:ContractingQuadrangle}
Let $(X,d)$ be a CAT(0) space and $\gamma$ a $B$-contracting geodesic. There exists a constant $\Delta \geq 0$ such that the following holds. For any $x,y \in X$, if the projections $x',y'$ of $x,y$ on $\gamma$ lie at distance $>\Delta$, then $[x',y'] \subset [x,y]^{+\Delta}$, that is, $[x',y']$ lies in the (Hausdorff) $\Delta$-neighbourhood of $[x,y]$. 
\end{lemma}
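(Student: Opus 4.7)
The plan is to form a quasi-geodesic from $x$ to $y$ that contains $[x', y']$ as a sub-arc, and then to invoke a Morse-type stability result for the contracting geodesic $\gamma$. Let $P := [x, x'] \cup [x', y'] \cup [y', y]$. A standard CAT(0) fact is that the nearest-point projection onto a convex subset is orthogonal, so the geodesic $[x, x']$ projects entirely to the single point $x'$, and similarly $[y, y']$ projects to $y'$; hence $\pi_\gamma(P) = [x', y']$.

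To see that $P$ is a quasi-geodesic with constants independent of $x$ and $y$, I would use the CAT(0) perpendicularity inequality twice. For $a \in [x, x']$ and $b \in [y, y']$, perpendicularity at $x'$ (using $\pi_\gamma(a) = x'$ and $y' \in \gamma$) yields $d(a, y')^2 \geq d(a, x')^2 + d(x', y')^2$, and perpendicularity at $y'$ (using $\pi_\gamma(b) = y'$) yields $d(a, b)^2 \geq d(a, y')^2 + d(y', b)^2$. Chaining these,
\[ d(a, b)^2 \geq d(a, x')^2 + d(x', y')^2 + d(y', b)^2, \]
so $d(a, b) \geq (d(a, x') + d(x', y') + d(y', b))/\sqrt{3}$. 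The right-hand sum is exactly the length of the sub-path of $P$ from $a$ to $b$, so $P$ is a $(\sqrt{3}, 0)$-quasi-geodesic; the remaining sub-path configurations are handled similarly and are even better.

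Finally, since $\gamma$ is $B$-contracting, a Morse-type stability property for quasi-geodesics that traverse long portions of a contracting geodesic yields a constant $\Delta = \Delta(B)$ such that, provided $d(x', y') > \Delta$, the quasi-geodesic $P$ lies in the $\Delta$-neighbourhood of any geodesic from $x$ to $y$. Since $[x', y'] \subset P$, this gives $[x', y'] \subset [x, y]^{+\Delta}$ as required. The main obstacle I foresee is this last step: obtaining a bound $\Delta$ that is genuinely uniform in $x$ and $y$. A direct approach---picking $p \in [x', y']$ with a distant preimage $q \in [x, y]$ under the 1-Lipschitz projection $\pi_\gamma$ and using that the open ball $B(q, d(q, \gamma))$ avoids $\gamma$ and therefore has projection of diameter $\leq B$---only yields a bound in terms of $d(x, y) - d(x', y')$, which is in turn proportional to $d(x, x') + d(y, y')$ and hence unbounded. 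The contracting property must be leveraged more carefully, typically by iterating the projection estimate at nested scales around $q$, or by comparing $P$ directly to a geodesic between its endpoints using that a long portion of $P$ sits on $\gamma$.
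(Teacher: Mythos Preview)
Your second perpendicularity inequality is false. You claim that $\pi_\gamma(b)=y'$ gives $d(a,b)^2 \geq d(a,y')^2 + d(y',b)^2$, but the projection condition only forces $\angle_{y'}(b,p)\geq \pi/2$ for $p\in\gamma$, and $a$ is not on $\gamma$. Concretely, take $X=\mathbb{R}^2$, $\gamma$ the $x$-axis, $x'=(0,0)$, $y'=(L,0)$, $a=(0,h)$, $b=(L,h)$. Then $d(a,b)=L$ while $d(a,x')^2+d(x',y')^2+d(y',b)^2=L^2+2h^2$, so your chained inequality fails for any $h>0$. In this same example the path $P$ has length $L+2h$ between $a$ and $b$ while $d(a,b)=L$, so $P$ is not a $(\sqrt{3},0)$-quasi-geodesic, and indeed not a $(K,C)$-quasi-geodesic for any constants independent of $h$. (Of course the $x$-axis in $\mathbb{R}^2$ is not contracting, but your argument for the quasi-geodesic constants never used the contracting hypothesis, so the counterexample is legitimate against that step.)

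This means the obstacle you flag in your final paragraph is not the only one: even before invoking Morse stability, the object $P$ you want to feed into it does not have the claimed uniform quality. The paper sidesteps the quasi-geodesic route entirely. It quotes \cite[Corollary~3.4]{MR2507218} to find $z\in[x,y]$ and $z'\in[x',y']$ with $d(z,z')$ bounded in terms of $B$ alone, then uses CAT(0) convexity to replace $[x,z]$ by $[x,z']$ up to controlled error, and finally quotes \cite[Lemma~3.5]{MR2507218} to find $x''\in[x,z']$ close to $x'$; convexity again puts $[x',z']$ near $[x'',z']\subset[x,z']$. The point is that these cited results already encode the iterated projection estimate you gesture at in your last sentence, and they give the uniform constant directly.
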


\begin{proof}
We assume that $d(x',y')$ is sufficiently large compared to the contracting constant of $\gamma$. According to \cite[Corollary~3.4]{MR2507218}, there exist two points $z \in [x,y]$ and $z' \in [x',y']$ such that $d(z,z')$ is bounded above by a constant depending only on the contracting constant of $\gamma$. Because a CAT(0) metric is convex, the geodesic $[x,z']$ (resp.\ $[y,z']$) is contained in a controlled neighbourhood of $[x,z]$ (resp.\ $[y,z]$). Therefore, it suffices to show that $[x',y']$ is contained in a controlled neighbourhood of $[x,z'] \cup [z',y]$. According to \cite[Lemma~3.5]{MR2507218}, there exists $x'' \in [x,z']$ such that $d(x',x'')$ is bounded above by a constant depending only on the contracting constant of $\gamma$. Since a CAT(0) metric is convex, $[x',z']$ is contained in a controlled neighbourhood of $[x'',z']$, and a fortiori in a controlled neighbourhood of $[x,y]$. The same holds for $[y',z']$. This concludes the proof of our lemma.
\end{proof}

\begin{proof}[Proof of Theorem~\ref{thm:CoxeterContracting}.]
First, assume that $W$ contains a contracting element with respect to its Cayley graph. Then $W$ must be infinite and it cannot split as a direct product of two infinite groups. Without loss of generality we may decompose $W$ as a direct product $W_1 \times \cdots \times W_n$ of irreducible Coxeter groups, where $W_1$ is infinite and $W_2, \ldots, W_n$ are all finite. A sufficiently large power of a contracting element will belong to $W_1$, so $W_1$ has to contain a contracting element. Since a group that is virtually free abelian of rank $\geq 2$ cannot contain a contracting element, it follows that, if $W_1$ is affine (virtually abelian), then it must be virtually infinite cyclic. But the only irreducible Coxeter group that is virtually infinite cyclic is the infinite dihedral group.

\medskip \noindent
Conversely, assume that $W$ decomposes as a product $W_1 \times \cdots \times W_n$ of irreducible Coxeter groups such that $W_2, \ldots, W_n$ are all finite and such that $W_1$ is either non-affine or an infinite dihedral group. In order to conclude that $W$ contains a contracting element, it suffices to show that $W_1$ contains a contracting element. Therefore we may assume from now on that $W$ is irreducible and either non-affine or an infinite dihedral group. In fact, since an infinite dihedral group clearly contains a contracting element, we will assume that $W$ is non-affine. 

\medskip \noindent 
Let $D(W)$ be the Davis complex for $(W,S)$, which is CAT(0) (see Remark \ref{rem:Davis}). Fix an element $g \in W$ contracting in $D(W)$ and let $\gamma \subset D(W)$ be a CAT(0) axis of $g$. Let $\tau$ denote the translation length of $g$ along $\gamma$.

\medskip \noindent 
Let $\Delta$ be the constant from Lemma~\ref{lem:ContractingQuadrangle}, and let $N$ denote the maximal number of walls intersecting a ball of radius $\Delta$ in $D(W)$. Such finite $N$ exists because, in $D(W)$, only finitely many walls may intersect a ball of radius $\Delta$: a ball contains a finite number of edges (depending only on the radius of the ball) and there exists a unique wall crossing a given edge.

\begin{claim}\label{claim:OneGoodWall}
There exists a wall $J$ in $D(W)$ that intersects $\gamma$ and whose CAT(0) projection on $\gamma$ is bounded, of diameter $< \max(2N \tau, \Delta)$.
\end{claim}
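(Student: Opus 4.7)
The strategy is to argue by contradiction. Suppose every wall of $D(W)$ crossing $\gamma$ has $\pi_\gamma$-image of diameter $\geq \max(2N\tau, \Delta)$; I aim to exhibit a CAT(0) ball of radius $\Delta$ in $D(W)$ meeting more than $N$ walls, contradicting the very definition of $N$.

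Pick an arbitrary wall $J_0$ crossing $\gamma$, say at a point $p_0$. By the contradictory hypothesis (up to an arbitrarily small loss if the supremum is not attained), one can select $x, y \in J_0$ whose nearest-point projections $x', y'$ on $\gamma$ satisfy $d(x',y') > \Delta$ and $d(x',y') \geq 2N\tau$. Since walls in the Davis complex are CAT(0)-convex, the CAT(0) geodesic $[x,y]$ lies entirely in $J_0$, and Lemma~\ref{lem:ContractingQuadrangle} then yields $[x',y'] \subset [x,y]^{+\Delta} \subset J_0^{+\Delta}$. Thus there is an arc $I \subset \gamma$ of length $\geq 2N\tau$ contained in the $\Delta$-neighbourhood of $J_0$.

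Next I would use $g$ to manufacture many walls. For every $n \in \mathbb{Z}$ the wall $g^n J_0$ crosses $\gamma$ at the distinct point $\gamma(n\tau)$, and by equivariance of the CAT(0) projection the arc $g^n I$ is just the shift of $I$ by $n\tau$ along $\gamma$, lying in the $\Delta$-neighbourhood of $g^n J_0$. A short parameter computation, using that $|I| \geq 2N\tau$, shows that the midpoint $m$ of $I$ also lies in $g^n I$ for every integer $n$ with $|n| \leq N$. Consequently $m$ is within CAT(0) distance $\Delta$ of each of the $2N+1$ pairwise distinct walls $g^{-N} J_0, \ldots, g^{N} J_0$, so the closed ball $\overline{B}(m, \Delta)$ meets strictly more than $N$ walls, contradicting the definition of $N$.

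The main subtlety I anticipate is the boundary case in which $\mathrm{diam}(\pi_\gamma(J_0))$ equals $\max(2N\tau, \Delta)$ without the supremum being realised: extracting $x, y$ with $d(x',y')$ strictly greater than $\Delta$ then requires a small $\varepsilon$-perturbation, or equivalently one proves the claim with $\leq$ and then tightens to $<$; this is harmless, since any strict improvement of the diameter bound suffices for the downstream use in the proof of Theorem~\ref{thm:CoxeterContracting}. Apart from this detail, the argument rests only on two standard facts already used above: walls in $D(W)$ are CAT(0)-convex, and nearest-point projection onto the axis $\gamma$ commutes with the action of $\langle g \rangle$.
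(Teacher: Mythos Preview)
Your argument shares its core mechanism with the paper's proof --- apply Lemma~\ref{lem:ContractingQuadrangle} to a wall with large projection to get an arc of $\gamma$ in its $\Delta$-neighbourhood, then translate by powers of $g$ and count walls meeting a fixed ball --- but there is a genuine gap at the step where you claim ``the $2N+1$ pairwise distinct walls $g^{-N}J_0,\ldots,g^{N}J_0$''. Nothing you have said rules out that some nontrivial power $g^k$ with $1\le |k|\le 2N$ stabilises $J_0$. If that happens, the translates collapse to fewer than $N+1$ distinct walls, and $\overline{B}(m,\Delta)$ meeting them is no contradiction at all. The fact that the crossing points $g^n p_0$ on $\gamma$ are distinct does not help: a single convex wall can contain an entire sub-lattice of $\gamma$ (indeed all of $\gamma$, once it is $g^k$-invariant and meets $\gamma$).

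This is exactly the obstacle the paper's proof is organised around. There the logic runs in the opposite direction: one first shows that \emph{any} wall whose projection on $\gamma$ is large must be stabilised by a nontrivial power of $g$ (by the same ball-counting trick you use), and then argues separately --- using that a $g^k$-invariant wall carries its own axis for $g^k$, which by Lemma~\ref{lem:ContractingQuadrangle} must lie within $\Delta$ of $\gamma$ --- that only finitely many walls can be stabilised by any power of $g$. Since infinitely many walls cross $\gamma$, one can then \emph{choose} a wall $J$ outside this finite set; for that $J$ the projection is automatically bounded. Your contradiction scheme can be repaired, but only by importing this same ``finitely many stabilised walls'' step to guarantee a choice of $J_0$ with distinct translates, at which point you have essentially reproduced the paper's argument.
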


\noindent
The set of all walls in $D(W)$ cut $D(W)$ into bounded pieces, so there must exist a wall, say $K$, that intersects $\gamma$. We first prove that, if the projection of $K$ on $\gamma$ is sufficiently large, then it has to be stabilized by some non-trivial power of $g$.

\medskip \noindent
Assume that the projection of $K$ on $\gamma$ has diameter $\geq 2 \max(2N\tau, \Delta)$. Then, we can find $p \in \gamma \cap K$ and $q \in K$ such that the distance between $p$ and the projection $q'$ of $q$ on $\gamma$ is $\geq \max(2N \tau, \Delta)$. Up to replacing $g$ with $g^{-1}$, we may assume that $g$ translates $p$ towards $q'$. 

\medskip \noindent
By Lemma~\ref{lem:ContractingQuadrangle}, since $d(p,q') \geq \Delta$, we get $[p,q'] \subset [p,q]^{+\Delta}$. 

\medskip \noindent
Next, for an index $0 \leq k \leq N$, and $o$ the midpoint of $[p,q']$, observe that $g^kp \in [p,o]$:
$$d(p,o) = d(p,q')/2 \geq N \tau \geq k \tau = d(p,g^k p).$$
This implies that
$$o \in [g^k p, g^kq'] = g^k [p,q'] \subset g^k [p,q]^{+\Delta} \subset g^k K^{+\Delta}.$$
So $g^k K$ intersects $B(o,\Delta)$ for all $0 \leq k \leq N$. But since there at most $N$ planes intersecting $B(o,\Delta)$, there must exist distinct $0 \leq i,j \leq N$ such that $g^iK=g^jK$. In other words, a non-trivial power of $g$ stabilizes $K$, as we stated earlier.

\medskip \noindent
Therefore, in order to prove Claim \ref{claim:OneGoodWall}, it suffices to find a wall $J$ that crosses $\gamma$ but that is not stabilized by a non-trivial power of $g$. 

\medskip \noindent
We claim there are only finitely many walls stabilised by non-trivial powers of $g$. Indeed, let $H$ be a wall stabilized by some non-trivial power $g^s$ of $g$. As a convex subspace, $H$ is CAT(0) in its own right. Moreover, $H$ inherits a cellular structure from $D(W)$ with only finitely many isometry types of cells. Therefore, as $g^s$ necessarily has unbounded orbits in $H$, it has an axis $\alpha$ in $H$ which is also an axis in $D(W)$. But $\gamma$ is also an axis for $g^s$, and the projection of $\alpha$ onto $\gamma$ is all of $\gamma$, since $\alpha$ and $\gamma$ are parallel; so it follows from Lemma~\ref{lem:ContractingQuadrangle} that the Hausdorff distance between the two axes of $g^s$ is uniformly bounded. Thus, given a point $x \in \gamma$, all the walls stabilized by a power of $g$ intersect a ball of radius $\Delta$ centered at $x$, which implies that there are only finitely many such walls. 

\medskip \noindent
Now suppose $y,z \in \gamma$ are two points very far away from each other. Since the distance in $D(W)$ coarsely coincides with the number of separating walls, $y$ and $z$ are separated by at least one wall $J$ that is not stabilized by a non-trivial power of $g$. Additionally, $J$ must intersect $\gamma$, because it separates $y$ and $z$. And, according to the argument above, the projection of $J$ on $\gamma$ is bounded above by $\max(2N \tau, \Delta)$, which proves Claim \ref{claim:OneGoodWall}.

\begin{claim}\label{claim:SeparatedWalls}
There exists $r \geq 1$ such that $J$ and $g^rJ$ are disjoint and such that only finitely many walls cross both $J$ and $g^rJ$. 
\end{claim}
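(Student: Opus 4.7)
The plan is to choose $r$ large enough that $g^r$ slides $J$ so far along $\gamma$ that $\mathrm{proj}_\gamma(J)$ and $\mathrm{proj}_\gamma(g^r J) = g^r \mathrm{proj}_\gamma(J)$ are far apart, and then deduce both conclusions from the CAT(0) convexity of walls together with Lemma~\ref{lem:ContractingQuadrangle}. Write $D := \max(2N\tau, \Delta)$, which by Claim~\ref{claim:OneGoodWall} is an upper bound for the diameter of $\mathrm{proj}_\gamma(J)$, and pick any $r$ with $r\tau > 2D + 2\Delta$. Since $g$ acts on $\gamma$ as a translation of length $\tau$, the two bounded subsets $\mathrm{proj}_\gamma(J)$ and $g^r \mathrm{proj}_\gamma(J)$ are then disjoint and at distance greater than $2\Delta$ along $\gamma$.

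The disjointness of $J$ and $g^r J$ follows immediately: any point $x \in J \cap g^r J$ would force its CAT(0) projection $\mathrm{proj}_\gamma(x)$ to belong simultaneously to $\mathrm{proj}_\gamma(J)$ and $g^r \mathrm{proj}_\gamma(J)$, contradicting the choice of $r$.

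For the finiteness statement, fix a point $p$ on the subarc of $\gamma$ lying strictly between $\mathrm{proj}_\gamma(J)$ and $g^r \mathrm{proj}_\gamma(J)$. Let $K$ be any wall crossing both $J$ and $g^r J$, pick $x \in K \cap J$ and $y \in K \cap g^r J$, and set $x' := \mathrm{proj}_\gamma(x)$ and $y' := \mathrm{proj}_\gamma(y)$. Then $p$ lies in $[x', y']$ and $d(x', y') > 2\Delta$, so Lemma~\ref{lem:ContractingQuadrangle} applies and gives $[x', y'] \subset [x, y]^{+\Delta}$. Since walls in $D(W)$ are fixed sets of reflections and hence CAT(0)-convex, the segment $[x, y]$ is contained in $K$, and we conclude that $d(p, K) \leq \Delta$. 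Therefore every such $K$ meets the ball $B(p, \Delta)$, and by the definition of $N$ this ball meets at most $N$ walls.

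The only subtlety is that the bound $D$ on $\mathrm{proj}_\gamma(J)$ produced by Claim~\ref{claim:OneGoodWall} is independent of the candidate wall $K$: this is what allows a single choice of $r$ to work uniformly, so that the hypothesis $d(x', y') > \Delta$ of Lemma~\ref{lem:ContractingQuadrangle} is automatically satisfied for every $K$ crossing both $J$ and $g^r J$. With that uniformity in hand, both conclusions are straightforward applications of the ``large projection forces closeness'' principle encoded in Lemma~\ref{lem:ContractingQuadrangle}.
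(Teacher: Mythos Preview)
Your proof is correct and follows essentially the same route as the paper's: choose $r$ large enough that the projections of $J$ and $g^rJ$ onto $\gamma$ are far apart, then use Lemma~\ref{lem:ContractingQuadrangle} together with the convexity of walls to show any wall crossing both must meet a fixed ball of radius $\Delta$ on $\gamma$. The only cosmetic differences are the choice of anchor point (the paper uses the midpoint of $p$ and $g^rp$ for a fixed $p\in J\cap\gamma$, you use an arbitrary point strictly between the two projection sets) and the precise threshold for $r$.
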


\noindent
Let $B$ denote the diameter of the projection of $J$ on $\gamma$. It is finite according to Claim~\ref{claim:OneGoodWall}. Recall that $\tau$ is the translation length of $g$ along $\gamma$. Fix an $r>(2B+\Delta)/ \tau$.

\medskip \noindent
First, notice that $J$ and $g^rJ$ are disjoint. Fix a point $p \in J \cap \gamma$. If $J$ and $g^rJ$ intersect, then so do their projections on $\gamma$. Therefore, if $z$ is a point in this intersection of projections, then
$$r \tau = d(p,g^rp) \leq d(p,z)+ (z,g^rp) \leq 2B,$$
contradicting our choice of $r$. Therefore, $J$ and $g^rJ$ are indeed disjoint.

\medskip \noindent
Now, let $K$ be a wall crossing both $J$ and $g^r J$. We will prove that $K$ intersects the ball $B(o,\Delta)$, where $o \in \gamma$ denotes the midpoint of $p$ and $g^rp$. Since there are only finitely many walls intersecting a given ball, our claim will follow.

\medskip \noindent
Fix two points $a \in J \cap K$ and $b \in g^r J \cap K$. The projection $a'$ of $a$ on $\gamma$ must be at distance $\leq B$ from $p$; and, similarly, the projection $b'$ of $b$ on $\gamma$ must be at distance $\leq B$ from $g^rp$. Because $d(p,g^rp)=r \tau>2B$, the midpoint $o$ must lie between $a'$ and $b'$ on $\gamma$. Moreover, because
$$d(a',b') \geq d(p,g^rp) - d(a',p)-d(b',g^rp) \geq r \tau - 2B >\Delta,$$
Lemma~\ref{lem:ContractingQuadrangle} applies and shows that $[a,b] \subset K$ intersects $B(o,\Delta)$, as desired. This concludes the proof of Claim~\ref{claim:SeparatedWalls}. 

\medskip \noindent
Finally, because of the bijective correspondence between walls in $D(W)$ and hyperplanes in $Cay(W,S)$, it follows from Claim~\ref{claim:SeparatedWalls} that Corollary \ref{cor:contract} applies, so $g$ is contracting in $Cay(W,S)$. This concludes the proof of the theorem. 
\end{proof}

\subsection{Graph products}\label{section:GraphProduct}

\noindent
As a second application of Theorem~\ref{thm:Contracting}, we construct contracting elements in graph products of finitely generated groups. Recall that, given a graph $\Gamma$ and a collection of groups $\mathcal{G}= \{ G_u \mid u \in \Gamma \}$ indexed by $\Gamma$, their \emph{graph product} is 
$$\Gamma \mathcal{G}=\langle G_u \ (u\in \Gamma) \mid [G_u,G_v]=1 \ (\{u,v\} \in E(\Gamma)) \rangle,$$
where $E(\Gamma)$ denotes the edge-set of $\Gamma$ and where $[G_u,G_v]=1$ is shorthand for $[g,h]=1$ for all $g \in G_u$, $h \in G_v$. The groups of $\mathcal{G}$ are referred to as \emph{vertex-groups}. Unless explicitly stated, vertex-groups are not assumed to be finite.

\medskip \noindent
\textbf{Convention.} We assume that the groups in $\mathcal{G}$ are non-trivial. This is not restrictive, since a graph product with some trivial vertex groups is a graph product over a smaller graph, all of whose vertex groups are non-trivial.

\medskip \noindent
A \emph{word} in $\Gamma \mathcal{G}$ is a product $g_1 \cdots g_n$, where $n \geq 0$, and for every $1 \leq i \leq n$, $g_i \in G$ for some $G \in \mathcal{G}$; the $g_i$'s are the \emph{syllables} of the word, and $n$ is the \emph{length} of the word. The following elementary operations on a word do not modify the element of $\Gamma \mathcal{G}$ it represents:
\begin{description}
	\item[Cancellation:] delete the syllable $g_i$ if $g_i=1$;
	\item[Amalgamation:] if $g_i,g_{i+1} \in G$ for some $G \in \mathcal{G}$, replace the two syllables $g_i$ and $g_{i+1}$ by the single syllable $g_ig_{i+1} \in G$;
	\item[Shuffling:] if $g_i$ and $g_{i+1}$ belong to two adjacent vertex-groups, shuffle them.
\end{description}
A word is \emph{graphically reduced} if its length cannot be shortened by applying these elementary moves. Every element of $\Gamma \mathcal{G}$ can be represented by a graphically reduced word, and this word is unique up to the shuffling operation. The subgraph of $\Gamma$ induced by the vertices whose vertex-groups contain the syllables of such a word is the \emph{support} of our element. A word is \emph{cyclically graphically reduced} if its length cannot be shortened by applying these elementary moves and cyclic permutations. Every element $g$ of $\Gamma \mathcal{G}$ has a conjugate that can be reprensented by a cyclically graphically reduced word, and the support of such a conjugate is referred to as the \emph{essential support} of $g$. 
For more information on graphically reduced words, we refer to \cite{GreenGP} (see also \cite{HsuWise,VanKampenGP}). 

\medskip\noindent
Given an induced subgraph $\Lambda$ of $\Gamma$, we can define the graph product based on the vertex-groups associated to $\Lambda$, and for simplicity will write $\langle \Lambda \rangle$ for this instead of $\Lambda\mathcal{G}$.

\medskip \noindent
Recall that a \emph{join} refers to a subgraph $\Lambda$ of $\Gamma$  that is the union of two non-empty subgraphs $\Lambda_1, \Lambda_2$ with every vertex in $\Lambda_1$ adjacent to every vertex in $\Lambda_2$, so $\langle \Lambda_1 \sqcup \Lambda_2\rangle =\langle \Lambda_1 \rangle \times \langle \Lambda_2 \rangle$. Such a join is \emph{large} if the subgroups $\langle \Lambda_1 \rangle$ and $\langle \Lambda_2 \rangle$ of $\Gamma \mathcal{G}$ are both infinite (so either each of $\Lambda_1$ and $\Lambda_2$ has an infinite vertex-group, or has two non-adjacent vertices).  

\medskip \noindent
The rest of the section is dedicated to the proof of the following statement:

\begin{thm}\label{thm:ContractingGP}
Let $\Gamma$ be a finite graph and $\mathcal{G}=\{G_u \mid u \in V(\Gamma)\}$ a collection of finitely generated groups. For every $u \in V(\Gamma)$, fix a finite generating set $S_u \subset G_u$. Set $S:= \bigcup_{u \in V(\Gamma)} S_u$. If the essential support of an element $g \in \Gamma \mathcal{G}$ is neither complete nor contained in a large join, then it is contracting in $\mathrm{Cay}(\Gamma \mathcal{G},S)$.
\end{thm}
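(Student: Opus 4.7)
The plan is to invoke Theorem~\ref{thm:Contracting} applied to the graph $X := \mathrm{Cay}(\Gamma\mathcal{G}, \bigcup_u G_u)$. Recall that $X$ is quasi-median --- hence paraclique --- with cliques being cosets $hG_u$ of vertex-subgroups. I would equip each clique $C = hG_u$ with the word metric of $(G_u, S_u)$ pulled back by left translation by $h$. This defines a coherent and $\Gamma\mathcal{G}$-invariant system of metrics $\mathscr{C}$ on $X$. Applying Proposition~\ref{prop:Delta} along any graphically reduced word, the associated extended metric $\delta$ coincides with the word metric of $\mathrm{Cay}(\Gamma\mathcal{G}, S)$: at each hyperplane crossing, the $\delta$-contribution equals the $S_u$-length of the corresponding syllable. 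Hence contracting isometries for $(X, \delta)$ transfer to contracting isometries for $\mathrm{Cay}(\Gamma\mathcal{G}, S)$, and it suffices to verify the hypotheses of Theorem~\ref{thm:Contracting} for $g$ acting on $(X,\delta)$.

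After conjugation, I may assume $g = s_1 \cdots s_n$ is cyclically graphically reduced with $s_k \in G_{u_k} \setminus \{1\}$ and $u_k \in \Lambda$. The periodic bi-infinite concatenation of $s_1 \cdots s_n$ is a combinatorial geodesic $\gamma$ in $X$ on which $g$ acts by translation of length $n$; this is the axis. Hyperplanes of $X$ carry labels in $V(\Gamma)$; two distinct hyperplanes with labels $v, w$ can be transverse only if $v, w$ are adjacent in $\Gamma$, and the thickness of a hyperplane labeled $v$ is $\mathrm{diam}(G_v, S_v)$, which is infinite precisely when $G_v$ is infinite. It remains to produce two hyperplanes $H_p, H_q$ --- possibly drawn from the axis of a power $g^N$ --- that $g$ skewers and that are well-separated relative to $\mathscr{C}$.

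The strategy for well-separation is to choose $u_p, u_q$ so that the transverse hyperplanes to the pair $(H_p, H_q)$ form a controlled family. Any such transverse hyperplane has label in $\mathrm{Link}_\Gamma(u_p) \cap \mathrm{Link}_\Gamma(u_q)$; the hypothesis that $\Lambda$ is not in a large join and not complete is used to avoid persistent infinite-thickness configurations. Concretely, if no admissible choice of $(p,q)$ could be made, one would --- via a pigeonhole/minimality argument on the finite graph $\Gamma$ --- assemble the obstructing common infinite-group neighbours into a subset $\Lambda_2$, producing a decomposition $\Lambda \subseteq \Lambda_1 \ast \Lambda_2$ with both sides yielding infinite subgroups, contradicting the large-join hypothesis; the non-completeness assumption supplies two non-adjacent labels in $\Lambda$ to seed the argument. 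Once such a pair is obtained, the finiteness of $\Gamma$ together with the facing-triple condition bounds the remaining transverse hyperplanes' total thickness, and replacing $g$ by a large enough power secures the skewering of one sector strictly inside another.

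I expect the principal obstacle to be this combinatorial extraction: distilling from ``$\Lambda$ is neither complete nor contained in a large join'' a usable structural statement about common links of labels on the axis. A subtle auxiliary point is that some hyperplanes with infinite-vertex-group labels may appear transverse to axis hyperplanes without genuinely obstructing contraction, because $g$ can translate them to different (possibly transverse) hyperplanes rather than stabilising them, so the proof must either recognise such benign transverse hyperplanes or bypass them by choosing $H_p, H_q$ for which every infinite-thickness transverse hyperplane is moved off the axis by $g$. Controlling the parallelism of cliques in $X$ under the action of $g$ --- in particular, checking when a clique $hG_u$ and $ghG_u$ lie in the same hyperplane --- will be the key technical ingredient underpinning both the well-separation and the skewering.
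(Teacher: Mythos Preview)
Your setup is correct and matches the paper: the quasi-median graph, the system of local metrics, the identification of $\delta$ with the word metric on $\mathrm{Cay}(\Gamma\mathcal{G},S)$, and the reduction to verifying the hypotheses of Theorem~\ref{thm:Contracting}. The axis construction is also fine.

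The gap is in the combinatorial core. Your plan is to pick two axis hyperplanes $H_p,H_q$ with labels $u_p,u_q$ and constrain transverse hyperplanes to have labels in $\mathrm{link}(u_p)\cap\mathrm{link}(u_q)$, then argue by pigeonhole that some pair makes this set harmless. This is both weaker than what is needed and not substantiated. Knowing the label of a transverse hyperplane lies in $\mathrm{link}(u_p)\cap\mathrm{link}(u_q)$ does not by itself bound the \emph{number} of such hyperplanes, nor does it force the labels to index finite groups; and the promised pigeonhole extraction of a large join from a failure of your scheme is not an argument, just a hope.

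The paper proceeds differently, and the two missing ideas are decisive. First, it reduces to the case where $\Lambda$ itself is \emph{not a join}: if $\Lambda=\Lambda_1\ast\cdots\ast\Lambda_n$ with the $\Lambda_i$ join-free, the large-join hypothesis forces all but one $\Lambda_i$ to be complete with finite vertex-groups, and a power of $g$ has support in that one factor. After this reduction, $\mathrm{link}(\Lambda)$ is automatically a complete subgraph indexed by finite groups. Second, rather than working with a single pair $(u_p,u_q)$, the paper invokes a strong-separation result in $\langle\Lambda\rangle$ (from \cite{AutGP}) to find $J$ with $J,gJ$ strongly separated there; then between $J$ and $g^4J$ lie axis hyperplanes labelled by \emph{every} vertex of $\Lambda$, so any $K$ transverse to both $J$ and $g^4J$ must be transverse to all of them, forcing its label into $\mathrm{link}(\Lambda)$ rather than merely $\mathrm{link}(u_p)\cap\mathrm{link}(u_q)$. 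The thickness bound then follows by projecting $N(g^4J)$ onto $N(J)$ and observing the image sits in a coset of the finite group $\langle\mathrm{link}(\Lambda)\rangle$. Your proposal has neither the reduction step nor the mechanism that upgrades a two-label constraint to a full-$\Lambda$ constraint, and without these the well-separation claim does not go through.
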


\noindent
The connection between graph products and quasi-median graphs, which will allow us to apply Theorem~\ref{thm:Contracting}, is made by the following:

\begin{thm}[{\cite[Proposition~8.2, Corollary~8.7]{QM}}]
Let $\Gamma$ be a graph and $\mathcal{G}$ a collection of groups indexed by $V(\Gamma)$. The Cayley graph of $\Gamma \mathcal{G}$ with respect to the generating set consisting of all non-trivial elements in the vertex-groups, 
$$\mathrm{QM}(\Gamma, \mathcal{G}):= \mathrm{Cay} \left( \Gamma \mathcal{G}, \bigcup\limits_{G \in \mathcal{G}} G \backslash \{1 \} \right),$$ 
is a quasi-median graph of cubical dimension $\mathrm{clique}(\Gamma)= \max \{ \# V(\Lambda ) \mid \Lambda \subset \Gamma \ \text{clique} \}$. 
\end{thm}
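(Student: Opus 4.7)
The goal is twofold: show that $\mathrm{QM}(\Gamma, \mathcal{G})$ is quasi-median and compute its cubical dimension. My plan is to verify the four conditions of Proposition \ref{prop:QMweaklyModular}, using graphically reduced words as the main tool, and then identify prisms with cliques of $\Gamma$.

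The starting point is the normal form theory from \cite{GreenGP}: every element of $\Gamma \mathcal{G}$ admits a graphically reduced representative, and any two such representatives for a given element have the same length and the same multiset of syllables up to shuffling. Since every edge of $\mathrm{QM}(\Gamma, \mathcal{G})$ corresponds to multiplication by one nontrivial syllable from some vertex-group, the graph distance from $1$ to $g$ equals the length of any graphically reduced word for $g$. I would identify cliques as the cosets $gG_u$ for $u \in V(\Gamma)$: an edge $\{g,h\}$ belongs to a unique such clique, namely $gG_u$ where $u$ is determined by $g^{-1}h \in G_u \setminus \{1\}$; conversely each coset $gG_u$ is a complete subgraph and maximal because two elements in different vertex-groups $G_u, G_v$ with $u \neq v$ can be adjacent only through a common third vertex.

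Next, I would verify the four axioms of Proposition \ref{prop:QMweaklyModular}. For the two ``forbidden induced subgraph'' conditions (no $K_4^-$, no $K_{3,2}$), observe that if two edges incident to a common vertex $g$ arise from the same vertex-group $G_u$, then both their other endpoints lie in the coset $gG_u$ and are hence adjacent; this quickly forces any potential $K_4^-$ to close into a full $K_4$ and rules out $K_{3,2}$ altogether. For the Triangle Condition, given adjacent $g, h$ with $d(o,g)=d(o,h)$, the edge $\{g,h\}$ lives in a coset $gG_u$, and the projection of $o$ onto this coset (obtained by isolating the $G_u$-syllable in a graphically reduced form of $o^{-1}g$) yields a common neighbour lying on geodesics from $o$ to both $g$ and $h$. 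The Quadrangle Condition is the most delicate: given $o,x,y,z$ with $d(o,z) = d(o,x)+1 = d(o,y)+1$ and $\{x,z\}, \{y,z\}$ edges, write them as lying in cliques $zG_u$ and $zG_v$ with $u \neq v$. The assumption that $x$ and $y$ are both closer to $o$ than $z$ forces $G_u$ and $G_v$ to commute, i.e.\ $\{u,v\} \in E(\Gamma)$; the common neighbour of $x, y$ closer to $o$ is then $z(z^{-1}x)(z^{-1}y) = z(z^{-1}y)(z^{-1}x)$, and the shuffling operation allows us to rewrite graphically reduced forms to witness the reduced length.

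Finally, for the cubical dimension: a prism (i.e.\ a Hamming subgraph) in a quasi-median graph is always a product of cliques, and a subgraph of $\mathrm{QM}(\Gamma, \mathcal{G})$ isomorphic to $G_{u_1} \times \cdots \times G_{u_k}$ containing the identity exists if and only if the $G_{u_i}$ pairwise commute, equivalently, $\{u_1, \ldots, u_k\}$ induces a clique in $\Gamma$; since the vertex-groups are nontrivial, this gives cubical dimension exactly $\mathrm{clique}(\Gamma)$. The main obstacle will be the Quadrangle Condition, where extracting the correct commutation relation from the interplay of graphically reduced words for $o^{-1}x$, $o^{-1}y$, and $o^{-1}z$ requires a careful case analysis on the positions of the final $G_u$- and $G_v$-syllables in these normal forms.
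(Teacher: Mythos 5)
The paper does not prove this statement itself (it is quoted from \cite{QM}), and your proposal follows essentially the same route as that source: verifying the characterisation of Proposition~\ref{prop:QMweaklyModular} via the normal-form theory of graphically reduced words, identifying cliques with cosets of vertex-groups and prisms with cosets of $\langle \Lambda \rangle$ for $\Lambda \subset \Gamma$ complete. Your outline is correct; the only point to flesh out is that excluding an induced $K_{3,2}$ is not a direct consequence of the triangle-closing observation (a $K_{3,2}$ contains no triangles) but follows from the uniqueness, up to shuffling, of graphically reduced words of length two.
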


\noindent
The graph product $\Gamma \mathcal{G}$ acts naturally by isometries on $\mathrm{QM}(\Gamma, \mathcal{G})$ via left-multiplication; as Cayley graph, each edge of $\mathrm{QM}(\Gamma, \mathcal{G})$ is labelled by a generator, but also by a vertex of $\Gamma$ (corresponding to the vertex-group containing the generator). If two edges of $\mathrm{QM}(\Gamma, \mathcal{G})$ belong to the same hyperplane, they must be labelled by the same vertex of $\Gamma$ (see \cite[Lemma~8.9]{QM}), which implies that the hyperplanes of $\mathrm{QM}(\Gamma, \mathcal{G})$ are also naturally labelled by vertices of $\Gamma$. An easy observation that will be needed later is \cite[Lemma~8.12]{QM}, namely:

\begin{lemma}\label{lem:LabelHyp}
Let $\Gamma$ be a simplicial graph and $\mathcal{G}$ a collection of groups indexed by $V(\Gamma)$. Two transverse hyperplanes in $\mathrm{QM}(\Gamma, \mathcal{G})$ are labelled by adjacent vertices of $\Gamma$. 
\end{lemma}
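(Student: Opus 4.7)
The plan is to extract commuting generators from the transversality assumption and then invoke the normal-form theory for graph products. Suppose $H$ and $K$ are transverse hyperplanes in $\mathrm{QM}(\Gamma,\mathcal{G})$, with labels $u, v \in V(\Gamma)$; the aim is to show that $\{u,v\}$ is an edge of $\Gamma$.

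The first step will be to realise transversality by a square. By the prism structure of quasi-median graphs recalled in \cite{QM}, two transverse hyperplanes cross a common prism, and choosing an edge in the clique of this prism lying in $H$ together with an edge in the clique lying in $K$ produces a $2$-dimensional face, namely a $4$-cycle with one pair of opposite edges in $H$ and the other pair in $K$. After left-translating by a suitable element of $\Gamma\mathcal{G}$, the resulting cycle will have vertex set $\{1, a, ab, b\}$ with $a \in G_u \setminus \{1\}$, $b \in G_v \setminus \{1\}$, and in particular $ab = ba$ in $\Gamma\mathcal{G}$.

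Next, the degenerate case $u = v$ must be ruled out. If $u = v$, then $a, b \in G_u$ are both non-trivial, so $\{1, a, b\}$ lies in the single coset-clique $G_u$; the edges $(1,a)$ and $(1,b)$ then both belong to this clique, hence to the same hyperplane, contradicting that transverse hyperplanes are distinct.

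Finally, for $u \neq v$, the equality $ab = ba$ in $\Gamma\mathcal{G}$, combined with the uniqueness (up to shuffling) of graphically reduced words representing an element, forces the two syllables $a$ and $b$ to be shufflable, which is equivalent to $\{u,v\} \in E(\Gamma)$; this gives the conclusion. The main technical point is the first step, producing the square from the sector-based definition of transversality, and it is exactly where the specific geometry of quasi-median graphs enters; the remaining two steps are purely combinatorial consequences of the graph product normal form from \cite{GreenGP}.
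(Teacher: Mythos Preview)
Your proof is correct. Note, however, that the paper does not actually prove this lemma: it simply records the statement and attributes it to \cite[Lemma~8.12]{QM}. So there is no in-paper argument to compare against.

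Your approach is the natural one and almost certainly matches the proof in \cite{QM}: the only genuinely geometric step is producing a common prism (hence a $4$-cycle) from transversality, and once that is done the conclusion is forced by the normal form for graph products. Two minor remarks. First, the case $u=v$ can also be excluded directly from the prism: by Lemma~\ref{lem:QMprisms} a prism is a coset $g\langle\Lambda\rangle$ with $\Lambda$ complete, and all cliques of this prism labelled by a fixed vertex $u\in\Lambda$ are parallel, so a single prism cannot be crossed by two distinct hyperplanes with the same label. Second, your assertion that the fourth vertex of the square is exactly $ab$ deserves a word of justification; it follows either from the product structure of the prism, or from the uniqueness-up-to-shuffling of the length-$2$ graphically reduced words representing that vertex (which simultaneously forces $c=b$, $d=a$, and the adjacency of $u,v$).
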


\noindent
The construction of the quasi-median graph $\mathrm{QM}(\Gamma, \mathcal{G})$ leads to the following description of its geodesics \cite[Lemma~8.3]{QM}:

\begin{lemma}\label{lem:DistInX}
Let $\Gamma$ be a graph and $\mathcal{G}$ be a collection of groups indexed by $V(\Gamma)$. Fix two elements $g,h \in \Gamma \mathcal{G}$ and write $g^{-1}h$ as a graphically reduced word $u_1 \cdots u_n$. Then the sequence of vertices $$g,gu_1,gu_1u_2, \ldots, gu_1 \cdots u_n=h$$ defines a geodesic between $g$ and $h$ in $\mathrm{QM}(\Gamma, \mathcal{G})$. Conversely, any geodesic between $g$ and $h$ is labelled by a graphically reduced word representing $g^{-1}h$.
\end{lemma}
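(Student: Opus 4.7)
The plan is to translate statements about paths in $\QM$ directly into statements about words in $\Gamma\mathcal{G}$, and then invoke the classical normal form theory for graph products. By left-invariance of the $\Gamma\mathcal{G}$-action on its Cayley graph, I would first reduce to the case $g=1$, so that the claim becomes: $d(1,h)$ equals the syllable length of any graphically reduced word representing $h$, and every geodesic from $1$ to $h$ is labelled by such a word.

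The key observation is that any edge-path $1 = x_0, x_1, \ldots, x_m = h$ in $\QM$ is recorded by the word $v_1 \cdots v_m$ where $v_i := x_{i-1}^{-1} x_i$ is a non-trivial element of some vertex-group; conversely, any such word yields a path of length equal to its syllable length. Thus $d(1,h)$ coincides with the minimum syllable length of a word representing $h$ in $\Gamma\mathcal{G}$.

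Now I would apply the theory of graphically reduced words (see \cite{GreenGP}, also \cite{HsuWise,VanKampenGP}): the three elementary moves (cancellation, amalgamation, shuffling) preserve the element of $\Gamma\mathcal{G}$ represented; none of them increases length (shuffling preserves it, the other two strictly decrease it); every word reduces in finitely many steps to a graphically reduced word; and any two graphically reduced words for the same element differ only by shuffles, so have the same syllable length. It follows that the minimum syllable length of a word representing $h$ equals $n$, the length of any graphically reduced expression $u_1 \cdots u_n$ for $h$. In particular, the path $1, u_1, u_1 u_2, \ldots, u_1 \cdots u_n$ has length $n = d(1,h)$ and is therefore a geodesic. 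Conversely, if a geodesic from $1$ to $h$ were labelled by a word that is not graphically reduced, applying the reducing moves would yield a strictly shorter word representing $h$, contradicting minimality of $d(1,h)$; hence the labelling word must be graphically reduced.

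The only non-trivial input is the classical normal form theorem for graph products, which the paper is content to cite. If one had to reprove it, the main obstacle would be the confluence/uniqueness statement for graphically reduced words up to shuffling, since reducibility itself and length-monotonicity of the moves are immediate from the definitions.
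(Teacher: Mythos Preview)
Your argument is correct. Note, however, that the paper does not give its own proof of this lemma: it is simply quoted as \cite[Lemma~8.3]{QM}. What you have written is essentially the standard proof one would expect to find behind that citation---the bijection between edge-paths from $1$ in $\QM$ and words over $\bigcup_G G\setminus\{1\}$, together with the normal form theorem for graph products, immediately gives both directions. There is nothing to compare on the level of strategy, and your reduction to $g=1$ by left-invariance and the identification of $d(1,h)$ with minimal syllable length are exactly right.
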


\noindent
The cliques and prisms of $\mathrm{QM}(\Gamma, \mathcal{G})$ are described as follows (see \cite[Lemma~8.6 and Corollary~8.7]{QM} or \cite[Lemmas~2.4 and~2.6]{MR4295519}):

\begin{lemma}\label{lem:QMcliques}
Let $\Gamma$ be a simplicial graph and $\mathcal{G}$ a collection of groups indexed by $V(\Gamma)$. The cliques of $\mathrm{QM}(\Gamma, \mathcal{G})$ coincide with the cosets of vertex-groups. That is, every clique $C$ corresponds to $h \langle u \rangle$, for some $h \in \Gamma \mathcal{G}$ and $u \in V(\Gamma)$.
\end{lemma}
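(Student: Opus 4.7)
The plan is to prove the two inclusions separately, exploiting the uniqueness (up to shuffling) of graphically reduced words for elements of $\Gamma\mathcal{G}$, which is the black box provided by Green's normal form theorem invoked throughout the excerpt.

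First I would check that every coset $h\langle u\rangle$, with $u \in V(\Gamma)$ and $h \in \Gamma\mathcal{G}$, is a complete subgraph of $\mathrm{QM}(\Gamma,\mathcal{G})$: for any two distinct $hx, hy$ in the coset, the product $(hx)^{-1}(hy) = x^{-1}y$ lies in $\langle u\rangle\setminus\{1\}$, hence is a generator. Then I would show this complete subgraph is maximal. Suppose $hz$ were adjacent to every vertex of $h\langle u\rangle$; taking $x=1$ in the adjacency condition shows $z \in \langle v\rangle\setminus\{1\}$ for some $v \in V(\Gamma)$. If $v \neq u$, then for any $x \in \langle u\rangle\setminus\{1\}$ the product $x^{-1}z$ is represented by the two-syllable word $x^{-1}\cdot z$, which is graphically reduced (no cancellation or amalgamation since the syllables lie in different vertex-groups, and shuffling cannot shorten it), hence by uniqueness of graphically reduced forms $x^{-1}z$ does not lie in any single vertex-group --- contradicting adjacency. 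Thus $v = u$ and $hz \in h\langle u\rangle$, proving maximality.

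For the converse, let $C$ be a clique. Using the left-action of $\Gamma\mathcal{G}$ on its Cayley graph, which permutes cliques, I may translate $C$ so that $1 \in C$. Every other $c \in C$ is then adjacent to $1$, so $c \in \langle u_c\rangle \setminus\{1\}$ for some vertex $u_c \in V(\Gamma)$. If two such elements $c_1, c_2 \in C\setminus\{1\}$ satisfied $u_{c_1} \neq u_{c_2}$, the same graphically-reduced-word argument as above would show $c_1^{-1}c_2$ is not in any single vertex-group, contradicting the adjacency of $c_1$ and $c_2$ in $C$. Hence there is a single $u \in V(\Gamma)$ with $C \subseteq \langle u\rangle$, and maximality of $C$ together with the first part of the proof forces $C = \langle u\rangle$. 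After undoing the left-translation, $C = h\langle u\rangle$ for some $h$, as desired.

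The only potentially delicate step is the claim that a product of two non-trivial syllables belonging to distinct vertex-groups cannot equal an element of a single vertex-group. This is precisely what the uniqueness of the graphically reduced normal form (up to shuffling) delivers: any representative lying in $\langle w\rangle$ reduces to a single syllable in $\langle w\rangle$, so a two-syllable reduced expression in $\langle u\rangle \cdot \langle v\rangle$ with $u \neq v$ cannot coincide with it. Everything else is straightforward verification, so this is really the one place where I rely on non-trivial input, although in this paper it is freely used as standard.
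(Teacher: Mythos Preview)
Your proof is correct. The paper does not actually prove this lemma in-text; it merely cites \cite[Lemma~8.6 and Corollary~8.7]{QM} and \cite[Lemmas~2.4 and~2.6]{MR4295519}. Your argument is the natural elementary one, relying only on the normal form theorem for graph products (uniqueness of graphically reduced words up to shuffling), which the paper already uses freely. The only step that needs a moment's thought --- that a two-syllable reduced word in distinct vertex-groups cannot represent an element of a single vertex-group --- is exactly what you flagged, and it is indeed an immediate consequence of that normal form.
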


\begin{lemma}\label{lem:QMprisms}
Let $\Gamma$ be a simplicial graph and $\mathcal{G}$ a collection of groups indexed by $V(\Gamma)$. The prisms of $\mathrm{QM}(\Gamma, \mathcal{G})$ coincide with the cosets of the $\langle \Lambda \rangle$, where $\Lambda \subset \Gamma$ is a complete subgraph.
\end{lemma}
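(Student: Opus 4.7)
The plan is to prove the two directions separately, using Lemma~\ref{lem:QMcliques} (cliques are cosets of vertex-groups), Lemma~\ref{lem:LabelHyp} (transverse hyperplanes are labelled by adjacent vertices), and Lemma~\ref{lem:DistInX} (distances in $\mathrm{QM}(\Gamma,\mathcal{G})$ are computed from graphically reduced words).

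For the easy direction, I would start by observing that, when $\Lambda \subset \Gamma$ is a complete subgraph, the vertex-groups $\{G_u \mid u \in V(\Lambda)\}$ pairwise commute inside $\Gamma\mathcal{G}$, so the subgroup $\langle \Lambda \rangle$ is the direct product $\prod_{u \in V(\Lambda)} G_u$. Its Cayley graph with respect to $\bigcup_{u \in V(\Lambda)} (G_u \setminus \{1\})$ is the Cartesian product of the Cayley graphs $\mathrm{Cay}(G_u, G_u \setminus \{1\})$, each of which is a clique (an edge is drawn between any two distinct elements of $G_u$), hence a prism structure on $\langle \Lambda \rangle$. Using Lemma~\ref{lem:DistInX}, any word on letters of $\bigcup_{u \in V(\Lambda)} G_u$ is already graphically reduced up to amalgamation within each $G_u$ (thanks to the shuffling moves enabled by the completeness of $\Lambda$), and the distance in $\langle \Lambda \rangle$ equals the distance in $\mathrm{QM}(\Gamma, \mathcal{G})$. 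So $\langle \Lambda \rangle$ is isometrically embedded as a prism, and so is every coset $h \langle \Lambda \rangle$ by left-translation.

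For the converse, let $P$ be a prism in $\mathrm{QM}(\Gamma, \mathcal{G})$. Up to translating by some element of $\Gamma \mathcal{G}$, I may assume $1 \in P$. A prism is (by definition in a quasi-median graph) a Cartesian product of cliques, so there exist $k$ cliques $C_1, \ldots, C_k$ through $1$ such that $P \cong C_1 \times \cdots \times C_k$ and the hyperplanes $J_1, \ldots, J_k$ containing the $C_i$'s are pairwise transverse. By Lemma~\ref{lem:QMcliques}, each $C_i$ is of the form $G_{u_i}$ for some vertex $u_i \in V(\Gamma)$, and by Lemma~\ref{lem:LabelHyp}, the transversality of the $J_i$'s forces $u_1, \ldots, u_k$ to be pairwise adjacent in $\Gamma$. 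Thus $\Lambda := \{u_1, \ldots, u_k\}$ spans a complete subgraph of $\Gamma$, and $\langle \Lambda \rangle$ is a prism by the first part that contains $P$ and shares with it the cliques through $1$. Since a prism is determined by such a choice of pairwise transverse cliques through one of its vertices, I conclude $P = \langle \Lambda \rangle$, and back-translating finishes the proof.

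I don't anticipate any serious obstacle: the only subtle point is to justify that $P$ cannot properly extend beyond $\langle \Lambda \rangle$, which follows from the fact that the factor cliques of a prism through a fixed vertex already determine the prism as their Cartesian product in the quasi-median graph (each further coordinate would produce an additional transverse hyperplane, hence an additional vertex of $\Lambda$ adjacent to all previous ones, contradicting the choice of the $u_i$'s).
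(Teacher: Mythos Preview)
The paper does not prove this lemma; it simply cites \cite[Lemma~8.6 and Corollary~8.7]{QM} and \cite[Lemmas~2.4 and~2.6]{MR4295519}. Your proposal therefore supplies an argument where the paper gives none, and the route you take---reducing to Lemmas~\ref{lem:QMcliques}, \ref{lem:LabelHyp}, and \ref{lem:DistInX}---is the natural one and essentially what those external references do.

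Your argument is sound, with one point worth tightening. In the forward direction you verify that $h\langle\Lambda\rangle$ is an isometrically embedded Hamming subgraph, but you should also note why its factors are \emph{cliques} (maximal complete subgraphs) rather than merely complete subgraphs: this follows from Lemma~\ref{lem:QMcliques}, since each factor is a coset $hG_u$. In the converse direction your final step is correct but a bit elliptical: once you know the factor cliques through $1$ are $G_{u_1},\ldots,G_{u_k}$ with the $u_i$ pairwise adjacent, the vertex set of the prism is exactly $\{g_1\cdots g_k : g_i\in G_{u_i}\}=\langle\Lambda\rangle$, so $P=\langle\Lambda\rangle$ on the nose rather than just up to isomorphism.
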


\noindent
We are now ready to prove the main theorem of this section.

\begin{proof}[Proof of Theorem~\ref{thm:ContractingGP}.]
We start by endowing $\mathrm{QM}(\Gamma, \mathcal{G})$ with a system of metrics. Let $C$ be a clique of $\mathrm{QM}(\Gamma, \mathcal{G})$. By Lemma \ref{lem:QMcliques}, each element in $C$ can be uniquely written as a (graphically) reduced word of the form $h w$, where $h \in \Gamma \mathcal{G}$ is fixed, and $w \in \langle u \rangle$ for a fixed vertex $u \in \Gamma$.  
  Define the local metric
$$\delta_C : (hx,hy) \mapsto d_{S_u} (x,y),$$
where $d_{S_u}$ is the word metric over $\langle u \rangle$ given by the generating set $S_u$. By \cite[Claim~8.24]{QM}, $\mathscr{C}:= \{ (C,\delta_C) \mid C \text{ clique}\}$ is a coherent and $\Gamma \mathcal{G}$-invariant system of metrics, and the metric $\delta$ coincides with the word metric given by the generating set~$S$.

\medskip \noindent
We will prove that $g$ skewers a pair of well-separated hyperplanes relative to $\mathscr{C}$. According to Theorem~\ref{thm:Contracting}, this will suffice to conclude that $g$ is contracting in $\mathrm{Cay}(\Gamma \mathcal{G},S)$.

\medskip \noindent
Let $\Lambda$ denote the essential support of $g$. Up to conjugating $g$, we can assume that $g$ is cyclically graphically reduced and that $g \in \langle \Lambda \rangle$. We may also assume that $\Lambda$ is not a join. Suppose $\Lambda$ is a join $\Lambda_1 \ast \cdots \ast \Lambda_n$, with $\Lambda_1, \ldots, \Lambda_n$ not joins. If $n \geq 2$, then since by hypothesis the essential support of $g$ is neither complete nor contained in a large join, up to reindexing, $\Lambda_2, \ldots, \Lambda_n$ are complete subgraphs labelled by finite groups. A sufficiently high power of $g$ then has its support in $\Lambda_1$, which is not complete. We can therefore assume that the essential support $\Lambda$ of $g$ is not a join.

\medskip \noindent
According to \cite[Proposition~4.24]{AutGP}, fixing a geodesic $[1,g]$ between the vertices $1$ and $g$, the concatenation
$$\gamma:= \cdots \cup g^{-1}[1,g] \cup [1,g] \cup g [1,g] \cup \cdots$$
defines a geodesic in the subgraph $\langle \Lambda \rangle$, and a fortiori in $\mathrm{QM}(\Gamma, \mathcal{G})$; also, for every hyperplane $J$ crossing $\gamma$, up to replacing $g$ with one of its powers, $J$ and $gJ$ are \emph{strongly separated} (i.e.\ there doesn't exist a third hyperplane transverse to both) in $\langle \Lambda \rangle$. To obtain the theorem it suffices to verify that $J$ and $g^4J$ are well-separated relative to $\mathscr{C}$. 

\medskip \noindent
Let $K$ be a hyperplane transverse to both $J$ and $g^4J$. Since $\Lambda$ is the essential support of $g$, for every vertex $u \in \Lambda$ there exists a syllable of $g$ that belongs to $\langle u \rangle$, hence an edge of the path $g^2[1,g]$ labelled by $u$. Let $H$ denote the hyperplane containing this edge. Because $g^2[1,g]$ lies between $gJ$ and $g^3J$, and that $J$ and $gJ$, as well as $g^3J$ and $g^4J$, are strongly separated in $\langle \Lambda \rangle$, it follows that $H$ is not transverse to $J$ neither to $g^4J$ (otherwise, it would be transverse to both $J$ and $gJ$ or to both $g^3J$ and $g^4J$). In other words, $H$ separates $J$ and $g^4J$. This implies that $H$ is transverse to $K$. According to Lemma~\ref{lem:LabelHyp}, $K$ is labelled by a vertex adjacent to $u$. Since this is true for every vertex of $\Lambda$, we conclude that a hyperplane transverse to both $J$ and $g^4J$ is labelled by a vertex of $\mathrm{link}(\Lambda)$. 

\medskip \noindent
Now let $N(J)$ denote the \emph{carrier} of $J$, that is, is the subgraph of $\mathrm{QM}(\Gamma, \mathcal{G})$ generated (or induced) by $J$.
Let $P \subset N(J)$ denote the projection of $N(g^4J)$ on $N(J)$. According to \cite[Proposition~2.33]{QM}, the hyperplanes crossing $P$ are exactly the hyperplanes transverse to both $J$ and $g^4J$. It follows from our previous observation that all the edges of $P$ are labelled by vertices in $\mathrm{link}(\Lambda)$, which amounts to saying that $P$ is contained in a coset of $\langle \mathrm{link}(\Lambda) \rangle$. But, as a consequence of our assumptions on $\Lambda$, $\mathrm{link}(\Lambda)$ must be a complete graph all of whose vertices are labelled by finite groups. Hence
$$\sum\limits_{K \text{ transverse to $J$ and $g^4J$}} \mathrm{thick}(K) \leq \mathrm{clique}(\Gamma) \cdot \max \{ |\langle u \rangle|, \ \langle u \rangle \text{ finite}\}.$$
Thus, $J$ and $g^4J$ are well-separated relative to $\mathscr{C}$, and we conclude from Theorem~\ref{thm:Contracting} that $g$ is contracting in $\mathrm{Cay}(\Gamma \mathcal{G}, S)$. 
\end{proof}

\noindent
As a by-product of Theorem~\ref{thm:ContractingGP}, since contracting geodesics turn out to be Morse \cite[Lemma~3.3]{MR3175245}, we can deduce the following characterisation of Morse elements in finitely generated graph products. Recall that, given a finitely generated group $G$, an infinite-order element $g \in G$ is \emph{Morse} if, for all $A>0$ and $B \geq 0$, there exists some $C \geq 0$ such that every $(A,B)$-quasi-geodesic connecting two points in $\langle g \rangle$ stays in the $C$-neighbourhood of $\langle g \rangle$. 

\begin{cor}\label{cor:Morse}
Let $\Gamma$ be a finite graph and $\mathcal{G}=\{G_u \mid u \in V(\Gamma)\}$ a collection of finitely generated groups. An element $g \in \Gamma \mathcal{G}$ is Morse if and only if one of the following conditions hold:
\begin{itemize}
	\item the essential support of $g$ is neither complete nor contained in a large join;
	\item there exists a vertex $u \in \Gamma$ whose link is complete and labelled by only finite groups, and $g$ decomposes as $ab$ for some Morse element $a \in \langle u \rangle$ and some $b \in \mathrm{link}(u)$. 
\end{itemize}
\end{cor}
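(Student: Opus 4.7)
The plan is to handle the two alternatives of the statement separately. For the forward direction, the first alternative gives a contracting element via Theorem~\ref{thm:ContractingGP}, so I would conclude using \cite[Lemma~3.3]{MR3175245}: contracting quasi-geodesics are Morse. For the second alternative, write $g = ab$ with $a \in \langle u \rangle$ Morse and $b \in \langle \mathrm{link}(u) \rangle$; since $\mathrm{link}(u)$ is a clique with only finite vertex-groups, $\langle \mathrm{link}(u) \rangle$ is finite, $\mathrm{star}(u) = \{u\} \cup \mathrm{link}(u)$ is a clique of $\Gamma$, and $\langle \mathrm{star}(u) \rangle = \langle u \rangle \times \langle \mathrm{link}(u) \rangle$. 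Consequently $\langle g \rangle$ lies at finite Hausdorff distance from $\langle a \rangle$, reducing the task to showing that $a$ is Morse in $\Gamma \mathcal{G}$. I would invoke the amalgamated splitting
\[
\Gamma \mathcal{G} \;=\; \langle \Gamma \setminus \{u\} \rangle \;\ast_{\langle \mathrm{link}(u) \rangle}\; \langle \mathrm{star}(u) \rangle
\]
whose edge group is finite, which makes $\Gamma \mathcal{G}$ hyperbolic relative to its two vertex groups (Bowditch); then the classical fact that Morse-ness passes from a peripheral subgroup to the ambient relatively hyperbolic group, combined with the observation that $a$ remains Morse in the direct factor $\langle \mathrm{star}(u) \rangle = \langle u \rangle \times \langle \mathrm{link}(u) \rangle$ (the extra factor being finite), concludes the forward direction.

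For the backward direction, assume $g$ is Morse with essential support $\Lambda$, and suppose the first alternative of the statement fails. The key auxiliary result is a \emph{direct-product principle}: no element of a direct product $A \times B$ of two infinite finitely generated groups is Morse, the reason being that one can build $(K,C)$-quasi-geodesics from $1$ to $g^n$ that detour through an arbitrarily long element of a commuting factor, yielding points at unbounded distance from $\langle g \rangle$. Combined with the convexity of parabolic subgroups in graph products (with respect to the standard Cayley graph), this immediately rules out the case of $\Lambda$ being contained in a large join $\Lambda_1 \ast \Lambda_2$: there, $g$ would lie in the convex subgroup $\langle \Lambda_1 \rangle \times \langle \Lambda_2 \rangle$ with both factors infinite, contradicting Morse-ness.

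Hence $\Lambda$ is complete and $g = \prod_{u \in \Lambda} g_u$ with each $g_u \neq 1$. Applying the principle again inside $\langle u_1 \rangle \times \langle u_2 \rangle$ for any distinct $u_1, u_2 \in \Lambda$ with infinite vertex-groups forces exactly one $u_0 \in \Lambda$ to carry an infinite factor (exactly one, because $g$ has infinite order). Setting $a := g_{u_0}$ and $b := \prod_{u \neq u_0} g_u$, the element $b$ lies in a finite group, so $\langle g \rangle$ is at bounded Hausdorff distance from $\langle a \rangle$ and $a$ is Morse in $\Gamma \mathcal{G}$, hence in $\langle u_0 \rangle$ by convexity. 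It remains to verify that $\mathrm{link}(u_0)$ is complete and all its vertex-groups are finite: a non-adjacent pair $v,w \in \mathrm{link}(u_0)$ would yield a convex copy of $\langle u_0 \rangle \times (\langle v \rangle \ast \langle w \rangle)$ with an infinite second factor, and a vertex $v \in \mathrm{link}(u_0)$ with $\langle v \rangle$ infinite would yield $\langle u_0 \rangle \times \langle v \rangle$; in either case the direct-product principle would contradict the Morse-ness of $a$. This matches Case~(ii) with $u = u_0$.

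The main obstacle I foresee is the Case~(ii) direction of the forward implication, namely transferring the Morse property of $a$ from the peripheral subgroup $\langle \mathrm{star}(u) \rangle$ up to $\Gamma \mathcal{G}$. Although this is a standard consequence of relative hyperbolicity with finite edge groups, finding a clean reference may be non-trivial; in its absence I would argue directly via the Bass-Serre tree of the amalgamation, using that the finiteness of the edge groups forces any quasi-geodesic in $\Gamma \mathcal{G}$ between two powers of $a$ to spend most of its length inside a single $\langle \mathrm{star}(u) \rangle$-coset, where Morse-ness of $a$ in the peripheral yields the desired tracking globally.
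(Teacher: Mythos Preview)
Your proposal is correct and follows essentially the same route as the paper. Both arguments rest on the same three ingredients: Theorem~\ref{thm:ContractingGP} together with the implication contracting $\Rightarrow$ Morse for the first alternative; the direct-product obstruction (no Morse elements in a product of two infinite groups, applied inside convex parabolic subgroups) to rule out large joins and to force a unique infinite factor in the complete-support case; and the amalgamated splitting $\Gamma\mathcal{G}=\langle\mathrm{star}(u)\rangle\ast_{\langle\mathrm{link}(u)\rangle}\langle\Gamma\setminus\{u\}\rangle$ over a finite subgroup to transfer Morse-ness between $\langle u\rangle$ and the ambient group. The only differences are organisational: you separate the two implications explicitly and justify the Morse transfer via relative hyperbolicity, whereas the paper runs a single case analysis on the join decomposition of the essential support and declares the transfer step ``clear''. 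One minor terminological slip: $\mathrm{star}(u)$ is complete but need not be a \emph{clique} in the paper's sense (maximal complete subgraph); this does not affect the argument.
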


\begin{proof}
Let $\Lambda$ denote the essential support of $g$, and decompose it as a join $\Lambda_1 \ast \cdots \ast \Lambda_n$ such that $\Lambda_1, \ldots, \Lambda_n$ are not joins themselves. First we eliminate two clear cases when $g$ cannot be Morse: 
\begin{itemize}
	\item If two of the $\Lambda_i$ are infinite, so either contain at least two vertices or are a single vertex labelled by an infinite group, then $g$ is not Morse as it is contained in a subgroup that splits as a product of two infinite groups. 
	\item If all the $\Lambda_i$ are single vertices labelled by finite groups, then $g$ is a finite-order element, and so cannot be Morse. 
\end{itemize}
From now on, up to reindexing of subgraphs in $\Lambda$, assume that $\Lambda_1$ either contains at least two vertices or is a single vertex labelled by an infinite group, and assume that $\Lambda_2, \ldots, \Lambda_n$ are single vertices labelled by finite groups (with the possibility that $n=1$). 

\medskip \noindent
Assume first that $\Lambda_1$ contains at least two vertices. If $\Lambda$ is not contained in a large join, then Theorem~\ref{thm:ContractingGP} applies and shows that $g$ is contracting, and therefore Morse. Otherwise, $g$ belongs to a subgroup that splits as a product of two infinite subgroups, and so cannot be Morse. 

\medskip \noindent
Next, assume that $\Lambda_1$ is a single vertex, say $u$, labelled by an infinite group. Notice that $g$ decomposes uniquely as a product $ab$ with $a \in \langle u \rangle$ and $b \in \langle \Lambda_2 \cup \cdots \cup \Lambda_n \rangle \leq \langle \mathrm{link}(u) \rangle$. Since $\Lambda_2, \ldots, \Lambda_n$ are single vertices labelled by finite groups, $b$ has finite order, so $g$ has a power, say $g^N$, that belongs to $\langle u\rangle$. If $\mathrm{link}(u)$ is not complete or if it is complete but has at least one vertex labelled by an infinite group, then $g^N$ is not Morse since it belongs to a subgroup that splits a product of two infinite subgroups. A fortiori, $g^N$ is not Morse either. Otherwise, if $\mathrm{link}(u)$ is complete and all its vertices are labelled by finite groups, then 
$$\Gamma \mathcal{G}= \langle \mathrm{star}(u) \rangle \underset{\langle \mathrm{link}(u) \rangle}{\ast} \langle \Gamma \backslash \{u\} \rangle$$
is a splitting over a finite subgroup. Clearly, $g^N$ is Morse in $\Gamma \mathcal{G}$ if and only if it is Morse in $\langle \mathrm{star}(u) \rangle$, or equivalently in $\langle u \rangle$. Since $g^N$ is also a power of $a$, this amounts to requiring $a$ to be a Morse element in $\langle a \rangle$. 
\end{proof}

\section{Application to periagroups}\label{section:periagroups}

\subsection{Mediangle geometry}\label{section:MediangleGeometry}

\noindent
We first record the preliminaries about mediangle graphs necessary for Section~\ref{section:ContractingInPeriagroups}. 
\paragraph{Mediangle graphs.}  Our definition of \emph{mediangle graphs} is a variation of the definition of quasi-median graphs as weakly modular graphs with no induced copy of $K_{3,2}$ and $K_4^-$; see Section~\ref{section:QM}. Roughly speaking, we replace the $4$-cycles in Proposition \ref{prop:QMweaklyModular} with convex even cycles. Recall that a \emph{convex} subgraph $Y$ of a graph $X$ is one that contains all the geodesics between its vertices, that is, $I(x,y) \subseteq Y$ for all $x,y \in V(Y)$.

\begin{definition}\label{def:Mediangle}
A connected graph $X$ is \emph{mediangle} if the following are satisfied:
\begin{description}
	\item[(Triangle Condition)] For all vertices $o,x,y \in X$ satisfying $d(o,x)=d(o,y)$ and $d(x,y)=1$, there exists a common neighbour $z \in X$ of $x,y$ such that $z \in I(o,x) \cap I(o,y)$.
	\item[(Intersection of Triangles)] $X$ does not contain an induced copy of $K_4^-$.
	\item[(Cycle Condition)] For all vertices $o ,x,y,z \in X$ satisfying $d(o,x)=d(o,y)=d(o,z)-1$ and $d(x,z)=d(y,z)=1$, there exists a convex cycle of even length that contains the edges $[z,x],[z,y]$ and such that the vertex opposite to $z$ belongs to $I(o,x) \cap I(o,y)$.
	\item[(Intersection of Even Cycles)] The intersection between any two convex cycles of even lengths contains at most one edge. 
\end{description}
\end{definition}

\noindent
As mentioned in Section~\ref{section:Paraclique}, examples of mediangle graphs include (quasi-)median graphs, Cayley graphs of Coxeter graphs (e.g.\ the one-skeleton of the regular tiling of the plane by hexagons), one-skeleta of some small cancellation polygonal complexes, and hypercellular graphs. We refer to \cite{Mediangle} for more details. 

\medskip \noindent
In \cite[Section 3.3]{Mediangle}, hyperplanes in mediangle graphs are defined as the transitive closure of the relation that assumes two edges in a $3$-cycle or opposite in a convex even cycle are equivalent. Lemma \ref{lem:hyper} shows this definition is equivalent to Definition \ref{def:hyper}, which uses parallelism; this will allow us to apply the results from \cite{Mediangle} safely. 

\begin{lemma}\label{lem:hyper}
Let $X$ be a mediangle graph. Two edges $e$ and $f$ belong to the same hyperplane if and only if there exists a sequence of edges
$$a_0=e, \ a_1, \ldots,  \ a_{n-1}, \ a_n= f$$
such that, for every $0 \leq i \leq n-1$, $a_i$ and $a_{i+1}$ either belong to a common $3$-cycle or are opposite edges in a convex even cycle.
\end{lemma}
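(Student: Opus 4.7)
The plan is to show the two hyperplane definitions agree by proving both inclusions. Since mediangle graphs are $K_4^-$-free, each edge of $X$ lies in a unique maximal clique, so the relation ``lying in parallel cliques'' (Definition~\ref{def:hyper}) descends to a well-defined equivalence relation $\sim_{\mathrm{par}}$ on edges; the relation $\sim_{\mathrm{loc}}$ generated by the two local moves in the statement is tautologically an equivalence relation on edges. The goal is to verify that $\sim_{\mathrm{par}} = \sim_{\mathrm{loc}}$.

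For $\sim_{\mathrm{loc}} \subseteq \sim_{\mathrm{par}}$: the 3-cycle case is immediate, since $K_4^-$-freeness forces the two edges of a common triangle to lie in a common maximal clique. For the convex even cycle case, suppose $e = \{u_0, u_1\}$ and $f = \{u_k, u_{k+1}\}$ are opposite edges of a convex $2k$-cycle $(u_0, \ldots, u_{2k-1})$. Convexity of the cycle makes the projection of $u_0$ onto $\{u_k, u_{k+1}\}$ equal to $u_{k+1}$ and the projection of $u_1$ equal to $u_k$, so the two edges (as $2$-cliques) are already parallel. If $e$ extends to a larger maximal clique $C_e$, I transport each extra vertex $w \in C_e$ by using the Cycle Condition to produce a parallel convex even cycle passing through $w$, and then invoke the Intersection of Even Cycles axiom to pin down a canonical counterpart $w' \in C_f$. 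Iterating this shows that $C_e$ and $C_f$ are parallel as full maximal cliques, so $e \sim_{\mathrm{par}} f$.

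For $\sim_{\mathrm{par}} \subseteq \sim_{\mathrm{loc}}$: given $e \in C_0$ and $f \in C_1$ with $C_0, C_1$ parallel maximal cliques, I first note that 3-cycle moves alone connect arbitrary edges within a single clique (any two edges of a clique of size $\geq 3$ share a vertex and hence a triangle). It therefore suffices to connect some edge of $C_0$ to some edge of $C_1$. I induct on the nearest-point distance $k := d(x, \mathrm{proj}_{C_1}(x))$, which is independent of $x \in C_0$ by coherence of projections (Proposition~\ref{prop:HypCliqueGated}). The base case $k = 0$ gives $C_0 = C_1$ and is trivial. For the inductive step, I produce an intermediate parallel clique $C'$ with nearest-point distances $d(C_0, C') = 1$ and $d(C', C_1) = k - 1$ by pushing each vertex of $C_0$ one step along a geodesic to its projection on $C_1$, and apply induction. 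For the base case $k = 1$, given distinct $x, y \in C_0$ with projections $\tilde x, \tilde y \in C_1$, the 4-cycle $(x, y, \tilde y, \tilde x)$, by $K_4^-$-freeness and the Cycle Condition, is either already a convex even 4-cycle (linking $\{x, y\}$ to $\{\tilde x, \tilde y\}$ by one local move) or decomposes as two triangles glued along a diagonal (linking them by two 3-cycle moves).

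The hardest step is the backward direction's extension argument: controlling the extra vertices of the ambient maximal cliques in the presence of a convex even cycle of length $\geq 6$ requires combining the Cycle Condition and the Intersection of Even Cycles axiom carefully, and checking that the counterpart $w'$ obtained for each $w \in C_e$ is well-defined and provides a genuine bijection between the full maximal cliques. If this step is not handled correctly, one only obtains parallelism of the $2$-cliques $e, f$ rather than of their (possibly larger) maximal cliques, which would leave the proof incomplete.
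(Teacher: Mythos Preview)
The paper's proof is essentially a one-line citation of \cite[Corollary~3.18]{Mediangle}, which characterises the existence of such a sequence by whether the projection of $e$ onto the clique $C$ containing $f$ is an edge; combined with Lemma~\ref{lem:CliqueGated} this is exactly parallelism of the containing cliques. Your attempt at a self-contained argument is a genuinely different route, but it has two problems.

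First, what you flag as the ``hardest step'' is a non-issue. You worry that for opposite edges $e,f$ in a convex even $2k$-cycle you only get parallelism of $e,f$ as $2$-cliques rather than of their maximal cliques $C_e,C_f$. But Lemma~\ref{lem:CliqueGated} already says the projection between two cliques is either constant or a bijection, so it suffices to show that two vertices of $C_e$ have \emph{distinct} gates in $C_f$. A short computation with the gate property does this: the gate of $u_0$ in $C_f$ must be $u_{k+1}$ (if it were $u_k$ or a third vertex of $C_f$, the equalities $d(u_0,u_k)=k$ and $d(u_0,u_{k+1})=k-1$ would be violated), and symmetrically the gate of $u_1$ is $u_k$. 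No transporting of extra vertices via the Cycle Condition or the Intersection of Even Cycles axiom is needed.

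Second, your inductive step for $\sim_{\mathrm{par}} \subseteq \sim_{\mathrm{loc}}$ has a real gap. ``Pushing each vertex of $C_0$ one step along a geodesic to its projection on $C_1$'' does not obviously produce a clique, and even if it did, arguing that it is parallel to $C_1$ would need transitivity of parallelism, which for mediangle graphs is only established afterwards (Proposition~\ref{prop:MediangleParaclique}). The fix is to induct on edges rather than cliques: given $e=\{x,y\}$ at distance $k\geq 1$ from $C_f$, take a neighbour $z$ of $x$ on a geodesic toward its projection and apply the Cycle Condition with base vertex $y'=\mathrm{proj}_{C_f}(y)$ to obtain a convex even cycle through $[x,y]$ and $[x,z]$; the edge opposite $[x,y]$ in this cycle has the same projection on $C_f$ and is strictly closer. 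This is precisely Claim~\ref{claim:SequenceCycles}, which the paper proves later for another purpose. As a side remark, in your $k=1$ base case the ``two triangles glued along a diagonal'' alternative never arises: gatedness of $C_1$ forces the $4$-cycle $(x,y,\tilde y,\tilde x)$ to be induced, and the Cycle Condition then makes it convex.
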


\begin{proof}
Let $C$ denote the clique containing $f$. As a consequence of \cite[Corollary~3.18]{Mediangle}, there exists a sequence of edges as described by our lemma if and only if the projection of $e$ on $C$ is an edge. But this also amounts to saying that the cliques containing $e$ and $f$ are parallel, i.e.\ $e$ and $f$ belong to the same hyperplane. 
\end{proof}

\noindent
Let us verify that, as claimed before, mediangle graphs are indeed paraclique.

\begin{prop}\label{prop:MediangleParaclique}
Mediangle graphs are paraclique.
\end{prop}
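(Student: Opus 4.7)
The plan is to verify, for any mediangle graph $X$, the two defining properties of a paraclique graph: (a) every clique of $X$ is gated, and (b) the parallelism relation between cliques is transitive.

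For (a), I would invoke the claim used in the proof of Proposition~\ref{prop:Paraclique} (attributed to \cite[Theorem~3.1]{MR1420527}): a connected graph is clique-gated if and only if it has no induced $K_4^-$ and satisfies the triangle condition. Both properties are built directly into the definition of a mediangle graph (Definition~\ref{def:Mediangle}), so cliques in $X$ are immediately gated, and Lemma~\ref{lem:CliqueGated} applies.

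For (b), the strategy is to identify parallelism of cliques with an edge-equivalence relation that is manifestly transitive. Consider the equivalence relation $\approx$ on edges of $X$ generated by the local rule used in \cite{Mediangle}, namely that two edges are related whenever they share a common $3$-cycle or are opposite edges in a convex even cycle. By construction, $\approx$ is an equivalence relation, hence transitive. Using \cite[Corollary~3.18]{Mediangle}, which is the content underlying Lemma~\ref{lem:hyper}, I would argue that an edge $e$ in a clique $Q$ is $\approx$-related to an edge $f$ in another clique $C$ if and only if $\mathrm{proj}_C(e)$ is an edge; by Lemma~\ref{lem:CliqueGated}, this is equivalent to $Q$ and $C$ being parallel. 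Hence two cliques are parallel precisely when an edge of one is $\approx$-related to an edge of the other, and transitivity of $\approx$ transfers to transitivity of parallelism. Concretely, if $C_1$ is parallel to $C_2$ and $C_2$ is parallel to $C_3$, pick edges $e_i \in C_i$ and form the edge $e_2' = \mathrm{proj}_{C_2}(e_3)$; then $e_1 \approx e_2$ and $e_2 \approx e_2' \approx e_3$ (the middle step being in the triangle inside $C_2$), so $e_1 \approx e_3$ and $C_1, C_3$ are parallel.

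The main obstacle is a minor logical subtlety: Lemma~\ref{lem:hyper} is phrased in terms of \emph{hyperplanes} (Definition~\ref{def:hyper}), which presupposes that parallelism is already an equivalence relation. To avoid this circularity I would not quote Lemma~\ref{lem:hyper} as a black box but instead extract from its proof the precise equivalence we need, namely the characterization $e \approx f \iff \mathrm{proj}_{C_f}(e) \text{ is an edge}$ coming from \cite[Corollary~3.18]{Mediangle}. Once this dictionary between the edge-level and clique-level descriptions is in place, the proposition follows, with all the combinatorial work already done in \cite{Mediangle}.
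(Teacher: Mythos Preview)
Your proposal is correct and essentially the same as the paper's proof: both establish clique-gatedness (you via the $K_4^-$/triangle-condition characterisation, the paper via a direct citation to \cite[Lemma~3.12]{Mediangle}), and both reduce transitivity of parallelism to \cite[Corollary~3.18]{Mediangle} by identifying ``parallel'' with ``in the same Mediangle-hyperplane''. The paper phrases the latter step slightly differently, invoking Lemma~\ref{lem:SameHypIso} to get bijectivity of the projection once two cliques lie in the same hyperplane, whereas you work directly with the edge-relation $\approx$; but the content is the same, and your remark about avoiding circularity with Lemma~\ref{lem:hyper} is well taken.
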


\noindent
We start by proving the following observation, which will be also useful later:

\begin{lemma}\label{lem:SameHypIso}
Let $X$ be a mediangle graph and $Y,Z \subset X$ two gated subgraphs. If $Y$ and $Z$ are crossed by exactly the same hyperplanes, then the projection of $Y$ on $Z$ induces an isometry $Y \to Z$. 
\end{lemma}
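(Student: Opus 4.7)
The plan is to use two facts valid in every paraclique graph, hence in $X$ by Proposition~\ref{prop:MediangleParaclique}: distances count separating hyperplanes (Proposition~\ref{prop:ParaGeod}), and the gate $\mathrm{proj}_A(x)$ of a vertex $x$ in a gated subgraph $A$ lies on the same side as $x$ of every hyperplane crossing $A$ (otherwise a geodesic from $x$ to a point of $A$ on the opposite side would have to cross such a hyperplane twice, contradicting Proposition~\ref{prop:ParaGeod}). Write $\pi := \mathrm{proj}_Z|_Y$.

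First I would note that a hyperplane $H$ not crossing $Y$, equivalently not crossing $Z$ by hypothesis, has $Y$ and $Z$ each contained in a single sector of $H$; let $\mathcal{H}_{\mathrm{opp}}$ denote the family of such hyperplanes for which these two sectors are distinct. By the gate characterisation above, for every $y \in Y$ the hyperplanes separating $y$ from $\pi(y)$ are exactly those in $\mathcal{H}_{\mathrm{opp}}$, a set independent of~$y$; in particular $d(y, \pi(y)) = |\mathcal{H}_{\mathrm{opp}}|$ is constant on $Y$.

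For the isometry claim, fix $y_1, y_2 \in Y$; the hyperplanes separating them cross $Y$ by convexity of gated subgraphs, hence are disjoint from $\mathcal{H}_{\mathrm{opp}}$. Tracking the sides through $y_2$ (every hyperplane of $\mathcal{H}_{\mathrm{opp}}$ separates $y_2$ from $\pi(y_2)$ but not $y_1$ from $y_2$; every hyperplane separating $y_1, y_2$ does not separate $y_2$ from $\pi(y_2)$; no other hyperplane separates $y_1$ from $\pi(y_2)$) shows that the hyperplanes separating $y_1$ from $\pi(y_2)$ decompose as the disjoint union of those separating $y_1, y_2$ and of $\mathcal{H}_{\mathrm{opp}}$, giving $d(y_1, \pi(y_2)) = d(y_1, y_2) + |\mathcal{H}_{\mathrm{opp}}|$. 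On the other hand, the gate property of $\pi(y_1) \in Z$ applied to $\pi(y_2)$ yields $d(y_1, \pi(y_2)) = d(y_1, \pi(y_1)) + d(\pi(y_1), \pi(y_2)) = |\mathcal{H}_{\mathrm{opp}}| + d(\pi(y_1), \pi(y_2))$. Comparing, $d(\pi(y_1), \pi(y_2)) = d(y_1, y_2)$.

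For surjectivity, given $z \in Z$, I would set $y := \mathrm{proj}_Y(z)$ and $z' := \pi(y)$, then verify that no hyperplane separates $z$ and $z'$. A hyperplane crossing both $Y$ and $Z$ has $y$ on the side of $z$ (gate of $z$ in $Y$) and $z'$ on the side of $y$ (gate of $y$ in $Z$), so $z$ and $z'$ agree on it; and a hyperplane crossing neither has both $z, z' \in Z$ in a single sector. Proposition~\ref{prop:ParaGeod} then forces $z = z'$, so $\pi$ is onto. The main delicate step is the hyperplane bookkeeping in the isometry argument, where one must use the gate characterisation to track sides consistently through two successive projections; beyond that, no mediangle-specific structure from Section~\ref{section:MediangleGeometry} is needed, only the paraclique framework of Section~\ref{section:Paraclique}.
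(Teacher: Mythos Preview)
Your proof is correct and follows essentially the same approach as the paper's: both arguments show the projection is distance-preserving by tracking which hyperplanes separate a pair of points versus their projections, and both obtain surjectivity by projecting a $z\in Z$ first to $Y$ and then back to $Z$. The paper is terser because it invokes \cite[Corollary~3.18 and Lemma~3.17]{Mediangle} directly, whereas you rederive these facts from the paraclique framework (Propositions~\ref{prop:HypCliqueGated} and~\ref{prop:ParaGeod}); this makes your version more self-contained and in fact shows the lemma holds for any paraclique graph, not just mediangle ones. One small wording issue: in your justification of the gate characterisation, ``a point of $A$ on the opposite side'' should be ``a point of $A$ in a different sector from $\mathrm{proj}_A(x)$'', since hyperplanes can delimit more than two sectors; the argument then goes through because any geodesic $x\to\mathrm{proj}_A(x)\to a$ would cross the hyperplane twice.
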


\begin{proof}
Let $a,b \in Y$ be two vertices. According to \cite[Corollary~3.18]{Mediangle}, the hyperplanes separating the projections of $a$ and $b$ on $Z$ are exactly the hyperplanes separating $a$ and $b$ that cross $Z$. But the hyperplanes separating $a$ and $b$ necessarily cross $Y$, and consequently $Z$. Thus, $a$ and $b$ are separated by the same hyperplanes as their projections on $Z$. In particular, the same number of hyperplanes separates $a$ and $b$ and their projections on $Z$, which proves that the projection of $Y$ on $Z$ is an isometric embedding. 

\medskip
\noindent It remains to verify that every vertex of $Z$ is the projection of some vertex of $Y$. Given a vertex $z \in Z$, let $y$ denote its projection on $Y$. According to \cite[Lemma~3.17]{Mediangle}, the hyperplanes separating $z$ from $y$ separate $z$ from $Z$. Consequently, the hyperplanes separating $y$ and $z$ coincide with the hyperplanes separating $Y$ and $Z$. Thus $y$ and $z$ minimise the distance between $y$ and $z$, so $z$ must be the projection of $y$ on $Z$.
\end{proof}

\begin{proof}[Proof of Proposition~\ref{prop:MediangleParaclique}.]
Cliques in mediangle graphs are gated according to \cite[Lemma~3.12]{Mediangle}. Now, let $C_1,C_2$ be two cliques such that the projection of $C_1$ on $C_2$ is not reduced to a single vertex. It follows from \cite[Corollary~3.18]{Mediangle} that $C_1$ and $C_2$ belong to the same hyperplane, and then Lemma~\ref{lem:SameHypIso} implies that the projection of $C_1$ on $C_2$ is bijective. 
\end{proof}

\noindent
In the sequel, we will need to know which isometries of mediangle graphs admit an axis (i.e.\ a bi-infinite geodesic line on which they act as a non-trivial translation). This is not always the case among isometries with unbounded orbits. Our next statement provides a sufficient condition for admitting an axis. The criterion is far from optimal, but we focus on the specific case that will be relevant to us.

\begin{prop}\label{prop:AxisSometimes}
Let $X$ be a mediangle graph and $g \in \mathrm{Isom}(X)$ some isometry. Assume that every vertex of $X$ belongs to only finitely many cliques and that $g$ skewers a pair of hyperplanes $(J_1,J_2)$ satisfying:
\begin{itemize}
	\item[(i)] every hyperplane transverse to both $J_1$ and $J_2$ delimits only finitely many sectors;
	\item[(ii)] there are only finitely many hyperplanes transverse to both $J_1$ and $J_2$.
\end{itemize}
Then some power of $g$ admits an axis in $X$.
\end{prop}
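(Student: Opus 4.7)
Up to replacing $g$ with the power $g^n$ given by the skewering hypothesis, I may assume $gS_1 \subsetneq S_2 \subsetneq S_1$. Iterating this inclusion produces a bi-infinite chain of hyperplanes
$$\ldots,\, g^{-1}J_1,\, g^{-1}J_2,\, J_1,\, J_2,\, gJ_1,\, gJ_2,\, g^2J_1,\, g^2J_2,\, \ldots$$
in which each hyperplane separates its two neighbours. By Proposition~\ref{prop:ParaGeod}, for every vertex $v$ the distance $d(v, g^k v)$ grows at least linearly in $|k|$, so $g$ has infinite order and unbounded orbits.

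The plan is the standard combinatorial ``minimise the translation length'' strategy, carried out in the mediangle setting. Set $\ell := \inf_{v \in X} d(v, gv) \geq 2$. The core step is to show that $\ell$ is attained. Fix a vertex $v_0$ lying on the carrier of $J_1$, and let $D := d(v_0, gv_0)$; the $\langle g \rangle$-invariant set $X_D := \{v : d(v, gv) \leq D\}$ is non-empty. I would show that $X_D$ meets only finitely many $\langle g \rangle$-orbits. Any $v \in X_D$ is, up to a $\langle g \rangle$-translate, contained in a ``fundamental slab'' between $J_1$ and $gJ_1$. A geodesic from such $v$ to $gv$ has length $\leq D$ and crosses a bounded number of hyperplanes, which, using hypothesis~(ii), must all lie in a finite set controlled by the transverse hyperplanes to $J_1, J_2$ (and their nearby translates in the chain). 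Hypothesis~(i) bounds the combinatorial ``width'' transverse to these, and the finite-cliques-per-vertex hypothesis then forces only finitely many candidate vertices in the slab. Hence $\ell$ is attained at some $v_\ast$.

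It remains to show that $v_\ast$ lies on an axis. Let $\gamma$ be any geodesic from $v_\ast$ to $g v_\ast$; by $\langle g \rangle$-equivariance and Proposition~\ref{prop:ParaGeod}, the concatenation $\bigcup_{k \in \mathbb{Z}} g^k \gamma$ is a bi-infinite geodesic provided no hyperplane of $X$ is crossed twice, which reduces to showing that no hyperplane $K$ separates both $(v_\ast, gv_\ast)$ and $(gv_\ast, g^2 v_\ast)$. If such a $K$ existed, both $K$ and $g^{-1}K$ would separate $v_\ast$ from $g v_\ast$; using the gating of hyperplane carriers (\cite[Lemma~3.12]{Mediangle}) to project $v_\ast$ across $K$ onto a nearby vertex $v'_\ast$, a short case analysis analogous to the CAT(0) cube complex case would give $d(v'_\ast, g v'_\ast) < \ell$, contradicting the minimality of $v_\ast$. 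The concatenation $\bigcup_k g^k \gamma$ is then a $g$-invariant bi-infinite geodesic, i.e.\ an axis for $g$.

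The main obstacle is the finiteness argument for $X_D / \langle g \rangle$. One must pin down precisely which hyperplanes appear along a geodesic of bounded length within the fundamental slab, separating those handled by (ii) and the chain structure from those that sit strictly between consecutive chain elements; the local finite-clique hypothesis then translates finitely many hyperplanes into finitely many vertices. The argument for the axis itself, given the minimizer, is formally closer to well-known cube-complex proofs, with the gating properties of mediangle graphs playing the role of the median structure.
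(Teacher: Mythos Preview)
Your approach differs from the paper's. The paper does not minimise displacement directly; instead it takes the convex hull $A$ of an orbit $\langle g\rangle\cdot o$, shows $A$ is a locally finite quasi-line, and then invokes a general result (\cite[Theorem~A.2]{TransLengthQM}) that any isometry of a locally finite quasi-line with unbounded orbits has a power admitting an axis. Local finiteness of $A$ comes from hypothesis~(i): an infinite clique in $A$ would give a hyperplane delimiting infinitely many sectors meeting the orbit, forcing it to be transverse to some $g^iJ_1$ and $g^iJ_2$, a contradiction. The quasi-line bound comes from hypothesis~(ii): at most $2N+d(J_1,g^3J_1)$ hyperplanes separate $o$ from any $x\in A$ in the same fundamental slab.

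Your proposal has two genuine gaps. First, the finiteness of $X_D/\langle g\rangle$ is only sketched, and your outline conflates ``finitely many hyperplanes crossed'' with ``finitely many vertices'': the hyperplanes separating $v$ from $gv$ inside the slab are not all transverse to $J_1$ and $J_2$, so hypothesis~(ii) does not directly control them, and you never explain how hypothesis~(i) enters. The paper's use of~(i) (via infinite cliques in the convex hull) is the missing mechanism.

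Second, and more seriously, your axis construction relies on gating of hyperplane carriers, citing \cite[Lemma~3.12]{Mediangle}. That lemma is about \emph{cliques}, not carriers. In mediangle graphs carriers of hyperplanes are \emph{not} gated in general (see the discussion preceding Lemma~\ref{lem:RightHypGated}), so the projection ``across $K$'' you need is unavailable. The CAT(0) cube complex shortening argument also uses that hyperplanes are two-sided, whereas here $K$ may delimit many sectors, so $v_\ast$ and $g^2v_\ast$ need not lie in the same sector of $K$ even if both $\gamma$ and $g\gamma$ cross $K$. This is likely why the statement only promises an axis for a \emph{power} of $g$: the direct minimiser-to-axis step you propose is not known for mediangle graphs, and the paper sidesteps it entirely via the quasi-line black box.
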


\noindent
We start by proving the following elementary observation:

\begin{lemma}\label{lem:UnionSectorsConvex}
Let $X$ be a mediangle graph and $J$ a hyperplane. A union of sectors delimited by $J$ is convex in $X$.
\end{lemma}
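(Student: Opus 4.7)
The plan is to pick two vertices $x,y$ in a union $U = \bigcup_{i \in I} S_i$ of sectors delimited by $J$, take an arbitrary geodesic $\gamma$ between them, and show that $\gamma$ stays inside $U$ by exploiting two ingredients already established for paraclique graphs: the convexity of individual sectors (Proposition~\ref{prop:HypCliqueGated}(iii)), and the fact that a geodesic crosses each hyperplane at most once (Proposition~\ref{prop:ParaGeod}).

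First I would dispose of the trivial case: if $x$ and $y$ lie in a common sector $S_i \subseteq U$, then Proposition~\ref{prop:HypCliqueGated}(iii) tells us $S_i$ is convex in $X$, so $\gamma \subseteq S_i \subseteq U$ immediately. Otherwise, $x \in S_i$ and $y \in S_j$ with $i \neq j$, so in particular $J$ separates $x$ from $y$; by Proposition~\ref{prop:ParaGeod} there is then a unique edge $e = [a,b]$ of $\gamma$ that belongs to $J$, and I may label the endpoints so that $a$ lies in the sector of $x$ and $b$ lies in the sector of $y$.

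The subpath of $\gamma$ from $x$ to $a$ is a geodesic that does not cross $J$, hence both endpoints lie in $S_i$; invoking convexity of $S_i$ again, the whole subpath stays in $S_i \subseteq U$. Symmetrically, the subpath from $b$ to $y$ sits inside $S_j \subseteq U$. Finally, the edge $e$ itself has endpoints $a \in S_i \subseteq U$ and $b \in S_j \subseteq U$, so the entire geodesic $\gamma$ is contained in $U$, establishing the convexity of $U$.

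There is really no hard step here: the statement reduces to combining convexity of sectors with the ``crossed-at-most-once'' description of geodesics, so the only thing to watch is the bookkeeping of which sector each piece of $\gamma$ lies in, and to note that the single edge of $\gamma$ lying in $J$ has its two endpoints in the two sectors containing $x$ and $y$, which are both by hypothesis part of $U$.
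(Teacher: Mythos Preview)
Your proposal is correct and follows essentially the same approach as the paper's own proof: both dispose of the same-sector case via Proposition~\ref{prop:HypCliqueGated}(iii) and handle the two-sector case using Proposition~\ref{prop:ParaGeod} to ensure the geodesic cannot enter a third sector. The paper's version is more terse (it simply observes that a geodesic crossing $J$ at most once cannot visit a third sector), while you spell out the decomposition into the two subpaths and the connecting edge; note that invoking convexity of $S_i$ for the subpath from $x$ to $a$ is actually redundant, since a path that avoids the edges of $J$ and starts in $S_i$ already stays in $S_i$ by the definition of sectors as connected components of $X\backslash\backslash J$.
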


\begin{proof}
We already know from Proposition~\ref{prop:HypCliqueGated} that sectors delimited by $J$ are convex. Therefore, it suffices to show that, given two vertices $x,y \in X$ that belong to two distinct sectors $A,B$ delimited by $J$, every geodesic connecting $x$ to $y$ is contained in $A \cup B$. But we know from Proposition~\ref{prop:ParaGeod} that any such geodesic cannot cross $J$ twice, so it cannot pass through a third sector. 
\end{proof}

\begin{cor}\label{cor:SectorConvexHull}
Let $X$ be a mediangle graph and $S \subset X$ a subgraph. A sector intersects the convex hull of $S$ if and only if it intersects $S$. 
\end{cor}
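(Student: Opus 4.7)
The plan is to prove the corollary by a short argument via the contrapositive, relying entirely on Lemma~\ref{lem:UnionSectorsConvex}.

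One direction is immediate: since $S$ is contained in its convex hull, any sector meeting $S$ automatically meets the convex hull.

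For the converse, I would argue by contrapositive. Suppose a sector $A$ delimited by some hyperplane $J$ does not intersect $S$. Since the sectors delimited by $J$ partition the vertex set of $X$ (they are the connected components of $X\backslash\backslash J$), the vertex set of $S$ is contained in the union $U$ of all sectors delimited by $J$ other than $A$. By Lemma~\ref{lem:UnionSectorsConvex}, $U$ is convex in $X$. Therefore the convex hull of $S$ is also contained in $U$, so it is disjoint from $A$, which is what we wanted.

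There is no real obstacle here: the entire content is packaged in Lemma~\ref{lem:UnionSectorsConvex}, together with the basic fact that the sectors of a hyperplane partition the vertices of $X$. The only point to double-check is this partition statement, which follows from Proposition~\ref{prop:HypCliqueGated}: removing the edges of $J$ yields connected components whose vertex sets cover $X$ and are pairwise disjoint by definition, so every vertex lies in exactly one sector of $J$.
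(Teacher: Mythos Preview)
Your proof is correct and follows essentially the same approach as the paper's: both argue the nontrivial direction by contrapositive, using Lemma~\ref{lem:UnionSectorsConvex} to conclude that the complement of the given sector (a union of sectors) is convex and hence contains the convex hull of $S$. Your added remark that Proposition~\ref{prop:HypCliqueGated} justifies the partition of vertices into sectors is a fine clarification but not strictly needed.
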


\begin{proof}
It is clear that a sector intersecting $S$ also intersects the convex hull of $S$. Conversely, if a sector $D$ does not intersect $S$, then $S$ is contained in the complement $D^c$ of $D$, which is a union of sectors. Therefore, Lemma~\ref{lem:UnionSectorsConvex} implies that $D^c$ is convex, and we conclude that the convex hull of $S$ must be contained in $D^c$, and consequently disjoint from $D$. 
\end{proof}

\begin{proof}[Proof of Proposition~\ref{prop:AxisSometimes}.]
Fix an arbitrary vertex $o \in X$ and let $A$ denote the convex hull of the orbit $\langle g \rangle \cdot o$. We claim that $A$ is a locally finite quasi-line on which $g$ acts. This will imply  that some power of $g$ acts as a non-trivial translation on some bi-infinite geodesic contained in $A$; see for instance \cite[Theorem~A.2]{TransLengthQM}. 

\medskip \noindent
Up to replacing $g$ with some of its powers, we can assume that $g^nJ_2$ separates $g^nJ_1$ and $g^{n+1}J_1$ for every $n \in \mathbb{Z}$. If $A$ is not locally finite, then, because every vertex of $X$ belongs to only finitely many cliques, $A$ must contain an infinite complete subgraph. Let $J$ denote the hyperplane containing this subgraph. Clearly, $A$ intersects infinitely many sectors delimited by $J$, so Corollary~\ref{cor:SectorConvexHull} implies that $\langle g \rangle \cdot o$ intersects infinitely many sectors delimited by $J$. But, if $o$ lies between $g^sJ_1$ and $g^sJ_1$ for some $s \in \mathbb{Z}$, then $g^ko$ lies between $g^{s+k}J_1$ and $g^{s+k}J_1$, which implies that $J$ must be transverse to $g^iJ_1$ and $g^{i+1}J_1$ (and a fortiori $g^iJ_2$) for infinitely many $i \in \mathbb{Z}$. But, due to the fact that $J$ delimits infinitely many sectors, there cannot be an index $i \in \mathbb{Z}$ such that $g^i J_1$ and $g^iJ_2$ are both crossed by $J$. We conclude that $A$ must be locally finite.

\medskip \noindent
Then, let us verify that $A$ is contained in a neighbourhood of $\langle g \rangle \cdot o$. Fix an arbitrary vertex $x \in A$. Up to replacing our vertices and hyperplanes by $\langle g \rangle$-translates, assume for ease of notation that $o$ and $x$ both lie between $J_1$ and $gJ_1$. We claim that $d(o,x) \leq 2N+d(J_1,g^3J_1)$ where $N$ denotes the number of hyperplanes transverse to both $J_1$ and $J_2$. Among the hyperplanes separating $o$ and $x$, we know that at most $2N$ of them may be transverse to both $g^{-1}J_1$ and $J_1$ or to both $gJ_1$ and $g^2J_1$. Also, at most $d(g^{-1}J_1,g^2J_1)=d(J_1,g^3J_1)$ of them may separate $g^{-1}J_1$ and $g^2J_1$. The possible remaining hyperplanes separate $x$ from $g^{-1}J_1$ and $g^2J_1$, and consequently from $\langle g \rangle \cdot o$, which is impossible since $x$ belongs to the convex hull of $\langle g \rangle \cdot o$. 
\end{proof}

\subsection{Periagroups}\label{sec:periagroups} Periagroups were first defined in \cite{Mediangle}, as follows.

\begin{definition}\label{def:periagroup}[\cite{Mediangle}]
Let $\Gamma$ be a graph, $\lambda : E(\Gamma) \to \mathbb{N}_{\geq 2}$ an edge labelling, and $\mathcal{G}=\{G_u \mid u \in V(\Gamma) \}$ a collection of non-trivial groups. We assume that $\lambda(\{u,v\})=2$ for any edge $\{u,v \} \in E(\Gamma)$ satisfying $\max(|G_u|, |G_v|)\geq 3$  (so $\lambda(\{u,v\})>2$ means $|G_u|=|G_v|=2$). The \emph{periagroup} $\Pi(\Gamma, \lambda, \mathcal{G})$ admits
$$\left\langle G_u, \ u \in V(\Gamma) \mid \langle G_u,G_v \rangle^{\lambda(\{u,v\})} = \langle G_v,G_u \rangle^{\lambda(\{u,v\})}, \ \{u,v\} \in E(\Gamma) \right\rangle$$
as a relative presentation. Here, $\langle a,b \rangle^k$ refers to the word obtained from $ababab \cdots$ by keeping only the first $k$ letters; and $\langle G_u, G_v \rangle^k = \langle G_v,G_u \rangle^k$ is a shorthand for: $\langle a,b \rangle^k= \langle b,a \rangle^k$ for all non-trivial $a \in G_u$ and $b \in G_v$. 
\end{definition}

\begin{ex}\label{ex:periagroup}
Let $\Gamma$ be the labelled graph with vertices $v_1, v_2, v_3, v_4$ and corresponding vertex groups $G_1, G_2, G_3, G_4$, where $G_i=\langle x_i\mid x_i^2=1\rangle$, $i=1,2$, $G_3=\langle x_3 \mid x_3^3=1 \rangle$ and $G_4=\langle x_4, x_5 \mid - \rangle \cong F_2$, as pictured:
 \[
  \xymatrix{\stackrel{G_1}{\bullet}\ar@{-}[r]^5 &\stackrel{G_2}{\bullet} \ar@{-}[r]^2&\stackrel{G_3}{\bullet}\ar@{-}[r]^2 &\stackrel{G_4}{\bullet} }.
 \]
 The periagroup $\Pi(\Gamma, \lambda, \mathcal{G})$ based on $\Gamma$ has the presentation $$\langle x_1,x_2,x_3,x_4 \mid \langle x_1,x_2\rangle^5=\langle x_2,x_1\rangle^5, [x_2,x_3]=[x_3,x_4]=[x_3, x_5]=1, x_1^2=x_2^2=x_3^3=1 \rangle.$$

 \end{ex}
\noindent
Periagroups of cyclic groups of order two coincide with Coxeter groups; and, if $\lambda \equiv 2$, all the relations are commutations and one retrieves graph products of groups. Thus, periagroups can be thought of as an interpolation between Coxeter groups and graph products of groups. Periagroups of cyclic groups, a generalisation of Coxeter groups and right-angled Artin groups, aslo appear in \cite{MR1076077}, and are studied geometrically in \cite{MR4735237} under the name \emph{Dyer groups}. 

\medskip \noindent
Let $\Pi = \Pi(\Gamma, \lambda, \mathcal{G})$ be a periagroup. A \emph{word} in $\Pi$ is a product $g_1 \cdots g_n$ with $n \geq 0$ and where, for every $1 \leq i \leq n$, $g_i$ belongs to $G_i$ for some $G_i \in \mathcal{G}$; the $g_i$'s are the \emph{syllables} of the word, and $n$ is the \emph{length} of the word. Clearly, the following operations on a word do not modify the element of $\Pi$ it represents:
\begin{description}
	\item[(reduction)] remove the syllable $g_i$ if $g_i=1$;
	\item[(fusion)] if $g_i,g_{i+1} \in G$ for some $G \in \mathcal{G}$, replace the two syllables $g_i$ and $g_{i+1}$ by the single syllable $g_ig_{i+1} \in G$;
	\item[(dihedral relation)] if there exist $\{u,v\} \in E(\Gamma)$ such that $g_i\cdots g_{i+\lambda(u,v)-1}= \langle a,b \rangle^{\lambda(u,v)}$ for some $a \in G_u$, $b \in G_v$, then replace this subword with $\langle b,a \rangle^{\lambda(u,v)}$.
\end{description}
A word is \emph{graphically reduced} if its length cannot be shortened by applying these elementary moves. Clearly, every element of $\Pi$ can be represented by a graphically reduced word. Moreover, according to \cite[Proposition~5.8]{Mediangle}, such a word is unique up to applying dihedral relations.

\paragraph{Mediangle geometry of periagroups.} Generalising graph products, \cite[Theorem~1.1]{Mediangle} shows how periagroups and mediangle graphs are related. In particular:

\begin{thm}
Let $\Pi = \Pi(\Gamma, \lambda,\mathcal{G})$ be a periagroup. The Cayley graph 
$$\mathrm{M}(\Gamma, \lambda, \mathcal{G}):= \mathrm{Cay} \left( \Pi, \bigcup\limits_{G \in \mathcal{G}} G \backslash \{1 \} \right)$$ 
is a mediangle graph. 
\end{thm}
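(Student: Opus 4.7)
The plan is to check the four axioms of Definition~\ref{def:Mediangle} directly, using the normal form theorem for periagroups (\cite[Proposition~5.8]{Mediangle}) as the main tool. Every element $g \in \Pi$ admits a graphically reduced representative, unique up to the dihedral moves, so the number of syllables in any reduced representative is a well-defined invariant $\ell(g)$. The first step is to verify that the graph distance from $1$ to $g$ in $\mathrm{M} := \mathrm{M}(\Gamma, \lambda, \mathcal{G})$ equals $\ell(g)$: any edge-path from $1$ to $g$ spells a word for $g$, each elementary move (reduction, fusion, dihedral relation) only decreases or preserves the length, and so the minimum is realised by any graphically reduced word.

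Next, by left-translation invariance I would identify the relevant subgraphs near the identity. The cliques of $\mathrm{M}$ are precisely the cosets $g G_u$ with $u \in V(\Gamma)$ and $|G_u| \geq 2$: any two elements of $g G_u$ differ by a single non-trivial generator, and, conversely, two edges sharing a vertex but coming from distinct vertex groups $G_u, G_v$ cannot close into a triangle, because the corresponding $2$-syllable word is already graphically reduced. The non-degenerate convex even cycles (of length $\geq 4$) are the cosets $g\langle G_u, G_v \rangle$ with $\{u, v\} \in E(\Gamma)$, $|G_u| = |G_v| = 2$, and $\lambda(u, v) \geq 3$: the associated dihedral group of order $2\lambda(u, v)$ yields a $2\lambda(u, v)$-cycle, and its convexity is verified by rewriting geodesic words through the dihedral relations.

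With these descriptions in place, I would verify the four axioms as follows. For the \emph{Triangle Condition}, translate so $o = 1$, write $y = xs$ with $s \in G_u \setminus \{1\}$, and use $\ell(x) = \ell(y)$ together with the normal form to produce a graphically reduced representative of $x$ ending in a $G_u$-syllable $s'$; then $z := xs'^{-1}$ lies at distance $\ell(x) - 1$ from $1$ and is adjacent to both $x$ and $y$. For \emph{Intersection of Triangles}, an induced $K_4^-$ based at $1$ would force three generators $b, c, d$ to lie in a single vertex group (each pair forms a triangle with $1$), whence $c^{-1} d$ lies in the same vertex group, contradicting the missing edge. For the \emph{Cycle Condition}, after translating $z$ to the origin, the edges $[z, x]$ and $[z, y]$ lie in cosets of distinct vertex groups $G_u, G_v$ (otherwise the Triangle Condition settles matters); using the normal form applied to a reduced word for $z^{-1} o$, one extracts an edge $\{u, v\} \in E(\Gamma)$ and produces the required coset of $\langle G_u, G_v \rangle$, whose opposite vertex to $z$ then lies in $I(o, x) \cap I(o, y)$. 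For \emph{Intersection of Even Cycles}, two convex even cycles sharing two distinct edges must belong to the same coset of a dihedral subgroup, hence coincide.

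The hard step is the Cycle Condition: establishing simultaneously that the cycle is convex and that the vertex opposite to $z$ lies in $I(o, x) \cap I(o, y)$ requires careful bookkeeping with the dihedral rewrites of a reduced word representing $z$, in order to expose a dihedral subword $\langle s, t \rangle^{\lambda(u, v)}$ with $s \in G_u$ and $t \in G_v$ at the appropriate position. Once this is under control, the remaining axioms are combinatorial consequences of the normal form together with the descriptions of cliques and even cycles established above.
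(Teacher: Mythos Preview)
The paper does not prove this statement; it simply imports it from \cite[Theorem~1.1]{Mediangle}. So there is no in-paper argument to compare against, and your outline is in fact the natural strategy, essentially the one carried out in \cite{Mediangle}: establish $d(1,g)=\ell(g)$ from the normal form, identify cliques and convex even cycles concretely, then check the four axioms.

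That said, your sketch has a genuine gap. Your classification of convex even cycles is incomplete: you claim they are the cosets $g\langle G_u,G_v\rangle$ with $|G_u|=|G_v|=2$ and $\lambda(u,v)\geq 3$, but this omits all the $4$-cycles arising from commutation. Whenever $\{u,v\}\in E(\Gamma)$ with $\lambda(u,v)=2$, any non-trivial $a\in G_u$, $b\in G_v$ give a convex $4$-cycle $\{g,ga,gab,gb\}$, and when $|G_u|\geq 3$ this is \emph{not} a full coset of $\langle G_u,G_v\rangle=G_u\times G_v$. These $4$-cycles are exactly what the Cycle Condition must output in the graph-product-like situation, and they are the ones you must control for the \emph{Intersection of Even Cycles} axiom; your argument there (``same coset of a dihedral subgroup, hence coincide'') does not address two $4$-cycles sitting inside a prism $G_u\times G_v$, nor a $4$-cycle meeting a longer dihedral cycle.

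Second, in the Cycle Condition you write that one ``extracts an edge $\{u,v\}\in E(\Gamma)$''. This is the heart of the matter and is not free. After translating $z$ to $1$, with $x=s\in G_u$, $y=t\in G_v$, $u\neq v$, and $\ell(o^{-1})=\ell(o^{-1}s)+1=\ell(o^{-1}t)+1$, you must argue that some reduced word for $o^{-1}$ admits, via dihedral moves, both a terminal $G_u$-syllable and a terminal $G_v$-syllable, and that this forces $u$ and $v$ to be adjacent in $\Gamma$ with the correct opposite vertex lying in $I(o,x)\cap I(o,y)$. In \cite{Mediangle} this is handled through a careful inductive analysis of how the elementary moves propagate; your sketch names the step but does not supply the mechanism. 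The remaining axioms (Triangle Condition, no $K_4^-$) are handled correctly in your outline, modulo the minor slip that in an induced $K_4^-$ only two of the three pairs among $b,c,d$ form triangles with the base vertex, so the conclusion that all three lie in one vertex-group is reached by transitivity rather than directly.
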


\noindent
The characterisation of graphically reduced words as minimal words given by \cite[Proposition~5.8]{Mediangle} immediately implies that:

\begin{prop}\label{prop:GeodPeriagroups}
Let $\Pi = \Pi(\Gamma, \lambda,\mathcal{G})$ be a periagroup. Fix two elements $g,h \in \Pi$ and write $g^{-1}h$ as a graphically reduced word $u_1 \cdots u_n$. Then the sequence of vertices $$g,gu_1,gu_1u_2, \ldots, gu_1 \cdots u_n=h$$ defines a geodesic between $g$ and $h$ in $\mathrm{M}(\Gamma, \lambda,\mathcal{G})$. Conversely, any geodesic between $g$ and $h$ is labelled by a graphically reduced word representing $g^{-1}h$.
\end{prop}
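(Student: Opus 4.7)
The plan is to set up a clean correspondence between paths in $\mathrm{M}(\Gamma,\lambda,\mathcal{G})$ and words over $\bigsqcup_{G\in\mathcal{G}} G\setminus\{1\}$ representing elements of $\Pi$, and then read off the statement from the characterisation of graphically reduced words as minimum-length words. More precisely, I would first observe that, by the very definition of the Cayley graph $\mathrm{M}(\Gamma,\lambda,\mathcal{G})$, a path $k_0, k_1, \ldots, k_m$ corresponds bijectively to a word $v_1\cdots v_m$ of syllables (each $v_i$ being a non-trivial element of some $G\in\mathcal{G}$) via $v_i := k_{i-1}^{-1} k_i$. Consequently, paths from $g$ to $h$ of length $m$ correspond exactly to words of length $m$ representing $g^{-1}h$, and the graph distance $d_{\mathrm{M}}(g,h)$ equals the minimum length of such a word.

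Next, I would invoke \cite[Proposition~5.8]{Mediangle}, which asserts that in a periagroup the graphically reduced words are exactly those of minimum length among all words representing a given element. Combining these two facts immediately gives both halves of the proposition. For the forward direction, if $u_1\cdots u_n$ is a graphically reduced word for $g^{-1}h$, then $n = d_{\mathrm{M}}(g,h)$, so the sequence $g, gu_1, gu_1u_2, \ldots, gu_1\cdots u_n = h$ is a path of length $d_{\mathrm{M}}(g,h)$ and hence a geodesic. Conversely, if $g = k_0, k_1, \ldots, k_m = h$ is a geodesic, then the associated word $v_1 \cdots v_m$ represents $g^{-1}h$ and has length $m = d_{\mathrm{M}}(g,h)$, i.e.\ it is of minimum length, so by the same cited proposition it is graphically reduced.

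There is essentially no obstacle in this argument: all the content sits in the cited \cite[Proposition~5.8]{Mediangle}. The only point one might want to spell out more carefully is the bijection between paths and syllable words, and in particular the fact that consecutive edges in a path automatically yield non-trivial syllables (since the Cayley graph uses the generating set $\bigcup_{G\in\mathcal{G}} G\setminus\{1\}$), ensuring that the word one reads off a path is a well-formed word in the sense of Section~\ref{sec:periagroups}.
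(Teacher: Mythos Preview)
Your proposal is correct and takes essentially the same approach as the paper: the paper simply states that the proposition follows immediately from the characterisation of graphically reduced words as minimal-length words given by \cite[Proposition~5.8]{Mediangle}, and your argument unpacks exactly this implication via the standard path/word correspondence in a Cayley graph.
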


\noindent
In mediangle graphs, it is possible to define \emph{angles} between transverse hyperplanes \cite{Mediangle}. Then, a hyperplane $J$ is \emph{right} whenever $\measuredangle(J,H) = \pi/2$ for every hyperplane $H$ transverse to $J$. Alternatively, a hyperplane is right if the only convex cycles it crosses have length four. Contrary to hyperplanes in quasi-median graphs, carriers of hyperplanes may not be gated in mediangle graphs. However, \cite[Lemma~2.26]{RARotation} shows that this does not happen for right hyperplanes:

\begin{lemma}\label{lem:RightHypGated}
In mediangle graphs, carriers of right hyperplanes are gated. 
\end{lemma}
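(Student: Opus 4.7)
The plan is to construct, for every vertex $x \in X$, an explicit gate $x_0 \in N(J)$. The strategy is to first reduce the problem to finding a gate in the ``near-side'' portion $Y_S := N(J) \cap S$ of the carrier, where $S$ is the sector of $J$ containing $x$, and then show $Y_S$ itself is gated using the right-angledness of $J$.

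For the reduction, fix $y \in N(J)$ with $y$ lying in a sector $S' \neq S$ of $J$. By Proposition~\ref{prop:HypCliqueGated}, the unique $J$-clique $C_y$ containing $y$ has exactly one vertex in $S$, call it $\tilde y$; the edge $y\tilde y$ lies in $J$. Any hyperplane $H \neq J$ does not separate $y$ from $\tilde y$ (else $H$ would cross the clique $C_y \subset J$, forcing $H=J$), so $H$ separates $x$ from $y$ if and only if it separates $x$ from $\tilde y$. Since $J$ itself separates $x$ from $y$ but not from $\tilde y$, Proposition~\ref{prop:ParaGeod} gives $d(x,y) = 1 + d(x,\tilde y)$. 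Consequently, if $x_0 \in Y_S$ is a gate of $x$ in $Y_S$, then
\[d(x,y) = 1 + d(x,\tilde y) = 1 + d(x,x_0) + d(x_0,\tilde y) = d(x,x_0) + d(x_0,y),\]
so $x_0$ is automatically a gate of $x$ in $N(J)$. It thus suffices to show $Y_S$ is gated.

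For the main step, fix any clique $C \subset J$, and for $x \in X$ choose $x_0 \in Y_S$ minimising $d(x,\cdot)$. I would prove $x_0$ is a gate by showing that for every $y \in Y_S$, the hyperplanes separating $x$ from $y$ are precisely the disjoint union of those separating $x$ from $x_0$ and those separating $x_0$ from $y$. The nontrivial content is ruling out a hyperplane $H$ that separates $x_0$ from both $x$ and $y$. Such an $H$ cannot be $J$ (since $x_0, x \in S$) and cannot cross any clique of $J$ (else $H = J$), so $H$ separates two parallel cliques $C_1, C_2 \subset J$ with $x_0 \in C_1$ and $y \in C_2$. The right-angledness of $J$ then forces the ``intersection'' of $H$ with $J$ to consist of convex $4$-cycles: for each vertex $v \in C_1$ on one side of $H$, there is a parallel vertex $v' \in C_2$ obtained by crossing $H$ along a perpendicular $4$-cycle edge. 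Using such a $4$-cycle, I would produce a vertex of $Y_S$ lying strictly closer to $x$ than $x_0$, contradicting the minimality of $x_0$.

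The main obstacle is this last step: extracting a contradiction from the separating hyperplane $H$. In quasi-median graphs all carriers are automatically gated, because every transverse configuration factors through $4$-cycles; in mediangle graphs this fails for general hyperplanes, which is why right-angledness is critical. The argument must carefully combine the 4-cycle structure coming from rightness with the paraclique projection and hyperplane-counting formulas (Propositions~\ref{prop:ParaGeod} and~\ref{prop:HypCliqueGated}) to convert the hypothetical $H$ into a ``slide'' of $x_0$ along $Y_S$ towards $x$.
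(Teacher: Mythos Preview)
The paper does not prove this lemma; it cites \cite[Lemma~2.26]{RARotation} and moves on. So there is no in-paper argument to compare against, and your proposal has to stand on its own.

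Your reduction to showing each $Y_S = N(J)\cap S$ is gated is correct: the identity $d(x,y)=1+d(x,\tilde y)$ for $y$ in another sector follows from Proposition~\ref{prop:ParaGeod} exactly as you say, and the gate of $x$ in $Y_S$ then serves as the gate in all of $N(J)$. The main step, however, has a genuine gap, which you yourself flag. You posit a hyperplane $H$ separating $x_0$ from both $x$ and some $y\in Y_S$, note that $H$ must separate the $J$-cliques $C_1\ni x_0$ and $C_2\ni y$, and then assert that right-angledness lets you ``slide'' $x_0$ across $H$ inside $Y_S$ via a $4$-cycle. But rightness of $J$ only constrains convex even cycles \emph{containing an edge of $J$}; it says nothing directly about edges of $H$ incident to $x_0$. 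There is no reason your $H$ should touch $x_0$ at all: it may separate $C_1$ from $C_2$ while the nearest edge of $H$ in $N(J)$ is far from $x_0$. To repair this you would need either to choose $H$ more carefully---e.g.\ as the hyperplane crossed by the first edge of a geodesic from $x_0$ toward $x$, and then argue separately that \emph{this} hyperplane also separates $x_0$ from some point of $Y_S$ (which already requires convexity of $Y_S$, not yet established)---or to prove a product decomposition $N(J)\cong C\times Z$ for right hyperplanes and deduce gatedness from that. As written, the passage from ``$J$ is right'' to ``$x_0$ has a neighbour in $Y_S$ across $H$'' is the missing idea, not merely a missing detail.
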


\noindent
A remarkable property satisfied by right hyperplanes is given by \cite[Lemma~2.25]{RARotation} below. Recall that, given a group $G$ acting on a mediangle graph $X$ and a hyperplane $J$, the \emph{rotative-stabiliser} $\mathrm{stab}_\circlearrowright(J)$ refers to the subgroup of $\mathrm{stab}(J)$ that stabilises (setwise) each clique in $J$. 

\begin{lemma}\label{lem:TransverseThenStab}
Let $G$ be a group acting on a mediangle graph $X$, and let $J,H$ be two transverse hyperplanes. If $J$ is right, then $\mathrm{stab}_\circlearrowright(H)$ stabilises $J$. 
\end{lemma}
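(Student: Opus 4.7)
My plan is to show that for every $g \in \mathrm{stab}_\circlearrowright(H)$ and every edge $e \in J$, we have $g(e) \in J$. By Proposition~\ref{prop:MediangleParaclique}, $X$ is paraclique, so parallelism between cliques is transitive and $J$ is a single parallelism class of cliques; since any isometry preserves parallelism, it suffices to exhibit one clique $C \subset J$ with $g(C) \in J$. Then any other clique $C'$ of $J$ is parallel to $C$, so $g(C')$ is parallel to $g(C) \in J$ and therefore belongs to $J$ too. I will take $C$ to be a clique of $J$ sharing vertices with cliques of $H$; this exists because $J$ is transverse to $H$ and right, so $J$ and $H$ share a convex $4$-cycle.

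More precisely, I pick an edge $[a,b] \in C \subset J$ lying in a convex $4$-cycle $(a,b,c,d)$ with $[c,d] \in J$ and $[a,d] \subset C_1$, $[b,c] \subset C_2$ for some cliques $C_1, C_2$ of $H$. This square witnesses the transversality of $J$ and $H$. Applying the isometry $g$, and using that $g$ stabilises each of $C_1, C_2$ setwise, the image $(g(a), g(b), g(c), g(d))$ is again a convex $4$-cycle whose $H$-edges $[g(a), g(d)] \subset C_1$ and $[g(b), g(c)] \subset C_2$ remain in the same $H$-cliques. The remaining edges $[g(a), g(b)]$ and $[g(c), g(d)]$ are then opposite edges of a convex even cycle, hence lie in a common hyperplane $J'$ by the characterisation of hyperplanes (Lemma~\ref{lem:hyper}).

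The heart of the proof is to identify $J'$ with $J$. If $g$ fixes both $a$ and $b$ the conclusion is immediate; otherwise, the uniqueness of parallelism partners between $C_1$ and $C_2$ — an edge between two distinct parallel cliques of $H$ must respect their projection-induced bijection — forces both $g(a) \neq a$ and $g(b) \neq b$. The four vertices $a, b, g(a), g(b)$ then close up into a $4$-cycle $(a, g(a), g(b), b)$ whose $H$-edges are $[a, g(a)] \subset C_1$ and $[b, g(b)] \subset C_2$. The main obstacle is verifying convexity of this $4$-cycle: any chord $[a, g(b)]$ or $[g(a), b]$ would be an edge between $C_1$ and $C_2$, hence a parallelism edge, which by uniqueness of partners would force $g(b) = b$ or $g(a) = a$, contradicting the case hypothesis; triangular diagonals, on the other hand, are ruled out by the absence of induced $K_4^-$ in a mediangle graph. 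Once convexity is secured, Lemma~\ref{lem:hyper} shows that $[a,b]$ and $[g(a), g(b)]$, as opposite edges of a convex $4$-cycle, lie in the same hyperplane. Thus $[g(a), g(b)] \in J$, giving $g(C) \in J$ and completing the argument.
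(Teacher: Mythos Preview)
Your overall strategy is sound, and the paper itself only cites this lemma from \cite{RARotation} without reproving it, so there is no ``paper's proof'' to compare against. There is, however, a gap in your argument that the $4$-cycle $(a, g(a), g(b), b)$ is convex. Ruling out the two chords $[a, g(b)]$ and $[g(a), b]$ only shows the cycle is \emph{induced}; convexity additionally requires that the opposite vertices $a$ and $g(b)$ have no common neighbour outside $\{g(a), b\}$, and your appeal to the absence of $K_4^-$ does not address this. Indeed, one can check that any such extra neighbour $z$ would lie outside $C, C', C_1, C_2$, but that alone yields no contradiction.

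The fix is easy. One option is to invoke the Cycle Condition: with $o=a$, $z=g(b)$, $x=g(a)$, $y=b$, the hypotheses are met, so there is a convex even cycle through $[g(b),g(a)]$ and $[g(b),b]$ whose vertex opposite $g(b)$ lies in $I(a,g(a))\cap I(a,b)=\{a\}$; since $d(a,g(b))=2$ this cycle has length $4$ and must be $(a,g(a),g(b),b)$. A cleaner alternative bypasses convexity altogether: using no-$K_4^-$ one checks that $\mathrm{proj}_C(g(a))=a$ and $\mathrm{proj}_C(g(b))=b$ (any second neighbour of $g(a)$ in $C$ would force $C=C_1$), so the projection of $g(C)$ on $C$ is non-constant and Lemma~\ref{lem:CliqueGated} gives $g(C)$ parallel to $C$ directly. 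As a minor point, the existence of the initial convex $4$-cycle witnessing transversality (with $J$ right) deserves a citation to \cite{Mediangle}, where transverse hyperplanes are characterised via shared convex even cycles.
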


\medskip \noindent
The structure of right hyperplanes in mediangle graphs associated to periagroups is described by the following statement:

\begin{prop}\label{prop:RightHyp}
Let $\Pi= \Pi(\Gamma,\lambda, \mathcal{G})$ be a periagroup. The following statements hold in $M(\Gamma, \lambda, \mathcal{G})$. 
\begin{itemize}
	\item[(i)] For every vertex $u \in \Gamma$, the hyperplane $J_u$ containing the clique $\langle u \rangle$ is right if and only if all the edges of $\Gamma$ containing $u$ have label $2$.
	\item[(ii)] If $J_u$ is right, its carrier is $\langle \mathrm{star}(u) \rangle$. As a consequence, $\mathrm{stab}(J_u) = \langle \mathrm{star}(u) \rangle$ and $\mathrm{stab}_\circlearrowright(J)= \langle u \rangle$. 
	\item[(iii)] All the edges of a right hyperplane are labelled by the same vertex of $\Gamma$. The labels of two transverse right hyperplanes are adjacent in $\Gamma$.
	\item[(iv)] Two right hyperplanes whose carriers intersect are transverse if and only if their labels are adjacent.
\end{itemize}
\end{prop}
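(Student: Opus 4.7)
The plan is to treat the four items in the order in which they depend on one another, reducing everything to a careful description of the convex even cycles of $M(\Gamma, \lambda, \mathcal{G})$ and to the parallelism relation on its cliques. First I would identify the convex even cycles of $M(\Gamma, \lambda, \mathcal{G})$: using Proposition~\ref{prop:GeodPeriagroups}, each dihedral relation $\langle a,b \rangle^{\lambda(\{u,v\})} = \langle b,a \rangle^{\lambda(\{u,v\})}$, for $\{u,v\} \in E(\Gamma)$, $a \in G_u \setminus \{1\}$, and $b \in G_v \setminus \{1\}$, produces a convex cycle of length $2\lambda(\{u,v\})$ crossing $J_u$ and $J_v$; conversely, any convex even cycle arises this way, since two graphically reduced words representing the same element differ only by applications of such dihedral relations. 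Item (i) then follows at once: $J_u$ is crossed by a convex cycle of length strictly greater than $4$ exactly when some edge $\{u,v\} \in E(\Gamma)$ satisfies $\lambda(\{u,v\}) > 2$.

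For (ii), assume $J_u$ is right. By Lemma~\ref{lem:hyper}, any two edges of $J_u$ are linked by a chain whose consecutive pairs either lie in a common $3$-cycle (same clique, same label) or are opposite in a convex even cycle. Since $J_u$ is right, the latter are $4$-cycles coming from commutations $[G_u, G_v] = 1$ with $v \in \mathrm{link}(u)$, and opposite edges of such a square share the same label. Hence every edge of $J_u$ is labelled $u$, and every clique of $J_u$ is a coset $g\langle u \rangle$. I would then prove $N(J_u) = \langle \mathrm{star}(u) \rangle$ by double inclusion: the inclusion $\langle \mathrm{star}(u) \rangle \subseteq N(J_u)$ is established by induction on the word length of $g \in \langle \mathrm{star}(u) \rangle$ over $\bigcup_{v \in \mathrm{star}(u)} G_v$, each step sliding the previous $u$-clique across a commutation square; the reverse inclusion uses the fact that every clique in $J_u$ can be joined to $\langle u \rangle$ by a finite sequence of $4$-cycle slides, each of which left-multiplies by an element of some $\langle v \rangle$ with $v \in \mathrm{link}(u)$. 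The stabiliser statements then follow: since $\Pi$ acts freely and label-preservingly, $\mathrm{stab}(J_u) \subseteq N(J_u) = \langle \mathrm{star}(u) \rangle$ (because $1 \in N(J_u)$), and the reverse inclusion is clear; for the rotative stabiliser, any element stabilising $\langle u \rangle$ setwise lies in $\langle u \rangle$, and conversely each generator of $\langle \mathrm{star}(u) \rangle$ normalises $\langle u \rangle$ (trivially for $u$, via $[G_u, G_v] = 1$ for $v \in \mathrm{link}(u)$), so $\langle u \rangle$ fixes every coset $h\langle u \rangle \subset J_u$.

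The first assertion of (iii) is part of the argument above. For the second, let $J, H$ be transverse right hyperplanes with labels $u, v$. Were $u = v$, every edge of $J$ and of $H$ would be $u$-labelled; an edge of $N(H)$ crossing $J$ would belong to both hyperplanes, forcing $J = H$, which contradicts distinctness, whence $N(H)$ would lie entirely on one side of $J$, contradicting transversality. Thus $u \neq v$. Lemma~\ref{lem:TransverseThenStab} applied with $J$ right then gives $\mathrm{stab}_\circlearrowright(H) \subseteq \mathrm{stab}(J)$, and (ii) identifies these as conjugates of $\langle v \rangle$ and $\langle \mathrm{star}(u) \rangle$ respectively; since essential supports of non-trivial elements of periagroups are conjugation-invariant, this forces $v \in \mathrm{star}(u)$, hence $v \in \mathrm{link}(u)$. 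For (iv), the forward direction is the second assertion of (iii); conversely, if $J, H$ are right with labels $u, v$ satisfying $\{u, v\} \in E(\Gamma)$ (so $\lambda(\{u,v\}) = 2$) and carriers meeting at some $g$, then for any $1 \neq a \in G_u$ and $1 \neq b \in G_v$ the square $\{g, ga, gab, gb\}$ arising from $[G_u, G_v] = 1$ is crossed by both $J$ and $H$, which yields transversality.

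The main obstacle will be (ii), specifically the identification $N(J_u) = \langle \mathrm{star}(u) \rangle$: the forward inclusion demands clean inductive bookkeeping of how parallelism between $u$-cliques propagates through commutation squares, while the reverse requires verifying that no extraneous clique enters the parallelism class. The normalisation property that yields $\mathrm{stab}_\circlearrowright(J_u) = \langle u \rangle$ is then a crisp but essential consequence of this description.
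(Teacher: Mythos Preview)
Your approach is substantially more self-contained than the paper's, which simply cites \cite{RARotation} for items (i), the first part of (ii), and (iii), and then derives the remaining assertions in a few lines (computing $\mathrm{stab}_\circlearrowright(J_u)$ as $\bigcap_{g \in \langle \mathrm{link}(u)\rangle} g\langle u \rangle g^{-1}$, and proving (iv) by translating so that the two hyperplanes are $J_u,J_v$ and then exhibiting the prism $\langle u \rangle \times \langle v \rangle$). Your hands-on arguments for (i), (ii) and (iv) are correct and track the underlying geometry well; the identification of convex even cycles with dihedral relations is exactly \cite[Claim~4.13]{Mediangle}, quoted later in the paper in the proof of Lemma~\ref{lem:ParallelEdgesSameOrbit}.

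There is, however, a genuine gap in your treatment of the second assertion of (iii). You invoke the principle that ``essential supports of non-trivial elements of periagroups are conjugation-invariant'' to pass from $g\langle v\rangle g^{-1}\subset h\langle\mathrm{star}(u)\rangle h^{-1}$ to $v\in\mathrm{star}(u)$. But essential support is only defined in the paper for graph products, and for general periagroups the naive statement fails: in any odd dihedral factor the two generators are conjugate, so $\langle v\rangle$ can be conjugate to $\langle w\rangle$ with $w\neq v$. What rescues the argument here is the extra hypothesis that $H$ is right, hence $v$ lies only on edges labelled~$2$. One way to complete the step: Theorem~\ref{thm:InterParabolicPeriagroups} together with Proposition~\ref{prop:ConjParabolicSize} gives $g\langle v\rangle g^{-1}=k\langle w\rangle k^{-1}$ for some $w\in\mathrm{star}(u)$; then the argument from the proof of Proposition~\ref{prop:ConjParabolicSize} shows that the cliques $\langle v\rangle$ and $k\langle w\rangle$ lie in the same hyperplane, namely $J_v$; since $J_v$ is right, your first assertion of (iii) forces $w=v$, whence $v\in\mathrm{star}(u)$. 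A smaller point: in your $u=v$ case, the sentence ``whence $N(H)$ would lie entirely on one side of $J$, contradicting transversality'' also needs an extra line---for instance, apply Lemma~\ref{lem:TransverseThenStab} with $H$ right to get $\langle u\rangle=\mathrm{stab}_\circlearrowright(J)\subset\mathrm{stab}(H)$, and observe that a non-trivial $a\in\langle u\rangle$ moves the sector containing $N(H)$ off itself while fixing $N(H)$.
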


\begin{proof}
Item $(i)$ is given by \cite[Lemma~2.28]{RARotation}. First part of $(ii)$ is given by \cite[Lemma~2.29]{RARotation}. It clearly implies that $\mathrm{stab}(J_u)= \langle \mathrm{star}(u) \rangle$. Since $\langle \mathrm{star}(u) \rangle$ decomposes as a product $\langle \mathrm{link}(u) \rangle \times \langle u \rangle$, the cliques of $J$ must be given by the cosets $g \langle u \rangle$ with $g \in \langle \mathrm{link}(u) \rangle$. Hence
$$\mathrm{stab}_\circlearrowright(J)= \bigcap\limits_{g \in \langle \mathrm{link}(u) \rangle} g \langle u \rangle g^{-1}= \bigcap\limits_{g \in \langle \mathrm{link}(u) \rangle} \langle u \rangle = \langle u \rangle,$$
proving item $(ii)$. Item $(iii)$ is given by \cite[Lemma~2.30]{RARotation}.

\medskip \noindent
Finally, consider two right hyperplanes whose carriers intersect. Up to translating by an element of $\Pi$, we can assume that our two hyperplanes are $J_u$ and $J_v$ for some vertices $u,v \in \Gamma$ only contained in edges labelled $2$. If $u$ and $v$ are not adjacent, then $J_u$ and $J_v$ are not transverse according to $(iii)$. Otherwise, if $u$ and $v$ are adjacent, the cliques $\langle u \rangle$ and $\langle v \rangle$ span a prism, namely $\langle u,v \rangle = \langle u \rangle \times \langle v \rangle$, which shows that $J_u$ and $J_v$ are transverse. This proves $(iv)$.  
\end{proof}

\noindent
We record a final observation about hyperplanes in mediangle graphs of periagroups:

\begin{lemma}\label{lem:ParallelEdgesSameOrbit}
Let $\Pi = \Pi(\Gamma,\lambda,\mathcal{G})$ be a periagroup. Let $J$ be a hyperplane in $M(\Gamma, \lambda,\mathcal{G})$, $C\subset J$ a clique, and $e \subset J$ an edge. There exists $g \in \mathrm{stab}(J)$ such that $ge$ is the projection of $e$ on $C$.
\end{lemma}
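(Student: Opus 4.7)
The plan is to realise the parallelism bijection $\mathrm{proj}_C \colon C_e \to C$ by left-multiplication, where $C_e$ denotes the unique clique containing $e$. Since $C$ and $C_e$ both lie in the hyperplane $J$ of the mediangle (hence paraclique) graph $M(\Gamma, \lambda,\mathcal{G})$, they are parallel by Proposition~\ref{prop:MediangleParaclique} together with Lemma~\ref{lem:CliqueGated}, so $\mathrm{proj}_C$ restricts to a bijection $C_e\to C$ and in particular sends $e$ to a well-defined edge $f\subseteq C$. Producing $g\in\mathrm{stab}(J)$ with $g\cdot e=f$ will suffice.

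I would proceed by induction on the combinatorial distance between $C_e$ and $C$ in $J$. By Lemma~\ref{lem:hyper}, one can connect $C$ to $C_e$ by a chain of cliques $C=C_0,C_1,\ldots,C_n=C_e$ in $J$ such that each consecutive pair is related by a single elementary move: either $(C_{i-1},C_i)$ lie in a common $3$-clique of $M(\Gamma,\lambda,\mathcal{G})$ (\emph{triangle case}), or they are the two opposite sides of a convex even cycle (\emph{cycle case}). The induction reduces to producing, for each $i$, an element $g_i\in\mathrm{stab}(J)$ which realises the one-step parallelism bijection $C_i\to C_{i-1}$ by left-multiplication; then $g:=g_1g_2\cdots g_n$ is the desired element, and tracking $e$ through the composition gives $g\cdot e=f$.

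For the base cases, the presentation of $\Pi$ and the description of cliques from \cite[\S5]{Mediangle} dictate a natural candidate. In the triangle case, the enclosing $3$-clique is a coset $hG_u$ with $|G_u|\geq 3$; an appropriate conjugate $hu_0h^{-1}$, with $u_0\in G_u$ chosen so as to realise the required cyclic permutation of vertices, fixes the entire coset setwise, and hence preserves $J$ because all edges of this coset lie in $J$. In the cycle case, Proposition~\ref{prop:GeodPeriagroups} identifies a convex even cycle of length $2k$ with a coset of a dihedral sub-periagroup $\langle G_u,G_v\rangle$ having $|G_u|=|G_v|=2$ and $\lambda(\{u,v\})=k$; the ``half-translation'' of the cycle (obtained by reading $k$ consecutive syllables of the defining dihedral relator) then takes $C_i$ to $C_{i-1}$.

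The step I expect to be the main obstacle is the cycle case: while the chosen element obviously preserves the cycle itself, one must verify that it preserves $J$ \emph{globally}, i.e.~that every edge of $J$ outside this cycle is sent to another edge of $J$. My plan is to argue this via the description of parallelism and the semidirect product decompositions of carriers provided in \cite{Mediangle}, so that every other clique of $J$ parallel to $C_i$ is obtained from it by a translation commuting with $g_i$; the same analysis is also what ensures the composed $g$ really lies in $\mathrm{stab}(J)$ rather than merely permuting the cliques locally.
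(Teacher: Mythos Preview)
Your overall strategy coincides with the paper's: reduce along a chain (the paper's Claim~\ref{claim:SequenceCycles} shows only even-cycle moves are needed, so your triangle case is superfluous), then exhibit an explicit element of the dihedral subgroup attached to the relevant convex even cycle. Your worry about ``preserving $J$ globally'' is misplaced: any isometry of $M$ sends hyperplanes to hyperplanes, so if $g$ carries one edge of $J$ to another then $gJ$ is the hyperplane containing that image, hence $gJ=J$ automatically.

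The real gap is in the cycle step, and it is shared with the paper's argument. In a convex $2k$-cycle arising from an edge of $\Gamma$ with label $k$ odd (already for $\Pi=D_3$), the edge-labels alternate $a,b,a,b,\ldots$, so an edge and its opposite carry \emph{different} generators. Left-multiplication in a Cayley graph preserves edge-labels, hence \emph{no} element of $\Pi$ can send such an edge to its opposite; for $\Pi=D_3$ one checks directly that $\mathrm{stab}(J)=\{1,a\}$ for the hyperplane through $[1,a]$, and neither element moves $[1,a]$. Your ``half-translation'' $w_0=\langle a,b\rangle^k$ is, for $k$ odd, an involution whose left-multiplication acts on the $2k$-gon as a reflection (not as the rotation by $k$): it sends $[1,a]$ to $[w_0,w_0a]$, an edge sharing a vertex with the opposite edge but not equal to it. The paper's ``reflection that inverts a given pair of opposite edges'' likewise fixes each of the two edges setwise rather than exchanging them. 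Thus the lemma, read literally, already fails for $D_3$. Its sole application (coherence in Claim~\ref{claim:CayleyPeriagroups}) is unaffected: when $k\geq 3$ both vertex-groups have order~$2$, so every clique of such a hyperplane is a single edge and any bijection of two-point metric spaces is an isometry; only the $4$-cycle case actually matters, and there the correct element is $b$ (not your half-translation $ab$, which sends the clique $\langle u\rangle$ to the right coset $b\langle u\rangle$ but does not realise the projection pointwise).
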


\begin{proof}
As a consequence of Claim~\ref{claim:SequenceCycles} below, we can assume without loss of generality that there exists a convex even cycle whose opposite edges are $e$ and the projection of $e$ on $C$. The convex even cycles of $M(\Gamma,\lambda,\mathcal{G})$ are completely characterised by \cite[Claim~4.13]{Mediangle}; namely, they are $\Pi$-translates of the cycles of the form 
$$1, \ a, \ ab, \ aba, \ldots, \ \langle a,b \rangle^k = \langle b,a \rangle^k, \ldots, \ bab, \ ba, \ b$$
where $a$ and $b$ are two generators that belong to two vertex-groups corresponding to two vertices of $\Gamma$ connected by an edge labelled by $k$. In other words, they correspond to $2k$-cycles on which a dihedral subgroup $\langle a,b \rangle$ of size $2k$ acts canonically. Since such a dihedral group contains a reflection that inverts a given pair of opposite edges, the desired conclusion follows.

\begin{claim}\label{claim:SequenceCycles}
Let $X$ be a mediangle graph, $C \subset X$ a clique, and $e$ an edge contained in the same hyperplane as $C$. There exists a sequence of edges
$$a_0=e, \ a_1, \ldots, \ a_{n-1}, \ a_n$$
such that $a_n$ is the projection of $e$ on $C$ such that $a_i,a_{i+1}$ are opposite edges in some convex even cycle for every $0 \leq i \leq n-1$.
\end{claim}

\noindent
We argue by induction over the distance between $e$ and $C$. Let $x,y \in X$ denote the endpoints of $e$ and let $x',y' \in C$ denote their projections on $C$. Notice that $d(x,x')=d(y,y')$. This is due to the fact that the edges $[x,y]$ and $[x',y']$ belongs to the same hyperplane, which implies that the hyperplanes separating $x$ and $x'$, or equivalently $y$ and $y'$, coincide with the hyperplanes separating $[x,y]$ and $C$, and consequently that the same number of hyperplanes separates $x$ from $x'$ and $y$ from $y'$. If $d(x,x')=0$, then $[x,y]$ is contained in $C$ and there is nothing to prove. 

\medskip \noindent
\begin{minipage}{0.3\linewidth}
\includegraphics[width=0.8\linewidth]{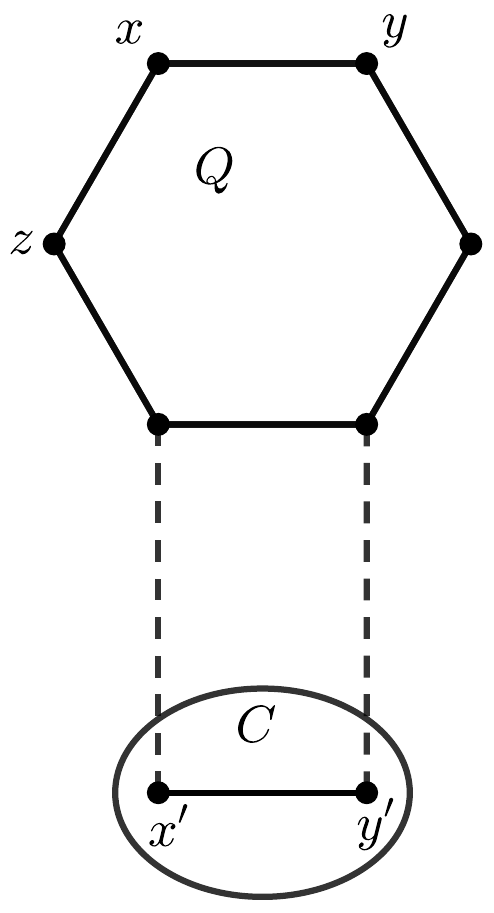}
\end{minipage}
\begin{minipage}{0.69\linewidth}
Otherwise, we can fix a neighbour $z$ of $x$ that belongs to a geodesic connecting $x$ to $x'$. Notice that
$$d(y',y) = d(x',x)=d(x',z)+1 = d(y',x').$$
Moreover, $d(y',x')= 1+ d(x,x')> d(x,x')$. Therefore, we can apply the cycle condition, i.e.\ there exists a convex even cycle $Q$ spanned by the edges $[x,y]$ and $[x,z]$ such that the vertex $v \in Q$ opposite to $x$ belongs to $I(y',y) \cap I(y',z)$. Then the edge of $Q$ opposite to $[x,y]$ is closer to $C$ and has the same projection on $C$ as $[x,y]$. Hence the desired conclusion. 
\end{minipage}
\end{proof}

\paragraph{Parabolic subgroups.} Given a periagroup $\Pi=\Pi(\Gamma,\lambda,\mathcal{G})$, a \emph{standard parabolic subgroup} is a subgroup of the form $\langle \Lambda \rangle$ where $\Lambda \subset \Gamma$ is a subgraph. A \emph{parabolic subgroup} is a subgroup that is conjugate to a standard parabolic subgroup. It turns out that the intersection of two parabolic subgroups is again a parabolic subgroup. More precisely, the proof of \cite[Theorem~1.7]{Mediangle} shows that:

\begin{thm}\label{thm:InterParabolicPeriagroups}
Let $\Pi = \Pi(\Gamma, \lambda, \mathcal{G})$ be a periagroup. For all $g,h \in \Pi$ and $\Phi,\Psi \subset \Gamma$, 
$$g \langle \Phi \rangle g^{-1} \cap h \langle \Psi \rangle h^{-1}= k \langle \Xi \rangle k^{-1},$$
where $k \in \Pi$ belongs to the projection of $h \langle \Psi \rangle$ on $g \langle \Phi \rangle$ and where $\Xi$ is the subgraph of $\Gamma$ induced by the vertices labelling the edges of this projection. In particular, the intersection of two parabolic subgroups is again a parabolic subgroup.
\end{thm}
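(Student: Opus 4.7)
The plan is to translate the algebraic statement into a geometric one in the mediangle graph $M := M(\Gamma,\lambda,\mathcal{G})$, and to exploit the fact that every coset of a standard parabolic is a gated subgraph (established in \cite{Mediangle}, in analogy with the well-known phenomenon for quasi-median graphs of graph products, \cite[Proposition~8.2]{QM}). Throughout, set $Y_1 := g\langle \Phi \rangle$ and $Y_2 := h\langle \Psi \rangle$, and note that an element $\gamma \in \Pi$ belongs to $g\langle \Phi \rangle g^{-1}$ if and only if $\gamma \cdot Y_1 = Y_1$ (and similarly for $Y_2$); so the intersection on the left-hand side coincides with the set of elements of $\Pi$ preserving both $Y_1$ and $Y_2$ setwise under left multiplication.

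First, I would study the projection $P := \mathrm{proj}_{Y_1}(Y_2)$. Since $Y_1,Y_2$ are gated, the hyperplanes crossing $P$ are exactly those hyperplanes crossing both $Y_1$ and $Y_2$; this follows from the mediangle tools gathered in Section~\ref{section:MediangleGeometry} (in particular, arguments analogous to Lemma~\ref{lem:SameHypIso}). Therefore any element of $\Pi$ preserving $Y_1$ and $Y_2$ setwise must permute these hyperplanes and hence preserve $P$ setwise. Conversely, if $\gamma$ preserves $Y_1$ setwise and also preserves $P$, then $\gamma$ preserves the parallel projection $P' := \mathrm{proj}_{Y_2}(Y_1)$ (the isometry $P \to P'$ given by projection is canonical), and combined with the fact that $Y_2$ is the gated hull of $P'$ along the hyperplanes crossing only $Y_2$, one deduces $\gamma \cdot Y_2 = Y_2$. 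Thus the intersection equals the setwise stabiliser of $P$.

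The geometric heart of the proof is identifying $P$ explicitly. I would fix any $k \in P$ and let $\Xi \subset \Gamma$ be the subgraph induced by the labels of the edges contained in $P$; the target is to show $P = k\langle \Xi \rangle$. The inclusion $k\langle \Xi \rangle \subseteq P$ should follow by iterating: any edge at a vertex of $P$ labelled by some $u \in \Xi$ is one whose hyperplane crosses both $Y_1$ and $Y_2$, so this edge lies in $P$; by Proposition~\ref{prop:GeodPeriagroups}, every element of $k\langle \Xi \rangle$ is reached from $k$ by a graphically reduced word in generators from $\Xi$, hence by a path in $P$. The reverse inclusion $P \subseteq k\langle \Xi \rangle$ follows because, by gatedness, any vertex of $P$ is connected to $k$ by a geodesic contained in $P$, whose labels are by construction in $\Xi$. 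Once $P = k\langle \Xi \rangle$ is established, the setwise stabiliser of $P$ under left multiplication is evidently $k\langle \Xi \rangle k^{-1}$, yielding the claimed formula.

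The main obstacle I anticipate is the identification $P = k\langle \Xi \rangle$. In the quasi-median (graph product) case every hyperplane carries a single label and this is straightforward, but for general periagroups, hyperplanes in dihedral pieces (with $\lambda(\{u,v\}) > 2$) may contain edges of two distinct labels and their carriers may fail to be gated (only right hyperplanes behave cleanly; see Lemma~\ref{lem:RightHypGated} and Proposition~\ref{prop:RightHyp}). Handling this requires a careful use of graphically reduced words and of the precise structure of convex even cycles in $M$ (as classified in \cite[Claim~4.13]{Mediangle}), to certify that the ``label set'' $\Xi$ of edges in $P$ really does control which elements of $Y_1$ lie in $P$.
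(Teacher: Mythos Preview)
The paper does not actually prove Theorem~\ref{thm:InterParabolicPeriagroups}; it is imported wholesale from \cite{Mediangle} (``the proof of \cite[Theorem~1.7]{Mediangle} shows that\ldots''). So there is no in-paper argument to compare against, and your task is really to produce a self-contained proof. Your geometric setup is correct: translating to the mediangle graph $M$, using that cosets of standard parabolics are gated, and studying $P=\mathrm{proj}_{Y_1}(Y_2)$ is exactly the right framework, and the inclusion $\mathrm{stab}(Y_1)\cap\mathrm{stab}(Y_2)\subseteq\mathrm{stab}(P)$ is immediate.

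There is, however, a genuine gap in your converse inclusion. You assert that if $\gamma$ preserves $Y_1$ and $P$ then $\gamma$ preserves $P'=\mathrm{proj}_{Y_2}(Y_1)$, and then that ``$Y_2$ is the gated hull of $P'$ along the hyperplanes crossing only $Y_2$'', from which $\gamma\cdot Y_2=Y_2$ would follow. Neither step is justified. The first fails because knowing $\gamma(P)=P$ tells you nothing about $\gamma(Y_2)$ a priori, so you cannot identify $\gamma(P')$ with a projection onto $Y_2$; the projection $P\to P'$ being canonical does not make it $\gamma$-equivariant unless you already know $\gamma$ fixes $Y_2$. The second is simply false in general: in a free product $A\ast B$ with $Y_1=A$, $Y_2=B$, one has $P'=\{1\}$, whose gated hull is $\{1\}\neq B$. (That example does not break the theorem, since $\mathrm{stab}(P)=\{1\}$ there, but it shows your reconstruction principle is not available.)

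A more robust route for the converse is via rotative-stabilisers, in the spirit of Lemma~\ref{lem:HypParbabolicStab}: every hyperplane $J$ crossing $P$ crosses both $Y_1$ and $Y_2$, so $\mathrm{stab}_\circlearrowright(J)$ stabilises both cosets; hence the subgroup generated by these rotative-stabilisers lies in $\mathrm{stab}(Y_1)\cap\mathrm{stab}(Y_2)$. What then remains is to show that this subgroup equals $k\langle\Xi\rangle k^{-1}$, which is essentially equivalent to the identification $P=k\langle\Xi\rangle$ you already flag as the crux. So your diagnosis of the main difficulty is accurate, but it enters one step earlier than you place it, and you should drop the ``gated hull of $P'$'' argument entirely in favour of the rotative-stabiliser approach.
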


\noindent
It is worth mentioning that the projections involved in the theorem are well-defined as cosets of standard parabolic subgroups are gated according to \cite[Corollary~6.6]{Mediangle}. 

\medskip \noindent
We emphasize that, given a periagroup $\Pi = \Pi(\Gamma,\lambda, \mathcal{G})$, there may exist distinct subgraphs $\Lambda_1,\Lambda_2 \subset \Gamma$ such that the parabolic subgroups $\langle \Lambda_1 \rangle$ and $\langle \Lambda_2 \rangle$ are conjugate. This is already the case in the finite dihedral groups $D_{2n}$ with $n$ odd, where the two canonical generators are conjugate. However, the subgraphs $\Lambda_1$ and $\Lambda_2$ cannot be too different from each other: they must have the same size.

\begin{prop}\label{prop:ConjParabolicSize}
Let $\Pi= \Pi(\Gamma, \lambda, \mathcal{G})$ be a periagroup. For all subgraphs $\Phi,\Psi \subset \Gamma$, if $\langle \Phi \rangle$ and $\langle \Psi \rangle$ are conjugate in $\Pi$, then $|\Phi |= |\Psi|$. 
\end{prop}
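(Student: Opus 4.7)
The plan is to argue by induction on $n := \max(|\Phi|, |\Psi|)$. Without loss of generality assume $|\Phi| \leq |\Psi| = n$, and aim to show $|\Phi| = n$. The base case $n = 0$ is immediate since vertex groups are non-trivial by convention.

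For the inductive step, I would fix any $v \in \Psi$ and set $\Psi' := \Psi \setminus \{v\}$. Since $\langle \Psi' \rangle \subseteq \langle \Psi \rangle = g \langle \Phi \rangle g^{-1}$, conjugation yields $g^{-1} \langle \Psi' \rangle g \subseteq \langle \Phi \rangle$. Applying Theorem~\ref{thm:InterParabolicPeriagroups} to the intersection $\langle \Phi \rangle \cap g^{-1} \langle \Psi' \rangle g = g^{-1} \langle \Psi' \rangle g$ produces
\[ g^{-1} \langle \Psi' \rangle g = k \langle \Xi \rangle k^{-1}, \]
where $k$ lies in the projection of $g^{-1} \langle \Psi' \rangle$ onto $\langle \Phi \rangle$ and $\Xi$ is the subgraph of $\Gamma$ induced by the labels of the edges of this projection. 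The crucial asymmetry is that the projection sits inside the gated subgraph $\langle \Phi \rangle$ of $\mathrm{M}(\Gamma, \lambda, \mathcal{G})$, whose edges are labelled only by vertices of $\Phi$; hence $\Xi \subseteq \Phi$ and $k \in \langle \Phi \rangle$. Conjugating back shows $\langle \Psi' \rangle$ and $\langle \Xi \rangle$ are conjugate in $\Pi$.

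The induction then closes by a short case analysis on $|\Xi|$. If $|\Xi| = n$, then $\Xi \subseteq \Phi$ combined with $|\Phi| \leq n$ already forces $|\Phi| = n$. Otherwise $|\Xi| < n$, and since $|\Psi'| = n-1 < n$ as well, the inductive hypothesis applied to the conjugate pair $(\Xi, \Psi')$ gives $|\Xi| = n - 1$. If $|\Phi| = n$ we are again done; the remaining case $|\Phi| = n - 1$ forces $\Xi = \Phi$, and since $k \in \langle \Phi \rangle = \langle \Xi \rangle$ the conjugation by $k$ is trivial, so $g^{-1}\langle \Psi' \rangle g = \langle \Phi \rangle$, which conjugates back to $\langle \Psi' \rangle = \langle \Psi \rangle$ --- a contradiction since $\langle v \rangle \subseteq \langle \Psi \rangle$ while $\langle v \rangle \cap \langle \Psi' \rangle = \{1\}$ (again by Theorem~\ref{thm:InterParabolicPeriagroups} applied to the pair $(\{v\}, \Psi')$).

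The main obstacle is choosing the correct formulation of the intersection so that Theorem~\ref{thm:InterParabolicPeriagroups} produces $\Xi \subseteq \Phi$ rather than $\Xi \subseteq \Psi'$; a different but superficially symmetric application of the theorem would only give the latter, which closes no induction. Once this asymmetry is identified and exploited, the argument is straightforward.
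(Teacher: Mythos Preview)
Your proof is correct and takes a genuinely different route from the paper's. The paper argues geometrically: using rotative stabilisers (Lemma~\ref{lem:HypParbabolicStab}) it shows that the gated subgraphs $\langle \Phi \rangle$ and $g\langle \Psi \rangle$ of $M(\Gamma,\lambda,\mathcal{G})$ are crossed by exactly the same hyperplanes, whence Lemma~\ref{lem:SameHypIso} gives a graph isomorphism between them, and the conclusion follows by counting cliques through a vertex. Your approach is algebraic and inductive, leaning essentially only on Theorem~\ref{thm:InterParabolicPeriagroups}. The paper's argument yields more --- an actual isomorphism of the two cosets as graphs --- while yours is more self-contained and sidesteps the hyperplane machinery entirely.

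Two minor points. First, your final appeal to Theorem~\ref{thm:InterParabolicPeriagroups} for $\langle v \rangle \cap \langle \Psi' \rangle = \{1\}$ does not quite close: the theorem only yields $\Xi \subseteq \{v\}$, and ruling out $\Xi = \{v\}$ (equivalently $G_v \subseteq \langle \Psi'\rangle$) still requires the uniqueness of graphically reduced words up to dihedral relations (\cite[Proposition~5.8]{Mediangle}), since dihedral relations preserve the set of vertex-labels and a nontrivial $a \in G_v$ has support $\{v\} \not\subseteq \Psi'$. Second, your induction tacitly assumes $|\Psi| < \infty$; the proposition as stated allows infinite subgraphs, though the infinite case reduces immediately to the finite one (if $|\Phi| < \infty = |\Psi|$, pick finite $\Psi_0 \subset \Psi$ with $|\Psi_0| > |\Phi|$ and run your argument on $\Psi_0$ to get a contradiction).
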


\noindent
In order to prove the proposition, the following elementary observation will be needed: 

\begin{lemma}\label{lem:HypParbabolicStab}
Let $\Pi = \Pi(\Gamma, \lambda, \mathcal{G})$ be a periagroup, $\Lambda \subset \Gamma$ a subgraph, and $J$ a hyperplane of $M=M(\Gamma,\lambda, \mathcal{G})$. If $J$ crosses the subgraph $\langle \Lambda \rangle$, then $\mathrm{stab}_\circlearrowright (J)$ stabilises $\langle \Lambda \rangle$. 
\end{lemma}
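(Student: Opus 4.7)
The plan is to locate a clique $C\subset J$ entirely contained in $\langle \Lambda \rangle$, identify it as a coset of a vertex-group, and deduce that $\mathrm{stab}_\circlearrowright(J)$ sits inside $\langle \Lambda \rangle$ itself, which suffices since $\langle \Lambda \rangle$ is a subgroup of $\Pi$.

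Since $J$ crosses $\langle \Lambda \rangle$, I would start by picking an edge $e$ of $J$ whose endpoints $x,y$ both lie in $\langle \Lambda \rangle$, and let $C$ be the (unique) clique of $J$ containing $e$. The first step is to show $C \subset \langle \Lambda \rangle$. Because standard parabolic cosets are gated in $M(\Gamma,\lambda,\mathcal{G})$ by \cite[Corollary~6.6]{Mediangle}, any vertex $z \in C$ has a well-defined projection $z' \in \langle \Lambda \rangle$ satisfying $d(z,x)=d(z,z')+d(z',x)$ and $d(z,y)=d(z,z')+d(z',y)$. Since $x,y,z$ pairwise lie in the clique $C$, each of these sums equals $1$, so either $z=z'\in \langle \Lambda \rangle$, or simultaneously $z' = x$ and $z' = y$, which is impossible since $x\neq y$. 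Hence $C \subset \langle \Lambda \rangle$.

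The second step uses the fact that the cliques of the Cayley graph $M(\Gamma,\lambda,\mathcal{G})$ are exactly the cosets of vertex-group subgroups $\langle u \rangle$; this is immediate from the Cayley graph structure together with the graphical reducedness of two-syllable words from distinct vertex-groups, which prevents adjacency between non-identity elements of different vertex-groups. So $C = g\langle u \rangle$ for some $g \in \Pi$ and $u \in V(\Gamma)$. From $g \in C \subset \langle \Lambda \rangle$ we get $g \in \langle \Lambda \rangle$, and from $\langle u \rangle = g^{-1}C \subset \langle \Lambda \rangle$ we get $\langle u \rangle \subset \langle \Lambda \rangle$; so $g \langle u \rangle g^{-1} \subset \langle \Lambda \rangle$.

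Finally, for any $h \in \mathrm{stab}_\circlearrowright(J)$, the definition of the rotative-stabiliser gives $hC = C$ setwise; evaluating this on $g \in C$ yields $hg \in g\langle u \rangle$, whence $h \in g\langle u \rangle g^{-1} \subset \langle \Lambda \rangle$. Since $\langle \Lambda \rangle$ is a subgroup of $\Pi$, left-multiplication by $h$ preserves it as a subset of the vertex-set of $M$, which is the conclusion sought. The only step that requires any care is the transfer from ``$J$ crosses $\langle \Lambda \rangle$'' to ``some full clique of $J$ lies in $\langle \Lambda \rangle$''; the remainder is formal coset bookkeeping.
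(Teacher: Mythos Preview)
Your proof is correct and follows essentially the same approach as the paper: locate a clique $C=g\langle u\rangle$ of $J$ inside $\langle\Lambda\rangle$, observe that $\mathrm{stab}_\circlearrowright(J)\leq\mathrm{stab}(C)=g\langle u\rangle g^{-1}\leq\langle\Lambda\rangle$, and conclude. The only difference is cosmetic: the paper asserts directly that the clique of $\langle\Lambda\rangle$ containing the crossing edge has the form $g\langle u\rangle$ with $g\in\langle\Lambda\rangle$ and $u\in\Lambda$ (using that $\langle\Lambda\rangle$ is itself a mediangle Cayley graph), whereas you spell out the gatedness argument to show $C\subset\langle\Lambda\rangle$ before identifying $C$ as a coset.
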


\begin{proof}
Because $J$ crosses $\langle \Lambda \rangle$, there exist an element $g \in \langle \Lambda \rangle$ and a vertex $u \in \Lambda$ such that the clique $g \langle u \rangle$ of $\langle \Lambda \rangle$ is contained in $J$. On the one hand, 
$$\mathrm{stab}_\circlearrowright(J) \leq \mathrm{stab}(g \langle u \rangle) = g \langle u \rangle g^{-1};$$
on the other hand, $g \langle u \rangle g^{-1}$ clearly stabilises $\langle \Lambda \rangle$. Hence the desired conclusion. 
\end{proof}

\begin{proof}[Proof of Proposition~\ref{prop:ConjParabolicSize}.]
Fix a $g \in \Pi$ such that $\langle \Phi \rangle = g \langle \Psi \rangle g^{-1}$. We claim that the gated subgraphs $\langle \Phi \rangle$ and $g \langle \Psi \rangle$ in $M$ are crossed by exactly the same hyperplanes. Indeed, if $J$ were a hyperplane crossing $\langle \Phi \rangle$ but not $g \langle \Psi \rangle$, then $\mathrm{stab}_\circlearrowright(J)$ would stabilise $\langle \Phi \rangle$ (acccording to Lemma~\ref{lem:HypParbabolicStab}) but not $g \langle \Psi \rangle$, contradicting the equality $\langle \Phi \rangle = g \langle \Psi \rangle g^{-1}$. For the same reason, every hyperplane crossing $g \langle \Psi \rangle$ has to cross $\langle \Phi \rangle$ as well. As a consequence of this observation, we can apply Lemma~\ref{lem:SameHypIso} and deduce that $\langle \Phi \rangle$ and $g \langle \Psi \rangle$ are isomorphic graphs. Now, notice that every vertex of $\langle \Phi \rangle$ (resp.\ $g \langle \Psi \rangle$) belongs to exaclty $|\Phi|$ (resp.\ $|\Psi|$) cliques.  Hence the desired equality. 
\end{proof}

\noindent
As a consequence of Proposition~\ref{prop:ConjParabolicSize}, for any (possibly infinite) $\Gamma$, we can define the \emph{height} $h(P)$ of a parabolic subgroup $P \leq \Pi$ as the number of vertices in the subgraph $\Lambda \subset \Gamma$, where $P$ is conjugate to $\langle \Lambda \rangle$. A parabolic subgroup is \emph{of finite type} if it has finite height. 
A property of interest is that parabolic subgroups of finite type satisfy the descending chain condition:

\begin{cor}\label{cor:FiniteType}
Let $\Pi= \Pi(\Gamma,\lambda, \mathcal{G})$ be a periagroup and $P,Q \leq \Pi$ two parabolic subgroups. If $Q \subset P$ then $h(Q) \leq h(P)$. Moreover, if $h(P)$ is finite, then $h(P)=h(Q)$ if and only if $P=Q$. 
\end{cor}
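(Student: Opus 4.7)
Write $P = g\langle \Phi \rangle g^{-1}$ and $Q = h\langle \Psi \rangle h^{-1}$ for some $g,h \in \Pi$ and subgraphs $\Phi,\Psi \subset \Gamma$, so that $h(P) = |\Phi|$ and $h(Q) = |\Psi|$ by definition. The inclusion $Q \subseteq P$ means $P \cap Q = Q$, so Theorem~\ref{thm:InterParabolicPeriagroups} gives
\[ Q \;=\; P \cap Q \;=\; k\langle \Xi \rangle k^{-1}, \]
where $k \in \Pi$ lies in the projection $P_0$ of $h\langle \Psi \rangle$ onto $g\langle \Phi \rangle$, and $\Xi \subset \Gamma$ is the induced subgraph on the vertices labelling the edges of $P_0$.

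The first step is to observe that $\Xi \subseteq \Phi$ as subgraphs of $\Gamma$. Indeed, $P_0 \subseteq g\langle \Phi \rangle$, and every edge of the Cayley graph $M(\Gamma,\lambda,\mathcal{G})$ sitting inside $g\langle \Phi \rangle$ is contained in a clique of the form $g\phi \langle u \rangle$ with $\phi \in \langle \Phi \rangle$ and $u \in \Phi$; such an edge is labelled by $u \in \Phi$. Hence every label appearing in $P_0$ lies in $\Phi$, i.e.\ $\Xi \subseteq \Phi$. Combining with Proposition~\ref{prop:ConjParabolicSize} applied to the two representations $Q = h\langle \Psi \rangle h^{-1} = k\langle \Xi \rangle k^{-1}$, we get $h(Q) = |\Psi| = |\Xi| \leq |\Phi| = h(P)$, which is the first claim.

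For the equality case, assume $h(P) = h(Q)$ is finite. Then $|\Xi| = |\Phi|$ and both $\Xi$ and $\Phi$ are finite induced subgraphs of $\Gamma$ with $\Xi \subseteq \Phi$, so $\Xi = \Phi$ and therefore $\langle \Xi \rangle = \langle \Phi \rangle$. Since $k \in P_0 \subseteq g\langle \Phi \rangle$, we may write $k = g\phi$ with $\phi \in \langle \Phi \rangle$, and then
\[ Q \;=\; k\langle \Xi \rangle k^{-1} \;=\; g\phi \langle \Phi \rangle \phi^{-1} g^{-1} \;=\; g \langle \Phi \rangle g^{-1} \;=\; P, \]
where we used $\phi \langle \Phi \rangle \phi^{-1} = \langle \Phi \rangle$. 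The converse implication $P = Q \Rightarrow h(P) = h(Q)$ is trivial.

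The only mildly delicate point is the inclusion $\Xi \subseteq \Phi$; but this is essentially a bookkeeping statement about how edges of $M(\Gamma,\lambda,\mathcal{G})$ inside the translate $g\langle \Phi \rangle$ are labelled, and follows directly from the fact that the cliques inside $g\langle \Phi \rangle$ are the cosets of $\langle u \rangle$, $u \in \Phi$ (Lemma~\ref{lem:QMcliques} in the graph-product case, extended to periagroups via the structure of $M(\Gamma,\lambda,\mathcal{G})$).
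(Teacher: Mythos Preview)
Your proof is correct and follows essentially the same route as the paper's: both apply Theorem~\ref{thm:InterParabolicPeriagroups} to $P\cap Q=Q$, extract $\Xi\subseteq\Phi$ and $k\in g\langle\Phi\rangle$ from the description of the projection, and then use $|\Xi|=|\Phi|$ with finiteness to force $\Xi=\Phi$ and hence $Q=P$. You spell out the justification of $\Xi\subseteq\Phi$ and the final conjugation step $k\langle\Phi\rangle k^{-1}=g\langle\Phi\rangle g^{-1}$ more explicitly than the paper does, but the argument is the same.
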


\begin{proof}
Write $P=g \langle \Phi \rangle g^{-1}$ and $Q= h \langle \Psi \rangle h^{-1}$. As a consequence of Theorem~\ref{thm:InterParabolicPeriagroups}, we can write $P \cap Q = Q$ as $k \langle \Xi \rangle k^{-1}$ for some $k \in g \langle \Phi \rangle$ and $\Xi \subset \Phi$. Of course, this inclusion implies that
$$h(Q)= |\Xi| \leq |\Phi| = h(P).$$
If $h(P)=|\Phi|$ is finite and $h(P)=h(Q)$, then $|\Xi|=|\Phi|$ implies that $\Xi = \Phi$. Hence $Q= k \langle \Xi \rangle k^{-1}= g \langle \Phi \rangle g^{-1}=P$. 
\end{proof}

\begin{cor}\label{cor:ChainParabolicTerminate}
In a periagroup $\Pi= \Pi(\Gamma, \lambda, \mathcal{G})$, a decreasing sequence $P_1 \supset P_2 \supset \cdots$ of parabolic subgroups, with $P_1$ of finite type, must terminate.
\end{cor}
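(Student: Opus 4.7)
The plan is to use the height $h(\cdot)$ as a monovariant and reduce the statement to the fact that a non-increasing sequence of non-negative integers must stabilise; the whole argument is essentially a direct unpacking of Corollary~\ref{cor:FiniteType}.

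First I would apply the first assertion of Corollary~\ref{cor:FiniteType} to each inclusion $P_{i+1} \subset P_i$: this yields $h(P_{i+1}) \leq h(P_i)$, so $(h(P_i))_{i \geq 1}$ is a non-increasing sequence of non-negative integers. Since $P_1$ is assumed to be of finite type, $h(P_1)$ is a finite upper bound, and the sequence must stabilise: there exists $N \geq 1$ such that $h(P_i) = h(P_N)$ for every $i \geq N$.

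Next I would conclude using the second assertion of Corollary~\ref{cor:FiniteType}. For every $i \geq N$, the parabolic subgroup $P_i$ is again of finite type (since $h(P_i) \leq h(P_1) < \infty$), and the inclusion $P_{i+1} \subset P_i$ together with the equality $h(P_{i+1}) = h(P_i)$ of finite heights forces $P_{i+1} = P_i$. Hence $P_N = P_{N+1} = \cdots$ and the chain terminates.

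I do not expect any genuine obstacle here: Corollary~\ref{cor:FiniteType}, combined with Proposition~\ref{prop:ConjParabolicSize} that makes the height well defined, was designed precisely so that the height behaves as a well-founded invariant on chains of parabolic subgroups. The only point requiring attention is that the stabilisation argument needs $h(P_1)$ to be finite, which is exactly the \emph{finite type} hypothesis; without it nothing would prevent an infinite strictly decreasing chain of parabolic subgroups of infinite height.
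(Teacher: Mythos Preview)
Your proof is correct and follows essentially the same approach as the paper: both reduce the claim to Corollary~\ref{cor:FiniteType} via the height function. The paper's version is more terse, writing directly $h(P_1)>h(P_2)>\cdots$ (implicitly using the contrapositive of the second assertion of Corollary~\ref{cor:FiniteType} at each strict inclusion), whereas you spell out the stabilisation of the height sequence and then invoke the equality case; the underlying argument is the same.
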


\begin{proof}
It follows from Corollary~\ref{cor:FiniteType} that $h(P_1)>h(P_2)>\cdots$, which cannot be infinite since $h(P_1)$ is finite by assumption.
\end{proof}

\subsection{A lemma for Coxeter groups}

\noindent
In this section, we give a preliminary result about Coxeter groups that will be needed later. Before stating it, we introduce some notation.

\begin{definition}\label{def:JoinTwo}
Given two graphs $\Gamma_1,\Gamma_2$ whose edges are labelled by integers $\geq 2$, we denote by $\Gamma_1 \ast_2 \Gamma_2$  the labelled graph obtained from $\Gamma_1 \sqcup \Gamma_2$ by connecting every vertex of $\Gamma_1$ with every vertex of $\Gamma_2$ by an edge labelled $2$. The subgraphs $\Gamma_1$ and $\Gamma_2$ are the \emph{ $\ast_2$-factors} of $\Gamma_1 \ast_2 \Gamma_2$. A labelled graph is \emph{$\ast_2$-irreducible} whenever it does not decompose as a $\ast_2$-product of two non-empty labelled graphs. 
\end{definition}

\noindent
The rest of the section is dedicated to the proof of the following statement. We use a different notation for Coxeter groups compared to previous sections: we write $C(\Gamma)$ for the Coxeter group with defining graph $\Gamma$, instead of $(W,\Gamma)$.

\begin{prop}\label{prop:InterCosetCoxeter}
Let $\Psi$ be a labelled graph and $C(\Psi)$ its corresponding Coxeter group. For $\Lambda_1, \Lambda_2 \subset \Psi$, there exists $g \in C(\Psi)$ such that $g \langle \Lambda_1 \rangle \cap \langle \Lambda_2 \rangle = \emptyset$ if and only if (at least) one $\ast_2$-factor of $\Psi$ is contained in neither $\Lambda_1$ nor $\Lambda_2$. 
\end{prop}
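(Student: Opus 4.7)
The plan is first to recast the problem: since $g \langle \Lambda_1 \rangle \cap \langle \Lambda_2 \rangle \neq \emptyset$ is equivalent to $g \in \langle \Lambda_2 \rangle \langle \Lambda_1 \rangle$, the statement becomes $\langle \Lambda_2 \rangle \langle \Lambda_1 \rangle = C(\Psi)$ if and only if every $\ast_2$-factor of $\Psi$ lies in $\Lambda_1$ or $\Lambda_2$. The easy direction $(\Leftarrow)$ is direct: decomposing $\Psi = \Psi_1 \ast_2 \cdots \ast_2 \Psi_n$ into $\ast_2$-irreducible factors gives $C(\Psi) = \prod_i C(\Psi_i)$ with pairwise commuting factors, so if each $C(\Psi_i)$ embeds into some $\langle \Lambda_j \rangle$ then any $g$ factors as a product of something in $\langle \Lambda_2 \rangle$ with something in $\langle \Lambda_1 \rangle$.

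For the hard direction $(\Rightarrow)$, let $\Psi_0$ be a $\ast_2$-factor lying in neither $\Lambda_i$. The decomposition $\Psi = \Psi_0 \ast_2 \Psi'$ together with factor-wise commutativity reduces the problem to producing $g_0 \in C(\Psi_0) \setminus \langle \Lambda_2 \cap \Psi_0 \rangle \langle \Lambda_1 \cap \Psi_0 \rangle$, so I may assume $\Psi = \Psi_0$ is $\ast_2$-irreducible with $\Psi \not\subseteq \Lambda_i$. If some vertex $w \in V(\Psi) \setminus (\Lambda_1 \cup \Lambda_2)$ exists, I take $g := w$: any minimal decomposition $g = h_2 h_1$ is a reduced concatenation (by the exchange condition), and Tits's invariance of the support under reduced expressions forces $\{w\} = S(g) = S(h_2) \cup S(h_1) \subseteq \Lambda_1 \cup \Lambda_2$, a contradiction.

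Otherwise $\Lambda_1 \cup \Lambda_2 = V(\Psi)$; set $V_i := \Lambda_i \setminus \Lambda_{3-i}$ and $V_3 := \Lambda_1 \cap \Lambda_2$, so $V_1, V_2 \neq \emptyset$. I introduce the non-commutation graph $\Psi^{nc}$ on $V(\Psi)$, whose edges are the non-commuting pairs of $\Psi$; the key combinatorial fact is that $\Psi$ is $\ast_2$-irreducible precisely when $\Psi^{nc}$ is connected. I then pick a shortest $\Psi^{nc}$-path $w_0, w_1, \ldots, w_k$ from $V_1$ to $V_2$ (so $w_0 \in V_1$, $w_k \in V_2$, $k \geq 1$); minimality forces every intermediate $w_i$ with $0 < i < k$ to lie in $V_3$, since otherwise the path could be shortcut at the first $V_1$- or $V_2$-intermediate. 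I set $g := w_0 w_1 \cdots w_k \in C(\Psi)$.

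The claim is that this $g$ is its own unique reduced expression and lies outside $\langle \Lambda_2 \rangle \langle \Lambda_1 \rangle$. Consecutive letters in $g$ do not commute, blocking every commutation move; and since a shortest path has pairwise distinct vertices, one has $w_{i-1} \neq w_{i+1}$, so no substring of $g$ has the form $xyx$ needed to trigger a dihedral braid move. In any minimal decomposition $g = h_2 h_1$ the concatenation is reduced and must therefore equal $g$ as a word; then $h_2$, a prefix using letters only from $\Lambda_2$, cannot contain the first letter $w_0 \in V_1 = \Lambda_1 \setminus \Lambda_2$, forcing $h_2 = 1$. But then $h_1 = g$ contains $w_k \in V_2 = \Lambda_2 \setminus \Lambda_1$, contradicting $h_1 \in \langle \Lambda_1 \rangle$. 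The main obstacle I anticipate is verifying that shortness of the path really precludes all braid moves on $g$; this rests squarely on the combinatorial reformulation of $\ast_2$-irreducibility as connectedness of $\Psi^{nc}$ and on the fact that a shortest path in any graph visits each vertex at most once.
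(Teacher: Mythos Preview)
Your proof is correct. The reformulation $g\langle\Lambda_1\rangle\cap\langle\Lambda_2\rangle\neq\emptyset \iff g\in\langle\Lambda_2\rangle\langle\Lambda_1\rangle$ is clean, the reduction to the $\ast_2$-irreducible case is the same as the paper's, and the key step---building $g$ as the product along a path in the non-commutation graph and checking that no braid or commutation move applies (so the word is its own unique reduced expression)---is sound. Your deletion-condition argument that a minimal decomposition $g=h_2h_1$ must satisfy $\ell(g)=\ell(h_2)+\ell(h_1)$ is standard and correct, and from there the conclusion is immediate.

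The paper's proof uses essentially the same element $g$: it picks $a_1\notin\Lambda_1$, $a_2\notin\Lambda_2$, a path $x_1=a_2,\ldots,x_k=a_1$ of pairwise distinct vertices in the non-commutation graph, and sets $g=x_1\cdots x_k$. The difference is entirely in the verification. The paper enlarges $\Lambda_i$ to $\Psi\setminus\{a_i\}$ (so it never needs your Case~1), notes that the absence of applicable relations makes the corresponding path in the Cayley graph a \emph{convex} geodesic, and then argues geometrically via gated subgraphs and a projection lemma (their Corollary about three pairwise-intersecting gated subgraphs). Your argument stays purely inside Coxeter-group word combinatorics: Matsumoto's theorem gives uniqueness of the reduced expression, and then double-coset minimality finishes. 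Your route is more elementary and self-contained for readers familiar with standard Coxeter machinery; the paper's route is consistent with the mediangle framework developed elsewhere in the article, but requires that extra infrastructure. Both arrive at the same destination by the same choice of~$g$.
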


\noindent
That is, if $\Psi$ is $\ast_2$-reducible, then at least one of its factors is in $\Psi \setminus (\Lambda_1 \cup \Lambda_2)$. 
In order to prove this proposition, we start by stating and proving an elementary observation about mediangle graphs. Recall from \cite[Lemma~3.17]{Mediangle} that:

\begin{lemma}\label{lem:SepFromProj}
Let $X$ be a mediangle graph, $x \in X$ a vertex, and $Y \subset X$ a gated subgraph. Every hyperplane separating $x$ from its projection on $Y$ must separate $x$ from~$Y$. 
\end{lemma}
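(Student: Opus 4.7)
The plan is to combine the gated hypothesis on $Y$ with the hyperplane criterion for geodesics in paraclique graphs (Proposition~\ref{prop:ParaGeod}), applicable because mediangle graphs are paraclique by Proposition~\ref{prop:MediangleParaclique}. Let $y_0 := \mathrm{proj}_Y(x)$ denote the gate of $x$ on $Y$, and let $J$ be a hyperplane separating $x$ from $y_0$. I want to show that, for \emph{every} $y \in Y$, the hyperplane $J$ separates $x$ from $y$, which is equivalent to saying $J$ separates $x$ from $Y$.

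First I would use the defining property of a gate: for every $y \in Y$, there exists a geodesic from $x$ to $y$ passing through $y_0$. Fixing such a path, it decomposes as the concatenation $\alpha \cup \beta$ of a geodesic $\alpha$ from $x$ to $y_0$ and a geodesic $\beta$ from $y_0$ to $y$, and in particular $d(x,y) = d(x,y_0) + d(y_0,y)$, so $\alpha \cup \beta$ is itself a geodesic from $x$ to $y$. By Proposition~\ref{prop:ParaGeod}, a geodesic in a paraclique graph crosses each hyperplane at most once.

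Now apply this to the hyperplane $J$. Since $J$ separates $x$ from $y_0$, the sub-geodesic $\alpha$ must cross $J$ exactly once. Consequently $\beta$ cannot cross $J$ (otherwise $\alpha \cup \beta$ would cross $J$ twice, contradicting Proposition~\ref{prop:ParaGeod}), hence $y_0$ and $y$ lie in the same connected component of $X \setminus\!\!\setminus J$. Because $y_0$ is on the side of $J$ opposite to $x$, so is $y$. As $y \in Y$ was arbitrary, $J$ separates $x$ from all of $Y$, which is the desired conclusion.

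The main point to check carefully is that one can indeed paste $\alpha$ and $\beta$ into a single geodesic of $X$; but this is immediate from the gate equality $d(x,y) = d(x,y_0)+d(y_0,y)$. Everything else is a straightforward application of the hyperplane description of geodesics in paraclique graphs, so there is no serious obstacle here --- the lemma is really an observation packaging the interaction between gates and hyperplanes.
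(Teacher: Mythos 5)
Your argument is correct. Note that the paper does not actually prove this statement: it is recalled verbatim from \cite[Lemma~3.17]{Mediangle}, so there is no in-paper proof to compare against. Your self-contained derivation works: gatedness gives, for each $y \in Y$, a geodesic through the gate $y_0$, so $d(x,y)=d(x,y_0)+d(y_0,y)$ and the concatenation $\alpha \cup \beta$ is a geodesic; since mediangle graphs are paraclique (Proposition~\ref{prop:MediangleParaclique}), Proposition~\ref{prop:ParaGeod} forbids this geodesic from crossing $J$ twice, and as $\alpha$ must cross $J$ (it joins vertices in distinct components of $X \backslash\backslash J$), $\beta$ cannot, so $y$ lies in the same sector as $y_0$. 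The only phrasing to be careful with is ``the side of $J$ opposite to $x$'': hyperplanes in mediangle graphs may delimit more than two sectors, but your argument in fact shows that every $y \in Y$ lies in the \emph{same} sector as $y_0$, which is distinct from the sector of $x$, and this is exactly what ``$J$ separates $x$ from $Y$'' requires.
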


\noindent
From this lemma, we deduce that:

\begin{cor}\label{cor:ProjectionThreeInter}
Let $X$ be a mediangle graphs and $A,B, C \subset X$ three gated subgraphs. If $A \cap B, B \cap C \neq \emptyset$, then the projection of a vertex in $A \cap B$ onto $C$ must belong to $B$. 
\end{cor}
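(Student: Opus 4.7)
The plan is to argue by contradiction using Lemma~\ref{lem:SepFromProj} twice. Notice that $A$ plays no role beyond guaranteeing some vertex $x \in B$, so we fix $x \in A \cap B$ and let $y := \mathrm{proj}_C(x)$. I want to show $y \in B$; suppose for contradiction that $y \neq y'$ where $y' := \mathrm{proj}_B(y)$, and pick a hyperplane $H$ separating $y$ from $y'$.

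First I would apply Lemma~\ref{lem:SepFromProj} to the vertex $y$ and the gated subgraph $B$: since $y' = \mathrm{proj}_B(y)$, the hyperplane $H$ must separate $y$ from the entire subgraph $B$. In particular, $H$ separates $y$ from $x$ (which lies in $B$). Next, I would apply Lemma~\ref{lem:SepFromProj} a second time, now to $x$ and the gated subgraph $C$: since $y = \mathrm{proj}_C(x)$ and $H$ separates $x$ from $y$, the hyperplane $H$ must in fact separate $x$ from the entire subgraph $C$.

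The contradiction then comes from using the hypothesis $B \cap C \neq \emptyset$: pick any $z \in B \cap C$. On the one hand, $z \in C$ forces $z$ to lie on the side of $H$ opposite $x$, which (since $H$ separates $x$ and $y$) is the same side as $y$. On the other hand, $z \in B$ forces $z$ to lie on the side of $H$ opposite $y$. These two conclusions are incompatible, which yields the desired contradiction and forces $y = y' \in B$.

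The main (minor) obstacle is just bookkeeping of ``sides'' of the hyperplane $H$: one has to be careful that the two applications of Lemma~\ref{lem:SepFromProj} produce separation statements that are genuinely contradictory when combined with $z \in B \cap C$, rather than trivially compatible. Everything else is immediate from the gatedness of $B$ and $C$ and the properties of hyperplanes in mediangle graphs already established.
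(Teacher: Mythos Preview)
Your proof is correct and follows essentially the same route as the paper's: both argue by contradiction, apply Lemma~\ref{lem:SepFromProj} first to $y$ (the paper's $p$) and $B$ to get a hyperplane separating $y$ from all of $B$, then apply it to $x$ and $C$ to conclude the hyperplane does not cross $C$, and finally derive a contradiction from $B\cap C\neq\emptyset$. One small caution: your phrasing in terms of the two ``sides'' of $H$ implicitly assumes $H$ delimits only two sectors, which need not hold in a general mediangle graph; the clean way to finish (and what the paper does) is to note that since $H$ separates $x$ from $C$ it cannot cross $C$, so all of $C$ --- in particular both $y$ and $z$ --- lies in a single sector, which is then disjoint from $B$.
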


\begin{proof}
Fix a vertex $x \in A \cap B$ and let $p \in C$ denote its projection on $C$. If $p$ does not belong to $B$, then it follows from Lemma~\ref{lem:SepFromProj} that there exists some hyperplane $J$ separating $p$ from $B$. Since $x$ belongs to $B$, in particular $J$ separates $x$ and $p$. Then, Lemma~\ref{lem:SepFromProj} implies that $J$ cannot cross $C$. Hence we get a contradiction, since $J$ separates $p$ from $B \cap C$. 
\end{proof}

\noindent
We can now prove Proposition~\ref{prop:InterCosetCoxeter} for irreducible Coxeter groups, from which the general result follows easily. 

\begin{lemma}\label{lem:DisjointCosets}
Let $\Psi$ be an $\ast_2$-irreducible labelled graph. For $\Lambda_1, \Lambda_2 \subset \Psi$, there exists $g \in C(\Psi)$ such that $g\langle \Lambda_1 \rangle \cap \langle \Lambda_2 \rangle = \emptyset$ if and only if $\Lambda_1, \Lambda_2 \neq \Psi$. 
\end{lemma}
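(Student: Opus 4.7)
The $(\Rightarrow)$ direction is immediate: if $g\langle\Lambda_1\rangle\cap\langle\Lambda_2\rangle=\emptyset$, then neither $\langle\Lambda_1\rangle$ nor $\langle\Lambda_2\rangle$ equals $C(\Psi)$, so $\Lambda_1,\Lambda_2\neq\Psi$. For $(\Leftarrow)$, note that $g\langle\Lambda_1\rangle\cap\langle\Lambda_2\rangle\neq\emptyset\iff g\in\langle\Lambda_2\rangle\langle\Lambda_1\rangle$, and we want to exhibit $g\notin\langle\Lambda_2\rangle\langle\Lambda_1\rangle$. The uniform tool is the standard theory of minimum-length double-coset representatives in Coxeter groups: any element $g$ that is $(\Lambda_2,\Lambda_1)$-reduced (i.e.\ has no left descent in $\Lambda_2$ and no right descent in $\Lambda_1$) and non-trivial lies in a non-identity double coset, hence outside $\langle\Lambda_2\rangle\cdot 1\cdot\langle\Lambda_1\rangle=\langle\Lambda_2\rangle\langle\Lambda_1\rangle$. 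In each case below I exhibit such a $g$ by writing an element whose reduced expression is \emph{unique} (so its left-descent set is the singleton formed by the first letter and its right-descent set the singleton formed by the last letter) and verifying that these letters lie outside $\Lambda_2$ and $\Lambda_1$ respectively.

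If $\Lambda_1\cup\Lambda_2\subsetneq\Psi$, pick $u\in\Psi\setminus(\Lambda_1\cup\Lambda_2)$ and set $g=u$; the reduced expression is the single letter $u$, and $u\notin\Lambda_1\cup\Lambda_2$ settles the descent condition. Otherwise $\Lambda_1\cup\Lambda_2=\Psi$, so $\Lambda_1\setminus\Lambda_2$ and $\Lambda_2\setminus\Lambda_1$ are non-empty, and the $\ast_2$-irreducibility of $\Psi$ applied to the partition $V(\Psi)=(\Lambda_1\setminus\Lambda_2)\sqcup\Lambda_2$ produces $v_1\in\Lambda_1\setminus\Lambda_2$ and $x\in\Lambda_2$ joined by no edge of label~$2$. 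If $x\in\Lambda_2\setminus\Lambda_1$, I take $g=v_1x$: the condition $m(v_1,x)\neq 2$ rules out the only possible braid/commutation move, so the reduced expression is unique, with left descent $\{v_1\}$ avoiding $\Lambda_2$ and right descent $\{x\}$ avoiding $\Lambda_1$. If instead $x\in\Lambda_1\cap\Lambda_2$, a further application of $\ast_2$-irreducibility to the symmetric partition (iterated if necessary) yields a triple $v_1\in\Lambda_1\setminus\Lambda_2$, $u\in\Lambda_1\cap\Lambda_2$, $v_2\in\Lambda_2\setminus\Lambda_1$ with $m(v_1,u)\neq 2$ and $m(u,v_2)\neq 2$; then $g=v_1uv_2$ has length $3$, the two non-commuting adjacent pairs preclude all braid moves, so the reduced expression is again unique, with left descent $\{v_1\}$ avoiding $\Lambda_2$ and right descent $\{v_2\}$ avoiding $\Lambda_1$.

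The main subtlety is ensuring the triple exists in the last step, or else constructing a longer alternating chain $g=v_1u_1\cdots u_kv_2$ with non-commutation at every adjacent pair (and with $v_1\in\Lambda_1\setminus\Lambda_2$, $v_2\in\Lambda_2\setminus\Lambda_1$, $u_i\in\Lambda_1\cap\Lambda_2$). The combinatorial content of this step is that if no such chain can be assembled, then sorting the vertices of $\Lambda_1\cap\Lambda_2$ according to which side of $\Lambda_1\triangle\Lambda_2$ they commute with yields a bipartition of $V(\Psi)$ whose cross edges are all labelled~$2$, contradicting the $\ast_2$-irreducibility of $\Psi$. Once the chain is fixed, verification that the resulting $g$ satisfies our requirements is uniform: unique reduced expression, singleton descent sets, and first/last letters in $\Lambda_1\setminus\Lambda_2$ and $\Lambda_2\setminus\Lambda_1$ respectively.
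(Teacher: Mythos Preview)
Your proof is correct and constructs essentially the same element $g$ as the paper, but verifies $g\langle\Lambda_1\rangle\cap\langle\Lambda_2\rangle=\emptyset$ by a genuinely different method. The paper takes a geodesic $x_1=a_2,\ldots,x_k=a_1$ in the ``non-$2$'' graph (the graph on $V(\Psi)$ with an edge between $u,v$ iff $m(u,v)\neq 2$) from some $a_2\notin\Lambda_2$ to some $a_1\notin\Lambda_1$, sets $g=x_1\cdots x_k$, and then argues geometrically: the path in the Cayley graph labelled by this word is a convex geodesic, and a projection argument (using Corollary~\ref{cor:ProjectionThreeInter} on three gated subgraphs) shows the cosets $g\langle\Psi\setminus\{a_1\}\rangle$ and $\langle\Psi\setminus\{a_2\}\rangle$ are disjoint. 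Your argument instead uses the standard theory of double-coset representatives: such a word has all distinct letters and no consecutive commuting pair, hence a unique reduced expression, hence singleton left/right descent sets $\{x_1\}$ and $\{x_k\}$ disjoint from $\Lambda_2$ and $\Lambda_1$ respectively, hence $g$ is $(\Lambda_2,\Lambda_1)$-reduced and nontrivial, hence $g\notin\langle\Lambda_2\rangle\langle\Lambda_1\rangle$. Your verification is more elementary and self-contained within Coxeter word combinatorics; the paper's fits its ambient mediangle framework.

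Your case analysis is, however, needlessly elaborate. The insistence that intermediate letters lie in $\Lambda_1\cap\Lambda_2$ is irrelevant to the descent-set argument, and your justification for the existence of the chain in the final paragraph (``sorting the vertices of $\Lambda_1\cap\Lambda_2$ according to which side they commute with'') is vague. It is cleaner to observe directly, as the paper does, that $\ast_2$-irreducibility means the non-$2$ graph is connected, so a shortest path from $\Psi\setminus\Lambda_2$ to $\Psi\setminus\Lambda_1$ exists; its vertices are automatically distinct, and if $\Lambda_1\cup\Lambda_2=\Psi$ its interior vertices automatically lie in $\Lambda_1\cap\Lambda_2$ by minimality. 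This collapses your three subcases into one line.
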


\begin{proof}
It is clear that, if $\Lambda_1$ or $\Lambda_2$ coincides with $\Psi$ entirely, then $g\langle \Lambda_1 \rangle \cap \langle \Lambda_2 \rangle \neq \emptyset$. Conversely, assume that $\Lambda_1, \Lambda_2 \neq \Psi$. If $\Lambda_1= \Lambda_2$, then $g \langle \Lambda_1 \rangle \cap \langle \Lambda_2 \rangle= \emptyset$ for every $g \notin \langle \Lambda_1 \rangle$. From now on, we assume that $\Lambda_1 \neq \Lambda_2$. Consequently, we can find distinct vertices $a_1,a_2 \in \Psi$ such that $a_1 \notin \Lambda_1$ and $a_2 \notin \Lambda_2$. Because $\Psi$ is $\ast_2$-irreducible, we can find a sequence of pairwise distinct vertices
$$x_1:=a_2, \ x_2, \ldots, \ x_{k-1}, \ x_k:= a_1$$
such that, for every $1 \leq i \leq k-1$, $x_i$ and $x_{i+1}$ are not connected by an edge labelled $2$ in $\Psi$. (Indeed, since $\Psi$ is $\ast_2$-irreducible, the graph with the same vertices as $\Psi$, and whose edges connect two vertices whenever they are not connected by an edge labelled $2$ in $\Gamma$, is connected. Then our sequence of vertices can be taken as a geodesic connecting $a_2$ to $a_1$ in this new graph.) We claim that
$$g \langle \Psi \backslash \{a_1\} \rangle \cap \langle \Psi \backslash \{a_2\} \rangle = \emptyset \text{ where } g:=x_1 \cdots x_k.$$
This is sufficient in order to conclude the proof of our lemma since 
$$g \langle \Lambda_1 \rangle \cap \langle \Lambda_2 \rangle \subset g \langle \Psi \backslash \{a_1\} \rangle \cap \langle \Psi \backslash \{a_2\} \rangle.$$
Notice that no relation can be applied to the word $x_1 \cdots x_k$, which amounts to the path $\xi$ in $\mathrm{Cay}(C(\Psi), V(\Psi))$ starting at $1$ and labelled by $x_1 \cdots x_k$ being a convex geodesic (see for instance \cite[Lemmas~3.3 and~3.4]{Mediangle}). It follows from this convexity that $\xi$ must pass through the projection $p$ of $1$ on $g \langle \Psi \backslash \{a_1\} \rangle$. If $p$ is not the terminus of $\xi$, then the word $x_1 \cdots x_k$ must have a suffix that belongs to $\langle \Psi \backslash \{a_1\} \rangle$; but the last letter in the word is $x_k=a_1$, so this is not possible. Thus, $\xi$ connects $1$ to its projection on $g \langle \Psi \backslash \{a_1\} \rangle$. If $g \langle \Psi \backslash \{a_1\} \rangle$ intersects $\langle \Psi \backslash \{a_2\} \rangle$, then by Corollary~\ref{cor:ProjectionThreeInter}  $p$ must belong to $\langle \Psi \backslash \{a_2 \} \rangle$, hence $\xi \subset \langle \Psi \backslash \{a_2\} \rangle$ by convexity. So the word $x_1 \cdots x_k$ must belong to $\langle \Psi \backslash \{a_2\} \rangle$, which is impossible since the first letter is $x_1=a_2$. 
\end{proof}

\begin{proof}[Proof of Proposition~\ref{prop:InterCosetCoxeter}.]
Decompose $\Psi$ as a $\ast_2$-product $\Psi_1 \ast_2 \cdots \ast_2 \Psi_n$ of $\ast_2$-irreducible factors. So $C(\Psi)$ decomposes as $C(\Psi_1) \times \cdots \times C(\Psi_n)$. For $i=1,2$ and for every $1 \leq j \leq n$, set $\Lambda_i^j := \Lambda_i \cap \Psi_j$. For every $g \in C(\Psi)$, we have
$$g \langle \Lambda_1 \rangle \cap \langle \Lambda_2 \rangle = \left( g_1 \langle \Lambda_1^1 \rangle \cap \langle \Lambda_2^1 \rangle \right) \times \cdots \times \left( g_n \langle \Lambda_1^n \rangle \cap \langle \Lambda_2^n \rangle \right),$$
where $(g_1, \ldots, g_n)$ is the (direct product) decomposition of $g$. Consequently, there exists some $g \in C(\Psi)$ such that $g \langle \Lambda_1 \rangle \cap \langle \Lambda_2 \rangle = \emptyset$ if and only if there exist $1 \leq i \leq n$ and $g_i \in C(\Psi_i)$ such that $g \langle \Lambda_1^i \rangle \cap \langle \Lambda_2^i \rangle =\emptyset$. According to Lemma~\ref{lem:DisjointCosets}, this means that there exists $1 \leq i \leq n$ such that $\Lambda_1^i, \Lambda_2^i \neq \Psi_i$, or equivalently, that at least one $\ast_2$-factor of $\Psi$ is contained in neither $\Lambda_1$ nor $\Lambda_2$. 
\end{proof}

\subsection{Finding contracting elements}\label{section:ContractingInPeriagroups}

\noindent
In this section, our goal is to construct contracting elements in finitely generated periagroups. Clearly, given finitely many groups $G_1, \ldots, G_n$ with generating sets $S_1, \ldots, S_n$, respectively, the product $G_1 \times \cdots \times G_n$ contains a contracting element with respect to $S_1 \cup \cdots \cup S_n$ if and only if there exists some $1 \leq i \leq n$ such that $\mathrm{Cay}(G_j,S_j)$ is finite for every $j \neq i$ and such that $G_i$ contains a contracting element with respect to $S_i$. Therefore, it is sufficient to restrict ourselves to groups that do not decompose as products. For periagroups, this amounts to focusing on the following subfamily:

\begin{definition}
A periagroup $\Pi(\Gamma,\lambda,\mathcal{G})$ is \emph{irreducible} if the labelled graph $\Gamma$ does not split as a non-trivial $\ast_2$-join.
\end{definition}

\noindent
If the labelled graph $\Gamma$ is a $\ast_2$-join, as defined by Definition~\ref{def:JoinTwo}, then the periagroup $\Pi(\Gamma, \lambda, \mathcal{G})$ decomposes as a product of periagroups over the $\ast_2$-factors of~$\Gamma$. 

\medskip \noindent
In some sense, it is possible to break a periagroup into pieces that look either like graph products or like Coxeter groups. Since we understand how to construct contracting elements in graph products (Section~\ref{section:GraphProduct}) and Coxeter groups (Section~\ref{section:Coxeter}), one can expect to obtain contracting elements in  periagroups from such a decomposition. In practice, this is more subtle, but such a \emph{GP-Cox decomposition} (graph product - Coxeter) will be central in our arguments. Formally:

\begin{definition}
Let $\Pi(\Gamma, \lambda, \mathcal{G})$ be a periagroup. A \emph{GP-Cox decomposition} is a graph partition $\Gamma= \Gamma_1 \sqcup \Gamma_2$ such that:
\begin{itemize}
	\item all the edges in $\Gamma$ with an endpoint in $\Gamma_1$ have label $2$, and
	\item all the vertices in $\Gamma_2$ index cyclic groups of order two.
\end{itemize}
The vertices in $\Gamma_1$ (resp.\ $\Gamma_2$) are \emph{of type GP} (resp.\ \emph{of type Cox}). 
\end{definition}

\begin{ex}
Consider the Example \ref{ex:periagroup}. There we can take $\Gamma_1=\{v_1, v_2\}$ to be of type GP and $\Gamma_2=\{v_3,v_4\}$ of type Cox.
\end{ex}
\noindent
A periagroup may admit several GP-Cox decompositions. Indeed, a vertex indexed by a cyclic group of order $2$ that is only incident to edges labelled by $2$ can be of type either GP or Cox. 
For instance, if the periagroup $\Pi(\Gamma, \lambda, \mathcal{G})$ is a right-angled Coxeter group, i.e.\ all the groups in $\mathcal{G}$ are cyclic of order $2$ and $\lambda \equiv 2$, then $\Gamma = \Gamma \sqcup \emptyset$ and $\Gamma = \emptyset \sqcup \Gamma$ are two natural but distinct GP-Cox decompositions, highlighting the group structure as either a graph product or as a Coxeter group. The results obtained in this section do not depend on the specific GP-Cox decomposition we use.

\medskip \noindent
The rest of the section is dedicated to the proof of the following statement:

\begin{thm}\label{thm:PeriagroupsContracting}
Let $\Pi:=\Pi(\Gamma, \lambda, \mathcal{G})$ be an irreducible periagroup with a fixed GP-Cox decomposition $(\Psi^c,\Psi)$, where $\Gamma$ is finite with at least two vertices, and each vertex group $G \in \mathcal{G}$ is endowed with a finite generating set $S_G$. If one of the following conditions is satisfied, then $\Pi$ has a contracting element with respect to the generating set $\bigcup_{G \in \mathcal{G}} S_G$:
\begin{itemize}
	\item[(i)] $\Psi^c = \emptyset$ and $C(\Psi)$ is a Coxeter group that is neither spherical nor affine;
	\item[(ii)] $\Psi= \emptyset$ and $\Psi^c$ is not a large join;
	\item[(iii)] $\Psi$ and $\Psi^c$ are both non-empty.
\end{itemize}
If none of these conditions is satisfied, then $\Pi$ is neither virtually infinite cyclic nor acylindrically hyperbolic. 
\end{thm}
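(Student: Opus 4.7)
The argument splits into the three positive cases (i), (ii), (iii) and a separate treatment of the negative conclusion. Cases (i) and (ii) reduce essentially to results already proved. In case (i), $\Psi^c = \emptyset$ forces $\Pi = C(\Psi)$, and $\ast_2$-irreducibility of $\Gamma$ coincides with the usual irreducibility of $C(\Psi)$ as a Coxeter group; since by hypothesis $C(\Psi)$ is neither spherical nor affine, Theorem~\ref{thm:CoxeterContracting} immediately yields a contracting element. In case (ii), $\Psi = \emptyset$ means every edge of $\Gamma$ has label $2$ and $\Pi$ is the graph product $\Psi^c\mathcal{G}$; the $\ast_2$-irreducibility together with $|V(\Gamma)| \geq 2$ forces $\Psi^c$ not to be complete (else it would be a $\ast_2$-join), and by hypothesis $\Psi^c$ is not a large join, so any element of $\Pi$ whose essential support equals $\Psi^c$ (e.g.\ a product of a non-trivial generator from each vertex-group in a suitable non-commuting order) satisfies the hypothesis of Theorem~\ref{thm:ContractingGP} and is contracting.

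Case (iii) is the heart of the proof. The plan is to apply Theorem~\ref{thm:Contracting} to the mediangle graph $X := \mathrm{M}(\Gamma,\lambda,\mathcal{G})$ endowed with the coherent $\Pi$-invariant system of metrics $\mathscr{C}$ obtained by equipping each clique $h\langle v\rangle$ with the word metric of $G_v$ relative to $S_v$; by Proposition~\ref{prop:Delta}, the induced global metric $\delta$ coincides with the word metric on $\Pi$ with respect to $\bigcup_G S_G$. First, I would use the $\ast_2$-irreducibility of $\Gamma$ together with $\Psi, \Psi^c \neq \emptyset$ to locate a GP-type vertex $u \in \Psi^c$ and a Cox-type vertex $v \in \Psi$ with $v \notin \mathrm{link}(u)$; otherwise every vertex of $\Psi$ would be $\mathrm{link}$-adjacent to every $u \in \Psi^c$ via label-$2$ edges (as all edges incident to $\Psi^c$ are label-$2$), making $\Gamma = \Psi^c \ast_2 \Psi$ a non-trivial $\ast_2$-join. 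By Proposition~\ref{prop:RightHyp}, the hyperplane $J_u$ through the clique $\langle u\rangle$ is right with gated carrier $\langle\mathrm{star}(u)\rangle = \langle u\rangle \times \langle\mathrm{link}(u)\rangle$ (Lemma~\ref{lem:RightHypGated}), and any hyperplane transverse to $J_u$ is itself right, labelled by a vertex of $\mathrm{link}(u)$. Next, Proposition~\ref{prop:InterCosetCoxeter} applied inside $C(\Psi)$ to $\Lambda_1 = \Lambda_2 = \mathrm{link}(u) \cap \Psi$ (whose complement contains the $\ast_2$-factor of $\Psi$ containing $v$) produces $c \in C(\Psi)$ with $c\langle\mathrm{link}(u)\cap\Psi\rangle \cap \langle\mathrm{link}(u)\cap\Psi\rangle = \emptyset$. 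Finally I would set $g := a \cdot c \cdot a'$ with $a, a' \in G_u \setminus \{1\}$ chosen so that $g$ is cyclically graphically reduced; by Proposition~\ref{prop:GeodPeriagroups} the concatenation of $\langle g\rangle$-translates of a graphically reduced word for $g$ traces a bi-infinite geodesic in $X$ along which $J_u$ and $gJ_u$ are distinct disjoint hyperplanes.

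The main step, and the expected obstacle, is verifying that the pair $(J_u, g^k J_u)$ is well-separated relative to $\mathscr{C}$ for some power $k$: any hyperplane transverse to both has its carrier forced into $\langle\mathrm{star}(u)\rangle \cap g^k\langle\mathrm{star}(u)\rangle$, which by Theorem~\ref{thm:InterParabolicPeriagroups} is a coset of a parabolic subgroup whose height is strictly smaller than $|\mathrm{star}(u)|$ precisely because $c$ was chosen to enforce the disjointness on the Coxeter side; iterating, Corollary~\ref{cor:ChainParabolicTerminate} together with the finiteness of the generating sets $S_v$ for $v \in \mathrm{link}(u) \cap \Psi^c$ and with $|G_v| = 2$ for $v \in \mathrm{link}(u) \cap \Psi$ bounds the total thickness of transverse hyperplanes. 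The delicate point is exactly this bound: dihedral relations in $C(\Psi)$ create many parallel cliques, so one must combine Proposition~\ref{prop:InterCosetCoxeter} with the parabolic intersection calculus of Section~\ref{sec:periagroups} rather carefully. Existence of an axis for a power of $g$ then follows from Proposition~\ref{prop:AxisSometimes} (finiteness of $\Gamma$ ensures finitely many cliques through each vertex of $X$), and Theorem~\ref{thm:Contracting} concludes.

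For the negative conclusion, if no GP-Cox decomposition satisfies any of (i)--(iii), then $\Pi$ must be either a finite (spherical) Coxeter group, an irreducible affine Coxeter group of rank $\geq 2$ (hence virtually $\mathbb{Z}^n$ with $n \geq 2$), or a graph product over a large join (hence a direct product of two infinite subgroups). In each of these structural situations $\Pi$ is neither virtually infinite cyclic nor acylindrically hyperbolic, as finite groups, virtually abelian groups of rank $\geq 2$, and non-trivial direct products of infinite groups are excluded from both classes.
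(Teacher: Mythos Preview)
Your handling of cases (i), (ii), and the negative conclusion is essentially correct and matches the paper. The substantive issue is case (iii), where your explicit construction has a genuine gap.

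Your element $g=aca'$ with $c\in C(\Psi)$ chosen via Proposition~\ref{prop:InterCosetCoxeter} only constrains the \emph{Cox part} of $\mathrm{link}(u)$. It does nothing to control hyperplanes labelled by vertices $w\in\mathrm{link}(u)\cap\Psi^c$. Concretely, take $\Gamma$ with GP-vertices $u,w_1,w_2$ (each with $G_\bullet\cong\mathbb{Z}$), Cox-vertices $v_1,v_2$, label-$2$ edges $u\!-\!w_1$, $u\!-\!w_2$, $w_1\!-\!v_1$, $w_2\!-\!v_2$, a label-$3$ edge $v_1\!-\!v_2$, and no other edges; this $\Gamma$ is $\ast_2$-irreducible. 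Here $\mathrm{link}(u)\cap\Psi=\emptyset$, so your condition on $c$ is merely $c\neq 1$. Taking $c=v_1$ gives $g=uv_1u$, and one checks that $J_{w_1}$ is transverse to both $J_u$ and $gJ_u$ (because $uv_1\in\langle\mathrm{star}(w_1)\rangle$). Since $G_{w_1}\cong\mathbb{Z}$, the hyperplane $J_{w_1}$ has infinite thickness, so $J_u$ and $gJ_u$ are \emph{not} well-separated relative to $\mathscr{C}$. Your appeal to ``finiteness of the generating sets $S_v$'' is a red herring: thickness is the diameter of $(G_v,S_v)$, which is infinite whenever $G_v$ is. The ``iterating'' step via Corollary~\ref{cor:ChainParabolicTerminate} is too vague to repair this; you would need to exclude \emph{all} GP-type hyperplanes from being simultaneously transverse, and your choice of $c$ does not achieve that. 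The same issue also undermines your application of Proposition~\ref{prop:AxisSometimes}, whose hypothesis (i) requires each simultaneously transverse hyperplane to delimit only finitely many sectors.

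The paper's route in case (iii) is genuinely different and is designed precisely to neutralise the GP-type obstructions. It passes to the auxiliary quasi-median graph $\mathrm{QM}(\Omega,\mathcal{J})$ coming from the semidirect decomposition $\Pi=\Omega\mathcal{J}\rtimes C(\Psi)$; there is a $\Pi$-equivariant map $\eta\colon M\to\mathrm{QM}(\Omega,\mathcal{J})$ sending GP-type hyperplanes to hyperplanes and preserving transversality. Using that $\Omega$ is not a join (Corollary~\ref{cor:DoubleGammaJoin}), the paper finds strongly separated hyperplanes $J_1,J_2$ in $\mathrm{QM}(\Omega,\mathcal{J})$ with $\mathrm{stab}(J_1)\cap\mathrm{stab}(J_2)$ finite; the latter requires a parabolic-subgroup argument in $\Pi$ itself (Claims~\ref{claim:FixatorParabolic}--\ref{claim:FiniteFixatorPair} together with Lemma~\ref{lem:StronglySeparatedEverywhere}), not just in $C(\Psi)$. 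Pulling back via $\eta$, the preimages $\bar J_1,\bar J_2$ in $M$ automatically have \emph{no} GP-type hyperplane transverse to both (strong separation in $\mathrm{QM}$), so any simultaneously transverse hyperplane is Cox-type, has thickness one, and there are only finitely many of them because their rotative-stabilisers all lie in the finite group $\mathrm{stab}(J_1)\cap\mathrm{stab}(J_2)$ (via Lemma~\ref{lem:TransverseThenStab}). This is what your direct construction is missing: a mechanism that globally rules out infinite-thickness GP-hyperplanes from the transverse set.
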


\noindent
Cases $(i)$ and $(ii)$ correspond to Coxeter groups and graph products as studied in Sections~\ref{section:Coxeter} and ~\ref{section:GraphProduct}. Case $(iii)$ is the one we must settle. 

\medskip \noindent
Associated to a GP-Cox decomposition $(\Psi^c,\Psi)$ of $\Gamma$, the periagroup $\Pi= \Pi(\Gamma, \lambda, \mathcal{G})$ can be decomposed as a semi-direct product $\Omega \mathcal{J} \rtimes C(\Psi)$ of a graph product and a Coxeter group. This follows from \cite[Theorem~6.1]{Mediangle}, and we explain the decomposition now. Let $\Delta$ denote the gated subgraph $\langle \Psi \rangle$ in the mediangle graph $M=M(\Gamma, \lambda,\mathcal{G})$ and let $\mathcal{J}$ denote the set of the hyperplanes of $M$ that are tangent to $\Delta$ (i.e.\ that are not separated from $\Delta$ by other hyperplanes but that do not cross $\Delta$ themselves). Then 
$$\Pi = \underset{=: \mathrm{Rot}}{\underbrace{\langle \mathrm{stab}_\circlearrowright(J), \ J \in \mathcal{J} \rangle}} \rtimes \underset{=C(\Psi)}{\underbrace{\mathrm{stab}(\Delta)}}.$$
The decomposition essentially comes from the fact that $\mathrm{Rot}$ acts on $M$ with $\Delta$ as a fundamental domain, which is proved thanks to a ping-pong argument. By looking at how $\mathrm{Rot}$ acts on $\Delta$, we can show that it decomposes as a graph product $\Omega \mathcal{J}$. Here, $\Omega$ is the crossing graph of $\mathcal{J}$, i.e.\ the graph whose vertices are the hyperplanes in $\mathcal{J}$ and whose edges connect two hyperplanes whenever they are transverse. The vertex groups given by a hyperplane $J$ of $\mathcal{J}$, thought of as a vertex of $\Omega$, is $\mathrm{stab}_\circlearrowright (J)$ (which is a conjugate of a vertex group of $\Pi$). 

\medskip \noindent
This decomposition, and its geometric description, will be often used in the sequel. 

\medskip \noindent
In order to prove Theorem~\ref{thm:PeriagroupsContracting}, we need a better understanding of the structure of the graph $\Omega$. The next result shows that $\Omega$ can be thought of as a ``blow up'' of $\Psi^c$. 

\begin{prop}\label{prop:LabelMapOmega}
There is a surjective map $\ell : \Omega \twoheadrightarrow \Psi^c$ such that:
\begin{itemize}
	\item[(i)] for every $u \in \Psi^c$, $\ell^{-1}(u)$ is a collection of pairwise non-adjacent vertices of size $[C(\Psi): \langle \mathrm{link}(u) \cap \Psi \rangle]$;
	\item[(ii)] for all non-adjacent $u,v \in \Psi^c$, no vertex in $\ell^{-1}(u)$ is adjacent to some vertex in $\ell^{-1}(v)$;
	\item[(iii)] for all adjacent $u,v \in \Psi^c$, $\ell^{-1}(u) \cup \ell^{-1}(v)$ is a bipartite complete graph if and only if every $\ast_2$-component of $\Psi$ is contained in $\mathrm{link}(u)$ or $\mathrm{link}(v)$.
\end{itemize}
Moreover, in the graph product $\Omega \mathcal{J}$, two vertices of $\Omega$ with the same image under $\ell$ index isomorphic groups.
\end{prop}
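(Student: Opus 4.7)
The plan is to define $\ell$ by sending each hyperplane $J \in \mathcal{J}$ to the common label of its edges. This is well-defined because every vertex in $\Psi^c$ has only incident edges of label $2$ (by the GP-Cox condition), so the hyperplane $J_u$ containing the clique $\langle u \rangle$ is right by Proposition~\ref{prop:RightHyp}(i), and by Proposition~\ref{prop:RightHyp}(iii) all its edges share one label. Hyperplanes in $\mathcal{J}$ do not cross $\Delta = \langle \Psi \rangle$, so their label is not in $\Psi$, i.e.\ $\ell(\mathcal{J}) \subseteq \Psi^c$; conversely $J_u$ lies in $\mathcal{J}$ for every $u \in \Psi^c$ since $1 \in \langle \mathrm{star}(u) \rangle \cap \Delta$ sits in its carrier, so $\ell$ is surjective.

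Next I would establish that $C(\Psi)$ acts transitively on $\ell^{-1}(u)$. Any $H \in \ell^{-1}(u)$ contains a clique $h\langle u \rangle$ for some $h \in \Pi$, so $H = hJ_u$; the tangency of $H$ to $\Delta$ provides a vertex $y \in \Delta \cap N(H) = \Delta \cap h \langle \mathrm{star}(u) \rangle$, which can be written as $y = hs$ with $s \in \langle \mathrm{star}(u) \rangle \subseteq \mathrm{stab}(J_u)$, so $H = y J_u$ with $y \in C(\Psi)$. The stabiliser of $J_u$ in $C(\Psi)$ is $\langle \Psi \rangle \cap \langle \mathrm{star}(u) \rangle = \langle \mathrm{link}(u) \cap \Psi \rangle$, the first equality being a comparison of supports of graphically reduced words (well defined up to dihedral relations) for elements of the two standard parabolic subgroups, and the second using $u \notin \Psi$. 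This yields $|\ell^{-1}(u)| = [C(\Psi) : \langle \mathrm{link}(u) \cap \Psi \rangle]$. Two right hyperplanes with the same label $u$ cannot be transverse by Proposition~\ref{prop:RightHyp}(iii), since no vertex is adjacent to itself, which proves (i); the same obstruction for labels $u \neq v$ non-adjacent in $\Psi^c$ proves (ii).

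For (iii), with $u,v \in \Psi^c$ adjacent, transitivity reduces the bipartite-complete condition to asking that $J_u$ be transverse to $gJ_v$ for every $g \in C(\Psi)$. Both hyperplanes being right with adjacent labels, Proposition~\ref{prop:RightHyp}(iv) rephrases this as the carriers $\langle \mathrm{star}(u) \rangle$ and $g\langle \mathrm{star}(v) \rangle$ intersecting, i.e.\ $g \in \langle \mathrm{star}(u) \rangle \cdot \langle \mathrm{star}(v) \rangle$. If every $\ast_2$-component $\Psi_i$ of $\Psi = \Psi_1 \ast_2 \cdots \ast_2 \Psi_k$ is contained in $\mathrm{link}(u)$ or $\mathrm{link}(v)$, decomposing $g \in C(\Psi) = C(\Psi_1) \times \cdots \times C(\Psi_k)$ and gathering factors (distinct $\ast_2$-components commute) produces $g = ab$ with $a \in \langle \mathrm{link}(u) \rangle$, $b \in \langle \mathrm{link}(v) \rangle$. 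For the converse, suppose some $\ast_2$-factor $\Psi_0$ lies in neither $\mathrm{link}(u)$ nor $\mathrm{link}(v)$: Proposition~\ref{prop:InterCosetCoxeter}, applied with $\Lambda_1 = \mathrm{link}(v) \cap \Psi$ and $\Lambda_2 = \mathrm{link}(u) \cap \Psi$, yields $g \in C(\Psi)$ with $\langle \mathrm{link}(u) \cap \Psi \rangle \cap g\langle \mathrm{link}(v) \cap \Psi \rangle = \emptyset$. The main technical step is then to upgrade this empty intersection from the $\Delta$-parts of the carriers to the full carriers: given any hypothetical $x \in \langle \mathrm{star}(u) \rangle \cap g\langle \mathrm{star}(v) \rangle$, I would project it to the gated subgraph $\Delta$ (gated by \cite[Corollary~6.6]{Mediangle}), using the fact that for gated subgraphs $A,B$ of a mediangle graph with $A \cap B \neq \emptyset$ one has $\mathrm{proj}_A(B) = A \cap B$ (any gate of a point in $B$ lies on a geodesic to a vertex in $A \cap B$, which remains in $B$ by convexity); applied to the two carriers this gives $\mathrm{proj}_\Delta(\langle \mathrm{star}(u) \rangle) = \langle \mathrm{link}(u) \cap \Psi \rangle$ and $\mathrm{proj}_\Delta(g\langle \mathrm{star}(v) \rangle) = g\langle \mathrm{link}(v) \cap \Psi \rangle$, placing the gate of $x$ in the intersection assumed empty, a contradiction. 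Hence $\langle \mathrm{star}(u) \rangle \cap g\langle \mathrm{star}(v) \rangle = \emptyset$, so the bipartite-complete property fails.

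The \emph{moreover} statement is immediate: each hyperplane in $\ell^{-1}(u)$ has the form $gJ_u$ with $g \in C(\Psi)$, so its rotative stabiliser is $g\langle u \rangle g^{-1} \cong G_u$ by Proposition~\ref{prop:RightHyp}(ii). The main obstacle is precisely the projection argument in the converse direction of (iii): articulating why projecting onto $\Delta$ collapses the question of carrier-intersections to the intersections of their Coxeter traces already governed by Proposition~\ref{prop:InterCosetCoxeter}, via the ``intersection-equals-projection'' identity for pairs of gated subgraphs in a mediangle graph.
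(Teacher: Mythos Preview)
Your proof is correct and follows essentially the same route as the paper. The only notable difference is in part~(iii): the paper observes directly, via the Helly property for gated subgraphs, that the carriers $\langle \mathrm{star}(u)\rangle$ and $g\langle \mathrm{star}(v)\rangle$ intersect in $M$ if and only if they intersect in $\Delta$, which immediately reduces the question to whether $g\langle \mathrm{link}(u)\cap\Psi\rangle \cap h\langle \mathrm{link}(v)\cap\Psi\rangle \neq \emptyset$ and lets one invoke Proposition~\ref{prop:InterCosetCoxeter} in both directions at once. Your projection argument (``$\mathrm{proj}_A(B)=A\cap B$ when $A,B$ are gated with $A\cap B\neq\emptyset$'') is precisely Helly in disguise, so the two arguments are equivalent, though the paper's packaging is a little more economical and avoids handling the two implications of~(iii) separately.
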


\begin{proof}
By definition, $\Omega$ is the crossing graph of the hyperplanes (necessarily of type GP) that are tangent to $\Delta$. Let $\ell : \Omega \to \Psi^c$ be the labelling map; this is surjective because, for every vertex $u \in \Psi^c$, the hyperplane containing the edge $[1,u]$ belongs to $\Omega$ and is labelled by $u$. Notice also that, regarding the graph product $\Omega \mathcal{J}$, a vertex of $\Omega$ indexes the rotative-stabiliser of the corresponding hyperplane tangent to $\Delta$. Since the rotative-stabiliser of a hyperplane of type GP labelled by a vertex $u$ is always conjugate to $\langle u \rangle$ (see Proposition~\ref{prop:RightHyp}), it follows that two vertices of $\Omega$ with the same impage under $\ell$ index isomorphic groups, as desired.

\medskip \noindent
In order to prove (i), notice that $C(\Psi)$ acts transitively on $\ell^{-1}(u)$, where $u \in \Psi^c$ is a vertex we fix. Indeed, let $J$ and $H$ be two hyperplanes tangent to $\Delta$ both labelled by $u$. Then $J$ (resp.\ $H$) must contain an edge of the form $[a,au]$ (resp.\ $[b,bu]$) for some $a \in \Delta$ (resp.\ $b \in \Delta$). Since $ab^{-1}$ belongs to $C(\Psi)$ and sends $[b,bu]$ to $[a,au]$, this yields the desired conclusion. As a consequence, the size of $\ell^{-1}(u)$ must coincide with the index of the stabiliser of the hyperplane containing $[1,u]$ in $C(\Psi)$, which is $\langle \mathrm{link}(u) \rangle \cap C(\Psi)= \langle \mathrm{link}(u) \cap \Psi \rangle$ according to Proposition~\ref{prop:RightHyp}. 

\medskip \noindent
By Proposition~\ref{prop:RightHyp}, two transverse hyperplanes of type GP have adjacent labels, so this concludes the proof of (i) and also implies (ii).

\medskip \noindent
Finally, let $u,v \in \Psi^c$ be two adjacent vertices. For convenience, let $J_u$ (resp.\ $J_v$) denote the hyperplane containing the edge $[1,u]$ (resp.\ $[1,v]$). We saw that $C(\Psi)$ acts transitively on all the hyperplanes having the same label, so $\ell^{-1}(u)$ (resp.\ $\ell^{-1}(v)$) corresponds to all the hyperplanes $gJ_u$ (resp.\ $gJ_v$) for $g \in C(\Psi)$. For all $g,h \in C(\Psi)$, it follows from Proposition~\ref{prop:RightHyp} that $gJ_u$ and $hJ_v$ are transverse if and only if their carriers intersect in $M$, or equivalently in $\Delta$ (as a consequence of the Helly property for gated subgraphs). Therefore, $gJ_u$ and $hJ_v$ are transverse if and only 
$$g \langle \mathrm{link}(u) \cap \Psi \rangle \cap h \langle \mathrm{link}(v) \cap \Psi \rangle \neq \emptyset.$$
According to Proposition~\ref{prop:InterCosetCoxeter}, we deduce that $\ell^{-1}(u) \cup \ell^{-1}(v)$ is a bipartite complete graph if and only if every $\ast_2$-component of $\Psi$ is contained in $\mathrm{link}(u)$ or $\mathrm{link}(v)$. 
\end{proof}

\begin{cor}\label{cor:DoubleGammaJoin}
If $\Pi$ is irreducible, then $\Omega$ is not a join.
\end{cor}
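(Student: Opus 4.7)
The plan is to argue by contraposition: assume that $\Omega$ admits a non-trivial join decomposition $\Omega = \Omega_1 \ast \Omega_2$ (both $\Omega_i$ non-empty), and deduce a non-trivial $\ast_2$-decomposition of $\Gamma$, contradicting irreducibility.

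The first step is to show that the labelling map $\ell : \Omega \twoheadrightarrow \Psi^c$ respects the join decomposition, in the sense that each fiber $\ell^{-1}(u)$ is entirely contained in either $\Omega_1$ or $\Omega_2$. Indeed, by Proposition~\ref{prop:LabelMapOmega}(i) the vertices of $\ell^{-1}(u)$ are pairwise non-adjacent in $\Omega$, whereas the join structure forces every vertex of $\Omega_1$ to be adjacent to every vertex of $\Omega_2$. This yields a partition $\Psi^c = \Lambda_1 \sqcup \Lambda_2$, with $\Lambda_i := \ell(\Omega_i)$, both pieces non-empty since $\ell$ is surjective and the $\Omega_i$ are non-empty.

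Next, for any $u \in \Lambda_1$ and $v \in \Lambda_2$, the fact that $\ell^{-1}(u) \subset \Omega_1$ and $\ell^{-1}(v) \subset \Omega_2$ means $\ell^{-1}(u) \cup \ell^{-1}(v)$ is a complete bipartite subgraph of $\Omega$. By Proposition~\ref{prop:LabelMapOmega}(ii), this forces $u$ and $v$ to be adjacent in $\Psi^c$ (hence in $\Gamma$, necessarily along an edge of label $2$ since $u \in \Psi^c$). Applying Proposition~\ref{prop:LabelMapOmega}(iii), we deduce that, for every pair $(u,v) \in \Lambda_1 \times \Lambda_2$, each $\ast_2$-component of $\Psi$ lies in $\mathrm{link}(u)$ or in $\mathrm{link}(v)$. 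A standard pigeonhole argument then shows that, for every $\ast_2$-component $\Psi_j$ of $\Psi$, either $\Psi_j \subset \mathrm{link}(u)$ holds for \emph{all} $u \in \Lambda_1$, or $\Psi_j \subset \mathrm{link}(v)$ holds for \emph{all} $v \in \Lambda_2$ (otherwise two bad vertices $u_0 \in \Lambda_1$, $v_0 \in \Lambda_2$ would contradict the above).

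To finish, one assigns each $\ast_2$-component $\Psi_j$ to one side: put $\Psi_j$ into $\Gamma_1$ if it is linked to all of $\Lambda_2$, and otherwise (so it is linked to all of $\Lambda_1$) into $\Gamma_2$. Setting $\Gamma_1 := \Lambda_1 \cup (\text{chosen }\Psi_j\text{'s})$ and $\Gamma_2 := \Lambda_2 \cup (\text{remaining }\Psi_j\text{'s})$, one checks that every pair $(x,y) \in \Gamma_1 \times \Gamma_2$ is joined by an edge of label $2$: the cases $\Lambda_1$-$\Lambda_2$ and $\Lambda_i$-$\Psi_j$ are handled by the previous step (edges incident to $\Psi^c$ have label $2$ by the GP-Cox hypothesis), while $\Psi_j$-$\Psi_k$ with $\Psi_j \subset \Gamma_1$ and $\Psi_k \subset \Gamma_2$ lie in different $\ast_2$-components of $\Psi$ and are thus connected by label-$2$ edges. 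Since $\Lambda_1, \Lambda_2$ are non-empty, this produces a non-trivial decomposition $\Gamma = \Gamma_1 \ast_2 \Gamma_2$, contradicting the irreducibility of $\Pi$. The main subtlety lies in verifying the pigeonhole step and correctly distributing the components $\Psi_j$ so that all cross-edges between $\Gamma_1$ and $\Gamma_2$ receive label $2$; once that bookkeeping is in place, the argument reduces to unwinding Proposition~\ref{prop:LabelMapOmega}.
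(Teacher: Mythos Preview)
Your proof is correct and follows the same contrapositive strategy as the paper, using the same key input (Proposition~\ref{prop:LabelMapOmega}), but the execution differs. You explicitly construct the $\ast_2$-decomposition of $\Gamma$: after partitioning $\Psi^c = \Lambda_1 \sqcup \Lambda_2$ via the fibers of $\ell$, you run a pigeonhole argument on the $\ast_2$-components of $\Psi$ and then distribute them between the two sides, checking all four types of cross-edges. The paper instead works in the complement graph $\Xi$ (vertices of $\Gamma$, edges between pairs not joined by a label-$2$ edge) and shows that $\ell(\Omega_1)$ and $\ell(\Omega_2)$ lie in distinct connected components of $\Xi$ by a short minimal-path argument: a shortest $\Xi$-path from $\ell(\Omega_1)$ to $\ell(\Omega_2)$ would have its interior in a single $\ast_2$-component $\Psi_0$ of $\Psi$, and Proposition~\ref{prop:LabelMapOmega}(iii) forces $\Psi_0$ into the link of one endpoint, killing the first or last edge of the path. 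The paper's route is shorter and avoids the case analysis at the end; your route has the advantage of producing the decomposition $\Gamma = \Gamma_1 \ast_2 \Gamma_2$ explicitly, which makes the logic more transparent even if longer.
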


\begin{proof}
Assume that $\Omega$ decomposes non-trivially as a join $\Omega_1 \ast \Omega_2$. Our goal is to show that $\Pi$ is reducible, which amounts to saying that the graph $\Xi$, with the same vertices as $\Gamma$ and whose edges connect two vertices of $\Gamma$ whenever they are not connected by an edge labelled $2$ in $\Gamma$, is disconnected. 

\medskip \noindent
Let us verify that $\ell(\Omega_1)$ and $\ell(\Omega_2)$ lie in distinct connected components in $\Xi$. If it is not the case, we can find a path $u_1, \ldots, u_n$ in $\Xi$ with $u_1 \in \ell(\Omega_1)$ and $u_n \in \ell(\Omega_2)$. By choosing this path of minimal length, we can assume that $\{u_2, \ldots, u_{n-1}\}$ is disjoint from $\ell(\Omega_1) \cup \ell(\Omega_2) = \Psi^c$. Let $\Psi_0$ denote the $\ast_2$-component of $\Psi$ that contains $u_2$. Necessarily, $u_2, \ldots, u_{n-1}$ all belong to $\Psi_0$. Since $\ell^{-1}(\{u_1,u_n\})$ decomposes as a join $\ell^{-1}(u_1) \ast \ell^{-1}(u_n)$ in $\Omega$, it follows from Proposition~\ref{prop:LabelMapOmega} that $\Psi_0$ must be contained in $\mathrm{link}_\Gamma(u_1)$ or $\mathrm{link}_\Gamma(u_n)$. Consequently, in $\Xi$, either there is no edge connecting $u_1$ to $\Psi_0$ or there is no edge connecting $u_n$ to $\Psi_0$. But we know that $u_1$ is adjacent to $u_2 \in \Psi_0$ and that $u_n$ is adjacent to $u_{n-1} \in \Psi_0$, a contradiction.
\end{proof}

\noindent
We need a last preliminary lemma before turning to the proof of Theorem~\ref{thm:PeriagroupsContracting}. 

\begin{lemma}\label{lem:CentraliserOmegaJ}
The centraliser of $\Omega \mathcal{J}$ in $C(\Psi)$ is $\langle \Lambda \rangle$, where $\Lambda$ is the union of the $\ast_2$-factors of $\Psi$ contained in $\mathrm{link}(\Psi^c)$.
\end{lemma}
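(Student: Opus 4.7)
The plan is to prove the two inclusions $\langle\Lambda\rangle \subseteq C_{C(\Psi)}(\Omega\mathcal{J})$ and $C_{C(\Psi)}(\Omega\mathcal{J}) \subseteq \langle\Lambda\rangle$ separately. By Proposition~\ref{prop:RightHyp}, every generator of $\Omega\mathcal{J}$ has the form $cuc^{-1}$ for some $c \in C(\Psi)$ and $u \in \Psi^c$, and a direct manipulation shows that $w \in C(\Psi)$ commutes with $cuc^{-1}$ if and only if $c^{-1}wc$ commutes with $u$ in $\Pi$. Hence centralising $\Omega\mathcal{J}$ is equivalent to the condition $c^{-1}wc \in C_\Pi(u) \cap C(\Psi)$ for every $c \in C(\Psi)$ and every $u \in \Psi^c$.

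For the forward inclusion, fix $v \in \Lambda$, belonging to some $\ast_2$-irreducible factor $\Psi_0$ of $\Psi$ contained in $\mathrm{link}(u)$ for every $u \in \Psi^c$. Using the direct-product decomposition $C(\Psi) = \langle\Psi_0\rangle \times \langle\Psi \setminus \Psi_0\rangle$, write $c = c_0 c_1$ accordingly. Since $v$ commutes with $c_1$, a short calculation gives $c^{-1}vc = c_0^{-1}vc_0 \in \langle\Psi_0\rangle$. As $\Psi_0 \subseteq \mathrm{link}(u)$ via label-$2$ edges, $u$ commutes with every element of $\langle\Psi_0\rangle$, and in particular with $c_0^{-1}vc_0$, showing that $v$ centralises $\Omega\mathcal{J}$.

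For the reverse inclusion, I first establish $C_\Pi(u) = \langle\mathrm{star}(u)\rangle$: by Proposition~\ref{prop:RightHyp}, the rotative stabiliser $\mathrm{stab}_\circlearrowright(J_u) = \langle u \rangle$ has normaliser $\mathrm{stab}(J_u) = \langle\mathrm{star}(u)\rangle$ in $\Pi$, so $C_\Pi(u) \subseteq N_\Pi(\langle u \rangle) = \langle\mathrm{star}(u)\rangle$; the reverse containment is clear since $\langle\mathrm{star}(u)\rangle = \langle u \rangle \times \langle\mathrm{link}(u)\rangle$. Theorem~\ref{thm:InterParabolicPeriagroups}, applied to the standard parabolics $\langle\mathrm{star}(u)\rangle$ and $\langle\Psi\rangle$ (both containing $1$), then yields $C_\Pi(u) \cap C(\Psi) = \langle\mathrm{link}(u) \cap \Psi\rangle$. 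So the condition on $w$ becomes: $w$ lies in the normal core inside $C(\Psi)$ of the standard parabolic $\langle\mathrm{link}(u) \cap \Psi\rangle$ for every $u \in \Psi^c$.

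The main obstacle is identifying these normal cores. I will prove that a standard parabolic $\langle T \rangle \leq C(\Psi)$ is normal in $C(\Psi)$ if and only if $T$ is a union of $\ast_2$-factors of $\Psi$. The ``if'' direction is immediate, since such $\langle T \rangle$ is a direct factor of $C(\Psi)$. For ``only if'', I analyse $vtv^{-1} = vtv$ for $v \in \Psi \setminus T$ and $t \in T$: by the well-known support-invariance of reduced expressions in Coxeter groups, $vtv \in \langle T \rangle$ forces the edge $\{v,t\}$ to carry label $2$, whence $T$ is a union of $\ast_2$-factors. The normal core of $\langle\mathrm{link}(u) \cap \Psi\rangle$, which is parabolic by Theorem~\ref{thm:InterParabolicPeriagroups} combined with Corollary~\ref{cor:ChainParabolicTerminate} (as $\mathrm{link}(u) \cap \Psi$ is finite), is therefore $\langle\Lambda_u\rangle$ where $\Lambda_u$ is the union of $\ast_2$-factors of $\Psi$ contained in $\mathrm{link}(u)$. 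Intersecting over $u \in \Psi^c$ and using $\langle T_1 \rangle \cap \langle T_2 \rangle = \langle T_1 \cap T_2 \rangle$ for standard parabolics (a special case of Theorem~\ref{thm:InterParabolicPeriagroups}) then gives $\langle\Lambda\rangle$, as claimed.
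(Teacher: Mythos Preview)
Your approach is essentially the same as the paper's: both reduce to the identity $C_{C(\Psi)}(G_u)=\langle\mathrm{link}(u)\cap\Psi\rangle$, then identify the normal core of a standard parabolic in $C(\Psi)$ as the subgroup generated by the $\ast_2$-factors it contains (via the same $vtv$ argument). The only organisational difference is that the paper first intersects over $u$ and then takes the normal core, whereas you take the normal core for each $u$ and then intersect; both orderings yield $\langle\Lambda\rangle$.

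There is, however, a small gap in your derivation of $C_\Pi(u)\subseteq\langle\mathrm{star}(u)\rangle$. Proposition~\ref{prop:RightHyp} gives $\mathrm{stab}(J_u)=\langle\mathrm{star}(u)\rangle$ and $\mathrm{stab}_\circlearrowright(J_u)=\langle u\rangle$, but it does \emph{not} assert that the normaliser of $\mathrm{stab}_\circlearrowright(J_u)$ in $\Pi$ equals $\mathrm{stab}(J_u)$. From $g\in N_\Pi(\langle u\rangle)$ you get $\mathrm{stab}_\circlearrowright(gJ_u)=\mathrm{stab}_\circlearrowright(J_u)$, and concluding $gJ_u=J_u$ requires knowing that distinct right hyperplanes have distinct rotative-stabilisers---which in turn amounts to the normaliser computation you are trying to establish. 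The paper sidesteps this by appealing directly to the solution of the word problem in periagroups to obtain $C_{C(\Psi)}(G_u)=\langle\mathrm{link}(u)\cap\Psi\rangle$; you should do the same (e.g.\ if $w\in C(\Psi)$ commutes with a nontrivial $a\in G_u$, then in a graphically reduced word for $wa=aw$ the syllable $a$ can only be shuffled past syllables from $\mathrm{link}(u)$, forcing all syllables of $w$ to lie in $\mathrm{link}(u)\cap\Psi$). With that correction your argument is complete.
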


\begin{proof}
From the solution to the word problem in periagroups, we deduce that, for every $u \in \Psi^c$, the centraliser of $G_u$ in $C(\Psi)$ is $\langle \mathrm{link}(u) \cap \Psi \rangle$. Since $\Omega \mathcal{J}= \langle g G_u g^{-1}, u \in \Psi^c, g\in C(\Psi) \rangle$, we have
$$\begin{array}{lcl} Z(\Omega \mathcal{J}) \cap C(\Psi) & = & \displaystyle \bigcap\limits_{g \in C(\Psi), u \in \Psi^c} Z(gG_ug^{-1}) \cap C(\Psi) = \bigcap\limits_{g \in C(\Psi)} g \left( \bigcap\limits_{u \in \Psi^c} \langle \mathrm{link}(u) \cap \Psi \rangle \right) g^{-1} \\ \\ & = & \displaystyle \bigcap\limits_{g \in C(\Psi)} g \langle \mathrm{link}(\Psi^c) \cap \Psi \rangle g^{-1}. \end{array}$$
We conclude the proof thanks to

\begin{claim}
For every subgraph $\Lambda \subset \Psi$, the intersection 
$$\bigcap\limits_{g \in C(\Psi)} g \langle \Lambda \rangle g^{-1}$$
is equal to $\langle \Xi \rangle$, where $\Xi$ is the union (possibly empty) of the $\ast_2$-factors contained in~$\Lambda$.
\end{claim}

\noindent
The intersection must be a normal parabolic subgroup, say $\langle \Xi \rangle$. For all $u \in \Xi$ and $v \notin \Xi$, we must have $vuv \in \langle \Xi \rangle$, which is only possible if $u$ and $v$ commute. Thus, $\Xi$ must be a union of $\ast_2$-factors. As $\langle \Xi \rangle \leq \langle \Lambda \rangle$, these $\ast_2$-factors must be contained in $\Lambda$. Conversely, for every $\ast_2$-factor $\Xi_0$ contained in $\Lambda$, a conjugate of $\langle \Xi_0 \rangle$ must be contained in $\langle \Lambda \rangle$. 
\end{proof}

\begin{proof}[Proof of Theorem~\ref{thm:PeriagroupsContracting}.]
Because cliques in $M$ are gated, every clique contains a unique vertex at minimal distance from $1$. In other words, every clique $C$ of $M$ can be uniquely written as $gG$ where $G \in \mathcal{G}$ and where $g$ is the unique shortest element in $gG$. Then, endow the clique $C$ with the local metric
$$\delta_C : (ga,gb) \mapsto \|a^{-1}b\|_{S_G}$$
given by the word length $\| \cdot \|_{S_G}$ of $G$ associated to the generating set $S_G$. Let $\mathscr{C}$ denote the corresponding system of metrics.

\begin{claim}\label{claim:CayleyPeriagroups}
The system of metrics $\mathscr{C}$ is coherent and $\Pi$-invariant. Moreover, $(M,\delta)$ is $\Pi$-equivariantly isometric to $\mathrm{Cay}(\Pi, \bigcup_{G \in \mathcal{G}} S_G)$.
\end{claim}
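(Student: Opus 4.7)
The plan is to verify the three assertions of the claim in turn: the $\Pi$-invariance of $\mathscr{C}$, its coherence, and the isometry $(M,\delta) \cong \mathrm{Cay}(\Pi, \bigcup_G S_G)$. I would begin with $\Pi$-invariance by direct computation. Given $g \in \Pi$ and a clique $C = hG$ with $h$ the unique shortest element of the coset, the translate $gC = ghG$ is again a coset of $G$; writing its shortest element as $k$, we have $gh = kg_0$ for a unique $g_0 \in G$, so $g \cdot ha = k \cdot g_0 a$ for every $a \in G$. Then
\[
\delta_{gC}(gha, ghb) = \|(g_0 a)^{-1}(g_0 b)\|_{S_G} = \|a^{-1}b\|_{S_G} = \delta_C(ha, hb),
\]
as required.

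For coherence, let $C_1, C_2$ be parallel cliques lying in a common hyperplane $J$. I would split into two cases according to their common size. If $|C_1| = |C_2| = 2$, both local metrics are the $0$-$1$ discrete metric, and any bijection (in particular the projection $\mathrm{proj}_{C_2} : C_1 \to C_2$) is trivially an isometry. If the common size is $\geq 3$, then the vertex group $\langle u \rangle$ indexing the cliques has order $\geq 3$, which by the convention in Definition~\ref{def:periagroup} forces $\lambda(\{u, w\}) = 2$ for every edge of $\Gamma$ incident to $u$; Proposition~\ref{prop:RightHyp} then asserts that $J$ is a right hyperplane with $\mathrm{stab}(J) = \langle \mathrm{link}(u) \rangle \times \langle u \rangle$, and the cliques of $J$ are exactly the cosets $\ell\langle u \rangle$ for $\ell \in \langle \mathrm{link}(u) \rangle$ (each such $\ell$ being the shortest element of its coset). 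Writing $C_i = \ell_i \langle u \rangle$, the element $\ell_2 \ell_1^{-1} \in \langle \mathrm{link}(u) \rangle$ commutes with $\langle u \rangle$, so that left multiplication by it sends $\ell_1 a$ to $\ell_2 a$ for every $a \in \langle u \rangle$; and since the syllables of $\ell_2 \ell_1^{-1}$ lie in vertex groups of $\mathrm{link}(u)$, none of them crosses $J$, so $\mathrm{proj}_{C_2}(\ell_1 a) = \ell_2 a$. Thus the projection coincides with left multiplication by $\ell_2 \ell_1^{-1} \in \mathrm{stab}(J)$, and $\Pi$-invariance makes it an isometry of local metrics.

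For the $\Pi$-equivariant isometry $(M,\delta) \cong \mathrm{Cay}(\Pi, S)$ with $S := \bigcup_G S_G$, both sides share $\Pi$ as vertex set and carry $\Pi$-invariant metrics, so it suffices to compare $\delta(g,h)$ with $d_S(g,h)$. To prove $\delta \leq d_S$, I would take any $S$-word $s_1 \cdots s_m$ representing $g^{-1}h$: the corresponding path $g, gs_1, \ldots, gs_1 \cdots s_m = h$ has consecutive vertices lying in common cliques $C_i$ (each labelled by the vertex group $G_i \ni s_i$), and Definition~\ref{def:delta} applied to this path yields $\delta(g,h) \leq \sum_i \delta_{C_i}(\cdot,\cdot) \leq \sum_i \|s_i\|_{S_{G_i}} \leq m$. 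Conversely, a path $v_0 = g, v_1, \ldots, v_n = h$ realising $\delta(g,h)$ has each step $v_i = v_{i-1} x_i$ with $x_i$ in the vertex group labelling $C_i$ and $\|x_i\|_{S_{G_i}} = \delta_{C_i}(v_{i-1}, v_i)$; concatenating minimum-length $S_{G_i}$-words for each $x_i$ gives an $S$-word for $g^{-1}h$ of length exactly $\delta(g,h)$, so $d_S(g,h) \leq \delta(g,h)$. The main obstacle in the whole argument lies in the coherence step for large cliques: one must unambiguously identify the projection between two parallel cliques, and the argument crucially relies on the direct-product decomposition $\mathrm{stab}(J) = \langle \mathrm{link}(u) \rangle \times \langle u \rangle$ from Proposition~\ref{prop:RightHyp}, since it is precisely the commutativity of the two factors that lets the projection be realised by an element of $\mathrm{stab}(J)$ and makes coherence a consequence of the $\Pi$-invariance.
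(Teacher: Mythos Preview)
Your proof is correct, and for $\Pi$-invariance you argue essentially as the paper does. The coherence and isometry steps, however, take a genuinely different route.

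For coherence, the paper invokes Lemma~\ref{lem:ParallelEdgesSameOrbit} uniformly: given any edge $e$ of a hyperplane $J$ and any clique $C \subset J$, some $g \in \mathrm{stab}(J)$ sends $e$ to its projection on $C$; since every pair of vertices in a clique spans an edge, $\Pi$-invariance then gives the isometry clique by clique. Your approach sidesteps this lemma by a case split on clique size. The size-$2$ case is disposed of metrically (any bijection of a two-point set with the discrete metric is an isometry), and for cliques of size $\geq 3$ you exploit the periagroup convention that vertex-groups of order $\geq 3$ meet only label-$2$ edges, so the relevant hyperplane is right and its carrier factorises as $\langle \mathrm{link}(u) \rangle \times \langle u \rangle$ via Proposition~\ref{prop:RightHyp}. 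This trades a uniform mediangle argument for a more elementary one that needs no analysis of convex even cycles or dihedral reflections; what it buys the paper is that Lemma~\ref{lem:ParallelEdgesSameOrbit} works for all cliques at once without the case distinction. One small expositional point: your description of the cliques of $J$ as $\ell\langle u\rangle$ with $\ell \in \langle\mathrm{link}(u)\rangle$ is literally correct only after translating so that $J = J_u$; this is harmless given the $\Pi$-invariance already established, but it would be cleaner to say so.

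For the isometry $(M,\delta) \cong \mathrm{Cay}(\Pi, S)$, the paper computes $\delta$ via Propositions~\ref{prop:Delta} and~\ref{prop:GeodPeriagroups} as the total syllable-length of a graphically reduced word, then argues that the periagroup moves (reduction, fusion, dihedral relation) never increase that total. Your argument is more direct: you bound $\delta \leq d_S$ straight from the infimum in Definition~\ref{def:delta}, and $d_S \leq \delta$ by expanding a $\delta$-realising path into an $S$-word. Both are fine; yours is shorter but implicitly uses that the infimum is attained, which follows either from integrality of the local metrics or from Proposition~\ref{prop:Delta}.
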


\noindent
We start by verifying that $\mathscr{C}$ is $\Pi$-invariant. So let $C$ be a clique, $x,y \in C$ two vertices, and $g \in \Pi$ an element. If $h \in C$ denotes the projection of $1$ to $C$, we can write $x=ha$ and $y=hb$ for some generators $a,b \in G$ where $G$ is the group of $\mathcal{G}$ that indexes $C$, and we have $\delta_C(x,y)= \| b^{-1}a\|_{S_G}$. If $k$ denotes the projection of $1$ to $gC$, then we can write $gh=kw$ for some $w \in S_G$. Since $gx=kwa$ and $gy = kwb$ where $wa,wb \in G$, we conclude that
$$\delta_{gC}(gx,gy)= \| b^{-1}a\|_{S_G} = \delta_C(x,y),$$
as desired. The fact that $\mathscr{C}$ is coherent also follows from the fact that $\mathscr{C}$ is $\Pi$-equivariant as a consequence of Lemma~\ref{lem:ParallelEdgesSameOrbit}.

\medskip \noindent
Finally, let us verify that $\delta(x,y)= \|y^{-1}x\|_S$ for all $x,y \in M$, which will conclude the proof of our claim. Notice that, according to Propositions~\ref{prop:Delta} and~\ref{prop:GeodPeriagroups}, $\delta(x,y)$ coincides with the sum of the lengths of the syllables of a graphically reduced word representing $y^{-1}x$. But a word written over $S \cup S^{-1}$ can be reduced, and applying a reduction, a fusion, or a dihedral relation does not increase the sum of the lengths of the syllables. Therefore, a word of length $\|y^{-1}x\|_S$ written over $S\cup S^{-1}$ can be reduced into a graphically reduced word such that the lengths of its syllables is $\leq \|y^{-1}x\|_S$. This proves the desired equality and concludes the proof of Claim~\ref{claim:CayleyPeriagroups}. 

\medskip \noindent
As a consequence of Claim~\ref{claim:CayleyPeriagroups}, finding contracting elements in $\Pi$ amounts to finding contracting isometries in $\Pi$ with respect to its action on $(M,\delta)$. Our goal will be to apply Theorem~\ref{thm:Contracting}. 

\medskip \noindent
Let $\Pi = \Omega \mathcal{J} \rtimes C(\Psi)$ be the decomposition of $\Pi$ associated to our fixed GP-Cox decomposition. If $\Omega$ is empty, then $\Psi^c$ is empty, which amounts to saying that our periagroup $\Pi$ coincides with the Coxeter group $C(\Psi)$. So Theorem~\ref{thm:CoxeterContracting} applies. If $\Psi$ is empty, then our periagroup coincides with a graph product and Theorem~\ref{thm:ContractingGP} applies. From now on, we assume that $\Omega$ and $\Psi$ are both non-empty. Observe that $\Pi$ naturally acts on the quasi-median graph $\mathrm{QM}(\Omega, \mathcal{J})$, since it is a Cayley graph of $\Omega \mathcal{J}$ given by a generating set that is invariant under the action of $C(\Psi)$ on $\Omega \mathcal{J}$ by conjugation (which only permutes the vertex-groups). With respect to this action, vertex-stabilisers coincide with the conjugates of $C(\Psi)$.

\begin{claim}\label{claim:FiniteInterStab}
If there are two strongly separated hyperplanes $J_1$ and $J_2$ in $\mathrm{QM}(\Omega, \mathcal{J})$ such that $\mathrm{stab}(J_1) \cap \mathrm{stab}(J_2)$ is finite, then $\Pi$ contains a contracting element.
\end{claim}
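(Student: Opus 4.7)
The plan is to apply Theorem~\ref{thm:Contracting} to the action of $\Pi$ on $(M,\delta)$: by Claim~\ref{claim:CayleyPeriagroups}, it suffices to produce an element $g \in \Pi$ that admits an axis in $M$ and skewers a pair of hyperplanes well-separated relative to $\mathscr{C}$. I would first construct a skewering element by ping-pong inside $\mathrm{QM}(\Omega,\mathcal{J})$. The subgroup $\Omega\mathcal{J}$ acts freely and transitively on the vertices of its Cayley graph $\mathrm{QM}(\Omega,\mathcal{J})$, and by hypothesis admits a pair of strongly separated hyperplanes $J_1, J_2$ with finite common stabiliser. A standard ping-pong argument (of the type used to produce rank-one isometries from strongly separated hyperplanes in CAT(0) cube complexes, e.g.\ \cite{MR2827012}, adapted to quasi-median graphs) then produces $g \in \Omega\mathcal{J}$ skewering some pair of strongly separated hyperplanes $H_1, H_2$ in $\mathrm{QM}(\Omega,\mathcal{J})$.

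Next I would transfer this picture to $M$ and verify well-separation there. Every hyperplane of $\mathrm{QM}(\Omega,\mathcal{J})$ is labelled, in an $\Omega\mathcal{J}$-equivariant way, by a vertex of $\Omega$, i.e.\ by a hyperplane of $M$ lying in $\mathcal{J}$ -- which is of type GP and hence right by Proposition~\ref{prop:RightHyp}(i). So I can associate to $H_1, H_2$ two right hyperplanes $\widetilde{H}_1, \widetilde{H}_2$ of $M$, chosen as $\Omega\mathcal{J}$-translates of the respective labels, such that (after passing to a power of $g$ if necessary) $g$ skewers the pair $(\widetilde{H}_1, \widetilde{H}_2)$ in $M$. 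For a hyperplane $K$ of $M$ transverse to both $\widetilde{H}_1$ and $\widetilde{H}_2$, Lemma~\ref{lem:TransverseThenStab} yields that $\mathrm{stab}_\circlearrowright(K)$ stabilises both $\widetilde{H}_1$ and $\widetilde{H}_2$; combined with Proposition~\ref{prop:RightHyp}(ii) and the gatedness of right-hyperplane carriers (Lemma~\ref{lem:RightHypGated}), this constrains $K$ to cross the intersection of the carriers of $\widetilde{H}_1$ and $\widetilde{H}_2$. I would then split by the type of the label of $K$: if $K$ is of type GP, the labelling correspondence produces a hyperplane of $\mathrm{QM}(\Omega,\mathcal{J})$ transverse to both $H_1$ and $H_2$, contradicting strong separation; if $K$ is of type Cox, the finiteness of $\Gamma$ together with the carrier constraint leaves only finitely many possibilities, each of thickness $1$ relative to $\mathscr{C}$ since type-Cox vertex-groups are cyclic of order two. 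Hence $\widetilde{H}_1, \widetilde{H}_2$ are well-separated relative to $\mathscr{C}$.

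Finally, I would use Proposition~\ref{prop:AxisSometimes} to obtain an axis for a power of $g$ in $M$: every vertex of $M$ lies in finitely many cliques since $\Gamma$ is finite, only finitely many hyperplanes are transverse to both $\widetilde{H}_1$ and $\widetilde{H}_2$ by the previous step, and each such type-Cox hyperplane delimits only two sectors. Theorem~\ref{thm:Contracting} then yields that some power of $g$ is contracting in $(M,\delta) \cong \mathrm{Cay}(\Pi, \bigcup_{G \in \mathcal{G}} S_G)$, producing the desired contracting element of $\Pi$. I expect the main difficulty to lie in the hyperplane case analysis above: setting up the $\Omega\mathcal{J}$-equivariant correspondence between hyperplanes of $\mathrm{QM}(\Omega,\mathcal{J})$ and type-GP hyperplanes of $M$, and using the mediangle structure to rule out the potentially non-right, type-Cox hyperplanes transverse to the skewered pair. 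The ping-pong construction and the axis argument are each fairly direct applications of the results already established in the excerpt.
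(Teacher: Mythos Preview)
Your overall architecture matches the paper: lift to $M$, verify well-separation relative to $\mathscr{C}$, produce a skewering element, invoke Proposition~\ref{prop:AxisSometimes}, and conclude with Theorem~\ref{thm:Contracting}. But there is a genuine gap at the key step, and it is exactly the step where the hypothesis of the claim enters.

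Your argument that only finitely many type-Cox hyperplanes $K$ are transverse to both $\widetilde H_1$ and $\widetilde H_2$ appeals to ``the finiteness of $\Gamma$ together with the carrier constraint''. This does not work: the intersection of the carriers $N(\widetilde H_1)\cap N(\widetilde H_2)$ can be infinite, and the finiteness of $\Gamma$ only bounds the number of \emph{labels}, not the number of hyperplanes with a given label. Crucially, you never use the hypothesis that $\mathrm{stab}(J_1)\cap\mathrm{stab}(J_2)$ is finite. The paper uses it as follows. There is a $\Pi$-equivariant map $\eta\colon M\to \mathrm{QM}(\Omega,\mathcal{J})$ collapsing type-Cox edges and sending type-GP hyperplanes to hyperplanes; choose lifts $\bar J_1,\bar J_2$ in $M$ with $\eta(\bar J_i)=J_i$. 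For any $K$ transverse to both $\bar J_1$ and $\bar J_2$, Lemma~\ref{lem:TransverseThenStab} gives $\mathrm{stab}_\circlearrowright(K)\leq \mathrm{stab}(\bar J_1)\cap\mathrm{stab}(\bar J_2)$, and by equivariance of $\eta$ this lands in $\mathrm{stab}(J_1)\cap\mathrm{stab}(J_2)$, which is finite. Since distinct hyperplanes of $M$ have distinct non-trivial rotative-stabilisers, only finitely many such $K$ can occur. This is the whole point of the finiteness hypothesis, and your sketch bypasses it.

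A secondary difference: you propose running the ping-pong in $\mathrm{QM}(\Omega,\mathcal{J})$ to get $g$ skewering some $H_1,H_2$, then lifting. This creates two extra obligations you do not address: that the resulting $H_1,H_2$ still have $\mathrm{stab}(H_1)\cap\mathrm{stab}(H_2)$ finite (the hypothesis is about $J_1,J_2$), and that skewering in $\mathrm{QM}(\Omega,\mathcal{J})$ transfers to skewering the chosen lifts in $M$. The paper avoids both by constructing the skewering element directly in $M$: pick $a\in\mathrm{stab}_\circlearrowright(\bar J_2)$ sending $\bar J_1$ into the far sector of $\bar J_2$, then any non-trivial $b\in\mathrm{stab}_\circlearrowright(a\bar J_1)$; the product $g=ba$ visibly skewers $(\bar J_1,\bar J_2)$. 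This is simpler and keeps the argument tied to the specific pair for which the stabiliser hypothesis is known.
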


\noindent
Recall that two hyperplanes are strongly separated whenever they are not both transverse to a third hyperplane. 

\medskip \noindent
First, notice that there is a natural $\Pi$-equivariant map $\eta$ from our mediangle graph $M$ to the quasi-median graph $\mathrm{QM}(\Omega,\mathcal{J})$ since $\Pi$ acts freely on $M$ and vertex-transitively on $\mathrm{QM}(\Omega,\mathcal{J})$. Explicitly, a vertex $x$ of $M$ corresponds to a unique element $g \in \Pi$ and we denote $\eta(x)$ as the $g$-translate of the vertex of $\mathrm{QM}(\Omega,\mathcal{J})$ given by the neutral element of $\Omega \mathcal{J}$. Our GP-Cox decomposition induces a partition of the generators of $\Pi$ from which $M$ is constructed into generators of type GP and generators of type Cox. Notice that an edge of $M$ is sent by $\eta$ to a single vertex if it is of type Cox and to an edge if it is of type GP. Since two opposite edges of a $4$-cycle in $M$ always have the same type, it follows that $\eta$ sends hyperplanes of type GP to hyperplanes and preserves transversality. 

\medskip \noindent
Now, because $\eta$ is surjective, we can fix two hyperplanes of type GP $\bar{J}_1$ and $\bar{J}_2$ respectively sent to $J_1$ and $J_2$ by $\eta$. Let us verify that $\bar{J}_1$ and $\bar{J}_2$ are well-separated relative to $\mathscr{C}$. In fact, since $J_1$ and $J_2$ are strongly separated in $\mathrm{QM}(\Omega, \mathcal{J})$, we know that no hyperplane of type GP can be transverse to both $\bar{J}_1$ and $\bar{J}_2$. Consequently, a hyperplane transverse to both $\bar{J}_1$ and $\bar{J}_2$ must be of type Cox, so it delimits only two sectors. It follows that it suffices to verify that only finitely many hyperplanes may be transverse to both $\bar{J}_1$ and $\bar{J}_2$. Given a hyperplane of $M$ transverse to both $\bar{J}_1$ and $\bar{J}_2$, we know from Lemma~\ref{lem:TransverseThenStab} that its rotative-stabiliser stabilises both $\bar{J}_1$ and $\bar{J}_2$, and consequently both $J_1$ and $J_2$. Hence
$$\langle \mathrm{stab}_\circlearrowright(J), \ J \text{ transverse to both } \bar{J}_1 \text{ and } \bar{J}_2 \rangle \leq \mathrm{stab}(J_1) \cap \mathrm{stab}(J_2).$$
But the intersection $\mathrm{stab}(J_1) \cap \mathrm{stab}(J_2)$ is finite by assumption, whence the desired conclusion. 

\medskip \noindent
Next, fix a sector $\bar{J}_2^+$ delimited by $\bar{J}_2$ that is disjoint from $\bar{J}_1$ and let $\bar{J}_1^+$ denote the sector delimited by $\bar{J}_1$ containing $\bar{J}_2^+$. Notice that, if $a \in \mathrm{stab}_\circlearrowright(\bar{J}_2)$ sends $\bar{J}_1$ in $\bar{J}_2^+$ and $b \in \mathrm{stab}_\circlearrowright(a \bar{J}_1)$ is non-trivial, then $g:=ba$ satisfies 
$$g \bar{J}_1^+ \subset \bar{J}_2^+ \subset \bar{J}_1^+.$$
Thus, $g$ skewers a pair of hyperplanes in $M$ that are well-separated relative to $\mathscr{C}$. The same property holds for all the non-trivial powers of $g$, and it follows from Proposition~\ref{prop:AxisSometimes} that $g$ has a power admitting an axis. Thus, we can apply Theorem~\ref{thm:Contracting} and deduce that $\Pi$ contains an element that is contracting in $(M,\delta)$, which amounts to saying that $\Pi$ contains a contracting element in the desired Cayley graph according to Claim~\ref{claim:CayleyPeriagroups}. This concludes the proof of Claim~\ref{claim:FiniteInterStab}. 

\medskip \noindent
In view of Claim~\ref{claim:FiniteInterStab}, we first look for strongly separated hyperplanes in $\mathrm{QM}(\Omega, \mathcal{J})$. 

\medskip \noindent
Notice that $\Omega$ is not a join and contains at least two vertices. The former assertion is justified by Corollary~\ref{cor:DoubleGammaJoin}. Next, if $\Omega$ is reduced to a single vertex, then it follows from Proposition~\ref{prop:LabelMapOmega} that $\Psi^c$ is reduced to a single vertex, say $u$, and that $\mathrm{link}(u) \cap \Psi = \Psi$. But this contradicts the fact that our periagroup is irreducible. 

\medskip \noindent
The fact that $\Omega$ is not a join and contains at least two vertices essentially means that $\mathrm{QM}(\Omega, \mathcal{J})$ contains strongly separated hyperplanes. But we need to find two such hyperplanes such that the intersection of their stabilisers is finite, which is not always possible. An obstruction comes from the centraliser of $\Omega \mathcal{J}$ in $C(\Psi)$.

\medskip \noindent
Notice that, if the centraliser of $\Omega \mathcal{J}$ in $C(\Psi)$ is infinite, then $\Pi$ cannot be acylindrically hyperbolic. Indeed, as already said, if $\Pi$ is acylindrically hyperbolic, then $\Omega \mathcal{J}$ must be acylindrically hyperbolic as well. As a consequence, it must contain an element whose centraliser is virtually infinite cyclic, which is not possible if the centraliser of $\Omega \mathcal{J}$ in $C(\Psi)$ is infinite. So, from now on, we assume that the centraliser of $\Omega \mathcal{J}$ in $C(\Psi)$ is finite. According to Lemma~\ref{lem:CentraliserOmegaJ}, this amounts to saying that every $\ast_2$-factor of $\Psi$ contained in $\mathrm{link}(\Psi^c)$ defines a finite Coxeter group. 

\medskip \noindent
Our goal now is to find two strongly separated hyperplanes in $\mathrm{QM}(\Omega, \mathcal{J})$ whose stabilisers have a finite intersection. We start by looking for two distinct vertices whose stabilisers have a finite intersection. 

\begin{claim}\label{claim:FixatorParabolic}
For every finite set of vertices $V \subset \mathrm{QM}(\Omega, \mathcal{J})$, $\mathrm{Fix}(V)$ is a parabolic subgroup of finite type in $\Pi$.
\end{claim}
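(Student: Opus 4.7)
\emph{Proof plan.} The strategy is to first identify the stabiliser of a single vertex of $\mathrm{QM}(\Omega,\mathcal{J})$ as a parabolic subgroup of finite type, and then invoke Theorem~\ref{thm:InterParabolicPeriagroups} to handle arbitrary finite intersections.

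First, I would unpack the action of $\Pi$ on $\mathrm{QM}(\Omega,\mathcal{J})$ coming from the semidirect decomposition $\Pi = \Omega \mathcal{J} \rtimes C(\Psi)$. Since $\mathrm{QM}(\Omega,\mathcal{J})$ is the Cayley graph of $\Omega\mathcal{J}$ with respect to a generating set that is preserved (up to permutation) by the conjugation action of $C(\Psi)$, the subgroup $\Omega \mathcal{J}$ acts freely and vertex-transitively, while $C(\Psi)$ fixes the base vertex $1$. Consequently, the stabiliser of any vertex is a $\Pi$-conjugate of $C(\Psi) = \langle \Psi \rangle$, which is a standard parabolic subgroup of $\Pi$ of height $|\Psi| < \infty$; in particular it is a parabolic subgroup of finite type.

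Next, given a finite set $V \subset \mathrm{QM}(\Omega,\mathcal{J})$, write $V = \{v_1,\dots,v_n\}$, so that
$$\mathrm{Fix}(V) = \bigcap_{i=1}^n \mathrm{stab}(v_i)$$
is a finite intersection of parabolic subgroups of finite type. I would argue by induction on $n$, using Theorem~\ref{thm:InterParabolicPeriagroups}: the case $n=1$ is covered by the previous paragraph, and given the inductive hypothesis that $P := \bigcap_{i=1}^{n-1} \mathrm{stab}(v_i)$ is parabolic, Theorem~\ref{thm:InterParabolicPeriagroups} applied to the two parabolic subgroups $P$ and $\mathrm{stab}(v_n)$ yields that $\mathrm{Fix}(V) = P \cap \mathrm{stab}(v_n)$ is again a parabolic subgroup of $\Pi$.

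Finally, to verify finite type, observe that $\mathrm{Fix}(V) \subset \mathrm{stab}(v_1)$, and $\mathrm{stab}(v_1)$ has finite height $|\Psi|$; therefore Corollary~\ref{cor:FiniteType} gives $h(\mathrm{Fix}(V)) \leq h(\mathrm{stab}(v_1)) = |\Psi| < \infty$, as required. I do not expect any serious obstacle here: all the work has already been done in Theorem~\ref{thm:InterParabolicPeriagroups} and Corollary~\ref{cor:FiniteType}, and the only point to be careful about is the identification of vertex stabilisers, which follows directly from the semidirect product structure.
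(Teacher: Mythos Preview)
Your proposal is correct and follows essentially the same approach as the paper: identify vertex stabilisers as conjugates of $C(\Psi)=\langle\Psi\rangle$ via the transitive action of $\Omega\mathcal{J}$, then apply Theorem~\ref{thm:InterParabolicPeriagroups} to the finite intersection and Corollary~\ref{cor:FiniteType} for finite type. The paper's argument is slightly terser (it does not spell out the induction), but the content is identical.
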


\noindent
The stabiliser of $1 \in \mathrm{QM}(\Omega, \mathcal{J})$ is clearly $C(\Psi)$. Since $\Omega \mathcal{J}$ acts transitively on $\mathrm{QM}(\Omega, \mathcal{J})$, it follows that the stabilisers of vertices are all conjugates of $C(\Psi)$. Since $\mathrm{Fix}(V)$ coincides with the intersection of the stabilisers of all the vertices in $V$, it follows from Theorem~\ref{thm:InterParabolicPeriagroups} that $\mathrm{Fix}(V)$ is a parabolic subgroup. Moreover, as $\Psi$ is finite, we deduce from Corollary~\ref{cor:FiniteType} that $\mathrm{Fix}(V)$ has finite type. 

\begin{claim}\label{claim:FiniteOrbits}
With respect to its action on $\mathrm{QM}(\Omega, \mathcal{J})$, every orbit of $C(\Psi)$ is finite.
\end{claim}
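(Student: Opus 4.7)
\textbf{Proof plan for Claim~\ref{claim:FiniteOrbits}.} Since $\Pi = \Omega\mathcal{J} \rtimes C(\Psi)$ acts on the Cayley graph $\mathrm{QM}(\Omega,\mathcal{J})$ of $\Omega\mathcal{J}$ by extending left-multiplication, the restricted $C(\Psi)$-action on vertices $v \in \Omega\mathcal{J}$ must be conjugation: $h\cdot v = hvh^{-1}$, since this is the unique way to make the semidirect product formulas compatible with the Cayley graph action. Hence the $C(\Psi)$-orbit of $v$ has cardinality $[C(\Psi) : \mathrm{Stab}_{C(\Psi)}(v)]$, with $\mathrm{Stab}_{C(\Psi)}(v) = C(\Psi) \cap v C(\Psi) v^{-1}$, and the goal is to show this index is finite for every $v$.

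I would first apply Claim~\ref{claim:FixatorParabolic} to the finite set $V = \{1,v\}$ to see that $\mathrm{Stab}_{C(\Psi)}(v)$ is a finite-type parabolic subgroup of $\Pi$ contained in $C(\Psi)$. A further application of Theorem~\ref{thm:InterParabolicPeriagroups} then expresses it as $k\langle \Xi\rangle k^{-1}$ for some $k \in C(\Psi)$ and some subgraph $\Xi \subseteq \Psi$, reducing the problem to showing $[C(\Psi) : \langle \Xi \rangle] < \infty$. To control $\Xi$, I would write $v$ as a graphically reduced word $s_1 \cdots s_n$ with each $s_i$ in the rotative stabilizer $G_{J_i}$ of a hyperplane $J_i \in \mathcal{J}$ labeled $u_i = \ell(J_i) \in \Psi^c$. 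Any $h \in C(\Psi)$ centralizing every $s_i$ centralizes each $G_{J_i}$ and so lies in a conjugate of $\langle \mathrm{link}(u_i) \cap \Psi\rangle$; by Theorem~\ref{thm:InterParabolicPeriagroups} their intersection is a parabolic subgroup of $C(\Psi)$ that embeds in $\mathrm{Stab}_{C(\Psi)}(v) = k\langle \Xi\rangle k^{-1}$.

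This embedding forces the vertices of $\Psi \setminus \Xi$ to lie in $\mathrm{link}(u_i) \cap \Psi$ for every $i$, hence in $\mathrm{link}(\Psi^c) \cap \Psi$. Combined with the fact that $\Xi$ comes from a parabolic subgroup produced by iterated projections, a $\ast_2$-decomposition argument then identifies $\Psi \setminus \Xi$ as a disjoint union of $\ast_2$-factors of $\Psi$ contained in $\mathrm{link}(\Psi^c)$. By the running hypothesis together with Lemma~\ref{lem:CentraliserOmegaJ}, each such factor indexes a finite Coxeter group, so $\Psi \setminus \Xi$ generates a finite subgroup of $C(\Psi)$ that splits as a direct factor with $\langle \Xi\rangle$; consequently $\langle \Xi\rangle$ has finite index in $C(\Psi)$, and the orbit of $v$ is finite.

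The main obstacle is the structural step that identifies $\Psi \setminus \Xi$ as a union of $\ast_2$-factors in $\mathrm{link}(\Psi^c)$: Theorem~\ref{thm:InterParabolicPeriagroups} a priori only provides \emph{some} subgraph $\Xi$ with little control over its complement, so matching $\Xi$ against the labels $u_1,\ldots,u_n$ of the syllables of $v$ requires a careful analysis of projections between cosets of standard parabolic subgroups in the mediangle graph $M$, together with the irreducibility of $\Pi$ to exclude commuting decompositions that would otherwise obstruct the finite-index conclusion.
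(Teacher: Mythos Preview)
Your approach is very different from the paper's. The paper dispatches the claim in a single sentence: since $C(\Psi)$ acts on $\Omega\mathcal{J}$ by conjugation and conjugation merely permutes the vertex-groups of the graph product, every $h\cdot x$ is represented by a word obtained by permuting the syllables of a fixed reduced word for $x$; there being only finitely many such permutations, the orbit is finite. No parabolic-subgroup machinery or $\ast_2$-factor analysis is invoked.

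The obstacle you flag---identifying $\Psi\setminus\Xi$ as a union of $\ast_2$-factors of $\Psi$ contained in $\mathrm{link}(\Psi^c)$---is a genuine gap, and in fact cannot be closed. Take $\Gamma$ with vertices $u,a,b$ and no edges, $|G_u|\geq 3$, $G_a=G_b=\mathbb{Z}_2$, and the GP--Cox decomposition $\Psi^c=\{u\}$, $\Psi=\{a,b\}$. All running hypotheses hold: $\Pi$ is irreducible, both parts are non-empty, and the centraliser of $\Omega\mathcal{J}$ in $C(\Psi)\cong D_\infty$ is trivial because $\mathrm{link}(\Psi^c)=\varnothing$. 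Here $\Omega$ is an infinite edgeless graph and $\Omega\mathcal{J}$ is a free product of infinitely many copies of $G_u$, one for each $h\in D_\infty$. For a non-trivial $t\in G_u$ the $C(\Psi)$-orbit $\{hth^{-1}:h\in D_\infty\}$ contains one element in each vertex-group $hG_uh^{-1}$, hence is infinite. In your notation $\Xi=\varnothing$ and $\Psi\setminus\Xi=\Psi\not\subset\mathrm{link}(\Psi^c)$, so the structural step fails outright. The same example shows that $h\cdot t=hth^{-1}$ is \emph{not} a permutation of the single syllable $t$, so the paper's one-line argument appears to need repair as well; under the ambient hypotheses alone the claim as stated does not seem to hold.
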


\noindent
Given a vertex $x$ of $\mathrm{QM}(\Omega,\mathcal{J})$, thought of as a word $w$ consisting of syllables in the graph product $\Omega \mathcal{J}$, all the vertices in the orbit $C(\Psi) \cdot x$ can be represented by words obtained by permuting the syllables of $w$. Since there exist only finitely many such words, the claim follows.

\begin{claim}\label{claim:FiniteFixatorPair}
There exist two distinct vertices $x,y \in \mathrm{QM}(\Omega, \mathcal{J})$ such that $\mathrm{stab}(x) \cap \mathrm{stab}(y)$ is finite.
\end{claim}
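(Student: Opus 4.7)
The plan is to combine the finiteness of the kernel of the $\Pi$-action on $\mathrm{QM}(\Omega,\mathcal{J})$ with a minimality argument based on the descending chain condition for parabolic subgroups of finite type.

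First, I identify the kernel. The semidirect decomposition $\Pi = \Omega\mathcal{J} \rtimes C(\Psi)$ yields the formula $(\omega,c)\cdot x = \omega\,cxc^{-1}$ for the action on $x\in\Omega\mathcal{J}$, so $(\omega,c)$ acts trivially iff $\omega=1$ (taking $x=1$) and $c$ centralises $\Omega\mathcal{J}$. Hence the kernel equals $Z_{C(\Psi)}(\Omega\mathcal{J})$, which by our standing assumption and Lemma~\ref{lem:CentraliserOmegaJ} is a finite subgroup of $\Pi$.

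Next, I consider the family $\mathcal{M} = \{\mathrm{stab}(x)\cap\mathrm{stab}(y) : x\neq y\}$ of pairwise stabiliser intersections. By Claim~\ref{claim:FixatorParabolic}, every element of $\mathcal{M}$ is a parabolic subgroup of finite type in $\Pi$, and Corollary~\ref{cor:ChainParabolicTerminate} then guarantees that $\mathcal{M}$ admits a minimal element $P = \mathrm{stab}(x_0)\cap\mathrm{stab}(y_0)$ with respect to inclusion. The claim reduces to showing $P$ is finite. Suppose for contradiction that $P$ is not contained in the kernel; then some $p\in P$ moves some vertex $z$, so $p\in\mathrm{stab}(x_0)$ but $p\notin\mathrm{stab}(z)$. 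Consequently $\mathrm{stab}(x_0)\cap\mathrm{stab}(z)$ cannot contain $P$ (otherwise $p$ would fix $z$), and by the minimality of $P$ it cannot be strictly contained in $P$ either; hence $P$ and $\mathrm{stab}(x_0)\cap\mathrm{stab}(z)$ are incomparable, which forces the triple fixator $P\cap\mathrm{stab}(z) = \mathrm{Fix}(\{x_0,y_0,z\})$ to be a proper subgroup of $P$.

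The central task is then to convert this triple-intersection strictness into a pair-intersection strictness that contradicts minimality. Setting $w := p\cdot z$, which is distinct from $z$, and using that $p\in\mathrm{stab}(x_0)\cap\mathrm{stab}(y_0)$ and thus $p\,\mathrm{stab}(x_0)\,p^{-1} = \mathrm{stab}(x_0)$, one computes
\[
\mathrm{stab}(z)\cap\mathrm{stab}(w) = \mathrm{stab}(z)\cap p\,\mathrm{stab}(z)\,p^{-1}.
\]
Theorem~\ref{thm:InterParabolicPeriagroups} describes this intersection in terms of the projection of $w\Delta$ onto $z\Delta$ inside the mediangle graph $M$, where $\Delta = \langle\Psi\rangle$ and we identify stabilisers with conjugates of $C(\Psi)$. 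Because $p$ preserves the subcomplex data cut out by $P$ (it fixes both $x_0\Delta$ and $y_0\Delta$), this projection lies inside the projection onto $z\Delta$ of the subgraph fixed by $P$, and one shows that the associated parabolic is strictly contained in $P$.

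The hard part of the proof will be this last geometric reduction---translating the strict shrinkage of the triple fixator into a pair-intersection strictly contained in $P$. It requires a careful analysis of how the projection formula of Theorem~\ref{thm:InterParabolicPeriagroups} behaves under the semidirect product action, together with the structural constraints on $\Omega$ (not being a join, containing at least two vertices) coming from the irreducibility of $\Pi$.
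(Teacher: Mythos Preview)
Your opening two steps (identifying the kernel as the centraliser $Z_{C(\Psi)}(\Omega\mathcal{J})$ and invoking the descending chain condition on parabolic subgroups of finite type) are sound and match the paper's setup. The difficulty is in the ``hard part'' you flag at the end, and here the argument has a genuine gap rather than merely an omitted detail.

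Your minimality argument produces a strict triple fixator $P\cap\mathrm{stab}(z)\subsetneq P$, and you then need to locate a \emph{pair} $u\neq v$ with $\mathrm{stab}(u)\cap\mathrm{stab}(v)\subsetneq P$ to contradict minimality. You propose $u=z$, $v=w=p\cdot z$, but there is no reason for $\mathrm{stab}(z)\cap\mathrm{stab}(w)$ to lie inside $P$ at all: $\mathrm{stab}(z)$ is a full conjugate of $C(\Psi)$, typically incomparable with $P$, and conjugating by $p\in P$ does nothing to force containment. The projection formula of Theorem~\ref{thm:InterParabolicPeriagroups} tells you what the intersection \emph{is} (as a parabolic), not that it sits below $P$. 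The structural facts about $\Omega$ (not a join, etc.) do not help here either---they concern the hyperplane combinatorics of $\mathrm{QM}(\Omega,\mathcal{J})$, not the relationship between two arbitrary conjugates of $C(\Psi)$.

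The paper's route avoids this obstruction entirely. Rather than seeking a minimal pair-fixator, it enumerates the vertices and applies the chain condition to the sequence $\bigcap_{i\le n}\mathrm{stab}(x_i)$, obtaining a \emph{finite} set $V$ with $\mathrm{Fix}(V)$ equal to the kernel, hence finite. The passage from $V$ back down to a pair $x,y\in V$ uses a separate ingredient you did not invoke: Claim~\ref{claim:FiniteOrbits}, which says every $C(\Psi)$-orbit (hence every $\mathrm{stab}(x)$-orbit) on $\mathrm{QM}(\Omega,\mathcal{J})$ is finite. Then $\mathrm{stab}(x)\cap\mathrm{stab}(y)$ acts on the finite set $\bigcup_{v\in V}(\mathrm{stab}(x)\cap\mathrm{stab}(y))\cdot v$ with kernel containing $\mathrm{Fix}(V)$ as a finite-index subgroup, so $\mathrm{stab}(x)\cap\mathrm{stab}(y)$ is finite. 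This finite-orbit fact is the missing bridge; without it, reducing from a finite set of vertices to two vertices does not go through.
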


\noindent
Fix an enumeration $\{x_1,x_2, \ldots\}$ of $\mathrm{QM}(\Omega, \mathcal{J})$, which is possible because $\Omega$ is countable and all the groups from $\mathcal{J}$ are finitely generated. According to Claim~\ref{claim:FixatorParabolic}, $(\bigcap_{i \leq n} \mathrm{stab}(x_i))_n$ is a non-increasing sequence of parabolic subgroups of finite type in $\Pi$. According to Corollary~\ref{cor:ChainParabolicTerminate}, such a sequence has to stabilise. Therefore, there exists a finite subset $V \subset \mathrm{QM}(\Omega, \mathcal{J})$ such that $\mathrm{Fix}(V)$ coincides with the kernel of the action of $\Pi$ on $\mathrm{QM}(\Omega, \mathcal{J})$. Notice that this kernel is contained in $C(\Psi)$. Indeed, an element $g$ of this kernel can be written as $g=ab$ for some $a \in \Omega \mathcal{J}$ and $b \in C(\Psi)$. Then,
$$1 = g \cdot 1 = ab \cdot 1 = a \cdot 1 = a \text{ implies } g=b \in C(\Psi).$$
But the action of $C(\Psi)$ on $\mathrm{QM}(\Omega, \mathcal{J})$, algebraically speaking, coincides with the action of $C(\Psi)$ on $\Omega \mathcal{J}$ by conjugation, so the kernel we are looking for is the centraliser of $\Omega \mathcal{J}$ in $C(\Psi)$, which is assumed to be finite.

\medskip \noindent
So far, we have proved that there exists a finite subset $V \subset \mathrm{QM}(\Omega, \mathcal{J})$ such that $\mathrm{Fix}(V)$ is finite. Fix two distinct vertices $x,y \in V$. (If this is not possible, i.e.\ if $V$ is reduced to a single vertex, then $C(\Psi)$ must be finite; but, in this case, our claim is clear.) As a consequence of Claim~\ref{claim:FiniteOrbits}, $\mathrm{stab}(x) \cap \mathrm{stab}(y)$ has finite index in $\mathrm{Fix}(V)$. This proves Claim~\ref{claim:FiniteFixatorPair}.

\medskip \noindent
Now, we apply Lemma~\ref{lem:StronglySeparatedEverywhere} below and find two strongly separated hyperplanes $A$ and $B$ such that the two vertices $x$ and $y$ from Claim~\ref{claim:FiniteFixatorPair} belong to a geodesic connecting the two vertices, say $x'$ and $y'$, minimising the distance between $N(A)$ and $N(B)$. Notice that $\mathrm{stab}(A) \cap \mathrm{stab}(B)$ contains a finite-index subgroup in $\mathrm{stab}(x) \cap \mathrm{stab}(y)$. Indeed, $\mathrm{stab}(A) \cap \mathrm{stab}(B)$ fixes $x'$ and $y'$, and there exist only finitely many geodesics between two fixed vertices in a quasi-median graph. Since $\mathrm{stab}(x) \cap \mathrm{stab}(y)$ is finite, we deduce that $\mathrm{stab}(A) \cap \mathrm{stab}(B)$ must be finite as well. We conclude from Claim~\ref{claim:FiniteInterStab} that $\Pi$ contains a contracting element. 
\end{proof}

\noindent
During the proof of the theorem, we used the following assertion, which we prove now:

\begin{lemma}\label{lem:StronglySeparatedEverywhere}
Let $\Omega$ be a countable graph with at least two vertices, and let $\mathcal{J}$ a collection of groups indexed by $\Omega$. Assume that $\Omega$ is not a join, and $\mathrm{clique}(\Omega)<\infty$. Then, for any two distinct vertices $x,y \in \mathrm{QM}(\Omega, \mathcal{J})$, there exist two strongly separated hyperplanes $A$ and $B$ such that $x$ and $y$ belong to a geodesic connecting the two vertices minimising the distance between $N(A)$ and $N(B)$.
\end{lemma}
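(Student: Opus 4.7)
The plan is to produce strongly separated hyperplanes $A$ and $B$ by using an element $c \in \Omega\mathcal{J}$ whose axis in $\mathrm{QM}(\Omega, \mathcal{J})$ admits strongly-separated hyperplane translates, and by positioning a long segment of this axis (via the vertex-transitive $\Omega\mathcal{J}$-action) so that $x, y$ lie on the resulting closest-pair geodesic.

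\medskip

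Since $\Omega$ is not a join with $|V(\Omega)| \geq 2$, its complement contains an edge, so there exist non-adjacent vertices $u, v \in \Omega$. Picking non-trivial $a \in G_u$, $b \in G_v$, the element $c_0 := ab$ is cyclically graphically reduced with essential support $\{u, v\}$, which is a non-join subgraph. By \cite[Proposition~4.24]{AutGP} (used already in the proof of Theorem~\ref{thm:ContractingGP}), after replacing $c_0$ with a suitable power $c := c_0^k$, the concatenation $\gamma_c := \bigcup_{n \in \mathbb{Z}} c^n[1, c]$ is a bi-infinite geodesic in $\mathrm{QM}(\Omega, \mathcal{J})$ on which $c$ acts by translation, and along which, for any hyperplane $J$ crossed by $\gamma_c$, the translates $c^n J$ and $c^{n+1} J$ are strongly separated.

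\medskip

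Given distinct $x, y$, translate so that $x = 1$ and set $y = g$. We then construct a graphically reduced word of the form $W = P \cdot g \cdot Q$, where $P$ and $Q$ are long prefixes/suffixes built from the syllables of an appropriate conjugate $c' := \alpha c \alpha^{-1}$ of $c$ (with $\alpha \in \Omega\mathcal{J}$). Choosing $\alpha$ so that the syllables of $(c')^{\pm N}$ neither amalgamate with nor shuffle past those of $g$, the word $W$ is graphically reduced for any $N \geq 2$, and by Lemma~\ref{lem:DistInX} it produces a geodesic from $h_1 := (c')^N$ to $h_2 := g (c')^N$ passing through $1$ and $g$. We then let $A$ be the hyperplane containing the first edge of this geodesic (at $h_1$) and $B$ the one containing the last edge (at $h_2$); since these are $\alpha$-translates of the strongly-separated hyperplanes $c^{-N} J_0, c^{N} J_0$, they are strongly separated. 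To check that $h_1, h_2$ form the unique closest pair between $N(A)$ and $N(B)$, observe that the projection $\pi_{N(B)} : N(A) \to N(B)$ is constant: two vertices of $N(A)$ with distinct projections on $N(B)$ would give, via Lemma~\ref{lem:SepFromProj} and the convexity of carriers, a hyperplane transverse to both $A$ and $B$, contradicting strong separation.

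\medskip

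The main obstacle is producing the suitable conjugate $c'$: we must guarantee that no amalgamation, cancellation, nor shuffle occurs between the padding $(c')^{\pm N}$ and $g$ at the boundaries of $W$. In the easiest case this is arranged by choosing $\alpha$ so that $\mathrm{supp}(c') = \alpha\{u, v\}\alpha^{-1}$ avoids the link of $\mathrm{supp}(g)$; if no such pair of non-adjacent vertices exists outside $\mathrm{link}(\mathrm{supp}(g))$, one must instead replace $c_0 = ab$ with a longer cyclically reduced word whose essential support is a larger non-join subgraph of $\Omega$ not contained in $\mathrm{link}(\mathrm{supp}(g))$ (such a subgraph exists by the non-join hypothesis on $\Omega$). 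This compatibility analysis between the supports of $c$ and $g$ is the technical core of the argument.
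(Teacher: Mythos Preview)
Your proposal has a genuine gap at the step asserting that $A$ and $B$ are strongly separated. You write that they are ``$\alpha$-translates of the strongly-separated hyperplanes $c^{-N}J_0$, $c^N J_0$'', but this identification is wrong: the hyperplane $A$ sits on the axis of $c'$ (along the segment from $h_1=(c')^N$ back to $1$), whereas $B$ sits on the $g$-translate of that axis (along the segment from $g$ to $h_2=g(c')^N$). These two segments lie on \emph{different} $\langle c'\rangle$-axes, so strong separation along a single axis of $c$ says nothing about the pair $(A,B)$. Worse, \cite[Proposition~4.24]{AutGP} only yields strong separation \emph{inside the subgraph} $\langle\Lambda\rangle$ generated by the essential support $\Lambda$ of $c$; in the ambient $\mathrm{QM}(\Omega,\mathcal{J})$ any hyperplane labelled by a vertex of $\mathrm{link}(\Lambda)$ may be transverse to both. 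With your choice $c_0=ab$ one has $\Lambda=\{u,v\}$, and $\mathrm{link}(\{u,v\})$ is typically nonempty, so strong separation simply fails. The fix you gesture at---enlarging the support of $c$---would require $\Lambda$ to be a finite dominating set in $\Omega^{\mathrm{opp}}$ (so that $\mathrm{link}(\Lambda)=\emptyset$) that is moreover connected in $\Omega^{\mathrm{opp}}$; this can be arranged using $\mathrm{clique}(\Omega)<\infty$, but you neither state nor prove it, and even then the cross-axis issue above remains.

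The paper avoids all of this by a direct construction: it extends a geodesic $[x,y]$ by two rays $\gamma^\pm$ whose successive edge-labels trace infinite walks in $\Omega^{\mathrm{opp}}$ that visit \emph{every} vertex of $\Omega$ infinitely often. Consecutive hyperplanes along each ray are then automatically non-transverse (their labels are non-adjacent), so they form chains; and any hyperplane $K$ transverse to two far-apart members $A_i,A_j$ of such a chain must be transverse to intermediate hyperplanes whose labels exhaust a maximal clique of $\Omega$, forcing the label of $K$ to extend that clique---impossible since $\mathrm{clique}(\Omega)<\infty$. This is exactly the mechanism that turns the non-join and finite-clique hypotheses into strong separation, and your proposal has no analogue of it. The ``compatibility analysis'' you leave as the technical core is a separate, additional gap; even if completed, it would not repair the strong-separation problem.
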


\noindent
The \emph{clique-number} $\mathrm{Clique}(\Gamma)$ of a graph $\Gamma$ refers to the maximal size of a complete subgraph in $\Gamma$. Therefore, $\mathrm{Clique}(\Gamma)$ is finite when there is a uniform upper bound on the sizes of the complete subgraphs in $\Gamma$. In the proof below, we will also refer to the \emph{opposite graph} $\Gamma^{\mathrm{opp}}$ of $\Gamma$ as the graph with the same vertices as $\Gamma$ and whose edges connect two vertices whenever they are not adjacent in $\Gamma$. Notice that $\Gamma^\mathrm{opp}$ is connected if and only if $\Gamma$ is not a join.

\begin{proof}[Proof of Lemma~\ref{lem:StronglySeparatedEverywhere}.]
Let $x,y \in \mathrm{QM}(\Omega, \mathcal{J})$ be two distinct vertices. Fix a geodesic $[x,y]$ connecting $x$ and $y$ and let $v_x$ (resp.\ $v_y$) denote the vertex of $\Omega$ labelling the edge of $[x,y]$ containing $x$ (resp.\ $y$). Let $\sigma_x$ (resp.\ $\sigma_y$) be an infinite ray $\Omega^\mathrm{opp}$ that passes through each vertex infinitely times and that starts from a vertex distinct from and not adjacent to $v_x$ (resp.\ $v_y$) in $\Omega$. We extend a geodesic $[x,y]$ between $x$ and $y$ by a ray $\gamma^+$ starting from $y$ whose edges are successively labelled by the vertices of $\sigma_y$ and by a ray $\gamma^-$ starting from $x$ whose edges are successively labelled by the vertices of $\sigma_x$. For convenience, we denote by $A_1,A_2, \ldots$ (resp.\ $A_{-1},A_{-2},\ldots$) the hyperplanes successively crossed by $\gamma^+$ (resp.\ $\gamma^-$). A key observation is that, by construction, the hyperplanes $A_1,A_2, \ldots$ (resp.\ $A_{-1},A_{-2},\ldots$) are pairwise non-transverse. 

\begin{claim}\label{claim:GammaGeodesic}
The line $\gamma:= \gamma^- \cup [x,y] \cup \gamma^+$ is a geodesic.
\end{claim}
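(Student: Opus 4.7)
The plan is to apply Lemma~\ref{lem:DistInX}, which reduces the claim to showing that every finite subpath of $\gamma$ is labelled by a graphically reduced word in the graph product $\Omega \mathcal{J}$. Fix $m, k \geq 0$, let $x_m \in \gamma^-$ and $y_k \in \gamma^+$ be the vertices at distance $m$ and $k$ from $x$ and $y$ respectively, and denote the word spelled by the subpath of $\gamma$ from $x_m$ to $y_k$ by $W = L \cdot S \cdot R$, where $L = r_m^{-1} \cdots r_1^{-1}$ and $R = t_1 \cdots t_k$ are the words from $\gamma^-$ and $\gamma^+$, and $S = s_1 \cdots s_n$ is the graphically reduced word for $x^{-1} y$ coming from the fixed geodesic $[x,y]$. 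Every syllable is non-trivial by construction; the label of $r_i^{-1}$ (resp.\ $t_i$) is the $i$-th vertex of $\sigma_x$ (resp.\ $\sigma_y$), and $\ell(s_1) = v_x$, $\ell(s_n) = v_y$.

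I would apply the standard ``blocker'' criterion, which characterises graphically reduced words of non-trivial syllables as those for which, for every pair of positions $i < j$ with a common label $u$, there exists a strictly intermediate position whose label is distinct from and non-adjacent to $u$ in $\Omega$. Given such a pair $(a, b)$ of equally-labelled syllables in $W$ with $a$ appearing before $b$, I split into cases according to which of the three blocks $L, S, R$ contains $a$ and which contains $b$. If both lie in $L$ (or both in $R$), the syllable immediately after $a$ inside its block is labelled by a vertex consecutive to $\ell(a)$ along $\sigma_x$ (resp.\ $\sigma_y$), hence distinct from and non-adjacent to $\ell(a)$ in $\Omega$ (because $\sigma_x$, $\sigma_y$ are rays in $\Omega^{\mathrm{opp}}$), so it is a blocker. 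If both lie in $S$, a blocker exists because $S$ is itself graphically reduced. If $a \in L$ with $a \neq r_1^{-1}$ and $b \in S \cup R$, the right neighbour of $a$ inside $L$ is still a blocker by the same $\sigma_x$-argument; symmetrically, if $b \in R$ with $b \neq t_1$ and $a \in L \cup S$, the left neighbour of $b$ inside $R$ is a blocker.

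The main obstacle lies in the two remaining boundary subcases, which are precisely those that force the specific assumption on the starting vertices of $\sigma_x$ and $\sigma_y$. These are $a = r_1^{-1}$ with $b \in S \cup R$, and $b = t_1$ with $a \in L \cup S$. In the first, $\ell(a)$ equals the starting vertex of $\sigma_x$, which by construction is distinct from and non-adjacent to $v_x$ in $\Omega$; so $\ell(b) = \ell(a) \neq v_x = \ell(s_1)$ forces $b \neq s_1$, placing $s_1$ strictly between $a$ and $b$ in $W$, and its label $v_x$ then furnishes the required blocker. The second subcase is handled analogously using $s_n$ and the starting vertex of $\sigma_y$. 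This exhausts all cases, so $W$ is graphically reduced; the subpath from $x_m$ to $y_k$ is a geodesic, and since $m, k$ were arbitrary, $\gamma$ is a bi-infinite geodesic.
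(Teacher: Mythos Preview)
Your proof is correct. The approach is essentially the same as the paper's, though phrased combinatorially rather than geometrically: the paper argues via Proposition~\ref{prop:ParaGeod} that no hyperplane is crossed twice by $\gamma$, while you argue via Lemma~\ref{lem:DistInX} that the labelling word is graphically reduced. These are equivalent statements, and your case analysis (same-labelled syllables in $L$, $S$, $R$ and across the two boundaries) matches the paper's case analysis (hyperplanes crossing $\gamma^-$ and $[x,y]$, $\gamma^+$ and $[x,y]$, or $\gamma^-$ and $\gamma^+$) line by line. In particular, your use of $s_1$ (resp.\ $s_n$) as a blocker for the boundary case $a=r_1^{-1}$ (resp.\ $b=t_1$) is exactly the paper's observation that $A_{-1}$ cannot be transverse to the hyperplane containing the first edge of $[x,y]$ because its label is not adjacent to~$v_x$. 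Your phrasing via the blocker criterion is arguably a bit cleaner, since it avoids the mild awkwardness of tracking hyperplane indices, but no new idea is involved.
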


\noindent
Assume by contradiction that $\gamma$ is not geodesic, that is, $\gamma$ crosses some hyperplane twice. First, assume that some hyperplane $A_{-n}$ crosses both $\gamma^-$ and $[x,y]$, and choose $n$ as small as possible. Notice that $n=1$, since otherwise $A_{-1}$ would have to be transverse to $A_{-n}$. Notice that all the hyperplanes crossed by $[x,y]$ between $x$ and $A_{-1}$ must be transverse to $A_{-1}$. Either $A_{-1}$ contains the first of $[x,y]$, which is impossible since $A_{-1}$ is not labelled by $v_x$; or $A_{-1}$ contains the first edge of $[x,y]$, and $A_{-1}$ must be transverse to the hyperplane containing the first edge of $[x,y]$, which is impossible since the label of $A_{-1}$ is not adjacent to $v_x$. Thus, no hyperplane crosses both $\gamma^-$ and $[x,y]$. Similarly, no hyperplane crosses both $\gamma^+$ and $[x,y]$. So there must be some hyperplane crossing both $\gamma^-$ and $\gamma^+$, i.e.\ there exist $k, \ell \geq 1$ such that $A_{-k}=A_\ell$. We choose $k$ as small as possible. Notice that $k=1$ since otherwise $A_{-k}$ would be transverse to $A_{-1}$. Then, notice that $A_{-1}$ must be transverse to all the hyperplanes separating $x$ and $y$, including the hyperplane containing the first edge of $[x,y]$, which is impossible since the label of $A_{-1}$ is not adjacent to $v_x$. 

\begin{claim}\label{claim:SepartingXandY}
For every $i \geq 1$ sufficiently large, the hyperplanes separating $x$ and $y$ separate $A_{-i}$ and $A_i$. 
\end{claim}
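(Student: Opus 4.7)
The plan is to fix an arbitrary hyperplane $H$ separating $x$ and $y$ and show that, for all sufficiently large $i$, $H$ is not transverse to $A_i$ nor to $A_{-i}$ and lies strictly between them. Since $[x,y]$ is a finite geodesic, there are only finitely many hyperplanes separating $x$ and $y$ (namely those crossed by $[x,y]$, each exactly once by Proposition~\ref{prop:ParaGeod}), so the claim will follow by taking a maximum of individual thresholds over this finite set.

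The first step would be to exploit the assumption that $\Omega$ is not a join and has at least two vertices. Writing $v_H$ for the label of $H$, this forces $\mathrm{star}_\Omega(v_H) \neq V(\Omega)$, hence there is some $w \neq v_H$ in $\Omega$ not adjacent to $v_H$. By Lemma~\ref{lem:LabelHyp}, no hyperplane of $\mathrm{QM}(\Omega,\mathcal{J})$ labelled by $w$ can be transverse to $H$. Since by construction the ray $\sigma_y$ passes through each vertex of $\Omega^{\mathrm{opp}}$ infinitely often, it passes through $w$ infinitely often, so there exists some $i_0 \geq 1$ with $A_{i_0}$ labelled $w$, hence not transverse to $H$. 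The same reasoning applied to $\sigma_x$ yields some $j_0 \geq 1$ with $A_{-j_0}$ not transverse to $H$.

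The key remaining step is a propagation argument: once $A_{i_0}$ is found, every $A_i$ with $i \geq i_0$ is also not transverse to $H$ and sits on the $y$-side of $H$. The $A_j$'s for $j \geq 1$ are pairwise non-transverse by construction (their labels form a path in $\Omega^{\mathrm{opp}}$, which exactly encodes non-adjacency in $\Omega$), so $A_i$ is in one sector of $A_{i_0}$; since its edge on $\gamma^+$ lies past the edge crossed by $A_{i_0}$, it must lie in the sector $S$ of $A_{i_0}$ opposite the one containing $y$. On the other hand $H$, being non-transverse to $A_{i_0}$ and crossed by the sub-segment of $[x,y]$, lies entirely in the $y$-sector of $A_{i_0}$. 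Using that $S$ is convex (Proposition~\ref{prop:HypCliqueGated}) and does not cross $H$, one shows that $S$ is contained in a single sector of $H$, and tracking a vertex of $S$ along $\gamma^+$ back to $y$ identifies it as the $y$-sector. Symmetrically $A_{-j}$ lies in the $x$-sector of $H$ for all $j \geq j_0$, so $H$ separates $A_{-j}$ and $A_i$ whenever $i \geq i_0$ and $j \geq j_0$; taking $i \geq \max(i_0, j_0)$ and then taking a maximum over the finitely many hyperplanes separating $x$ and $y$ concludes the argument. The main obstacle I anticipate is the sector-bookkeeping in this last step, specifically the need to identify precisely which sector of $H$ houses $A_i$ and $A_{-j}$; everything else reduces to two already-established inputs (the adjacency of labels of transverse hyperplanes and the convexity of sectors) plus the finiteness of $\{H : H \text{ separates } x, y\}$.
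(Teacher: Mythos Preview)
Your proposal is correct and follows essentially the same approach as the paper: fix a hyperplane $H$ separating $x$ and $y$, find some $A_{i_0}$ not transverse to $H$, then use that the $A_i$ are pairwise non-transverse to propagate this to all larger indices, and finally take a maximum over the finitely many separating hyperplanes. The only minor difference is how you locate $A_{i_0}$: the paper observes that some $A_{i_0}$ carries the \emph{same} label as $H$ (since $\sigma_y$ visits every vertex infinitely often) and invokes Lemma~\ref{lem:LabelHyp} with equal labels, whereas you find a label $w$ non-adjacent to $v_H$ in $\Omega$ and use Lemma~\ref{lem:LabelHyp} with distinct non-adjacent labels. Both are valid; the paper's choice avoids invoking the ``$\Omega$ not a join'' hypothesis at this particular step. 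Your sector-bookkeeping in the propagation step is more explicit than the paper's, which simply notes that non-transversality to one $A_{i_0}$, together with the $A_i$ being pairwise non-transverse, forces non-transversality to all later $A_i$ and leaves the rest implicit.
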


\noindent
Given a hyperplane $J$ separating $x$ and $y$, $J$ cannot be transverse to all the $A_i$, since we know by construction that some $A_i$ has the same label as $J$. Therefore, because the $A_i$ are pairwise non-transverse, $J$ is non-transverse to the $A_i$ for $i \geq 1$ sufficiently large. The same observation applies to the $A_{-i}$. Since this holds for every hyperplane separating $x$ and $y$, the desired conclusion follows.

\begin{claim}\label{claim:AijStronglySeparated}
For every $i \geq 1$, there exists $j \geq i$ sufficiently large, $A_i$ and $A_j$ (resp.\ $A_{-i}$ and $A_{-j}$) are strongly separated. 
\end{claim}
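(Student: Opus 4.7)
The plan is to choose $j \geq i$ large enough so that the labels $\{v_i, v_{i+1}, \ldots, v_j\}$ exhaust $V(\Omega)$; this is possible because by construction the ray $\sigma_y$ passes through every vertex of $\Omega$ infinitely often. I will argue that for such $j$, no hyperplane $K$ can be transverse to both $A_i$ and $A_j$, so they are strongly separated; the case of $A_{-i}$ and $A_{-j}$ then follows by the identical argument with $\sigma_x$ in place of $\sigma_y$.

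Suppose for contradiction that a hyperplane $K$ is transverse to both $A_i$ and $A_j$. The first and most delicate step is to show that $K$ must then be transverse to every intermediate $A_n$ for $i \leq n \leq j$. Since $A_i, A_{i+1}, \ldots, A_j$ are pairwise non-transverse and are crossed by $\gamma^+$ in that order, each $A_n$ in this range places the carriers $N(A_i)$ and $N(A_j)$ in opposite sectors of $A_n$. The carrier $N(K)$ is connected and, by transversality of $K$ with $A_i$ and $A_j$, meets both $N(A_i)$ and $N(A_j)$, hence meets both of these sectors of $A_n$. A path in $N(K)$ crossing from one sector to the other must traverse some edge lying in $A_n$; this edge cannot itself be a $K$-edge since each edge belongs to a unique hyperplane and $K \neq A_n$. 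The prism structure of $N(K)$ in the quasi-median graph $\mathrm{QM}(\Omega,\mathcal{J})$ then exhibits a $4$-cycle with two opposite edges in $K$ and two opposite edges in $A_n$, which witnesses the transversality of $K$ and $A_n$.

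Granting this, Lemma~\ref{lem:LabelHyp} forces $\ell(K)$ to be adjacent in $\Omega$ to $v_n$ for every $n$ with $i \leq n \leq j$. But $\{v_i, \ldots, v_j\} = V(\Omega)$ by the choice of $j$, so $\ell(K)$ would have to be adjacent to every vertex of $\Omega$, and in particular to itself, which is impossible in a loopless graph. The main obstacle I anticipate is the structural step in the previous paragraph propagating transversality through the chain $A_i, \ldots, A_j$; once that is in place, the conclusion reduces to a clean labelling argument using only the fact that $\sigma_y$ exhausts $V(\Omega)$.
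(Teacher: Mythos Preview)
Your propagation step is fine, and indeed the paper uses it implicitly as well: once you know that $K$ is transverse to both $A_i$ and $A_j$, it must be transverse to every $A_n$ with $i\le n\le j$, since each such $A_n$ separates $N(A_i)$ from $N(A_j)$ and carriers of hyperplanes in a quasi-median graph are gated.

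The genuine gap is in your choice of $j$. You want $\{v_i,\ldots,v_j\}$ to exhaust $V(\Omega)$, but the ambient lemma (Lemma~\ref{lem:StronglySeparatedEverywhere}) only assumes $\Omega$ is \emph{countable} with $\mathrm{clique}(\Omega)<\infty$, and in the application to periagroups $\Omega$ is typically infinite (the fibres $\ell^{-1}(u)$ have size $[C(\Psi):\langle \mathrm{link}(u)\cap\Psi\rangle]$, which need not be finite). An infinite ray in $\Omega^{\mathrm{opp}}$ can visit every vertex infinitely often, but no finite segment of it covers all of $V(\Omega)$, so your ``choose $j$ large enough'' step is impossible in general.

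The paper repairs exactly this point by using the hypothesis $\mathrm{clique}(\Omega)<\infty$ rather than finiteness of $\Omega$: fix a maximal complete subgraph $\{u_1,\ldots,u_n\}\subset\Omega$ and choose $j$ large enough that the labels $v_i,\ldots,v_j$ include each $u_k$. Then your propagation argument forces $\ell(K)$ to be adjacent to every $u_k$; since transverse hyperplanes have \emph{distinct} adjacent labels, $\ell(K)\notin\{u_1,\ldots,u_n\}$, contradicting maximality of the clique. So the structure of your argument is right; you just need to replace ``all of $V(\Omega)$'' by ``a maximal clique of $\Omega$''.
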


\noindent
Otherwise, there would exist some $i \geq 1$ such that $A_i$ and $A_j$ (or $A_{-i}$ and $A_{-j}$, but this case is symmetric so we do not consider it) are nevery strongly separated for $j \geq i$, i.e.\ there exists some hyperplane $H_j$ transverse to both $A_i$ and $A_j$. Let $u_1, \ldots, u_n \in \Omega$ be a maximal collection of pairwise adjacent vertices. By construction of $\sigma_y$, we can find some $j \geq i$ sufficiently large such that $A_i$ and $A_j$ are separated by hyperplanes labelled by $u_1, \ldots, u_n$. Necessarily, $H_j$ is transverse to all these hyperplanes, so its label is adjacent to all $u_1, \ldots, u_n$, a contradiction. This concludes the proof of Claim~\ref{claim:AijStronglySeparated}.

\medskip \noindent
As a consequence of Claims~\ref{claim:SepartingXandY} and~\ref{claim:AijStronglySeparated}, we can find $k \geq \ell \geq 1$ and $1 \leq p \leq q$ such that $A_{-k},A_{-\ell}$ are strongly separated, such that $A_p,A_q$ are strongly separated, and such that the hyperplanes separating $x$ and $y$ all separate $A_{-\ell}$ and $A_p$. Let $x'$ (resp.\ $y'$) denote the projection of $x$ on $N(A_{-k})$ (resp.\ $N(A_q)$). Fix a geodesic $\alpha^-$ connecting $x'$ to $x$ and a geodesic $\alpha^+$ connecting $y$ to $y'$.

\begin{claim}
The path $\alpha:= \alpha^- \cup [x,y] \cup \alpha^+$ is a geodesic.
\end{claim}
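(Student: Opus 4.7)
My plan is to invoke Proposition~\ref{prop:ParaGeod}, which says that a path in $\mathrm{QM}(\Omega,\mathcal{J})$ (being quasi-median, hence paraclique) is a geodesic precisely when it crosses each hyperplane at most once. Since each of $\alpha^-$, $[x,y]$, $\alpha^+$ is already a geodesic, the task reduces to ruling out the existence of a hyperplane $J$ crossed by two of these three sub-paths. The main inputs will be Lemma~\ref{lem:SepFromProj}, which identifies the hyperplanes crossing the ``gate'' segments $\alpha^{\pm}$, together with Claim~\ref{claim:GammaGeodesic}, which forbids $\gamma=\gamma^-\cup[x,y]\cup\gamma^+$ from crossing any hyperplane twice.

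Suppose first that some $J$ is crossed by both $\alpha^-$ and $[x,y]$. By Lemma~\ref{lem:SepFromProj}, $J$ separates $x$ from $N(A_{-k})$, so $N(A_{-k})$ sits entirely in a sector of $J$ disjoint from $x$. Since $J$ also separates $x$ from $y$, the choice of $\ell$ and $p$ yields that $J$ separates $A_{-\ell}$ from $A_p$; and because $\gamma$ is a geodesic (Claim~\ref{claim:GammaGeodesic}), $\gamma$ crosses $J$ exactly once, and this unique crossing must lie inside $[x,y]$ (otherwise $\gamma$ would hit $J$ both in $\gamma^-$ and again in $[x,y]$). Consequently $N(A_{-\ell})$ and $x$ share a sector of $J$, while $N(A_{-k})$ does not; hence $A_{-\ell}$ and $A_{-k}$ lie in distinct sectors of $J$. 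Travelling along $\gamma^-$ from the edge in $A_{-\ell}$ to the edge in $A_{-k}$ must therefore cross $J$ along some edge, and since every edge lies in a unique hyperplane this forces $J=A_{-i}$ for some $\ell<i<k$. But then $J$ contributes an edge of $\gamma^-$ and an edge of $[x,y]$, contradicting Claim~\ref{claim:GammaGeodesic}. The case where $J$ is crossed by $[x,y]$ and $\alpha^+$ is entirely symmetric.

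Finally, suppose $J$ is crossed by both $\alpha^-$ and $\alpha^+$ but not by $[x,y]$; then $x$ and $y$ share a sector of $J$ while $N(A_{-k})$ and $N(A_q)$ each lie outside it. Following $\gamma^-$ from $x$ to its gate $x'\in N(A_{-k})$, the same edge-in-a-unique-hyperplane argument forces $J=A_{-i}$ for some $i\geq 1$; symmetrically, $\gamma^+$ forces $J=A_j$ for some $j\geq 1$, which is impossible since $A_{-i}$ and $A_j$ are distinct hyperplanes crossed by the single geodesic $\gamma$. The main subtlety throughout is the bookkeeping of which sector of $J$ contains which of $A_{-\ell},A_{-k},A_p,A_q,x,y$; this is controlled by exploiting that $\gamma$ crosses $J$ at most once, which pins down how $\gamma$'s vertices are distributed among $J$'s sectors.
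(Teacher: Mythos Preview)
Your proof is correct and follows essentially the same approach as the paper: a hyperplane crossed by $\alpha^{-}$ (resp.\ $\alpha^{+}$) separates $x$ (resp.\ $y$) from $N(A_{-k})$ (resp.\ $N(A_q)$) by Lemma~\ref{lem:SepFromProj}, and since $\gamma^{-}$ (resp.\ $\gamma^{+}$) travels from $x$ (resp.\ $y$) into $N(A_{-k})$ (resp.\ $N(A_q)$), it too must cross that hyperplane; hence any hyperplane meeting two of the three sub-paths of $\alpha$ is crossed twice by $\gamma$, contradicting Claim~\ref{claim:GammaGeodesic}. Two cosmetic remarks: in your first case the detour through $A_{-\ell}$ is unnecessary (once you know $J$ separates $x$ from $N(A_{-k})$ and that $\gamma^-$ reaches $N(A_{-k})$, you already have $\gamma^-$ crossing $J$), and in the last case $\gamma^-$ reaches $N(A_{-k})$ but not necessarily the specific gate~$x'$---neither affects the argument.
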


\noindent
A hyperplane crossing both $\alpha^-$ and $[x,y]$ would have to cross $\gamma^-$ as $A_{-\ell}$ and $A_{-m}$ are strongly separated. So it would cross $\gamma$ twice, contracting Claim~\ref{claim:GammaGeodesic}. Similarly, a hyperplane crossing both $\alpha^-$ and $\alpha^+$ would have to cross both $\gamma^-$ and $\gamma^+$, contradicting Claim~\ref{claim:GammaGeodesic} again. Thus, $\alpha$ crosses each hyperplane at most once, proving that it is a geodesic.

\begin{claim}\label{claim:MinimalDistance}
The vertices $x'$ and $y'$ minimise the distance between $N(A_{-k})$ and $N(A_r)$.
\end{claim}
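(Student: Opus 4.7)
The plan is to prove $d(x', y') = d(N(A_{-k}), N(A_q))$. The inequality $d(N(A_{-k}), N(A_q)) \leq d(x', y')$ is immediate from $x' \in N(A_{-k})$ and $y' \in N(A_q)$. For the reverse I will invoke the standard fact that in a quasi-median graph the distance between two gated subgraphs equals the number of hyperplanes separating them; it thus suffices to show that every hyperplane $H$ separating $x'$ from $y'$ also separates $N(A_{-k})$ from $N(A_q)$, i.e.\ $H$ crosses neither carrier. Since $\alpha = \alpha^- \cup [x,y] \cup \alpha^+$ is a geodesic by the previous claim, each such $H$ is crossed by exactly one of the three subpaths, and I proceed by case analysis on which.

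When $H$ lies in $\alpha^-$ (the $\alpha^+$ case being symmetric), Lemma~\ref{lem:SepFromProj} gives that $H$ separates $x$ from $N(A_{-k})$, so $H$ does not cross $N(A_{-k})$. To see that $H$ does not cross $N(A_q)$ either, the possibility $H = A_q$ is excluded because $A_q$ places $x$ and $x'$ on the same side of itself along $\gamma$ (both sitting on the $\gamma^-$-side of $A_q$), and the possibility that $H$ is transverse to $A_q$ is ruled out by combining the strong separation of $A_p, A_q$---which forces $H$ to be non-transverse to $A_p$---with a $\gamma$-geodesic obstruction analogous to the one used in the proof of Claim~\ref{claim:GammaGeodesic}. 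When $H$ lies in $[x,y]$, then $H$ separates $x$ and $y$ and hence separates $A_{-\ell}$ from $A_p$; the case $H = A_{-k}$ is immediately excluded since $A_{-\ell}$ and $A_p$ both lie on the $x$-side of $A_{-k}$, while $H$ being transverse to $A_{-k}$ is ruled out by the same strategy: the strong separation of $A_{-k}$ and $A_{-\ell}$ together with the $\gamma^-$-segment from $x$ to $N(A_{-k})$ force $H$ to coincide with some $A_{-i}$ for $1 \leq i < k$, contradicting the geodesicity of $\gamma$ since $H$ already crosses $[x,y] \subset \gamma$. The symmetric argument using strong separation of $A_p, A_q$ rules out $H$ crossing $N(A_q)$.

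The main obstacle is the careful halfspace bookkeeping required in the transverse subcases: in each such subcase one must verify that the non-transversality of $H$ with the ``inner'' strongly separated hyperplane ($A_{-\ell}$ or $A_p$) places that hyperplane on the expected side of $H$ so as to force $H$ to coincide with an $A_{\pm i}$ on $\gamma$. Once this alignment is established, the resulting double-crossing contradiction with Claim~\ref{claim:GammaGeodesic} closes out every subcase and yields the claim.
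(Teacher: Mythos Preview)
Your overall strategy---prove that every hyperplane $H$ separating $x'$ from $y'$ also separates $N(A_{-k})$ from $N(A_q)$, by case analysis on which of $\alpha^-$, $[x,y]$, $\alpha^+$ it crosses---is exactly the paper's approach, and the reduction to that statement via Proposition~\ref{prop:ParaGeod} is correct.

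The execution, however, has a genuine gap in the transversality subcases. In the $[x,y]$ case you assert that if $H$ were transverse to $A_{-k}$ then ``the $\gamma^-$-segment from $x$ to $N(A_{-k})$ force[s] $H$ to coincide with some $A_{-i}$''. This does not follow: transversality of $H$ with $A_{-k}$ only gives $N(H)\cap N(A_{-k})\neq\emptyset$; it does not force $H$ to cross $\gamma^-$, so there is no reason $H$ should be one of the $A_{-i}$. The contradiction is actually reached one step earlier and differently: from strong separation of $A_{-k},A_{-\ell}$ you get that $H$ is not transverse to $A_{-\ell}$, so $N(H)$ lies in a single sector of $A_{-\ell}$; since $H$ crosses $[x,y]$, that sector is the one containing $x$. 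But $A_{-\ell}$ separates $x$ from $N(A_{-k})$ (they are non-transverse and $\gamma^-$ crosses $A_{-\ell}$ before $A_{-k}$), so $N(A_{-k})$ lies in the \emph{other} sector of $A_{-\ell}$, whence $N(H)\cap N(A_{-k})=\emptyset$, contradicting transversality. No appeal to $\gamma$-geodesicity or to $H=A_{-i}$ is needed. The same correction applies to the symmetric $A_q$ subcase.

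In the $\alpha^-$ case your ``$\gamma$-geodesic obstruction'' is too vague to count as a proof. Here the missing observation is that a hyperplane crossing $\alpha^-$ separates $x$ from $N(A_{-k})$ and hence must cross the initial segment of $\gamma^-$ from $x$ to $N(A_{-k})$, so it \emph{is} one of $A_{-1},\ldots,A_{-(k-1)}$; then the sector argument with $A_p$ (not $A_q$) just described rules out transversality with $A_q$. The paper short-circuits all of this by simply recording that $A_{-k}$ and $A_q$ are themselves strongly separated; once that is noted, each subcase becomes a one-liner.
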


\noindent
Notice that a hyperplane $J$ crossing $\alpha^-$ separates $A_{-k}$ and $A_q$. Indeed, if $J$ crosses $\alpha^-$, then it cannot be transverse to $A_q$ since $A_{-k}$ and $A_q$ are strongly separated; and, since it separates $x$ from its projection on $N(A_{-k})$, it cannot be transvse to $A_{-k}$ either. Similarly, every hyperplane crossing $\alpha^+$ separates $A_{-k}$ and $A_q$. And we already know that the hyperplanes crossing $[x,y]$ separate $A_{-\ell}$ and $A_p$. Thus, the hyperplanes crossing $\alpha$, or equivalently the hyperplanes separating $x'$ and $y'$, all separate $A_{-k}$ and $A_q$. This concludes the proof of Claim~\ref{claim:MinimalDistance}.

\medskip \noindent
Thus, we have proved that $x$ and $y$ both lie on a geodesic $\alpha$ that connects two vertices $x' \in N(A_{-k})$ and $y' \in N(A_q)$, where $A_{-k}$ and $A_q$ are strongly separated. This concludes the proof of the lemma. 
\end{proof}

\section{On acylindrical hyperbolicity}\label{section:AcylHyp}

\noindent
A group admitting a proper action on a metric space with contracting isometries must be acylindrically hyperbolic or virtually cyclic by \cite{MR3415065}, so Theorem~\ref{thm:PeriagroupsContracting} allows us to determine precisely when a periagroup is acylindrically hyperbolic. Theorem \ref{thm:PeriaAcylHyp} states that a periagroup is acylindrically hyperbolic whenever it is not virtually cyclic and it does not virtually split as a product of two infinite groups in an `obvious way'. 

\begin{thm}\label{thm:PeriaAcylHyp}
A finitely generated periagroup $\Pi:= \Pi(\Gamma, \lambda, \mathcal{G})$ is acylindrically hyperbolic if and only if $\Gamma$ decomposes as a $\ast_2$-join $\Gamma_1 \ast_2 \cdots \ast_2 \Gamma_n$ of $\ast_2$-irreducible labelled graphs such that:
\begin{itemize}
	\item[(i)] for every $2 \leq i \leq n$, the labelled graph $\Gamma_i$ is a single vertex indexed by a finite group, or $\Gamma_i$ defines a spherical Coxeter group, and
	\item[(ii)] the labelled graph $\Gamma_1$ either
\begin{itemize}
	\item is a single vertex indexed by an acylindrically hyperbolic group,
	\item or defines a graph product $\neq \mathbb{Z}_2 \ast \mathbb{Z}_2$ with at least two vertices,
	\item or defines a Coxeter group that is neither spherical nor affine,
	\item or contains at least one vertex indexed by $\mathbb{Z}_2$ and at least one vertex indexed by a group of order $\geq 3$.
\end{itemize}
\end{itemize}
\end{thm}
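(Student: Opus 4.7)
The plan is to reduce the statement to the irreducible case handled by Theorem~\ref{thm:PeriagroupsContracting}, via the $\ast_2$-decomposition of $\Gamma$. First I would write $\Gamma = \Gamma_1 \ast_2 \cdots \ast_2 \Gamma_n$ as a $\ast_2$-join of $\ast_2$-irreducible labelled subgraphs; this decomposition is essentially unique up to reordering, and it induces a direct product decomposition $\Pi = \Pi_1 \times \cdots \times \Pi_n$, where $\Pi_i$ denotes the periagroup defined by $\Gamma_i$. Since acylindrically hyperbolic groups are infinite and cannot split as a direct product of two infinite subgroups (see e.g.\ \cite{MR3430352}), $\Pi$ is acylindrically hyperbolic if and only if exactly one factor $\Pi_i$ is infinite and that factor is itself acylindrically hyperbolic.

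I would then characterise finite $\ast_2$-irreducible periagroups. The single-vertex case is immediate. If $\Gamma_i$ has at least two vertices and contains a vertex $u$ with $|G_u| \geq 3$, then every edge incident to $u$ carries label $2$, and the $\ast_2$-irreducibility of $\Gamma_i$ produces a vertex $w$ not adjacent to $u$; thus $\Pi_i$ contains the non-trivial free product $G_u \ast G_w$, which is infinite and in fact contains a non-abelian free subgroup. Consequently, $\Pi_i$ is finite exactly when either $\Gamma_i$ is a single vertex indexed by a finite group, or all vertex groups of $\Gamma_i$ are $\mathbb{Z}_2$ and the associated Coxeter group is spherical. This matches condition~$(i)$ of the theorem precisely, and reduces the problem to the acylindrical hyperbolicity of the unique infinite factor, say $\Pi_1$.

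For $\Pi_1$, the single-vertex case immediately yields the first bullet of condition~$(ii)$. When $\Gamma_1$ has at least two vertices, I would combine Theorem~\ref{thm:PeriagroupsContracting} with the main result of \cite{MR3415065}: a proper action on a geodesic metric space admitting a contracting isometry forces the group to be acylindrically hyperbolic or virtually infinite cyclic. The three cases $(i)$--$(iii)$ of Theorem~\ref{thm:PeriagroupsContracting} translate, after discarding the virtually cyclic exception, into the third bullet (Coxeter, neither spherical nor affine), the second bullet (pure graph product, avoiding $\mathbb{Z}_2 \ast \mathbb{Z}_2$), and the fourth bullet (mixed case, which is covered by condition~$(iii)$ of Theorem~\ref{thm:PeriagroupsContracting} whenever some edge is labelled $>2$, and by condition~$(ii)$ otherwise). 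Conversely, when none of the four bullets holds, the final clause of Theorem~\ref{thm:PeriagroupsContracting} yields directly that $\Pi_1$ is neither virtually infinite cyclic nor acylindrically hyperbolic, which closes the converse direction together with the first paragraph.

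The main obstacle will be confirming that $\mathbb{Z}_2 \ast \mathbb{Z}_2$ is the \emph{only} virtually infinite cyclic periagroup covered by the conditions of Theorem~\ref{thm:PeriagroupsContracting} once $\Gamma_1$ has at least two vertices, so that the gap between ``admits a contracting element'' and ``acylindrically hyperbolic'' is entirely accounted for by the explicit $\neq \mathbb{Z}_2 \ast \mathbb{Z}_2$ exclusion. The free-product argument from the second paragraph already shows that $\Pi_1$ contains a non-abelian free subgroup as soon as $\Gamma_1$ carries a vertex indexing a group of order $\geq 3$, which rules out the virtually cyclic case in both the mixed and the graph-product-with-a-non-$\mathbb{Z}_2$-vertex sub-cases. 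In the remaining sub-case, where $\Gamma_1$ defines an irreducible Coxeter group on $\mathbb{Z}_2$-vertices only, the only infinite virtually cyclic example is the infinite dihedral group $D_\infty$, corresponding to two non-adjacent $\mathbb{Z}_2$-vertices.
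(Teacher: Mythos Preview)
Your proposal is correct and follows essentially the same route as the paper: decompose $\Pi$ along the $\ast_2$-join into a direct product, use that acylindrically hyperbolic groups cannot split as a product of two infinite groups to reduce to one irreducible factor, characterise the finite factors, and then invoke Theorem~\ref{thm:PeriagroupsContracting} together with \cite{MR3415065} on the remaining factor.

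The only noticeable difference is in bookkeeping. The paper fixes once and for all the GP--Cox decomposition $\Gamma_1 = \Gamma_1' \sqcup \Gamma_1''$ in which $\Gamma_1''$ consists of \emph{all} the $\mathbb{Z}_2$-vertices, so that the three cases $(i)$--$(iii)$ of Theorem~\ref{thm:PeriagroupsContracting} line up directly with the last three bullets of the statement. You instead vary the GP--Cox decomposition in the mixed case (fourth bullet), falling back on condition~$(ii)$ when $\lambda\equiv 2$; this also works but is slightly less uniform. Conversely, you are more explicit than the paper about why the virtually cyclic exception is exhausted by $\mathbb{Z}_2\ast\mathbb{Z}_2$, supplying the free-product argument that produces a non-abelian free subgroup whenever a vertex group of order $\geq 3$ is present; the paper leaves this implicit.
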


\begin{proof}
By the assumption that $\Gamma$ is a $\ast_2$-join, the group $\Pi$ decomposes as
$$\Pi = \langle \Gamma_1 \rangle \times \cdots \times \langle \Gamma_n \rangle.$$
Since an acylindrically hyperbolic groups cannot decompose as a product of two infinite groups (see \cite[Corollary~7.2]{MR3430352}), and the product of an acylindrically hyperbolic group and a finite group remains acylindrically hyperbolic, we know that, up to permuting the $\ast_2$-factors of $\Gamma$, $\Pi$ is acylindrically hyperbolic if and only if $\langle \Gamma_1 \rangle$ is acylindrically hyperbolic and $\langle \Gamma_2 \rangle, \ldots, \langle \Gamma_n \rangle$ are all finite. Now each $\langle \Gamma_i \rangle$ is finite if and only if $\Gamma_i$ is a single vertex indexed by a finite group or it defines a spherical Coxeter group. It remains to investigate when $\langle \Gamma_1 \rangle$ is acylindrically hyperbolic. 

\medskip \noindent
Consider the GP-Cox decomposition $\Gamma_1 = \Gamma_1' \sqcup \Gamma_1''$, where $\Gamma_1''$ contains all the vertices of $\Gamma_1$ indexed by $\mathbb{Z}_2$. By Theorem~\ref{thm:PeriagroupsContracting}, $\langle \Gamma_1 \rangle$ is acylindrically hyperbolic if and only if 
\begin{itemize}
	\item $\Gamma_1$ is a single vertex indexed by an acylindrically hyperbolic group,
	\item or $\Gamma_1'= \emptyset$ (i.e.\ $\Gamma_1$ defines a Coxeter group) and $C(\Gamma_1'')$ is a Coxeter group that is neither spherical nor affine,
	\item or $\Gamma_1''= \emptyset$ and $\Gamma_1'$ defines a graph product $\neq \mathbb{Z}_2 \ast \mathbb{Z}_2$ with at least two vertices,
	\item or $\Gamma_1'$ and $\Gamma_1''$ are both non-empty, which means that $\Gamma_1$ contains at least one vertex indexed by $\mathbb{Z}_2$ and at least one vertex indexed by a group of order $\geq 3$.
\end{itemize}
This concludes the proof of the theorem. 
\end{proof}

\noindent
When restricted to Coxeter groups, Theorem~\ref{thm:PeriaAcylHyp} implies:

\begin{cor}\label{cor:CoxAcylHyp}
A finitely generated Coxeter group is acylindrically hyperbolic if and only if it decomposes as a product $W_1 \times \cdots \times W_n$ of irreducible Coxeter groups such that $W_2, \ldots, W_n$ are finite and such that $W_1$ is neither spherical nor affine.
\end{cor}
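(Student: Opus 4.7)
The plan is to derive Corollary~\ref{cor:CoxAcylHyp} as a direct specialization of Theorem~\ref{thm:PeriaAcylHyp} to the Coxeter case. A finitely generated Coxeter group $(W,S)$ is exactly the periagroup $\Pi(\Gamma, \lambda, \mathcal{G})$ in which $\Gamma$ is the Coxeter graph, $\lambda$ records the dihedral orders, and every vertex group in $\mathcal{G}$ is cyclic of order $2$; the corollary will follow by reading off what each clause of Theorem~\ref{thm:PeriaAcylHyp} says under this restriction.

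First I would observe that the $\ast_2$-decomposition $\Gamma = \Gamma_1 \ast_2 \cdots \ast_2 \Gamma_n$ into $\ast_2$-irreducible factors appearing in Theorem~\ref{thm:PeriaAcylHyp} matches the standard decomposition $W = W_1 \times \cdots \times W_n$ of $W$ into irreducible Coxeter subgroups. Indeed, two generators of $S$ commute precisely when the corresponding edge of $\Gamma$ carries the label $2$ (and is thus a $\ast_2$-edge), so being $\ast_2$-irreducible for $\Gamma_i$ is equivalent to irreducibility of $W_i = C(\Gamma_i)$.

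Next, condition (i) of Theorem~\ref{thm:PeriaAcylHyp} requires that, for $2 \leq i \leq n$, either $\Gamma_i$ be a single vertex indexed by a finite group, or $\Gamma_i$ define a spherical Coxeter group. Since every vertex group here is $\mathbb{Z}_2$, both alternatives are equivalent to the single statement that $W_i$ is finite, which is exactly one half of what the corollary asserts.

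Finally, for condition (ii), recall that the GP-Cox decomposition used in the proof of Theorem~\ref{thm:PeriaAcylHyp} places all $\mathbb{Z}_2$-indexed vertices in the Cox part, so for Coxeter groups we automatically have $\Gamma_1' = \emptyset$ and $\Gamma_1'' = \Gamma_1$. Of the four subcases of condition (ii), the first is impossible because $\mathbb{Z}_2$ is not acylindrically hyperbolic, the third is impossible since it requires $\Gamma_1'' = \emptyset$ while $\Gamma_1$ is non-empty, and the fourth is impossible since it requires a vertex indexed by a group of order $\geq 3$. Only the second subcase survives, and it collapses to the condition that $W_1 = C(\Gamma_1)$ is neither spherical nor affine. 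Combining this with the description of conditions (i) yields the corollary. The only potential obstacle is the small bookkeeping needed to check that the Coxeter setting genuinely forces the GP-Cox decomposition of Theorem~\ref{thm:PeriaAcylHyp} to select the second alternative and no other; no further geometric input is required.
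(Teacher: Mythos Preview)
Your proposal is correct and follows exactly the paper's approach: the paper simply states that the corollary follows by restricting Theorem~\ref{thm:PeriaAcylHyp} to Coxeter groups, and you have spelled out that specialization in detail. One minor presentational point: your numbering of the four subcases of condition~(ii) matches the bulleted list in the \emph{proof} of Theorem~\ref{thm:PeriaAcylHyp} (single vertex, Coxeter, graph product, mixed) rather than the order in its \emph{statement} (single vertex, graph product, Coxeter, mixed), so a reader consulting only the statement may be momentarily confused about which subcase you claim survives; the mathematics is unaffected.
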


\noindent
The same result can also be deduced from the construction of rank-one elements in \cite{MR2585575}. It is worth mentioning that, since we used \cite{MR2585575} in order to prove Theorem~\ref{thm:CoxeterContracting}, we do not really get an alternative proof of Corollary~\ref{cor:CoxAcylHyp}.  

\medskip \noindent
Then, when restricted to graph products, Theorem~\ref{thm:PeriaAcylHyp} implies:

\begin{cor}
Let $\Gamma$ be a finite graph, and let $\mathcal{G}$ be a collection of finitely generated groups indexed by $V(\Gamma)$. The graph product $\Gamma \mathcal{G}$ is acylindrically hyperbolic if and only if $\Gamma$ decomposes as a join $\Gamma_1 \ast \cdots \ast \Gamma_n$ satisfying:
\begin{itemize}
	\item[(i)] for every $2 \leq i \leq n$, $\Gamma_i$ is a complete graph whose vertices are indexed by finite groups, and
	\item[(ii)] the graph $\Gamma_1$ either
\begin{itemize}
	\item is a single vertex indexed by an acylindrically hyperbolic group,
	\item or has at least two vertices but is distinct from $\mathbb{Z}_2 \ast \mathbb{Z}_2$. 
\end{itemize}
\end{itemize}
\end{cor}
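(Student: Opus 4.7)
The plan is to deduce this corollary directly from Theorem~\ref{thm:PeriaAcylHyp} by specialising to the graph-product case, i.e.\ to periagroups with $\lambda \equiv 2$.

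First, I would observe that when every edge label is $2$, the notion of $\ast_2$-join of Definition~\ref{def:JoinTwo} coincides with the ordinary graph-theoretic join, and likewise $\ast_2$-irreducibility coincides with join-irreducibility. Applying Theorem~\ref{thm:PeriaAcylHyp}, I would obtain that $\Gamma\mathcal{G}$ is acylindrically hyperbolic if and only if $\Gamma$ admits a join decomposition $\Gamma = \Gamma_1' \ast \cdots \ast \Gamma_m'$ into join-irreducible factors satisfying the two conditions of the theorem. The rest of the argument consists in translating these conditions into the statement of the corollary.

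For condition~(i) of Theorem~\ref{thm:PeriaAcylHyp}, each factor $\Gamma_i'$ with $i \geq 2$ must either be a single vertex indexed by a finite group, or define a spherical (finite) Coxeter group. In the graph product setting, the latter requires all vertex groups of $\Gamma_i'$ to be $\mathbb{Z}_2$ and $\Gamma_i'$ to be a complete graph, which for a join-irreducible graph forces $\Gamma_i'$ to reduce to a single vertex. Thus each such $\Gamma_i'$ is a single vertex indexed by a finite group, and the join $\Gamma_2' \ast \cdots \ast \Gamma_m'$ produces a complete graph whose vertices are indexed by finite groups, matching condition~(i) of the corollary (possibly after grouping these single vertices into several complete-graph factors).

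For condition~(ii), I would check that the four sub-cases of the theorem collapse to the two bullets of the corollary. The first sub-case (single vertex indexed by an acylindrically hyperbolic group) is identical in both statements. The remaining three sub-cases all correspond to $\Gamma_1'$ having at least two vertices: since $\Gamma_1'$ is join-irreducible and non-trivial, it cannot be complete, so any right-angled Coxeter group it defines is automatically non-spherical; ``neither spherical nor affine'' in the right-angled setting then amounts exactly to excluding $\mathbb{Z}_2 \ast \mathbb{Z}_2$; the ``mix of $\mathbb{Z}_2$'s and larger groups'' and the ``graph product $\neq \mathbb{Z}_2 \ast \mathbb{Z}_2$ with at least two vertices'' sub-cases both translate into the condition that $\Gamma_1'$ is a graph product distinct from $\mathbb{Z}_2 \ast \mathbb{Z}_2$ on at least two vertices. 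Hence the three sub-cases merge into the second bullet of the corollary. The main step of the proof is this dictionary between the conditions; the only real obstacle is the bookkeeping involved in arranging the join-decomposition so that $\Gamma_1$ is join-irreducible while grouping all single-vertex finite-group factors into the complete graphs of condition~(i), but this is routine once the correspondence above has been verified.
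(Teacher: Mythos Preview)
Your proposal is correct and follows exactly the route the paper intends: the corollary is stated immediately after the sentence ``When restricted to graph products, Theorem~\ref{thm:PeriaAcylHyp} implies:'' and no separate proof is given, so specialising the theorem to $\lambda\equiv 2$ and translating the conditions is precisely what is expected. Your dictionary is accurate, including the observation that the only $\ast_2$-irreducible right-angled affine Coxeter group is $\mathbb{Z}_2\ast\mathbb{Z}_2$, which is what collapses the last three sub-cases of condition~(ii) into the single bullet ``at least two vertices and $\neq \mathbb{Z}_2\ast\mathbb{Z}_2$''; the bookkeeping you flag about grouping the single-vertex finite factors into complete graphs is indeed the only thing left to say.
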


\noindent
The same result can be deduced from \cite{MR3368093}. Here, our arguments differ: in \cite{MR3368093}, generalised loxodromic elements are obtained by a careful analysis of the actions of graph products on their natural Bass-Serre trees; in our proof the contracting elements are obtained from the quasi-median geometry of graph products.

\medskip \noindent
Recall that a \emph{Dyer group} \cite{MR1076077} is a periagroup of cyclic groups. When restricted to Dyer groups, Theorem~\ref{thm:PeriaAcylHyp} implies:

\begin{cor}\label{cor:DyerAcylHyp}
A finitely generated Dyer group $D:= \Pi(\Gamma, \lambda, \mathcal{G})$ is acylindrically hyperbolic if and only if $\Gamma$ decomposes as a $\ast_2$-join $\Gamma_1 \ast_2 \cdots \ast_2 \Gamma_n$ of $\ast_2$-irreducible labelled graphs such that:
\begin{itemize}
	\item[(i)] for every $2 \leq i \leq n$, the labelled graph $\Gamma_i$ is reduced to a single vertex indexed by a finite cyclic group, or $\Gamma_i$ defines a spherical Coxeter group, and
	\item[(ii)] the labelled graph $\Gamma_1$ either
\begin{itemize}
	\item defines a graph product $\neq \mathbb{Z}_2 \ast \mathbb{Z}_2$ with at least two vertices,
	\item or defines a Coxeter group that is neither spherical nor affine,
	\item or contains at least one vertex indexed by $\mathbb{Z}_2$ and at least one vertex indexed by a cyclic group of order $\geq 3$.
\end{itemize}
\end{itemize}
\end{cor}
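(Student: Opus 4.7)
The plan is to deduce Corollary~\ref{cor:DyerAcylHyp} directly from Theorem~\ref{thm:PeriaAcylHyp} by specialising to the case where every vertex-group is cyclic, which is exactly the definition of a Dyer group. No new geometric input is needed; the argument is essentially a translation of the hypotheses.

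First, I apply Theorem~\ref{thm:PeriaAcylHyp} to $D=\Pi(\Gamma,\lambda,\mathcal{G})$ and obtain a $\ast_2$-join decomposition $\Gamma=\Gamma_1\ast_2\cdots\ast_2\Gamma_n$ into $\ast_2$-irreducible labelled graphs satisfying conditions (i) and (ii) of that theorem. For condition (i), since all vertex-groups of $\mathcal{G}$ are cyclic by assumption, any $\Gamma_i$ reduced to a single vertex indexed by a finite group is automatically indexed by a finite cyclic group; so this condition translates verbatim into condition (i) of the corollary.

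Second, for condition (ii), I compare the four bullets of Theorem~\ref{thm:PeriaAcylHyp} with the three bullets of the corollary. The second, third and fourth bullets (graph product $\neq\mathbb{Z}_2\ast\mathbb{Z}_2$ with at least two vertices; non-spherical, non-affine Coxeter group; mix of $\mathbb{Z}_2$-vertices and vertices of order $\geq 3$) make no reference to the isomorphism type of vertex-groups beyond cardinality or being $\mathbb{Z}_2$, so they pass over to the Dyer setting unchanged. The only discrepancy is the first bullet of Theorem~\ref{thm:PeriaAcylHyp}(ii), namely that $\Gamma_1$ is a single vertex indexed by an acylindrically hyperbolic group. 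The key observation is that in the Dyer setting this possibility is vacuous: any such vertex-group would be a (finitely generated) cyclic group, and finitely generated cyclic groups are virtually cyclic, hence never acylindrically hyperbolic in the standard convention used here (which excludes virtually cyclic groups via the non-elementarity requirement in the definition). Consequently this bullet disappears, and the three remaining bullets match those of Corollary~\ref{cor:DyerAcylHyp}(ii) exactly.

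There is no real obstacle here; the only point one might want to spell out is the exclusion of the ``acylindrically hyperbolic single vertex'' case, which is immediate from the cyclicity of Dyer group factors. Combining both directions of Theorem~\ref{thm:PeriaAcylHyp}, the specialisation yields the stated equivalence.
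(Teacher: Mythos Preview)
Your proposal is correct and follows exactly the paper's approach: the paper simply states that Corollary~\ref{cor:DyerAcylHyp} is what Theorem~\ref{thm:PeriaAcylHyp} yields ``when restricted to Dyer groups'', without further argument. Your observation that the first bullet of Theorem~\ref{thm:PeriaAcylHyp}(ii) becomes vacuous because cyclic groups are virtually cyclic (hence not acylindrically hyperbolic) is precisely the only point that needs checking, and you have handled it correctly.
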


\noindent
The same result can be found in \cite{DyerAcylHyp}. There, the theorem is obtained by combining Corollary~\ref{cor:CoxAcylHyp} (i.e.\ \cite{MR2585575}) with the fact a Dyer group is naturally a finite-index subgroup of a Coxeter group \cite{MR4735237} (following \cite{MR1783167}).

\section{Applications to conjugacy growth}\label{section:ConjGrowth}

\subsection{Conjugacy and standard growth}\label{subsec:conjgeo}

For a group $G$ with finite generating set $X$ and words $u,v \in X^{\ast}$, we use $u=v$ to denote equality of words, and $u =_{G} v$ to denote equality of the group elements represented by $u$ and $v$. The \emph{(word) length} of an element $g\in G$, denoted $|g|_X$, is the length of a shortest word in $X$ that represents $g$, i.e.\ $|g|_X := \min \{ |w| \mid w\in X^*, w =_G g\}$. We let $s_{G,X}(n):=\#\{g\in G\mid |g|_X=n\}$ be the \emph{spherical standard growth} function of $G$ with respect to $X$, and similarly let the \emph{cumulative standard growth} be $S_{G,X}(n):=\#\{g\in G\mid |g|_X \leq n\}$. We define the \emph{growth rate} of the group to be
\[ \alpha_{G,X} = \lim_{n \to \infty} \sqrt[n]{s_{G,X}(n)}.\]

\medskip \noindent
We write $g \sim h$ to express that $g$ and $h$ are conjugate, and write $[g]$ for the conjugacy class of $g$. The \emph{length} of the conjugacy class $[g]$, denoted $|[g]|$, is the shortest length among all elements in $[g]$, i.e.\ $|[g]| = \min \{ |h| \mid h \sim g \}$. 
We define the \emph{(spherical) conjugacy growth function} $c(n) = c_{G,X}(n)$ 
of $G$ with respect to $X$ to be the number of conjugacy classes whose length is $= n$, that is, \ 
\[ c(n) = \#\{[g] \mid |[g]| = n\},\]
and the \emph{conjugacy growth rate} as $\lim_{n \rightarrow \infty}\sqrt[n]{c_{G,X}(n)}$.
We can similarly define the cumulative conjugacy growth function, but for ease of computation we shall work only with the spherical version. 
The \emph{conjugacy growth series} $C(z) = C_{G,X}(z)$ is defined to be the (ordinary) generating function of $c(n)$, so
\[ C(z) = \sum\limits_{n = 0}^{\infty} c(n) z^n. \]
All results in this paper can be easily extended to the cumulative version of the conjugacy growth function and series (see \cite{AC2017}). Similar to the standard growth, we define the \emph{conjugacy growth rate} of the group as $\gamma_{G,X} = \limsup_{n \to \infty} \sqrt[n]{c_{G,X}(n)}$. This limsup doesn't exist in general, but it does for the groups we consider in this paper.

\medskip \noindent
We call a formal power series $f(z)$ \emph{rational} if it can be expressed (formally) as the ratio of two polynomials with integral coefficients, or equivalently, the coefficients of $f(z)$ satisfy a finite linear recursion. In the language of polynomial rings, this is to say $f(z) \in \mathbb{Q}(z)$. Furthermore, $f(z)$ is \emph{irrational} if it is not rational. 
A formal power series is \emph{algebraic} if it is in the algebraic closure of $\mathbb{Q}(z)$, i.e.\ it is the solution to a polynomial equation with coefficients from $\mathbb{Q}(z)$. It is called \emph{transcendental} if it is not algebraic.

\medskip \noindent
For $G$ generated by $X$, let $\alpha=\alpha_{G,X}$ be the growth rate of the group $G$, as defined above. The main tool used in \cite{AC2017, CiobanuE2020, GeYang, CC25} to prove that the conjugacy growth series $C_{G,X}(z)$ is transcendental was to show that the conjugacy growth function $c_{G,X}(n)$ has asymptotics of the form $\sim \frac{\alpha^n}{n}$. We say that a sequence $a_n$, $n \geq 1$, has asymptotics $f(n)$, denoted as $a_n \sim f(n)$, if there exist two positive constants $c_1, c_2$ such that $c_1 f(n) \leq a_n \leq c_2 f(n)$ for all $n \geq 1$. Sequences with asymptotics of the form $\sim \frac{\alpha^n}{n}$ have transcendental generating functions by \cite[Thm. D]{Flajolet1987}.

\subsection{Conjugacy growth in direct products}

The conjugacy growth series, like the standard growth series, of direct products, is the product of each factor's series \cite[Obs. 14.8]{Rivin2010}:

\begin{lemma}\label{lem:dirprod}
Let $A_i$, $1\leq i \leq n$, be groups with (disjoint) generating sets $S_i$. The conjugacy growth series of $A_1\times \dots \times A_n$ with respect to the generating set $\cup_i S_i$ satisfies
\[C_{A_1\times \dots \times A_n, \cup_i S_i}(z)= \prod_i C_{A_i,S_i}(z).\]
\end{lemma}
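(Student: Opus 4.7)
The plan is to identify a bijection between conjugacy classes of $A_1 \times \cdots \times A_n$ and $n$-tuples of conjugacy classes of the $A_i$, and then to check that this bijection is length-additive in a suitable sense. This will reduce the identity between series to the familiar fact that the generating function of a convolution of sequences is the product of their generating functions.

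First I would observe that, because distinct factors of a direct product commute elementwise, two tuples $(a_1, \ldots, a_n)$ and $(b_1, \ldots, b_n)$ are conjugate in $A_1 \times \cdots \times A_n$ if and only if $a_i$ and $b_i$ are conjugate in $A_i$ for every $1 \leq i \leq n$. Hence conjugacy classes of the product biject with $n$-tuples of conjugacy classes, one from each factor, via $[(a_1, \ldots, a_n)] \mapsto ([a_1], \ldots, [a_n])$.

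Next I would verify that the word length is additive across factors. Since the $S_i$ are pairwise disjoint and any two generators from distinct $S_i$ commute in the product, any word $w$ over $\cup_i S_i$ can be rewritten, without changing its length or the group element it represents, as a concatenation $w_1 w_2 \cdots w_n$ where $w_i$ is a word over $S_i$ representing the $i$-th component. It follows that
$$\lvert (a_1, \ldots, a_n) \rvert_{\cup_i S_i} = \sum_{i=1}^n \lvert a_i \rvert_{S_i}.$$
Applying this to conjugates and taking minima in each factor independently yields
$$\lvert [(a_1, \ldots, a_n)] \rvert = \sum_{i=1}^n \lvert [a_i] \rvert.$$

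Combining these two observations, I would count: for each $m \geq 0$,
$$c_{A_1 \times \cdots \times A_n, \cup_i S_i}(m) = \sum_{m_1 + \cdots + m_n = m} \prod_{i=1}^n c_{A_i, S_i}(m_i),$$
which is exactly the coefficient of $z^m$ in $\prod_{i=1}^n C_{A_i, S_i}(z)$. Summing over $m \geq 0$ gives the claimed identity of formal power series. The only real point to watch is the length additivity step, which is immediate here because the generating sets are assumed disjoint and come from different direct factors, hence commute pairwise.
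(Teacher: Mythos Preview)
Your proof is correct. The paper does not actually prove this lemma; it simply cites \cite[Obs.~14.8]{Rivin2010}, and your argument is the standard one underlying that observation.
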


\medskip \noindent
The asymptotics for the conjugacy growth of a direct product has a similar behaviour to that for standard growth.

\begin{lemma} \label{lem:conjasp}
Let $A_i$, $1\leq i \leq n$, have (disjoint) generating sets $S_i$. Suppose the conjugacy growth function and rate of $A_i$ with respect to $S_i$ are $c_i(n)$ and $\gamma_i=\gamma_{A_i,S_i}$, respectively. 
\begin{enumerate}
\item The conjugacy growth rate of the direct product satisfies $\gamma_{A_1\times \dots \times A_n, \cup_i S_i} = \max_i \gamma_i$. 
\item If $\gamma_1>\gamma_i$ for all $i>1$, and $c_1(n) \sim \frac{\gamma_1^n}{n}$, then $c_{A_1\times \dots \times A_n, \cup_i S_i}(n) \sim \frac{\gamma_1^n}{n}.$
\end{enumerate}
\end{lemma}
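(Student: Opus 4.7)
The starting point is Lemma~\ref{lem:dirprod}, which converts the product formula for conjugacy growth series into the convolution identity
\[ c_G(N) \;=\; \sum_{k_1 + \cdots + k_n = N} c_1(k_1) \cdots c_n(k_n), \]
where $G := A_1 \times \cdots \times A_n$ and $c_G := c_{G, \cup_i S_i}$. Both parts of the lemma will follow from standard estimates applied to this convolution.

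For part (1), I would first obtain $\gamma_G \geq \max_i \gamma_i$ by isolating, for each $i$, the summand with $k_i = N$ and $k_j = 0$ for $j \neq i$, which gives $c_G(N) \geq c_i(N)$ (using $c_j(0) \geq 1$ for all $j$, since the identity forms a conjugacy class). For the reverse inequality, I would fix $\epsilon > 0$ and a constant $C_\epsilon$ with $c_i(k) \leq C_\epsilon (\gamma_i + \epsilon)^k$ for every $i$ and $k$. The number of compositions of $N$ into $n$ nonnegative parts is the polynomial $\binom{N+n-1}{n-1}$, so the convolution identity gives $c_G(N) \leq C_\epsilon^n \binom{N+n-1}{n-1} (\max_i \gamma_i + \epsilon)^N$, and taking $N$-th roots followed by $\epsilon \to 0$ yields $\gamma_G \leq \max_i \gamma_i$.

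For part (2), I would regroup the convolution as
\[ c_G(N) = \sum_{k=0}^N c_1(k) D(N-k), \qquad D(m) := \sum_{k_2 + \cdots + k_n = m} c_2(k_2) \cdots c_n(k_n), \]
and apply part~(1) to $A_2 \times \cdots \times A_n$ to obtain $D(m) \leq K \beta^m$ for some constant $\beta$ strictly less than $\gamma_1$ (the strict inequality $\gamma_1 > \gamma_i$ for $i > 1$ leaves room to absorb the polynomial factor from the estimate into the exponential). The lower bound is then transparent: $c_G(N) \geq c_1(N) D(0) \geq c_1(N) \geq \kappa \gamma_1^N / N$, using $D(0) = \prod_{i \geq 2} c_i(0) \geq 1$ together with the hypothesis $c_1(n) \sim \gamma_1^n/n$.

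The main obstacle is the upper bound, where the $1/N$ factor must be preserved under convolution with a sub-dominant sequence. My plan is to split the sum at $k = N/2$. On the range $k \leq N/2$, I would use only the crude bound $c_1(k) \leq M \gamma_1^k$ and sum the resulting geometric series in $\gamma_1/\beta$, obtaining a contribution of order $(\sqrt{\beta \gamma_1})^N$, which is $o(\gamma_1^N/N)$ since $\sqrt{\beta/\gamma_1} < 1$ gives exponential decay. On the range $k > N/2$, I would use $c_1(k) \leq M \gamma_1^k / k \leq 2M \gamma_1^k / N$ to pull the factor $1/N$ outside the sum, and then bound the remaining geometric sum by a multiple of $(\gamma_1/\beta)^N$, yielding a contribution of $O(\gamma_1^N/N)$. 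Naively using $c_1(k) \leq M\gamma_1^k/k$ throughout does not extract a $1/N$ automatically, because the dominant contribution comes from $k$ comparable to $N$; the strict domination $\gamma_1 > \max_{i \geq 2} \gamma_i$ is essential both to render the low-$k$ range exponentially negligible and to keep the geometric sum in the high-$k$ range convergent, which is precisely why the statement restricts to the case of a unique dominant growth rate.
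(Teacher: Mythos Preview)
The paper states this lemma without proof, treating both parts as standard facts about convolutions of sequences; there is therefore no argument in the paper to compare against. Your proposal is correct and supplies exactly the elementary estimates one would expect: the polynomial count of compositions for part~(1), and the split at $k=N/2$ for part~(2), using the crude bound $c_1(k)\le M\gamma_1^k$ on the low range to get an exponentially negligible contribution of order $(\sqrt{\beta\gamma_1})^N$, and the refined bound $c_1(k)\le M\gamma_1^k/k \le 2M\gamma_1^k/N$ on the high range to extract the factor $1/N$ before summing the geometric series. The only point worth flagging is cosmetic: in part~(1) the constant $C_\epsilon$ depends on $i$ as well as on $\epsilon$, but since there are finitely many factors one can of course take the maximum.
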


\medskip \noindent
This translates into the following statement for the conjugacy growth series.

\begin{cor}\label{cor:dirprod}
Let $A_i$, $1\leq i \leq n$, have (disjoint) generating sets $S_i$, suppose $A_n$ is virtually abelian, and $A_1, \dots, A_{n-1}$ have exponential growth and a contracting element in $\mathrm{Cay}(A_i,S_i)$. Furthermore, assume the growth rates satisfy $\gamma_1 > \gamma_2 \geq  \dots \geq \gamma_{n-1}$. 

Then the conjugacy growth series $C(z)=C_{A_1\times \dots \times A_n, \cup_i S_i}(z)$ is transcendental.
\end{cor}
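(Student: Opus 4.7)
The plan is to chain three ingredients already made explicit in the paper: Lemma \ref{lem:conjasp}(2) on conjugacy growth asymptotics in direct products, the sharp quantitative asymptotic coming from a contracting element in \cite{GeYang}, and the transcendence criterion of \cite[Theorem D]{Flajolet1987} recalled in Section \ref{subsec:conjgeo}.

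First I would produce the sharp asymptotic for the leading factor. Since $A_1$ has exponential growth and a contracting element in $\mathrm{Cay}(A_1, S_1)$, the group $A_1$ is non-elementary, and the conjugacy growth result of \cite{GeYang} underlying Corollary 1.8 of op.\ cit.\ delivers not only transcendence but the explicit asymptotic
$$c_{A_1, S_1}(n) \sim \frac{\gamma_1^n}{n}, \qquad \gamma_1 > 1,$$
which is precisely the form of asymptotic highlighted in Section \ref{subsec:conjgeo}.

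Next I would verify the hypotheses of Lemma \ref{lem:conjasp}(2) for the whole product $A_1 \times \cdots \times A_n$. By hypothesis $\gamma_1 > \gamma_j$ for every $2 \leq j \leq n-1$. It only remains to compare $\gamma_1$ and $\gamma_n$: since $A_n$ is virtually abelian, its standard growth function is polynomial, so $\alpha_n = 1$, and therefore $\gamma_n \leq \alpha_n = 1 < \gamma_1$. Thus $\gamma_1 > \gamma_j$ for all $j \geq 2$, and together with $c_{A_1, S_1}(n) \sim \gamma_1^n/n$ this lets Lemma \ref{lem:conjasp}(2) apply and yield
$$c_{A_1 \times \cdots \times A_n,\, \cup_i S_i}(n) \sim \frac{\gamma_1^n}{n}.$$

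Finally, since $\gamma_1 > 1$, this asymptotic on the coefficients of the power series $C(z) = \sum_n c_{A_1 \times \cdots \times A_n, \cup_i S_i}(n) \, z^n$ matches exactly the hypothesis of \cite[Theorem D]{Flajolet1987}, which forces $C(z)$ to be transcendental. The argument is mostly bookkeeping; I do not anticipate a serious obstacle beyond ensuring $\gamma_n < \gamma_1$ for the virtually abelian tail, which, as noted above, is immediate from polynomial standard growth. One could also first invoke Lemma \ref{lem:dirprod} to factor $C(z) = \prod_i C_{A_i, S_i}(z)$ for conceptual clarity, but the asymptotic route via Lemma \ref{lem:conjasp}(2) is the cleanest path to Flajolet's criterion.
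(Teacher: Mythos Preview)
Your argument is correct and uses the same core ingredients as the paper (the \cite{GeYang} asymptotic, Lemma~\ref{lem:conjasp}(2), and Flajolet's criterion), but you handle the virtually abelian factor $A_n$ differently. The paper first splits $C(z)$ via Lemma~\ref{lem:dirprod} as $C_{A_1\times\cdots\times A_{n-1}}(z)\cdot C_{A_n}(z)$, invokes \cite{Evetts2019} to say $C_{A_n}(z)$ is rational, and then runs the asymptotic argument on the first $n-1$ factors only; transcendence of the product then follows because rational times transcendental is transcendental. You instead absorb $A_n$ directly into Lemma~\ref{lem:conjasp}(2) by observing $\gamma_n\leq\alpha_n=1<\gamma_1$, which is a clean shortcut that sidesteps the need for \cite{Evetts2019} entirely. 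Both routes are sound; yours is slightly more economical in external input, while the paper's makes the role of the virtually abelian factor more explicitly algebraic.
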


\begin{proof}
By Lemma \ref{lem:dirprod}, $C(z)$ is the product of $C_{A_1\times \dots \times A_{n-1}, \cup_i S_i}(z)$ and $C_{A_n,S_n}(z)$. Since the latter is rational by \cite{Evetts2019}, the complexity of $C(z)$ is determined by the first $n-1$ factors, each of which has asymptotics of the form $\frac{\gamma_i^n}{n}$ by \cite{GeYang}. Then Lemma \ref{lem:conjasp}(2.) implies that the product of the first $n-1$ factors has conjugacy asymptotics $\frac{\gamma_1^n}{n}$, and therefore a transcendental conjugacy growth series by  \cite[Thm. D]{Flajolet1987}.
\end{proof}

\begin{remark}
Corollary \ref{cor:dirprod} can be strengthened in certain ways. For example, if $A=B$ is of exponential growth and contains a contracting element, then the conjugacy growth series of $A \times B$ (with respect to the standard generating set) is the square of the conjugacy growth series of $A$, so is transcendental. That is, we can obtain transcendental series even if one of the groups doesn't have a strictly larger growth rate than the others. 

However, there exist periagroups (right-angled Artin groups) that are not isomorphic but have the same growth rate, in which case the arguments above do not apply.  
\end{remark}

\begin{remark}
Theorem \ref{thm:PeriaAcylHyp}, together with \cite[Theorem 1.5]{AC2017}, implies that, for any generating set, no language of (geodesic) conjugacy representatives of a non-elementary periagroup can be regular; in fact, no such language can be unambiguous context-free. We refer to \cite{AC2017} for background and an in-depth discussion on the languages of conjugacy representatives in acylindrically hyperbolic groups.
\end{remark}

\section*{Acknowledgments}
The first-named author would like to thank Matt Cordes and Alex Martin for helpful discussions.

\addcontentsline{toc}{section}{References}

\bibliographystyle{alpha}
{\footnotesize\bibliography{ContractingPeria}}

%

\Address
\end{document}